\let\@@pmod\pmod
\DeclareRobustCommand{\pmod}{\@ifstar\@pmods\@@pmod}
\def\@pmods#1{\mkern4mu({\operator@font mod}\mkern 6mu#1)}
\newcommand{\ccg}{/ \! /}
\newcommand{\bsl}{\backslash}
\newcommand{\ochi}{\overline{\chi}}
\newcommand{\opsi}{\overline{\psi}}
\newcommand{\ve}{\varepsilon}
\newcommand{\eB}{{^\ve\! B}}
\newcommand{\eE}{{^\ve\! E}}
\newcommand{\vecB}{\mathbf{B}}
\newcommand{\vecD}{\mathbf{D}}
\newcommand{\vecE}{\mathbf{E}}
\newcommand{\lc}{\left\lceil}
\newcommand{\rc}{\right\rceil}
\newcommand{\lf}{\left\lfloor}
\newcommand{\rf}{\right\rfloor}
\newcommand{\Z}{\mathbb{Z}}
\newcommand{\Q}{\mathbb{Q}}
\newcommand{\R}{\mathbb{R}}
\newcommand{\C}{\mathbb{C}}
\newcommand{\HH}{\mathbb{H}}
\newcommand{\DD}{\mathcal{D}}
\newcommand{\A}{\mathcal{A}}
\newcommand{\new}{\mathrm{new}}
\newcommand{\Anew}{\mathcal{A}^{\mathrm{new}}}
\newcommand{\Amin}{\mathcal{A}^{\mathrm{min}}}
\newcommand{\rr}{\mathfrak{r}}
\newcommand{\N}{\mathsf{N}}
\newcommand{\tPsi}{\tilde{\Psi}}
\newcommand{\tOmega}{\tilde{\Omega}}
\newcommand{\mbc}{\mathbf{c}}
\newcommand{\boldS}{\bold{S}}
\newcommand{\boldh}{\bold{h}}
\newcommand{\sm}{\left(\begin{smallmatrix}}
\newcommand{\esm}{\end{smallmatrix}\right)}
\newcommand{\bpm}{\begin{pmatrix}}
\newcommand{\ebpm}{\end{pmatrix}}
\DeclareMathOperator{\Gal}{Gal}
\DeclareMathOperator{\lcm}{lcm}
\DeclareMathOperator{\SL}{SL}
\DeclareMathOperator{\PSL}{PSL}
\DeclareMathOperator{\GL}{GL}
\DeclareMathOperator{\PGL}{PGL}
\DeclareMathOperator{\G}{G}
\DeclareMathOperator{\M}{M}
\DeclareMathOperator{\sinc}{sinc}
\DeclareMathOperator{\ord}{ord}
\DeclareMathOperator{\Tr}{Tr}
\DeclareMathOperator{\tr}{tr}
\DeclareMathOperator{\sgn}{sgn}
\DeclareMathOperator{\cond}{cond}
\DeclareMathOperator{\I}{I}
\DeclareMathOperator{\NEl}{NEll}
\DeclareMathOperator{\Eis}{Eis}
\DeclareMathOperator{\El}{Ell}
\DeclareMathOperator{\Cu}{C}
\DeclareMathOperator{\Area}{Area}
\newtheorem{theorem}{Theorem}
\newtheorem{lemma}[theorem]{Lemma}
\newtheorem{proposition}[theorem]{Proposition}
\newtheorem{definition}[theorem]{Definition}
\theoremstyle{remark}
\newtheorem{remarks}[theorem]{Remarks}
\newtheorem{remark}[theorem]{Remark}
\numberwithin{theorem}{section}
\numberwithin{equation}{section}
\begin{document}
\title{Twist-minimal trace formulas and the Selberg eigenvalue conjecture}
\author{Andrew R.~Booker, Min Lee and Andreas Str\"ombergsson}
\address{Howard House, University of Bristol, Queens Ave.,
Bristol BS8 1SN, United Kingdom}
\email{\tt andrew.booker@bristol.ac.uk\\min.lee@bristol.ac.uk}
\address{Department of Mathematics, Box 480, Uppsala University,
751 06 Uppsala, Sweden}
\email{\tt astrombe@math.uu.se}
\thanks{A.~R.~B.\ and M.~L.\ were partially supported by EPSRC Grant
\texttt{EP/K034383/1}. M.~L.\ was partially supported by
Royal Society University Research Fellowship ``Automorphic forms,
L-functions and trace formulas''.  A.~S.\ was supported by 
the Swedish Research Council Grant 2016-03360.}
\begin{abstract}
We derive a fully explicit version of the Selberg trace formula
for twist-minimal Maass forms of weight $0$ and arbitrary conductor
and nebentypus character, and apply it to prove two theorems. First,
conditional on Artin's conjecture, we classify the even $2$-dimensional
Artin representations of small conductor; in particular, we show that
the even icosahedral representation of smallest conductor is the one
found by Doud and Moore \cite{DM06}, of conductor $1951$.  Second,
we verify the Selberg eigenvalue conjecture for groups of small level,
improving on a result of Huxley \cite{Hux85} from 1985.
\end{abstract}
\maketitle
%\tableofcontents

\section{Introduction}
%: Statement of results on SEC
In \cite{BS07}, the first and third authors derived a fully explicit
version of the Selberg trace formula for cuspidal Maass newforms of
squarefree conductor, and applied it to obtain partial results toward the
Selberg eigenvalue conjecture and the classification of $2$-dimensional
Artin representations of small conductor. In this paper we remove
the restriction to squarefree conductor, with the following
applications:
\begin{theorem}\label{thm:SEC}
The Selberg eigenvalue conjecture is true for $\Gamma_1(N)$ for $N\leq 880$,
and for $\Gamma(N)$ for $N\leq 226$.
\end{theorem}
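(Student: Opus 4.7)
The plan is to use the twist-minimal trace formula derived earlier in the paper to reduce the Selberg eigenvalue conjecture for these groups to a finite verification on twist-minimal newform spaces of small conductor. For $\Gamma_1(N)$, the cuspidal Laplace spectrum decomposes by nebentypus $\chi$, and via Atkin--Lehner newform theory into spaces of newforms of conductor $q\mid N$; each such newform is in turn a Dirichlet twist of a twist-minimal newform of some conductor $q'\mid q$ and some twist-minimal character $\chi'$. Since Dirichlet twisting preserves Laplace eigenvalues, it suffices to rule out exceptional spectral parameters on every twist-minimal space $(q',\chi')$ that appears inside some $\Gamma_1(N)$ with $N\leq 880$. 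For $\Gamma(N)$, an analogous reduction (embedding its spectrum into a union of nebentypus spectra on a suitable $\Gamma_0(M)$) places the problem into the same twist-minimal framework, with $N\leq 226$ being the computationally feasible range.

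For each relevant pair $(q',\chi')$, the strategy is to plug a carefully chosen nonnegative test function $h$ into the twist-minimal trace formula. The function $h$ should be strictly positive on the tempered spectral line $\R$, strictly positive on the exceptional imaginary segment $\{is:s\in(0,\tfrac{1}{2})\}$, and have Paley--Wiener transform $g=\hat{h}$ supported in a small compact interval, so that the geometric side becomes a finite, explicitly computable sum of identity, elliptic, hyperbolic, parabolic, and Eisenstein contributions in the form derived in this paper. Any exceptional eigenvalue $\lambda_0=\tfrac{1}{4}-s_0^2$ with $s_0\in(0,\tfrac{1}{2})$ would contribute at least $h(is_0)$ to the spectral side; choosing $h$ so that $h(is_0)$ strictly exceeds the full geometric side for every candidate $s_0$ and every twist-minimal space in range yields the desired contradiction.

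The main obstacle is quantitative and computational: the test function $h$ must be calibrated so that the minimum exceptional value $\min_{s\in(0,1/2)} h(is)$ dominates the geometric side uniformly across all twist-minimal spaces in range, and the geometric side must be evaluated with certified numerics (rigorous truncation of the elliptic and hyperbolic sums, interval-arithmetic evaluation of the integral transforms of $h$ appearing in the identity and parabolic terms, and careful handling of the cusp and Eisenstein contributions, which depend on $q'$ and $\chi'$). Once $h$ is tuned and the resulting computation is organized efficiently enough to cover all twist-minimal conductors up to the relevant bounds, the proof reduces to a large but finite machine calculation.
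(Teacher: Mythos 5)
Your reduction to twist-minimal spaces is the right one and matches the paper (for $\Gamma(N)$ this is exactly Lemma~\ref{lem:Gammaadmissible}), and the plan of evaluating the twist-minimal trace formula against a nonnegative test function with compactly supported $g$ is also correct in spirit. The gap is in the stated numerical criterion: you require $\min_{s\in(0,1/2)}h(is)$ to strictly exceed the full geometric side, but this cannot hold for the spaces that actually occur in the range of the theorem.

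The geometric side equals the spectral side $\sum_j c_j\,h(r_j)$ with $c_j\ge0$, and this already contains the contribution of every discrete eigenvalue, in particular the twist-minimal cusp forms with $\lambda=\tfrac14$ (so $r_j=0$) coming from even $2$-dimensional Artin representations. If $n\ge1$ such forms lie in $\Amin_\lambda(\chi)$, the geometric side is at least $n\cdot h(0)$. Since $h$ is analytic and $s=0$ is a limit point of the open interval $(0,\tfrac12)$, we have $\inf_{s\in(0,1/2)}h(is)\le h(0)$; together with your criterion this forces $h(0)\ge\inf_s h(is)>n\,h(0)$, hence $n<1$. But such Artin forms \emph{do} occur in the relevant range (dihedral forms attached to real quadratic fields, the tetrahedral form of conductor $163$, and others from Table~\ref{tab:Artin}), so the bare comparison is vacuous long before $N=880$ or $N=226$. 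The paper's proof fixes this by forming the quantity $Q_{\chi,\epsilon}$, which subtracts the known Artin contribution $n_{\chi,\epsilon}h(0)$ from the spectral side and divides by the multiplicity $m_\chi$ with which each non-CM newform appears, so that $Q_{\chi,\epsilon}<1$ while $h\ge1$ on $i\R$ rules out unknown exceptional forms; moreover, instead of a single hand-chosen $h$, it minimizes $Q_{\chi,\epsilon}$ as a positive-definite quadratic form over the coefficients $x_j$ of the parametrized family $h=(\sinc^2(\tfrac{\delta r}{2})\sum_{j}x_j\cos(j\delta r))^2$. Both the subtraction of known contributions and the optimization over $h$ are essential to reach the stated bounds; without them, the computation would fail.
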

\begin{theorem}\label{thm:Artin}
Assuming Artin's conjecture, Table~\ref{tab:Artin} is the complete list,
up to twist, of even, nondihedral, irreducible, $2$-dimensional
Artin representations of conductor $\le2862$.
\end{theorem}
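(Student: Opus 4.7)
The plan is to use the twist-minimal trace formula developed earlier in this paper as a sieve: for every pair $(N,\chi)$ with $N\leq 2862$ and $\chi$ a Dirichlet character modulo $N$, I would derive an explicit upper bound on the number of twist-minimal weight-$0$ Maass newforms of level $N$, nebentypus $\chi$, and Laplace eigenvalue exactly $1/4$, and then check that this bound is saturated by the known representations. Under Artin's conjecture, every even, irreducible, nondihedral $2$-dimensional Artin representation $\rho$ produces a Maass cusp form of eigenvalue $1/4$ whose level equals the conductor of $\rho$ and whose nebentypus equals $\det\rho$, and which is twist-minimal whenever $\rho$ is chosen twist-minimal in its orbit. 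Hence any representation of conductor $\leq 2862$ not in Table~\ref{tab:Artin} would force an additional such form, contradicting the trace-formula bound.

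The first step is to choose a test function $h$ that is even, nonnegative on $\R\cup i[-\tfrac12,\tfrac12]$ (the spectrum after Selberg's $1/4$-bound), holomorphic in a suitable horizontal strip, of rapid decay, and normalized so that $h(0)\geq 1$; for example $h(r)=|\widehat g(r)|^2$ with $g$ smooth, even, compactly supported, and $\widehat g(0)=1$ works. Feeding this $h$ into the twist-minimal trace formula for $(N,\chi)$, the spectral side is a nonnegative sum whose terms at $r_j=0$ each contribute at least $1$, so it dominates the count $M(N,\chi)$ of twist-minimal Maass newforms with eigenvalue $1/4$. The geometric side is an effectively computable number $U(N,\chi)$ assembled from the identity, elliptic, hyperbolic, and parabolic contributions of the formula, yielding $M(N,\chi)\leq U(N,\chi)$.

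Next, for every relevant $(N,\chi)$, I would compute $U(N,\chi)$ with certified numerical precision and compare it to the number of entries of Table~\ref{tab:Artin} of twist-minimal conductor $N$ and determinant $\chi$, augmented by the dihedral representations of the same invariants (which are enumerable as inductions of Gr\"ossencharacters from quadratic subfields and form a finite explicit list for each $N$). If in every case these integer counts agree, then no further even, nondihedral representation can exist below conductor $2862$, and Table~\ref{tab:Artin} is complete up to twist. The known icosahedral example of Doud and Moore \cite{DM06} of conductor $1951$ will appear as one of the entries, and the theorem will identify it as the smallest such example.

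The main obstacle is achieving a bound that is genuinely sharp, since $U(N,\chi)$ is a real number and a priori could exceed the integer $M(N,\chi)$ by less than $1$ for the wrong reasons. This forces careful tuning of $h$ so that its spectral profile is concentrated near $r=0$ while keeping the geometric side tractable, and rigorous error bounds must be propagated through every Selberg transform, class number sum, and Kloosterman contribution. Secondary but nontrivial difficulties are iterating over all characters for thousands of moduli, reusing elliptic and hyperbolic data across twist-equivalent levels, and, in the handful of low-conductor cases where the trace-formula bound is not tight on its own, exhibiting the corresponding Maass form explicitly and matching it to a specific Artin representation to close the gap.
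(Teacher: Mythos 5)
Your overall plan is the right one and is essentially the paper's: feed a nonnegative test function with $h(0)\ge1$ into the twist-minimal trace formula, treat the geometric side as an upper bound for the number of eigenvalue-$\tfrac14$ forms, and compare against the known Artin and dihedral contributions to rule out any further representation of conductor $\le2862$. However, three points that the paper handles explicitly are missing, and two of them would cause the argument as written to fail or to fall well short of the stated conductor bound.

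First, \emph{multiplicity}. A twist-minimal non-CM eigenform does not sit alone in $\Amin_\lambda(\chi)$: it appears with multiplicity
\[
m_\chi=\#\{\psi\bmod N:\psi(-1)=1,\ \psi^2=1,\ \cond(\psi)\cond(\chi\psi)\mid N\},
\]
since each such $\psi$ returns a twist of the same conductor and character. Thus each known nondihedral Artin representation contributes $m_\chi\cdot h(0)$ to the spectral side, not $h(0)$, and the quantity to be driven below $1$ is $\frac1{m_\chi}\bigl[\text{spectral sum}-n_{\chi,\epsilon}h(0)\bigr]$ with $n_{\chi,\epsilon}$ counting \emph{forms}, not representations. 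Your proposal compares the geometric bound to ``the number of entries of Table~\ref{tab:Artin}'' with no $m_\chi$, so either the subtraction of known forms is too small (and the criterion fails when it should succeed) or a hypothetical extra representation is treated as adding only $1$ rather than $m_\chi$. Second, \emph{parity}. Taking only $T_1$ (so that the spectral sum is obviously nonnegative, as you say) discards the information in $T_{-1}$ and roughly doubles the effective geometric budget per form, which is enough to lose the theorem at conductors far below $2862$. The paper instead uses $\tfrac12(T_1+(-1)^\epsilon T_{-1})$ to isolate each parity $\epsilon$, making the quadratic form $Q_{\chi,\epsilon}$ positive definite and then minimizing it over the family $h=(\sinc^2(\delta r/2)\sum_j x_j\cos(j\delta r))^2$ subject to $\sum_j x_j=1$; that minimization, not just a single well-chosen $h$, is what makes the bound sharp enough. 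Third, \emph{feasibility}: running over every $\chi\bmod N$ for all $N\le2862$ is prohibitive and unnecessary. By Lemma~\ref{lem:Galoisadmissible}, the local determinant character $\chi_p$ of a nondihedral $\rho$ at a prime where the Artin conductor exponent equals $1$ or $\ord_p\cond(\det\rho)$ can only have order $2,3,4,5$, and $20$ cannot arise from two such primes; combined with the reduction to minimal $\chi$ (Lemma~\ref{lem:twistminimal_isom}), this cuts the set of $(N,\chi)$ that must be treated to a manageable list. None of these points changes your high-level strategy, but without the first two the argument as written does not close, and without the third the computation does not terminate in practice. Finally, your fallback of ``exhibiting the corresponding Maass form explicitly'' in borderline cases is not used in the paper and would be very hard to make rigorous; the paper instead uses the $A_5$ Galois-conjugate pairing trick to eliminate a few borderline icosahedral conductors.
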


\begin{table}[h!]\begin{scriptsize}
\begin{tabular}{|rrrrrrrrrrrrrrrr|}\hline
\multicolumn{16}{|l|}{\textbf{tetrahedral}} \\
\href{http://www.lmfdb.org/ArtinRepresentation/2.163.8t12.1c1}{$163$} &
\href{http://www.lmfdb.org/ArtinRepresentation/2.277.8t12.1c1}{$277$} &
\href{http://www.lmfdb.org/ArtinRepresentation/2.349.8t12.1c1}{$349$} &
\href{http://www.lmfdb.org/ArtinRepresentation/2.397.8t12.1c1}{$397$} &
\href{http://www.lmfdb.org/ArtinRepresentation/2.547.8t12.1c1}{$547$} &
\href{http://www.lmfdb.org/ArtinRepresentation/2.3e2\_61.8t12.1c1}{$549$} &
\href{http://www.lmfdb.org/ArtinRepresentation/2.607.8t12.1c1}{$607$} &
\href{http://www.lmfdb.org/ArtinRepresentation/2.7\_97.8t12.1c1}{$679$} &
\href{http://www.lmfdb.org/ArtinRepresentation/2.19\_37.8t12.1c1}{$703$} &
\href{http://www.lmfdb.org/ArtinRepresentation/2.709.8t12.1c1}{$709$} &
\href{http://www.lmfdb.org/ArtinRepresentation/2.3e2\_79.8t12.1c1}{$711$} &
\href{http://www.lmfdb.org/ArtinRepresentation/2.7\_109.8t12.1c1}{$763$} &
\href{http://www.lmfdb.org/ArtinRepresentation/2.853.8t12.1c1}{$853$} &
\href{http://www.lmfdb.org/ArtinRepresentation/2.937.8t12.1c1}{$937$} &
\href{http://www.lmfdb.org/ArtinRepresentation/2.13\_73.8t12.1c1}{$949$} &
\href{http://www.lmfdb.org/ArtinRepresentation/2.5\_199.16t60.1c1}{$995$} \\

\href{http://www.lmfdb.org/ArtinRepresentation/2.1009.8t12.1c1}{$1009$} &
\href{http://www.lmfdb.org/ArtinRepresentation/2.29\_37.16t60.1c1}{$1073$} &
\href{http://www.lmfdb.org/ArtinRepresentation/2.3e2\_127.8t12.1c1}{$1143$} &
\href{http://www.lmfdb.org/ArtinRepresentation/2.31\_37.8t12.1c1}{$1147$} &
\href{http://www.lmfdb.org/ArtinRepresentation/2.3e2\_7\_19.8t12.1c1}{$1197$} &
\href{http://www.lmfdb.org/ArtinRepresentation/2.7\_181.8t12.1c1}{$1267$} &
\href{http://www.lmfdb.org/ArtinRepresentation/2.7\_181.16t60.1c1}{$1267$} &
\href{http://www.lmfdb.org/ArtinRepresentation/2.31\_43.8t12.1c1}{$1333$} &
\href{http://www.lmfdb.org/ArtinRepresentation/2.17\_79.16t60.1c1}{$1343$} &
\href{http://www.lmfdb.org/ArtinRepresentation/2.2e3\_3e2\_19.16t60.1c1}{$1368$} &
\href{http://www.lmfdb.org/ArtinRepresentation/2.1399.8t12.1c1}{$1399$} &
\href{http://www.lmfdb.org/ArtinRepresentation/2.3e2\_157.8t12.1c1}{$1413$} &
\href{http://www.lmfdb.org/ArtinRepresentation/2.1699.8t12.1c1}{$1699$} &
\href{http://www.lmfdb.org/ArtinRepresentation/2.3e2\_197.16t60.1c1}{$1773$} &
\href{http://www.lmfdb.org/ArtinRepresentation/2.1777.8t12.1c1}{$1777$} &
\href{http://www.lmfdb.org/ArtinRepresentation/2.1789.8t12.1c1}{$1789$} \\

\href{http://www.lmfdb.org/ArtinRepresentation/2.1879.8t12.1c1}{$1879$} &
\href{http://www.lmfdb.org/ArtinRepresentation/2.3e2\_211.8t12.1c1}{$1899$} &
\href{http://www.lmfdb.org/ArtinRepresentation/2.3e2\_211.8t12.2c1}{$1899$} &
\href{http://www.lmfdb.org/ArtinRepresentation/2.3e2\_5\_43.16t60.1c1}{$1935$} &
\href{http://www.lmfdb.org/ArtinRepresentation/2.1951.8t12.1c1}{$1951$} &
\href{http://www.lmfdb.org/ArtinRepresentation/2.3e2\_7\_31.8t12.1c1}{$1953$} &
\href{http://www.lmfdb.org/ArtinRepresentation/2.19\_103.8t12.1c1}{$1957$} &
\href{http://www.lmfdb.org/ArtinRepresentation/2.2e6\_31.16t60.1c1}{$1984$} &
\href{http://www.lmfdb.org/ArtinRepresentation/2.7\_293.16t60.1c1}{$2051$} &
\href{http://www.lmfdb.org/ArtinRepresentation/2.31\_67.8t12.1c1}{$2077$} &
\href{http://www.lmfdb.org/ArtinRepresentation/2.3e2\_233.16t60.1c1}{$2097$} &
\href{http://www.lmfdb.org/ArtinRepresentation/2.2131.8t12.1c1}{$2131$} &
\href{http://www.lmfdb.org/ArtinRepresentation/2.5\_7\_61.16t60.1c1}{$2135$} &
\href{http://www.lmfdb.org/ArtinRepresentation/2.3e2\_241.8t12.1c1}{$2169$} &
\href{http://www.lmfdb.org/ArtinRepresentation/2.3e2\_241.8t12.2c1}{$2169$} &
\href{http://www.lmfdb.org/ArtinRepresentation/2.3e2\_13\_19.8t12.1c1}{$2223$}
\\

\href{http://www.lmfdb.org/ArtinRepresentation/2.2311.8t12.1c1}{$2311$} &
\href{http://www.lmfdb.org/ArtinRepresentation/2.13\_181.8t12.1c1}{$2353$} &
\href{http://www.lmfdb.org/ArtinRepresentation/2.3e2\_271.8t12.1c1}{$2439$} &
\href{http://www.lmfdb.org/ArtinRepresentation/2.2e3\_307.16t60.1c1}{$2456$} &
\href{http://www.lmfdb.org/ArtinRepresentation/2.13\_199.8t12.1c1}{$2587$} &
\href{http://www.lmfdb.org/ArtinRepresentation/2.7\_13\_29.16t60.1c1}{$2639$} &
\href{http://www.lmfdb.org/ArtinRepresentation/2.2689.8t12.1c1}{$2689$} &
\href{http://www.lmfdb.org/ArtinRepresentation/2.3e2\_7\_43.8t12.1c1}{$2709$} &
\href{http://www.lmfdb.org/ArtinRepresentation/2.13\_211.8t12.1c1}{$2743$} &
\href{http://www.lmfdb.org/ArtinRepresentation/2.3e2\_307.8t12.1c1}{$2763$} &
\href{http://www.lmfdb.org/ArtinRepresentation/2.2797.8t12.1c1}{$2797$} &
\href{http://www.lmfdb.org/ArtinRepresentation/2.2803.8t12.1c1}{$2803$} &
\href{http://www.lmfdb.org/ArtinRepresentation/2.3e2\_313.8t12.1c1}{$2817$} &
& & \\ \hline

\multicolumn{16}{|l|}{\textbf{octahedral}} \\
\href{http://www.lmfdb.org/ArtinRepresentation/2.5\_157.24t138.2c1}{$785$} &
\href{http://www.lmfdb.org/ArtinRepresentation/2.5\_269.24t138.2c1}{$1345$} &
\href{http://www.lmfdb.org/ArtinRepresentation/2.2e2\_5\_97.48.1c1}{$1940$} &
\href{http://www.lmfdb.org/ArtinRepresentation/2.17\_127e2.24t138.2c1}{$2159^\ast$} &
\href{http://www.lmfdb.org/ArtinRepresentation/2.43\_53.48.1c1}{$2279$} &
\href{http://www.lmfdb.org/ArtinRepresentation/2.3e2\_257.24t138.2c1}{$2313$} &
\href{http://www.lmfdb.org/ArtinRepresentation/2.2e2\_3\_197.24t138.2c1}{$2364$} &
\href{http://www.lmfdb.org/ArtinRepresentation/2.2e3\_3\_101.24t138.2c1}{$2424$} &
\href{http://www.lmfdb.org/ArtinRepresentation/2.2e3\_5\_61.48.2c1}{$2440$} &
\href{http://www.lmfdb.org/ArtinRepresentation/2.2713.24t138.2c1}{$2713$} &
\href{http://www.lmfdb.org/ArtinRepresentation/2.2777.24t22.1c1}{$2777$} &
\href{http://www.lmfdb.org/ArtinRepresentation/2.2777.24t138.1c1}{$2777$} &
\href{http://www.lmfdb.org/ArtinRepresentation/2.2777.24t138.3c1}{$2777$} &
\href{http://www.lmfdb.org/ArtinRepresentation/2.2857.24t138.1c1}{$2857$} &
& \\ \hline

\multicolumn{16}{|l|}{\textbf{icosahedral}} \\
\href{http://www.lmfdb.org/ArtinRepresentation/2.1951e2.120.1c1}{$1951^\ast$} &
\href{http://www.lmfdb.org/ArtinRepresentation/2.1951e2.120.1c2}{$1951^\ast$} &
\href{http://www.lmfdb.org/ArtinRepresentation/2.2141e2.120.1c1}{$2141^\ast$} &
\href{http://www.lmfdb.org/ArtinRepresentation/2.2141e2.120.1c2}{$2141^\ast$} &
\href{http://www.lmfdb.org/ArtinRepresentation/2.2e2\_701e2.120.1c1}{$2804^\ast$} &
\href{http://www.lmfdb.org/ArtinRepresentation/2.2e2\_701e2.120.1c2}{$2804^\ast$}
&
& & & & & & & & & \\ \hline
\end{tabular}
\end{scriptsize}
\caption{Even, nondihedral Artin representations of conductor
$\le2862$, up to twist.  For each twist equivalence class we indicate
the minimal Artin conductor and link to the
\href{http://www.lmfdb.org/ArtinRepresentation/}{\texttt{LMFDB}}
page of a representation in the class. It is twist
minimal in all cases except those marked with $\ast$.}
\label{tab:Artin}
\end{table}

\begin{remarks}\
\begin{itemize}
\item As we pointed out in \cite{BS07}, in the case of squarefree
level the Selberg trace formula becomes substantially cleaner if
one sieves down to \emph{newforms}, and that also helps in numerical
applications by thinning out the spectrum. For nonsquarefree level
this is no longer the case, as the newform sieve results in
more complicated formulas in many cases.

Our main innovation in this paper is to introduce a further sieve down to
\emph{twist-minimal} forms, i.e.\ those newforms whose conductor cannot
be reduced by twisting. Although there are many technical complications
to overcome in the intermediate stages, in the end we find that the
twist-minimal trace formula is again significantly cleaner and helps to
improve the numerics.

A natural question to explore in further investigations is whether one
can skip the intermediate stages and derive the twist-minimal trace
formula directly. 
A direct proof might shed light on why several complicated terms 
of the trace formula are annihilated by the twist-minimal sieve, 
and avoid messy calculations.

\item Theorem~\ref{thm:SEC} for $\Gamma(N)$ improves a 30-year-old
result of Huxley \cite{Hux85}, who proved the Selberg eigenvalue
conjecture for groups of level $N\le18$. Treating nonsquarefree
conductors is essential for this application, since a form of level $N$
can have conductor\footnote{The level of a form $f$ is the smallest $N$
such that $f$ is modular for $\Gamma(N)$. Its conductor is the conductor
of the associated automorphic representation, or equivalently the
smallest $N$ such that $f$ is modular for some conjugate of
$\Gamma_1(N)$.}
as large as $N^2$. Moreover, the reduction to
twist-minimal spaces yields a substantial improvement in our numerical
results by essentially halving the spectrum in the critical case of forms of
prime level $N$ and conductor $N^2$ (see \eqref{eq:Mchi} below). This partially
explains why our result for $\Gamma(N)$ is within a factor of $4$ of
that for $\Gamma_1(N)$, despite the conductors being much larger.

\item By the Langlands--Tunnell theorem, the Artin conjecture is true
for tetrahedral and octahedral representations, so the conclusion
of Theorem~\ref{thm:Artin} holds unconditionally for those types.
In the icosahedral case, by \cite{Boo03} it is enough to assume the
Artin conjecture for all representations in a given Galois conjugacy
class; i.e., if there is a twist-minimal, even icosahedral
representation of conductor $\le2862$ that does not appear in
Table~\ref{tab:Artin}, then Artin's conjecture is false for
at least one of its Galois conjugates.

The entries of Table~\ref{tab:Artin} were computed by Jones and Roberts
\cite{JR17} by a thorough search of number fields with prescribed
ramification behavior, and we verified the completeness of the list via
the trace formula. In principle the number field search by Jones and
Roberts \cite{JR14} is exhaustive, so it should be possible to prove
Theorem~\ref{thm:Artin} unconditionally with a further computation,
but that has not yet been carried out to our knowledge.

For comparison, we note that Buzzard and Lauder \cite{BL17b} have
characterized the \emph{odd} $2$-dimensional representations of conductor
$\le1500$ by computing bases of the associated spaces of weight $1$
holomorphic modular forms.

\item
Theorem~\ref{thm:SEC} for $\Gamma_1(N)$ improves on the result
from \cite{BS07} by extending to nonsquarefree $N$ and increasing the
upper bound from $854$ to $880$. To accomplish the latter, we computed a
longer list of class numbers of the quadratic fields $\Q(\sqrt{t^2\pm4})$
using the algorithm from \cite{BBJ}.  Nothing (other than limited patience
of the user) prevents computing an even longer list and increasing the
bounds in Theorem~\ref{thm:SEC} a bit more. However, as explained in
\cite[\S6]{BS07}, our method suffers from an exponential barrier to
increasing the conductor, so that by itself is likely to yield only
marginal improvements. Some ideas for surmounting this barrier are
described in \cite[\S6]{BS07}; in any case, as Theorem~\ref{thm:Artin}
shows, the first even icosahedral representation occurs at conductor
$N=1951$,\footnote{The existence of this representation
was first shown by Doud and Moore \cite{DM06},
who also proved that $1951$ is minimal among \emph{prime} conductors of
even icosahedral representations.} so the bounds in Theorem~\ref{thm:SEC}
cannot be improved unconditionally beyond $1950$.
\item
All of our computations with real and complex numbers were carried out
using the interval arithmetic package
\href{http://arblib.org/}{\texttt{Arb}} \cite{Joh17}. Thus,
modulo bugs in the software and computer hardware, our results are
rigorous. The reader is invited to inspect our source code at \cite{code}.
\end{itemize}
\end{remarks}

We conclude the introduction with a brief outline of the paper. In
\S\ref{sec:prelim}--\ref{ss:twist-minimal_TF} we define the space of
twist-minimal Maass forms and state our version of the Selberg
trace formula for it. In \S\ref{sec:fullTF} we state and prove the
full trace formula in general terms, and then specialize it to
$\Gamma_0(N)$ with nebentypus character. In
\S\ref{sec:sieve} we apply the sieving process to pass from the full
space to newforms, and then to twist-minimal forms. In
\S\ref{sec:galois}, we describe some details of the application of the
trace formula to $\Gamma(N)$ and to Artin representations.
Finally, in \S\ref{sec:numerics}
we make a few remarks on numerical aspects of the proofs of
Theorems~\ref{thm:SEC} and \ref{thm:Artin}.

\subsection*{Acknowledgements}
We are grateful to Abhishek Saha for teaching us about twist-minimal
representations, in particular Lemma~\ref{lem:twistconductor}.
We thank John Jones and David Roberts for their efforts to
find Artin representations for all of the twist-equivalence classes
in Table~\ref{tab:Artin} and sharing their data in the
\href{http://www.lmfdb.org/ArtinRepresentation/}{\texttt{LMFDB}}.
Finally, we thank Andrew Knightly for helpful comments and corrections.

%%%%%
\subsection{Preliminaries on twist-minimal spaces of Maass forms}\label{sec:prelim}
Let $\HH=\{z=x+iy\in\C:y>0\}$ denote the hyperbolic plane.
Given a real number $\lambda>0$ and an even Dirichlet character $\chi\pmod*{N}$,
let $\A_\lambda(\chi)$ denote the vector space of Maass cusp forms of
eigenvalue $\lambda$, level $N$ and nebentypus character $\chi$,
i.e.\ the set of smooth functions $f:\HH\to\C$ satisfying
\begin{enumerate}
\item $f(\frac{az+b}{cz+d})=\chi(d)f(z)$ for all
$\begin{psmallmatrix}a&b\\c&d\end{psmallmatrix}\in\Gamma_0(N)$;
\item $\int_{\Gamma_0(N)\backslash\HH}|f|^2\frac{dx\,dy}{y^2}<\infty$;
\item $-y^2\left(\frac{\partial^2}{\partial x^2}
+\frac{\partial^2}{\partial y^2}\right)f=\lambda f$.
\end{enumerate}
We omit the level $N$ from the notation $\A_\lambda(\chi)$ since it is
determined implicitly as the modulus of $\chi$ (which might differ
from its conductor, i.e.\ $\chi$ need not be primitive).
Any $f\in\A_\lambda(\chi)$ has a Fourier expansion of the form
$$
f(x+iy)=\sqrt{y}\sum_{n\in\Z\setminus\{0\}}
a_f(n)K_{\sqrt{\frac14-\lambda}}(2\pi|n|y)e^{2\pi inx},
$$
for certain coefficients $a_f(n)\in\C$, where
$K_s(y)=\frac12\int_\R e^{st-y\cosh{t}}\,dt$
is the $K$-Bessel function.

For any $n\in\Z\setminus\{0\}$ coprime to $N$, let
$T_n:\A_\lambda(\chi)\to\A_\lambda(\chi)$ denote the Hecke
operator defined by
$$
(T_nf)(z)=\frac1{\sqrt{|n|}}
\sum_{\substack{a,d\in\Z\\d>0,ad=n}}
\chi(a)\sum_{b\pmod*{d}}
\begin{cases}
f\!\left(\frac{az+b}{d}\right)&\text{if }n>0,\\
f\!\left(\frac{a\bar{z}+b}{d}\right)&\text{if }n<0.
\end{cases}
$$
We say that $f\in\A_\lambda(\chi)$ is a
\emph{normalized Hecke eigenform} if $a_f(1)=1$ and
$f$ is a simultaneous eigenfunction of $T_n$ for every $n$ coprime to
$N$.  In this case, one has $T_nf=a_f(n)f$.

Let $\cond(\chi)$ denote the conductor of $\chi$. For any $M\in\Z_{>0}$
with $\cond(\chi)\mid M$, let $\chi|_M$ denote the unique character mod $M$
such that $\chi|_M(n)=\chi(n)$ for all $n$ coprime to $MN$.  Then for
any $M$ with $\cond(\chi)\mid M\mid N$ and any $d\mid\frac{N}{M}$, we have
a linear map $\ell_{M,d}:\A_\lambda(\chi|_M)\to\A_\lambda(\chi)$
defined by $(\ell_{M,d}f)(z)=f(dz)$. Let
$$
\A^{\rm old}_\lambda(\chi)=\sum_{\substack{M,d\in\Z_{>0}\\
\cond(\chi)\mid M\mid N,\,M<N\\d\mid\frac{N}{M}}}
\ell_{M,d}\A_\lambda(\chi|_M)
$$
denote the span of the images of all lower level forms under these maps,
and let $\Anew_\lambda(\chi)\subseteq\A_\lambda(\chi)$ denote the orthogonal
complement of $\A^{\rm old}_\lambda(\chi)$ with respect to the Petersson inner
product
$$
\langle f,g\rangle=\int_{\Gamma_0(N)\backslash\HH}
f\bar{g}\frac{dx\,dy}{y^2}.
$$
We call this the space of \emph{newforms} of eigenvalue $\lambda$,
conductor $N$ and character $\chi$.

By strong multiplicity one, we have
$\Anew_\lambda(\chi)\cap\Anew_\lambda(\chi')=\{0\}$
unless $\chi$ and $\chi'$ have the same modulus and satisfy
$\chi(n)=\chi'(n)$ for all $n$.
In particular, any nonzero
$f\in\Anew_\lambda(\chi)$ uniquely determines its conductor, which
we denote by $\cond(f)$.
Moreover, the Hecke operators $T_n$ map $\Anew_\lambda(\chi)$ to itself,
and $\Anew_\lambda(\chi)$ has a unique basis consisting of
normalized Hecke eigenforms.

Suppose that $f\in\Anew_\lambda(\chi)$ is a normalized Hecke eigenform.
Then for any Dirichlet
character $\psi\pmod*{q}$, there is a unique $M\in\Z_{>0}$
and a unique
$g\in\Anew_\lambda(\chi\psi^2|_M)$ such that
$a_g(n)=a_f(n)\psi(n)$ for all $n\in\Z\setminus\{0\}$ coprime to $q$.
We write $f\otimes\psi$ to denote the corresponding $g$.
We say that $f$ is
\emph{twist minimal} if $\cond(f\otimes\psi)\ge\cond(f)$ for every
Dirichlet character $\psi$.

Let $\Amin_\lambda(\chi)\subseteq\Anew_\lambda(\chi)$ denote the
subspace spanned by twist-minimal normalized Hecke eigenforms.
Clearly any normalized Hecke eigenform can be expressed as
$f\otimes\psi$ for some twist-minimal form $f$. In turn, for any
normalized Hecke eigenform $f\in\Amin_\lambda(\chi)$, we have
$$
\cond(f\otimes\psi)=\lcm(\cond(f),\cond(\psi)\cond(\chi\psi)),
$$
as implied by the following lemma, strengthening
\cite[Lemma~2.1]{Hum19} and \cite[Lemma~2.7]{CS18}.
\begin{lemma}\label{lem:twistconductor}
Let $F$ be a nonarchimedean local field, $\pi$ an irreducible,
admissible, generic
representation of $\GL_2(F)$ with central character
$\chi$, and $\psi$ a character of $F^\times$.
Let $a(\pi)$ and $a(\psi)$ denote the respective conductor exponents of
$\pi$ and $\psi$ (written additively). Then we have
\begin{equation}\label{eq:twistconductor}
a(\pi\otimes\psi)\le\max\{a(\pi),a(\psi)+a(\chi\psi)\},
\end{equation}
with equality if $\pi$ is twist minimal or $a(\pi)\ne a(\psi)+a(\chi\psi)$.
\end{lemma}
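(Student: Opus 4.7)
The plan is to invoke the local Langlands correspondence to identify $\pi$ with a 2-dimensional Weil--Deligne representation $\sigma$ of $F$ satisfying $\det\sigma = \chi$ and $a(\sigma) = a(\pi)$, and then to proceed by case analysis on the three types of irreducible admissible generic $\GL_2(F)$-representations, following the approach of \cite{CS18}. The elementary input used throughout is the character conductor inequality $a(\alpha\beta) \le \max\{a(\alpha), a(\beta)\}$ for characters $\alpha, \beta$ of $F^\times$, together with the fact that equality holds whenever $a(\alpha) \ne a(\beta)$.

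In the principal series case $\pi \cong \pi(\mu_1, \mu_2)$, one has $\chi = \mu_1\mu_2$, $a(\pi) = a(\mu_1) + a(\mu_2)$, and $\pi \otimes \psi \cong \pi(\mu_1\psi, \mu_2\psi)$, so \eqref{eq:twistconductor} reduces to
$$
a(\mu_1\psi) + a(\mu_2\psi) \le \max\{a(\mu_1) + a(\mu_2),\ a(\psi) + a(\mu_1\mu_2\psi)\},
$$
which I would verify by a direct case analysis on the ordering of $a(\mu_1) \le a(\mu_2)$ and $a(\psi)$, reading off at each step the conditions under which the character inequality is tight. This simultaneously yields the bound and the assertion about when equality is achieved. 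The twisted Steinberg case $\pi = \mu\cdot\mathrm{St}$, with $\chi = \mu^2$, $a(\pi) = \max\{1, 2a(\mu)\}$, and $\pi \otimes \psi = (\mu\psi)\cdot\mathrm{St}$, is handled by essentially the same kind of computation.

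The main obstacle is the supercuspidal case, where $\pi$ admits no decomposition into characters. Here I would work with $\sigma$ directly and exploit the break (i.e.\ depth) structure of the inertia representation: when $a(\psi)$ exceeds the depth of $\sigma$, the stability of $\varepsilon$-factors (as in Deligne's work on local constants) forces $a(\sigma \otimes \psi) = 2a(\psi) = a(\psi) + a(\chi\psi)$, while for $\psi$ of smaller conductor one generically has $a(\sigma \otimes \psi) = a(\sigma)$. The equality assertion under the twist-minimality hypothesis then amounts to ruling out the scenario in which twisting strictly decreases the Artin conductor, which is exactly what the hypothesis precludes; the alternative condition $a(\pi) \ne a(\psi) + a(\chi\psi)$ similarly forces the maximum to be realized without accidental cancellation between the two terms.
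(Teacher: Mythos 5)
Your treatment of the principal series and Steinberg cases matches the paper's approach and is correct. The supercuspidal case, however, is where the real content of the lemma lies, and your sketch there has genuine gaps.

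The paper handles the supercuspidal case by citing Tunnell \cite[Prop.~3.4]{Tun78}, which gives two facts: $a(\pi\otimes\psi)\le\max\{a(\pi),2a(\psi)\}$ with equality when $a(\psi)\ne\frac12 a(\pi)$, and the bound $a(\chi)\le\frac12 a(\pi)$ on the central character. The second fact is essential and does not appear in your argument: it is precisely what lets one identify $\max\{a(\pi),2a(\psi)\}$ with $\max\{a(\pi),a(\psi)+a(\chi\psi)\}$, which is what the statement requires. Your $\varepsilon$-factor stability heuristic gets the regime $a(\psi)$ large (where $a(\chi\psi)=a(\psi)$), but the phrase ``for $\psi$ of smaller conductor one generically has $a(\sigma\otimes\psi)=a(\sigma)$'' is not a proof, and the non-generic cases — twists that drop the conductor below $a(\pi)$ when $\pi$ is not twist-minimal, and twists at the critical depth — are exactly where the lemma's equality assertions are nontrivial.

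More specifically, you do not address the boundary case $a(\psi)=\frac12 a(\pi)$, where Tunnell's equality criterion fails. The paper's fix is to observe that if $a(\pi)\ne a(\psi)+a(\chi\psi)$ then $\min\{a(\psi),a(\chi\psi)\}<\frac12 a(\pi)$, and then to invoke the isomorphism $\pi\cong\pi^\vee\otimes\chi$ to replace $(\pi,\psi)$ by $(\pi^\vee,\chi\psi)$ so that WLOG $a(\psi)<\frac12 a(\pi)$, at which point Tunnell gives equality. Your remark that the condition $a(\pi)\ne a(\psi)+a(\chi\psi)$ ``forces the maximum to be realized without accidental cancellation'' does not substitute for this duality trick; as written it simply restates the claim. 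To make your proposal complete, you would need to either import Tunnell's theorem (including the central-character bound) and reproduce the $\pi\cong\pi^\vee\otimes\chi$ reduction, or else supply an actual proof via break decomposition of the inertia representation covering the critical depth $a(\psi)=\frac12 a(\pi)$, which is substantially more work than the sketch suggests.
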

\begin{proof}
There are three cases to consider:
\begin{enumerate}
\item $\pi\cong\chi_1\boxplus\chi_2$ is a principal series
representation, in which case
$$
a(\pi\otimes\psi)=a(\chi_1\psi)+a(\chi_2\psi),\quad
a(\pi)=a(\chi_1)+a(\chi_2),\quad\text{and}\quad\chi=\chi_1\chi_2.
$$
\item $\pi\cong\operatorname{St}\otimes\chi_1$ is a twist of the Steinberg
representation, in which case
$$
a(\pi\otimes\psi)=\max\{1,2a(\chi_1\psi)\},\quad
a(\pi)=\max\{1,2a(\chi_1)\},\quad\text{and}\quad\chi=\chi_1^2.
$$
\item $\pi$ is supercuspidal.
\end{enumerate}
In the first two cases, we verify \eqref{eq:twistconductor} by a
laborious case-by-case analysis based on the observation that, for any
characters $\xi_1$ and $\xi_2$,
$a(\xi_1\xi_2)\le\max\{a(\xi_1),a(\xi_2)\}$, with equality if
$a(\xi_1)\ne a(\xi_2)$. This also shows that equality holds in
\eqref{eq:twistconductor} when $a(\pi)\ne a(\psi)+a(\chi\psi)$.

In the third case, Tunnell \cite[Prop.~3.4]{Tun78} showed that
$a(\pi\otimes\psi)\le\max\{a(\pi),2a(\psi)\}$, with equality if
$a(\psi)\ne\frac12a(\pi)$, and that $a(\chi)\le\frac12a(\pi)$.
It follows that
$\max\{a(\pi),2a(\psi)\}=\max\{a(\pi),a(\psi)+a(\chi\psi)\}$, so
Tunnell's proposition implies \eqref{eq:twistconductor}.

Let us prove that equality holds in \eqref{eq:twistconductor} if
$a(\pi)\neq a(\psi)+a(\chi\psi)$. By what we have already pointed out,
we may assume $a(\psi)=\frac12a(\pi)$, and then
$a(\pi)\neq a(\psi)+a(\chi\psi)$ implies $a(\chi\psi)<\frac12a(\pi)$.
Applying Tunnell's proposition to the representation
$\pi^\vee\cong\pi\otimes\chi^{-1}$ (which has $a(\pi^\vee)=a(\pi)$)
and the character $\chi\psi$, we then obtain
$a(\pi\otimes\psi)=a(\pi^\vee\otimes\chi\psi)=a(\pi)$, i.e.\
equality holds in \eqref{eq:twistconductor}.

Finally, if $\pi$ is twist minimal then $a(\pi\otimes\psi)\ge a(\pi)$,
so equality holds in \eqref{eq:twistconductor} even if
$a(\pi)=a(\psi)+a(\chi\psi)$.
\end{proof}
\noindent
Thus, $f\mapsto f\otimes\psi$ extends to an injective linear map from
$\Amin_\lambda(\chi)$ to $\Anew_\lambda(\chi\psi^2|_M)$, where
$M=\lcm(N,\cond(\psi)\cond(\chi\psi))$.

In light of this, it is enough to consider the trace formula for
twist-minimal spaces of forms. In fact, since some twist-minimal spaces
are trivial, and for others a given form can have more
than one representation as the twist of a twist-minimal form (i.e.\ it is
possible to have $\cond(f\otimes\psi)=\cond(f)$ and $f\otimes\psi\ne f$,
cf.\ \cite{Hum19}),
a further reduction of the nebentypus character is possible, as follows.
\begin{definition}\label{def:minimal}
Let $\chi=\prod_{p\mid N}\chi_p$ be a Dirichlet character modulo $N$,
and put $e_p=\ord_p{N}$, $s_p=\ord_p\cond(\chi)$.
We say that $\chi$ is \emph{minimal}
if the following statement holds for every prime $p\mid N$:
\begin{align*}
&p>2\text{ and }\bigl[s_p\in\{0,e_p\}\text{ or }
\chi_p\text{ has order }2^{\ord_2(p-1)}\bigr]\\
\text{or }&p=2\text{ and }s_p\in\{\lfloor{e_p/2}\rfloor,e_p\}\cup\begin{cases}
\{0\}&\text{if }e_p\le 3,\\
\{0,2\}&\text{if }e_p>3\text{ and }2\nmid e_p,\\
\emptyset&\text{if }e_p>3\text{ and }2\mid e_p.
\end{cases}
\end{align*}
\end{definition}
Note that for an odd prime $p$,
there are $2^{\ord_2(p-1)-1}$ choices of $\chi_p\pmod*{p^{e_p}}$
of order $2^{\ord_2(p-1)}$, and for any such $\chi_p$ we
have $s_p=1$ and $\chi_p(-1)=-1$; in particular, when
$p\equiv3\pmod*{4}$, the Legendre symbol $\left(\frac{\cdot}{p}\right)$
is the unique such character. (For $p\equiv1\pmod*{4}$ and $e_p>1$, the spaces
resulting from different choices of $\chi_p$ of order $2^{\ord_2(p-1)}$
are twist equivalent, but there is no canonical choice. Similarly, for
$p=2$ the characters of conductor $2^{\lfloor{e_2/2}\rfloor}$ and fixed
parity yield twist-equivalent spaces.)

%%%
\begin{lemma}\label{lem:twistminimal_isom}
Let $\chi\pmod*{N}$ be a Dirichlet character, and
suppose that $\Amin_\lambda(\chi)\ne\{0\}$. Then there exists
$\psi\pmod*{N}$ such that $\chi\psi^2$ is minimal and
$\Amin_\lambda(\chi)\xrightarrow{\otimes\psi}
\Amin_\lambda(\chi\psi^2)$ is an isomorphism.
\end{lemma}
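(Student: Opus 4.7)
The proof proceeds prime-by-prime. Factoring $\chi=\prod_{p\mid N}\chi_p$, the plan is to choose, for each prime $p\mid N$, a local character $\psi_p$ modulo $p^{e_p}$ satisfying (i) $\chi_p\psi_p^2$ meets the local minimality condition of Definition~\ref{def:minimal} at $p$, and (ii) $a(\psi_p)+a(\chi_p\psi_p)\le e_p$. Setting $\psi=\prod_p\psi_p$, condition~(ii) guarantees via Lemma~\ref{lem:twistconductor}, applied at each prime, that $\cond(f\otimes\psi)=N$ for every normalized Hecke eigenform $f\in\Amin_\lambda(\chi)$. Twist-minimality transfers, since any further twist $(f\otimes\psi)\otimes\psi'=f\otimes(\psi\psi')$ of smaller conductor would contradict the minimality of $f$. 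Thus $\otimes\psi$ maps $\Amin_\lambda(\chi)$ into $\Amin_\lambda(\chi\psi^2)$, and $\otimes\psi^{-1}$ provides a two-sided inverse, yielding the claimed isomorphism.

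For odd primes $p$, twist-minimality of the local component $\pi_p$ restricts it sharply. The case analysis in the proof of Lemma~\ref{lem:twistconductor} shows: a twist-minimal principal series must have $a(\chi_1)=0$, forcing $s_p=e_p$ (already minimal); a twist-minimal Steinberg twist forces $e_p=1$, $s_p=0$ (already minimal); and a supercuspidal satisfies $s_p\le e_p/2$ by Tunnell's bound. Only the last case needs a nontrivial twist. Since $(\Z/p^{e_p}\Z)^\times$ is cyclic, we decompose $\chi_p=\chi_p^{(2)}\chi_p^{(\mathrm{odd})}$ into its 2-primary and odd-order parts; the odd part has a unique odd-order square root $\eta$ with $a(\eta)\le s_p$, and the 2-primary part, of order dividing $2^{\ord_2(p-1)}$, factors through $(\Z/p\Z)^\times$ and so has conductor at most $p$. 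Taking $\psi_p=\eta^{-1}$ when $\chi_p^{(2)}$ has maximal 2-power order $2^{\ord_2(p-1)}$ leaves $\chi_p\psi_p^2=\chi_p^{(2)}$, matching the special minimal case; taking $\psi_p=\eta^{-1}(\eta')^{-1}$ with $(\eta')^2=\chi_p^{(2)}$ otherwise trivializes $\chi_p\psi_p^2$. In both situations condition~(ii) follows from $s_p\le e_p/2$.

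The case $p=2$ is the main obstacle. Here the squaring map on characters modulo $2^{e_2}$ has kernel of size $4$ for $e_2\ge 3$, so fewer characters are squares, and the orbits under $\chi_2\mapsto\chi_2\psi_2^2$ are smaller. Correspondingly, the minimality conditions in Definition~\ref{def:minimal} are intricate, including $s_2=\lfloor e_2/2\rfloor$ and the extra value $s_2=2$ for odd $e_2>3$. We handle the prime $2$ by an explicit case analysis on $(e_2,s_2)$ combined with local representation theory. For pairs $(e_2,s_2)$ failing the conditions of Definition~\ref{def:minimal}, we invoke the trichotomy from the proof of Lemma~\ref{lem:twistconductor} (principal series, Steinberg twist, supercuspidal) together with Tunnell's bound $a(\chi)\le\tfrac12 a(\pi)$ in the supercuspidal case to show that no twist-minimal local representation of conductor exponent $e_2$ has central character of conductor $2^{s_2}$; hence $\Amin_\lambda(\chi)$ vanishes and the case is vacuous. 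For the remaining allowable pairs, we exhibit by direct computation a $\psi_2$ modulo $2^{e_2}$ achieving (i) and (ii), with the construction depending on the precise structure of $\chi_2$ and on which terminal minimal configuration of Definition~\ref{def:minimal} is being targeted. The conductor bound~(ii) is verified by bookkeeping of the filtration of the character group of $(\Z/2^{e_2}\Z)^\times$ in each sub-case.
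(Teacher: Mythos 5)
Your overall framework — reduce to choosing, for each $p\mid N$, a local $\psi_p$ satisfying (i) local minimality of $\chi_p\psi_p^2$ and (ii) $a(\psi_p)+a(\chi_p\psi_p)\le e_p$, then invoke Lemma~\ref{lem:twistconductor} — matches the paper's. Your treatment of odd primes is correct and takes a mildly different route: where the paper cites \cite[Thm.~$4.3'$]{AL78} for $s_p\le e_p/2$ and defines $\psi_p$ by an explicit index $b$ relative to a primitive root, you derive $s_p\le e_p/2$ from Tunnell's bound after discarding the principal-series and Steinberg cases, and build $\psi_p$ via the $2$-primary/odd decomposition of the character group of $(\Z/p^{e_p}\Z)^\times$. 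Both are sound; the paper's derivation avoids local representation theory, while yours makes the constraint $s_p\le e_p/2$ conceptually transparent.

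The handling of $p=2$, however, has a genuine gap. You assert that for every pair $(e_2,s_2)$ outside the list in Definition~\ref{def:minimal}, ``no twist-minimal local representation of conductor exponent $e_2$ has central character of conductor $2^{s_2}$,'' and say this follows from the trichotomy plus Tunnell's bound. But those tools give only $s_2\le e_2/2$ in the supercuspidal case; they do not exclude, for instance, $(e_2,s_2)=(9,3)$, which satisfies $3\le 9/2$ yet is \emph{not} one of the admissible pairs in Definition~\ref{def:minimal} for $e_2=9$. The paper does not treat such pairs as vacuous: for odd $e>3$ and $3\le s\le(e-3)/2$ it constructs an explicit $\psi_2$ (with $\psi_2(-1)=1$, $\psi_2(5)=e(-a/2^{s-1})$ for a specified odd $a$) of conductor $2^{s+1}$, and it invokes vanishing only in the separate case $e\ge 4$ even, via the Maass-form analogue of \cite[Thm.~4.4(iii)]{AL78}, to force $s=e/2$. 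Your proposal neither proves the claimed vanishing for odd $e$ nor exhibits the $\psi_2$ needed when $3\le s\le(e-3)/2$; saying ``we exhibit by direct computation a $\psi_2$'' without giving the construction leaves the key $p=2$ step unproved.
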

%%%
\begin{proof}
Since the map $f\mapsto f\otimes\psi$ is injective, by
Lemma~\ref{lem:twistconductor} it suffices to show
that there is a $\psi\pmod*{N}$ such that $\chi\psi^2$ is minimal and
$\cond(\psi)\cond(\chi\psi)\mid N$. Writing $\psi=\prod_{p\mid N}\psi_p$,
this is equivalent to finding $\psi_p\pmod*{p^{\ord_p{N}}}$ such that
$\chi_p\psi_p^2$ is minimal and
\begin{equation}\label{eq:ordpsum}
\ord_p\cond(\psi_p)+\ord_p\cond(\chi_p\psi_p)\le\ord_p{N}
\end{equation}
for each prime $p\mid N$.

Fix $p\mid N$ and set $e=\ord_p{N}$, $s=\ord_p\cond(\chi_p)$. If
$e\in\{s,1\}$ then $\chi_p$ is minimal, so we
can take $\psi_p$ equal to the trivial character mod $p^e$.
Hence we may assume that $e>\max\{s,1\}$.
In this case, since $\Amin_\lambda(\chi)$ is nonzero, it follows from
\cite[Theorem~$4.3'$]{AL78} that $s\le\frac12e$.

Suppose that $p$ is odd, and let $g$ be a primitive root mod $p^e$. Then
$\chi_p(g)=\exp(2\pi ia/\varphi(p^e))$ for a unique $a\in\Z\cap[1,\varphi(p^e)]$.
Set
$$
b=\begin{cases}
-a/2&\text{if }2\mid{a},\\
[\varphi(p^e)2^{-\ord_2(p-1)}-a]/2&\text{if }2\nmid a,
\end{cases}
$$
and let $\psi_p$ be the character defined by
$\psi_p(g)=\exp(2\pi ib/\varphi(p^e))$. Then $\chi_p\psi_p^2$ is minimal, and
since $p^{\ord_p{a}}\mid b$ and $e>1$, it
follows that $\ord_p\cond(\psi_p)\le\frac12e$, which implies the desired
inequality \eqref{eq:ordpsum}.

Suppose now that $p=2$. Since $s\le\frac12e$ and there is no character
of conductor $2$, $\chi_2$ is already minimal if $e\le 3$. Also,
by the analogue of \cite[Theorem~4.4(iii)]{AL78} for Maass forms, if $e\ge4$ is
even then we must have $s=e/2$, so $\chi_2$ is again minimal. Hence, we
may assume that $e$ is an odd number exceeding $3$.

If $s\in\{0,2,\lfloor{e/2}\rfloor\}$ then $\chi_2$ is minimal, so we may
assume that $3\le s\le(e-3)/2$.
Recalling that $(\Z/2^e\Z)^\times$ is generated by $-1$ and $5$,
we have $\chi_2(5)=\exp(2\pi ia/2^{s-2})$ for a unique odd number
$a\in[1,2^{s-2})$. Let $\psi_2$ be the character defined by
$\psi_2(-1)=1$ and $\psi_2(5)=\exp(-2\pi ia/2^{s-1})$. Then
$\chi_2(5)\psi_2(5)^2=1$, so $\chi_2\psi_2^2$ has conductor at most $4$,
and is therefore minimal.
Moreover, $\cond(\psi_2)=\cond(\chi_2\psi_2)=2^{s+1}$, so that
$$
\ord_2\cond(\psi_2)+\ord_2\cond(\chi_2\psi_2)=2s+2<e,
$$
as desired.
\end{proof}
\noindent
Thus, we may restrict our attention to the spaces $\Amin_\lambda(\chi)$
for minimal characters $\chi$.

%%%%%
\subsection{The twist-minimal trace formula}\label{ss:twist-minimal_TF}
Let $g:\R\to\C$ be even, continuous and absolutely integrable, with Fourier
transform $h(r)=\int_\R g(u)e^{iru}\,du$. We say that the pair $(g,h)$
is of \emph{trace class} if there exists $\delta>0$ such that $h$ is
analytic on the strip $\Omega=\{r\in\C:|\Im(r)|<\frac12+\delta\}$
and satisfies $h(r)\ll(1+|r|)^{-2-\delta}$ for all $r\in\Omega$.
%

%: minimal?
Let $\chi\pmod*{N}$ be a Dirichlet character, and write
$\chi=\prod_{p\mid N}\chi_p$, where
each $\chi_p$ is a character modulo $p^{\ord_p{N}}$.
The trace formula is an expression for
$\sum_{\lambda>0}\tr T_n|_{\Amin_\lambda(\chi)}
h\Bigl(\sqrt{\lambda-\tfrac14}\Bigr)$
in terms of $g$ and $\chi$.
Its terms are linear functionals of $g$
with coefficients that are multiplicative functions of $\chi$,
i.e.\ functions $F$ satisfying
$F(\chi)=\prod_{p\mid N}F(\chi_p)$.

In what follows we fix a prime $p$ and a character $\chi\pmod*{p^e}$ of
conductor $p^s$, and define the local factor at $p$ for various terms
appearing in the trace formula. As a notational convenience,
for any proposition $P$ we write $\delta_P$ to denote the
characteristic function of $P$, i.e.\ $\delta_P=1$ if $P$ is true and
$\delta_P=0$ if $P$ is false.

\subsubsection*{Identity term}
Define
\begin{equation}\label{eq:Mchi}
M(\chi)=\begin{cases}
p^{e-1}(p+1) & \text{if }s=e, \\
\frac{p-\chi(-1)}{\gcd(2,p-1,e)}\varphi(p^{e-1})
& \text{if }s<e\le2,\\
\frac{p^2-1}{\gcd(2,p-1,e)}\varphi(p^{e-2})
& \text{if }e>\max\{s,2\}.
\end{cases}
\end{equation}

\subsubsection*{Constant eigenfunction}
Define
$$
\mu(\chi)=\begin{cases}
-1&\text{if }s<e=1,\\
0&\text{otherwise.}
\end{cases}
$$

\subsubsection*{Parabolic terms}
For $m\ge1$ and $n\in\{\pm1\}$, define
\begin{align}\label{Phimndef}
\Phi_{m,n}(\chi)=\begin{cases}
\overline{\chi(m)}+\chi(n)\chi(m)&\text{if }s=e,\\
-1&\text{if }s<e=1\text{ and }m=p^k\text{ for some }k>0,\\
0&\text{otherwise}
\end{cases}
\end{align}
and
$$
\Psi_n(\chi)=\begin{cases}
1+\chi(n) & \text{if }s=e\text{ or }n=e=1,\\
\frac{p-\chi(-1)}{\gcd(2,p-1,e)}  & \text{if } n=1, s<e=2, \\
\frac{p^{\lfloor\frac{e-3}2\rfloor}+p^{\lceil\frac{e-3}2\rceil}}
{\gcd(2,p-1,e)}(p-1)
& \text{if }n=1, e>\max\{s,2\},\\
\frac32 & \text{if }n=-1, p=2, s<e=1, \\
\frac12 & \text{if }n=-1, p=2, s<e\in\{2, 3\},\\
1 & \text{if }n=-1, p>2, s<e=1,\\
0 & \text{otherwise.}
\end{cases}
$$

\subsubsection*{Elliptic and hyperbolic terms}
Following the notation in \cite[\S1.1]{BL17a}, let
$\DD$ denote the set of discriminants, that is
$$
\DD=\{D\in\Z:D\equiv 0\mbox{ or }1\pmod*{4}\}.
$$
Any nonzero $D\in\DD$ may be expressed uniquely in the form $d\ell^2$,
where $d$ is a fundamental discriminant and $\ell>0$. We define
$\psi_D(n)=\left(\frac{d}{n/\gcd(n,\ell)}\right)$, where
$\left(\frac{\;\;}{\;\;}\right)$ denotes the Kronecker symbol.  Note
that
$\psi_D$ is periodic modulo $D$, and if $D$ is fundamental then $\psi_D$
is the usual quadratic character mod $D$.  Set
$$
L(z,\psi_D)=\sum_{n=1}^{\infty}\frac{\psi_D(n)}{n^z}
\quad\mbox{for }\Re(z)>1.
$$
Then it is not hard to see that
$$
L(z,\psi_D)=L(z,\psi_d)\prod_{p\mid\ell}
\Biggl[1+\bigl(1-\psi_d(p)\bigr)\sum_{j=1}^{\ord_p\ell}p^{-jz}\Biggr],
$$
so that $L(z,\psi_D)$ has analytic continuation to $\C$, apart from a
simple pole at $z=1$ when $D$ is a square. In particular, if $D$ is not
a square then we have
$$
L(1,\psi_D)=L(1,\psi_d)\cdot\frac1{\ell}\prod_{p\mid\ell}
\left[1+\bigl(p-\psi_d(p)\bigr)\frac{p^{\ord_p\ell}-1}{p-1}\right].
$$

Let $t\in\Z$ and $n\in\{\pm1\}$ with $D=t^2-4n$ not a square.
Then $D\in\DD$, so we may write $D=d\ell^2$ as above.
Define $r=\ord_p(D/2)+1$,
$$
\alpha=\frac{\psi_d(p)-1}
{1+(p-\psi_d(p))\frac{p^{\ord_p\ell}-1}{p-1}}
\quad\text{and}\quad
\omega=\begin{cases}
1&\text{if }2\nmid t\text{ or }p=2\text{ and }r=2s,\\
0&\text{otherwise.}
\end{cases}
$$
For $s=e$ we set
\begin{equation}\label{e:Htn_s=e}
H_{t,n}(\chi)=\begin{cases}
\chi\bigl(\frac{t+p^s\omega}{2}\bigr)
\bigl(2+\alpha\frac{p^e+p^{e-1}-2}{p-1}\bigr)
& \text{if }r\geq 2e,\\
\chi\bigl(\frac{t+\sqrt{d}\ell}{2}\bigr) +
\chi\bigl(\frac{t-\sqrt{d}\ell}{2}\bigr)
& \text{if }r<2e\text{ and }\psi_d(p)=1, \\
0 & \text{otherwise,}
\end{cases}
\end{equation}
where $\sqrt{d}$ denotes a square root of $d$ modulo $4p^e$.
For $s<e$ we set
\begin{multline}\label{e:Htn_s<e}
H_{t,n}(\chi)=\delta_{e=1\text{ or } \left(\frac{n}{p}\right)=1}
\cdot\delta_{r\geq e-1}\cdot\frac{\alpha}{\gcd(2,e)}
\chi\!\left(\frac{t+p^s\omega}{2}\right) p^{e-3}\\
\cdot\bigg[p\bigl(p-\delta_{e=2}\chi(-1)
-\delta_{r=e-1}(p+\delta_{2\mid e})\bigr)-\delta_{e>2}\bigg]
\end{multline}
when $p>2$, and
\begin{equation}\label{e:boldS2_min_e>s}	
H_{t,n}(\chi)=\alpha\chi\!\left(\frac{t+2^{r/2}\omega}{2}\right)2^{e-3}
\begin{cases}
3 & \text{if }e\ge3, r>e, \\
1+2(-1)^e & \text{if }e\ge 3, r=e, \\
1-2(-1)^d & \text{if }e\ge3, r=e-1, 2\nmid e, \\
2 & \text{if }e=2, r\geq e, \\
-1 & \text{if }e=2, r<e, \\
4 & \text{if } e=1,\\
0 & \text{otherwise}
\end{cases}
\end{equation}
when $p=2$.

%%%
With the notation in place, we can now state the trace formula for
$T_n$ acting on $\Amin_\lambda(\chi)$.
\begin{theorem}\label{THMmintf}
Let $\chi\pmod*{N}$ be a minimal character of conductor $q$,
$n\in\{\pm1\}$, and $(g,h)$ a pair of test functions of trace class.
Then
\begin{align*}
\sum_{\lambda>0}&\tr T_n|_{\Amin_\lambda(\chi)}
h\Bigl(\sqrt{\lambda-\tfrac14}\Bigr)\\
&=-\delta_{n=1}\biggl[
\frac{M(\chi)}{12}\int_\R\frac{g'(u)}{\sinh(u/2)}\,du
+\mu(\chi)\int_\R g(u)\cosh(u/2)\,du\biggr]\\
&+\sum_{\substack{t\in \Z\\D=t^2-4n\\\sqrt{D}\notin\Q}}
H_{t,n}(\chi)L(1,\psi_{D})
\begin{cases}
g\!\left(2\log\frac{|t|+\sqrt{D}}2\right)
&\text{if }D>0,\\
\frac{\sqrt{|D|}}{\pi}\int_\R
\frac{g(u)\cosh(u/2)}{4\sinh^2(u/2)+|D|}\,du
&\text{if }D<0
\end{cases}
\\
&+\Phi_{1,n}(\chi)\bigg\{
\biggl(\gamma+\log\frac{2\pi}{N}+\frac12\log\frac{2}{(2,N)}\biggr)g(0)
-\int_0^\infty\log\Bigl(\sinh\frac{u}{2}\Bigr)g'(u)\,du\\
&+\delta_{n=1}\biggl[
\biggl(\gamma+\log\frac{2}{N}+\frac12\log(2,N)
+\frac12\sum_{p\mid{N}}\log{p}\biggr)g(0)
-\int_0^\infty\log\Bigl(\tanh\frac{u}{4}\Bigr)g'(u)\,du
\biggr]\bigg\}\\
&+2\sum_{m=2}^\infty \frac{\Lambda(m)}{m}\Phi_{m,n}(\chi)g(2\log m)
-\Lambda(N/q)\Psi_n(\chi)g(0)-\delta_{N=1}\frac14\int_\R g(u)\,du.
\end{align*}

\end{theorem}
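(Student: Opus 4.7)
The plan is to deduce Theorem \ref{THMmintf} from the unsieved Selberg trace formula by a two-stage Möbius-type inversion. In \S\ref{sec:fullTF} I would first establish a fully explicit version of the classical trace formula for $\sum_\lambda\tr T_n|_{\A_\lambda(\chi)}h(\sqrt{\lambda-\tfrac14})$ by specializing the abstract trace formula to $\Gamma_0(N)$ with nebentypus character, writing each geometric term (identity, elliptic, hyperbolic, parabolic) as an explicit character sum paired against a transform of $g$. The first sieving step inverts the map $\bigoplus_{M,d}\ell_{M,d}\A_\lambda(\chi|_M)\to\A_\lambda(\chi)$ by inclusion--exclusion over pairs $(M,d)$ with $\cond(\chi)\mid M\mid N$ and $d\mid N/M$, yielding the trace on $\Anew_\lambda(\chi)$. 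The second step inverts $f\mapsto f\otimes\psi$, using Lemma \ref{lem:twistconductor} to pin down exactly which Dirichlet characters $\psi$ contribute and at what multiplicities, producing the trace on $\Amin_\lambda(\chi)$.

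The crucial structural feature that keeps this computation tractable is multiplicativity: each coefficient $M(\chi)$, $\mu(\chi)$, $\Phi_{m,n}(\chi)$, $\Psi_n(\chi)$, and $H_{t,n}(\chi)$ factors as a product over primes dividing $N$. So both sieves decouple into independent local problems at each prime $p$, parametrized by $e=\ord_p N$ and $s=\ord_p\cond(\chi)$. At each prime I would carry out the Möbius inversion over local old-form data $(M_p,d_p)$, then over local twisting characters $\psi_p\pmod*{p^e}$, and finally invoke the minimality hypothesis on $\chi$ (Definition \ref{def:minimal}) to select the canonical representative of each twist-equivalence class. The case distinctions in the statement — $s=e$; $s<e\le 2$; $e>\max\{s,2\}$; and $p=2$ versus odd $p$ — correspond to the distinct local regimes of $(\Z/p^e\Z)^\times$ in which these character sums exhibit qualitatively different behavior.

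The main technical obstacle is the elliptic/hyperbolic term. The local contribution $H_{t,n}(\chi)L(1,\psi_D)$ mixes a local character value with an $L$-factor whose ramified Euler factor must first be extracted, so that after combining with the local sieve only the clean expressions in \eqref{e:Htn_s=e}, \eqref{e:Htn_s<e}, and \eqref{e:boldS2_min_e>s} remain. The fine structure of these formulas — the dependence on $r=\ord_p(D/2)+1$, the splitting symbol $\psi_d(p)$, and the auxiliary parameter $\omega$ — emerges from a case analysis of optimal embeddings of the quadratic order of discriminant $D$ into the local Eichler order attached to $\Gamma_0(p^e)$ with character $\chi_p$, modified by the possibility of replacing an embedding by a twist-equivalent one. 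The $p=2$ case is the most delicate because $(\Z/2^e\Z)^\times$ is not cyclic; one must track the contributions of the generators $-1$ and $5$ separately, which explains both the piecewise definition of $\Psi_n(\chi)$ at $p=2$ and the unusual form of $\omega$ in \eqref{e:boldS2_min_e>s}.

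A secondary subtlety, highlighted in the remarks above, is that many Eisenstein and parabolic contributions that survive the newform sieve are annihilated by the twist-minimal sieve. Verifying these cancellations without double counting accounts for much of the case analysis in $\Phi_{m,n}(\chi)$ and $\Psi_n(\chi)$; in particular, $\Psi_n(\chi)$ is supported only on divisor/twist configurations that survive both sieves, which is why it is multiplied by $\Lambda(N/q)$ in the final display — a factor that vanishes unless $N/q$ is a prime power, exactly matching the local regime where a nontrivial twist by an unramified local character is possible.
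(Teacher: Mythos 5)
Your proposal follows essentially the same route as the paper: derive the full trace formula for $\Gamma_0(N)$ with nebentypus (via the reflection group $\Gamma_0^\pm(N)$), then apply the newform sieve \eqref{newformsieve} followed by the twist-minimal sieve \eqref{minsieveformula}, exploiting multiplicativity to decouple the computation into local factors at each prime $p$, with the elliptic/hyperbolic terms handled via local optimal-embedding counts in Miyake's style and a separate delicate analysis at $p=2$. The one small imprecision is in your description of the first sieve: it is an inversion over divisors $M$ alone using the multiplicative coefficients $\beta$ with $\sum\beta(m)m^{-s}=\zeta(s)^{-2}$ (not an inclusion--exclusion over pairs $(M,d)$, since the sum $\sum_d\ell_{M,d}$ is not a direct sum), but this does not change the substance of the argument.
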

%%%%%
%%%%%
\section{Statement and proof of the full Selberg trace formula}\label{sec:fullTF}
%%%%%
\subsection{Selberg trace formula in general form from \cite{BS07}}\label{ss:STF_general}
The group of all isometries (orientation preserving or not) of $\HH$ can be identified with $\G = \PGL_2(\R)$,
where the action is defined by
$$
	T(z)
	=
	\begin{cases}
	\frac{az+b}{cz+d} & \text{if } ad-bc>0, \\
	\frac{a\bar{z}+b}{c\bar{z}+d} & \text{if } ad-bc<0,
	\end{cases}
	\quad
	\text{for } T=\bpm a & b\\ c& d\ebpm \in \G.
$$
The group of orientation preserving isometries, $\G^+ = \PSL_2(\R)$, is a subgroup of index $2$ in $\G$.
We write $\G^-=\G\setminus \G^+$ for the other coset in $\G$.

Let $\Gamma$ be a discrete subgroup of $\G$ such that the surface $\Gamma\bsl\HH$ is noncompact but of finite area,
and let $\chi$ be a (unitary) character on $\Gamma$.
We set $\Gamma^+:= \Gamma\cap \G^+$ and assume $\Gamma^+\neq \Gamma$.
We let $L^2(\Gamma\bsl \HH, \chi)$ be the Hilbert space of functions $f: \HH \to \C$ satisfying
the automorphy relation
$$
	f(\gamma z) = \chi(\gamma) f(z), \quad\forall \gamma \in \Gamma,
$$
and
$$
	\int_{\Gamma\bsl \HH} \left|f\right|^2 \; d\mu < \infty.
$$
We let $\{\phi_j\}_{j\geq 1}$ be any orthonormal basis of the discrete spectrum
of the Laplace operator $\Delta = -y^2 \left(\frac{\partial^2}{\partial x^2}+\frac{\partial^2}{\partial y^2}\right)$
on $L^2(\Gamma\bsl \HH, \chi)$, i.e.\
$\phi_j \in C^\infty(\HH)\cap L^2(\Gamma\bsl \HH, \chi)$
and $\Delta \phi_j = \lambda_j \phi_j$,
say with increasing eigenvalues $0\leq \lambda_1\leq\lambda_2\leq\cdots$.
We also let
$$
	r_j = \sqrt{\lambda_j -\tfrac{1}{4}} \in \R^+\cup i\left[-\tfrac{1}{2}, 0\right].
$$

The trace formula from \cite[Theorem~2 and (2.37)]{BS07} reads as follows.
The even analytic function $h$ and its Fourier transform $g$ are given as in Section~\ref{ss:twist-minimal_TF}.
The trace formula for $(\Gamma, \chi)$ is
$$
	\sum_{j\geq 1} h(r_j)
	=
	\I(\Gamma, \chi)+ \NEl(\Gamma, \chi) + \El(\Gamma, \chi) + \Cu(\Gamma, \chi) + \Eis(\Gamma, \chi), 	
$$
where
\begin{equation}\label{e:I}
	\I(\Gamma, \chi) = \frac{\Area(\Gamma\bsl \HH)}{4\pi} \int_\R h(r) r\tanh(\pi r) \; dr,
\end{equation}
\begin{equation}\label{e:NEl}
	\NEl(\Gamma, \chi)
	=
	\sum_{\{T\}\subset M, \text{ non-ell}}
	\frac{\log N(T_0)}{[Z_{\Gamma}(T): [T_0]]}
	\frac{\chi(T)}{N(T)^{\frac{1}{2}}- \sgn(\det T) N(T)^{-\frac{1}{2}}}
	 g(\log N(T)),
\end{equation}
\begin{equation}\label{e:El}
	\El(\Gamma, \chi)
	=
	\sum_{\{T\}\subset M, \text{ elliptic}} \frac{\chi(T)}{2|Z_\Gamma(T)| \sin(\theta(T))}
	\int_\R \frac{e^{-2\theta(T) r}}{1+e^{-2\pi r}} h(r) \; dr,
\end{equation}
\begin{multline}\label{e:Cu}
	\Cu(\Gamma, \chi)
	=
	\bigg(\sum_{j\in C_{\Gamma, \chi}, k(j) = j} \chi(T_{j, 0})\bigg)
	\bigg\{ \frac{1}{8} h(0)
	+ \frac{1}{4\pi} \int_\R h(r) \bigg(\frac{\Gamma'}{\Gamma}\!\left(\frac{1}{2}+ir\right)
	-\frac{\Gamma'}{\Gamma}(1+ir) \bigg) \; dr
	\bigg\}
	\\
	+
	\frac{g(0)}{4}
	\bigg( \sum_{1\leq j\leq \kappa, k(j) = j} \sum_{v\in\{0, 1\}} \chi(T_{j, v}) \log \mbc_{j, v}
	- 2\sum_{1\leq j\leq \kappa, j\notin C_{\Gamma, \chi}} \log |1-\chi(T_j)|
	\bigg)
	\\
	+
	\frac{|C_{\Gamma, \chi}|}{2}
	\bigg\{
	\frac{1}{4} h(0)
	-
	g(0)\log 2
	-
	\frac{1}{2\pi}\int_\R h(r) \frac{\Gamma'}{\Gamma}(1+ir) \; dr
	\bigg\}
\end{multline}
and
\begin{equation}\label{e:Eis}
	\Eis(\Gamma, \chi)
	=
	\frac{1}{4\pi} \int_\R h(r)
	\frac{(\varphi^{\Gamma})'}{\varphi^{\Gamma}}\!\left(\frac{1}{2}+ir\right) \; dr
	- \frac{1}{4} h(0) \tr \Phi^{\Gamma}\!\left(\frac{1}{2}\right).
\end{equation}
Note that all the sums and integrals are absolutely convergent (see \cite[Proposition~2.2]{BS07}).
In the remainder of this section we explain the notation appearing in \eqref{e:I}, \eqref{e:NEl}, \eqref{e:El}, \eqref{e:Cu} and \eqref{e:Eis}.
%

%: NEl and El
The set $M$ in $\NEl(\Gamma, \chi)$ and $\El(\Gamma, \chi)$ is given as
$$
	M = \{T\in \Gamma\;:\; T\text{ has no cusp of $\Gamma^+$ as a fixpoint}\},
$$
i.e.\ $M$ is the set of all $T\in \Gamma$ that do not fix any cusp of $\Gamma^+$ and
``$\{T\}\subset M$" denotes that we add over a set of representatives for the $\Gamma$-conjugacy classes in $M$.
We write $Z_\Gamma(T)$ for the centralizer of $T$ in $\Gamma$.
The sum in $\NEl(\Gamma, \chi)$ in \eqref{e:NEl} is over all non-elliptic conjugacy classes in $M$.
Thus $T$ is hyperbolic, reflection or a glide reflection,
and let $\alpha\in (-\infty, -1] \cup (1, \infty)$ be the unique number
such that
$T$ is conjugate within $\G$ to $\sm \alpha & 0 \\ 0 & 1\esm$.
Then we write
$$
	N(T) = |\alpha|.
$$
We denote by $T_0$ some hyperbolic element or a glide reflection in $Z_\Gamma(T)$.
Then the infinite cyclic group
$$
	[T_0] = \{T_0^n\;:\; n\in \Z\}
$$
has finite index $[Z_\Gamma(T): [T_0]]$ in $Z_\Gamma(T)$
and the ratio
$$
	\frac{\log N(T_0)}{[Z_\Gamma(T):[T_0]]}
$$
depends only on $T$ and not on our choice of $T_0$.
The sum in $\El(\Gamma, \chi)$ in \eqref{e:El} is over all elliptic conjugate classes in $M$,
and we write $\theta(T)$ for the unique number $\theta\in (0, \frac{\pi}{2}]$ such that $T$ is conjugate within $\G$ to $\sm \cos\theta & \sin\theta\\ -\sin\theta & \cos\theta\esm$.
%

%: Cu
Now we explain the notations appearing in $\Cu(\Gamma, \chi)$ in \eqref{e:Cu}.
Let
$$
	\eta_1, \ldots, \eta_\kappa \in \partial\HH = \R\cup\{\infty\}
$$
be a set of representatives of the cusps of $\Gamma^+\bsl \HH$,
one from each $\Gamma^+$-equivalence class.
For each $j\in \{1, \ldots, \kappa\}$ we choose $\N_j\in \G^+$ such that $\N_j(\eta_j) = \infty$
and the stabilizer $\Gamma_{\eta_j}^+$ is $[T_j]$ where
$$
	T_j = \N_j^{-1} \bpm 1 & -1 \\ 0 & 1\ebpm \N_j.
$$
We write $C_{\Gamma, \chi}$ for the set of open cusp representatives, viz.\
$$
	C_{\Gamma, \chi}  = \bigl\{j\in \{1, \ldots, \kappa\}\;:\; \chi(T_j) = 1\bigr\}.
$$

We fix, once and for all, an element $V\in \Gamma-\Gamma^+$.
For each $j\in\{1, \ldots, \kappa\}$ there exists $k(j)\in\{1, \ldots, \kappa\}$ and $U_j\in \Gamma^+$ such that
$$
	V \eta_j = U_j \eta_{k(j)}.
$$
Note that $k(j)$ is uniquely determined
by the condition that $\eta_{k(j)}$ is the representative of the cusp $V\eta_j$
and then $U_j$ is determined up to the right shifts with $T_{k(j)}$.
By \cite[p.~116 and (2.5)]{BS07},
$U_j$ satisfies
\begin{equation}\label{e:Uj}
	U_j = V\N_j^{-1} \bpm -1 & x_j \\ 0 & 1\ebpm \N_{k(j)} \in \Gamma^+, \text{ for some }x_j\in \R.
\end{equation}
We note in particular that $k(k(j)) = j$ and that $k(j)\in C_{\Gamma, \chi}$ if and only if $j\in C_{\Gamma, \chi}$.
For each $j\in\{1, \ldots, \kappa\}$ with $k(j) = j$, and each $v\in \Z$, we set
\begin{equation}\label{e:Tjv}
	T_{j, v} = \N_j^{-1} \bpm -1 & x_j+v\\ 0 & 1\ebpm \N_j = U_j^{-1} V T_j^v\in \Gamma.
\end{equation}
The last identity follows from \eqref{e:Uj}.
Then $T_{j, v}$ is a reflection fixing the point $\eta_j$.
Also the other fixpoint of $T_{j, v}$ in $\partial\HH$ must be a cusp;
we write it as $V_2\eta_k$ for some $V_2\in \Gamma^+$
and $k\in \{1, \ldots, \kappa\}$,
and then define the number $\mbc_{j, v}>0$ by the relation (cf.~\cite[(2.26)]{BS07})
\begin{equation}\label{e:mbc}
	\N_j V_2 \N_k^{-1} = \bpm * & * \\ \mbc_{j, v} & * \ebpm,
	\text{ where } \det(\N_j V_2 \N_k^{-1}) = 1.
\end{equation}

%: Eis
It now remains to explain the notations $\varphi^\Gamma$ and $\Phi^\Gamma$
appearing in $\Eis(\Gamma, \chi)$ in \eqref{e:Eis}.
For $j\in C_{\Gamma, \chi}$, let $E_j(z, s, \chi)$ be the Eisenstein
series for $\langle\Gamma^+, \chi\rangle$ associated to the cusp $\eta_j$:
\begin{equation}\label{e:Eis_j}
	E_j(z, s, \chi)
	=
	\sum_{W\in [T_j]\bsl \Gamma^+}
	\chi(W^{-1}) (\Im(\N_j W z))^s,
	\text{ for } z\in \HH \text{ and } \Re(s)>1,
\end{equation}
continued meromorphically to all $s\in \C$.
There is a natural $\Gamma$-analog of $E_j(z, s, \chi)$,
which for every $j\in C_{\Gamma, \chi}$ is given by
(cf.~\cite[(2.8)--(2.10)]{BS07}):
\begin{equation}\label{e:Eis_j_Gamma}
	E_j^{\Gamma}(z, s, \chi)
	=
	E_j(z, s, \chi)
	+
	\chi(V^{-1}) E_j (Vz, s, \chi).
\end{equation}

We fix, once and for all, a subset $R_{\Gamma, \chi} \subset C_{\Gamma, \chi}$ such that
\begin{equation}\label{e:R_Gammachi}
	\forall j\in C_{\Gamma, \chi}:
	\begin{cases}
	\text{ exactly one of } j, k(j)\in R_{\Gamma, \chi} & \text{if } k(j)\neq j, \\
	j\in R_{\Gamma, \chi} \iff \chi(V^{-1}U_j)=1 & \text{if } k(j)=j.
	\end{cases}
\end{equation}
For each $j\in C_{\Gamma, \chi}$ with $k(j) = j$,
it holds that $\chi(V^{-1} U_j) = \pm1$.
Hence if $j\notin R_{\Gamma, \chi}$ then $E_j^{\Gamma}(z, s, \chi) \equiv 0$.
In \cite[(2.32)]{BS07}, meromorphic functions $\varphi_{j, \ell}^{\Gamma}(s)$ for $j, \ell\in R_{\Gamma, \chi}$
are constructed so that for each $j\in R_{\Gamma, \chi}$,
\begin{equation}\label{e:Eis_avr_FE}
	E_j^{\Gamma}(z, 1-s, \chi)
	=
	\sum_{\ell\in R_{\Gamma, \chi}} \varphi_{j, \ell}^{\Gamma}(1-s) E_\ell^{\Gamma}(z, s, \chi).
\end{equation}
The relations \eqref{e:Eis_avr_FE} together determine the functions
$\varphi_{j, \ell}^{\Gamma}(s)$ uniquely (cf.~\cite[p.~125(bottom)]{BS07}).
We also note that all $\varphi_{j, \ell}^{\Gamma}(s)$ are holomorphic along the line $\Re(s) = \frac{1}{2}$.
The $|R_{\Gamma, \chi}|\times |R_{\Gamma, \chi}|$ matrix
$$
	\Phi^{\Gamma}(s) = (\varphi_{j, \ell}^{\Gamma}(s))_{j, \ell\in R_{\Gamma, \chi}}
$$
is called the scattering matrix for $\Gamma$ and we write
$$
	\varphi^{\Gamma}(s) = \det\Phi^\Gamma(s).
$$
We point out that this determinant $\varphi^{\Gamma}(s)$, as well as the trace of $\Phi^{\Gamma}(s)$
which also appears in \eqref{e:Eis}
are independent of which ordering of $R_{\Gamma, \chi}$ we chose when defining the matrix $\Phi^{\Gamma}(s)$.
%%%

\subsection{Trace formula for $\Gamma_0(N)$, $\chi$}
Throughout this section we will use the convention that all matrix representatives for elements in $\G = \PGL_2(\R)$ are taken to have determinant $1$ or $-1$.
Let $N$ be an arbitrary positive integer and set $\Gamma^+= \Gamma_0(N)\subset \G^+$.
Fix
$$
	V = \bpm -1 &0 \\ 0 & 1\ebpm \in \G
$$
and note that $V^2 =1_2$, the identity matrix, and $V\Gamma^+V^{-1} = \Gamma^+$.
Hence $\Gamma = \Gamma_0^\pm (N) = \langle\Gamma^+, V\rangle\subset \G$
is a supergroup of $\Gamma^+$ of index $2$.
We will use the standard notation $\omega(N)$ for the number of primes dividing $N$.

Let $\chi$ be a Dirichlet character modulo $N$.
For any divisor $\alpha\mid N$ satisfying $\gcd(\alpha, N/\alpha)=1$,
we define
$$
	\chi_\alpha(x) = \chi(y)
$$
for $y\equiv_{\alpha} x$ and $y\equiv_{N/\alpha}1$.
By the Chinese remainder theorem,
such $y$ is uniquely determined modulo $N$ and $\chi_\alpha$ is a Dirichlet character modulo $\alpha$.
It follows that $\chi = \chi_\alpha \chi_{N/\alpha}$ for all such divisors $\alpha$.
For any prime $p$, we set
$$
	\chi_p = \chi_{\gcd(N, p^{\infty})}.
$$
It follows that $\chi = \prod_{p\mid N} \chi_p$
(identity of functions on $\Z$); also $\chi_p \equiv 1$ for all $p\nmid N$.

Let us agree to call $\chi$ pure if $\chi_p(-1) = 1$ for every odd prime $p$.
Note that if $\chi$ is even then $\chi$ is pure if and only if $\chi_p(-1)=1$ for all primes $p$.
From now on we fix $\chi$ to be an even Dirichlet character modulo $N$,
and we set $q=\cond(\chi)$.
We introduce two indicator functions $I_\chi$ and $I_{q, 4}$ via
\begin{align}
	& I_{\chi} = \delta_{\chi\text{ is pure}};\\
	& I_{q, 4} = \delta_{p\equiv_41\;\forall p\mid q}.
\end{align}

We view $\chi$ as a character on $\Gamma^+$ via
$$
	\chi\!\left(\sm a & b\\ c & d\esm\right) = \chi(d) \text{ for } \sm a & b\\ c& d\esm\in\Gamma^+.
$$
This character can be extended in two ways to a character of $\Gamma$: either by $\chi(V)=1$ or $\chi(V)=-1$.
These extensions are explicitly given by
\begin{equation}\label{e:chi-epsilon}
	\chi^{(\ve)}(T) = \det(T)^{\ve} \chi(d), \text{ for } T=\sm a & b\\ c& d\esm\in \Gamma \text{ where } \ve\in\{0, 1\}.
\end{equation}

Let $\Tr(\G, \chi^{(\ve)})$ be the trace formula for $(\Gamma_0^\pm (N), \chi^{(\ve)})$.
For $n\in \{\pm1\}$, we have
\begin{equation}\label{e:Tn_epsilon}
	\sum_{\lambda\geq0} \tr T_n|_{\A_\lambda(\chi)} h\left(\sqrt{\lambda-\tfrac{1}{4}}\right)
	=
	\Tr(\G, \chi^{(0)})+n \Tr (\G, \chi^{(1)}).
\end{equation}

%: description of arithmetic functions.

%: \Psi
For each prime $p\mid N$, define
\begin{equation}\label{e:Psi1}
	\Psi_1(p^{e_p}, p^{s_p})
	=
	\begin{cases}
	2p^{e_p-s_p} & \text{if } e_p < 2s_p, \\
	p^{\lf\frac{e_p}{2}\rf}+ p^{\lf\frac{e_p-1}{2}\rf} & \text{if } e_p \geq 2s_p, 	
	\end{cases}
\end{equation}
\begin{equation}\label{e:Psi2}
	\Psi_2(p^{e_p}, p^{s_p})
	=
	\begin{cases}
	1 & \text{if } e_p=0, \\
	\begin{cases}
	2e_p & \text{if } e_p>0 \text{ and } s_p=0, \\
	2(e_p-2s_p+1) & \text{if } e_p\geq 2s_p \text{ and } s_p>0, \\
	\end{cases}
	& \text{if } p \text{ is odd}, \\
	\begin{cases}
	e_p+1 & \text{if } e_p\in\{1, 2\} \text{ and } s_p=0, \\
	2(e_p-1) & \text{if } e_p\in\{4, 3\} \text{ and } s_p=0, \\
	4(e_p-3) & \text{if } e_p\geq 5 \text{ and } s_p=0, \\
	4(e_p-2s_p-1) & \text{if } e_p\geq 2s_p+1 \text{ and } s_p\geq 3, \\
	\end{cases}
	& \text{if } p=2, \\
	0 & \text{otherwise,}
	\end{cases}
\end{equation}
\begin{equation}\label{e:tPsi2}
	\tPsi_2(p^{e_p}, \chi_p)
	=
	\begin{cases}
	1 & \text{if } e_p=0, \\
	\min\{e_2+1, 4\} & \text{if } p=2 \text{ and } s_2=0, \\
	2 & \text{if } p\equiv_4-1 \text{ and } s_p=0, \\
	\chi_p(\sqrt{-1}) \Psi_2(p^{e_p}, p^{s_p}) & \text{if } p\equiv_41 \text{ and } \chi_p \text{ even}, \\
	0 & \text{otherwise}
	\end{cases}
\end{equation}
(where $\sqrt{-1}$ is a square root of $-1$ in $\Z/p\Z$),
and
\begin{equation}\label{e:Psi3}
	\Psi_3(p^{e_p}, p^{s_p})
	=
	\begin{cases}
	0 & \text{if } e_p=0, \\
	p^{e_p-s_p}(4s_p-1) & \text{if } e_p < 2s_p, \\
	\left(p^{\lf\frac{e_p}{2}\rf}+p^{\lf\frac{e_p-1}{2}\rf}\right)
	\left(e_p-\frac{1}{p-1}\right)
	+
	\frac{2}{p-1} + \delta_{s_p>0} \big(p^{s_p-1} +2\frac{p^{s_p-1}-1}{p-1}\big)
	& \text{if } e_p\geq 2s_p.
	\end{cases}
\end{equation}
For $j\in \{1, 2\}$ we set $\Psi_j(N, q) = \prod_{p\mid N} \Psi_j(p^{e_p}, p^{s_p})$.
Similarly, $\tPsi_2(N,\chi)=\prod_{p\mid N}\tPsi_2(p^{e_p}, \chi_p)$.

Let
%: Omega
\begin{equation}\label{e:Omega1}
	\Omega_1(N, q)
	=
	\begin{cases}
	1 & \text{if } 0\leq e_2 \leq 1 \text{ or } s_2=e_2, \\
	\frac{3}{2} & \text{if } e_2=2\text{ and } s_2 < e_2, \\
	2 & \text{if } e_2\geq 3 \text{ and } s_2 < e_2,
	\end{cases}
\end{equation}
\begin{equation}\label{e:Omega2}
	\Omega_2(N, q)
	=
	\begin{cases}
	\frac{1}{2}-\frac{3}{4}e_2
	& \text{if } 0\leq e_2 \leq 2, \\
	-2 & \text{if } e_2\geq 3, s_2=0, \\
	-2s_2 & \text{if } e_2> s_2 \geq 3, \\
	-e_2 & \text{if } e_2=s_2\geq 3.
	\end{cases}
\end{equation}

Finally we define
\begin{equation}\label{e:Phi}
	\Phi_\pm(p^{e_p}, m)
	=
	\Psi_1(p^{e_p}, p^{w_\pm(m)})
\qquad\text{with }\:
w_\pm(m) = \max\{s_p, e_p-\ord_p(\pm m^2-1)\}.
\end{equation}
%

%%%
\begin{theorem}\label{thm:STF_N}
For $n\in\{\pm1\}$ we have
\begin{equation}\label{e:STF_N}
\sum_{\lambda\geq0} \tr T_n|_{\A_\lambda(\chi)} h\left(\sqrt{\lambda-\tfrac{1}{4}}\right)
	=
	\I(\Gamma, \chi; n)
	+
	(\NEl+\El)(\Gamma, \chi; n)
	+
	(\Cu+\Eis)(\Gamma, \chi; n),
\end{equation}
where
$$
	\I(\Gamma, \chi; n)
	=
	\frac{1+n}{2} \frac{N\prod_{p\mid N} (1+p^{-1})}{12}
	\int_\R rh(r) \tanh(\pi r) \; dr,
$$
$$
	(\NEl+\El)(\Gamma, \chi; n)
	=
	\sum_{\substack{t\in \Z, t^2-4n=d\ell^2, \\ \sqrt{t^2-4n}\notin \Q}}
	A_{t, n}[h] \boldh(d)
	\prod_{p} \boldS_p(p^{e_p}, \chi_p; t, n),
$$
where $A_{t, n}[h]$
is defined in \eqref{e:Atnh},
$\boldh(d)$ is the narrow class number of $\Q(\sqrt d)$,\linebreak
$\boldS_p(p^{e_p}, \chi_p; t, n)$ is given in \eqref{e:boldSp_e=0_0} and Lemma~\ref{lem:boldSp},
\begin{multline}\label{e:Cu+Eis_N_1}
	(\Cu+\Eis)(\Gamma, \chi; 1)
	\\
	=
	\Psi_1(N, q) \bigg\{ \frac{1}{4} h(0)
	-\frac{1}{2\pi} \int_\R h(r) \bigg(\frac{\Gamma'}{\Gamma}\!\left(\frac{1}{2}+ir\right) + \frac{\Gamma'}{\Gamma}(1+ir)\bigg) \; dr
	\bigg\}
	- \frac{1}{4} I_\chi \Psi_2(N, q) h(0)
	\\
	+
	\bigg\{ \Psi_1(N, q) \log\!\left(\frac{\pi}{2}\right)
	-
	\sum_{p\mid N} \Psi_3(p^{e_p}, p^{s_p}) \prod_{p'\mid N, p'\neq p} \Psi_1({p'}^{e_{p'}}, {p'}^{s_{p'}}) \log p\bigg\}
	g(0)
	\\
	+
	2\sum_{m\geq1, \gcd(m,q)=1}\frac{\Lambda(m)}{m} \prod_{p\mid N,p\nmid m} \Big[(\Re\chi_p(m))\Phi_+(p^{e_p}, m)\Big]\,
	g(2\log m)
\end{multline}
and
\begin{multline}\label{e:Cu+Eis_N_-1}
	(\Cu + \Eis) (\Gamma, \chi; -1)
	=
	\frac{1}{4}I_\chi \big\{ 2^{\omega(N)} \Omega_1(N, q) - I_{q, 4} \tPsi_2(N, \chi)\big\} h(0)
	\\
	-
	I_\chi 2^{\omega(N)}\Omega_1(N, q)
	\frac{1}{2\pi} \int_\R h(r) \frac{\Gamma'}{\Gamma}(1+ir) \; dr\\
	+
	I_\chi 2^{\omega(N)}
	\bigg\{
	\Omega_1(N, q) \bigg(\log \pi - \sum_{p\mid N, p>2} \max\{\tfrac{1}{2}, s_p\} \log p\bigg)
	+
	\Omega_2(N, q) \log 2
	\bigg\} g(0)
	\\
	+
	2\sum_{m\geq1, \gcd(m,q)=1} \frac{\Lambda(m)}{m} \prod_{p\mid N,p\nmid m}
\Big[(\{\Re/i\Im\}\chi_p(m)) \Phi_-(p^{e_p}, m)\Big]\,
	g(2\log m),
\end{multline}
where $\{\Re/i\Im\}\chi_p(m) = \frac{1}{2}\big(\overline{\chi_p(m)}+\chi_p(-m)\big)$.
\end{theorem}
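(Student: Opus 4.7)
The plan is to apply the general trace formula from Section~\ref{ss:STF_general} to the pair $(\Gamma_0^\pm(N),\chi^{(\ve)})$ for each $\ve\in\{0,1\}$ and then combine the two outputs via \eqref{e:Tn_epsilon}. Thus for each of the five terms $\I,\NEl,\El,\Cu,\Eis$ we must evaluate both $\ve=0$ and $\ve=1$, form the sum $\Tr(\G,\chi^{(0)})+n\Tr(\G,\chi^{(1)})$, and check that the result collapses into the compact shape on the right of \eqref{e:STF_N}. Throughout, the decomposition $\chi=\prod_{p\mid N}\chi_p$ allows us to group contributions prime by prime, which is what produces the multiplicative local factors $\boldS_p$, $\Psi_j$, $\tPsi_2$, $\Omega_j$, $\Phi_\pm$ in the statement.

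First I would dispense with the identity term. Since $\I(\Gamma,\chi^{(\ve)})$ in \eqref{e:I} depends only on $\Area(\Gamma\bsl\HH)=\tfrac12\Area(\Gamma^+\bsl\HH)=\tfrac{\pi N}{6}\prod_{p\mid N}(1+p^{-1})$, it is independent of $\ve$, and the combination contributes $(1+n)\I(\Gamma,\chi^{(0)})$, which is exactly the stated $\I(\Gamma,\chi;n)$. Next I treat the non-elliptic and elliptic terms together. A conjugacy class $\{T\}\subset M$ in $\Gamma$ is determined by $(t,n)$ with $t=\tr T\in\Z$, $n=\sgn\det T=\pm 1$, together with an orbit of solutions to a binary quadratic form problem of discriminant $D=t^2-4n$. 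Fixing $t,n$ and using the standard parametrization of $\Gamma_0(N)$-classes by ideals in the order of discriminant $D$, the sum over classes factors as a product over primes $p\mid N$; the archimedean part reduces to $A_{t,n}[h]$ (whose definition in \eqref{e:Atnh} packages $g$ or the elliptic Gaussian integral) times Dirichlet's class-number factor $\boldh^+(d)$ times the local sieve counts $\boldS_p(p^{e_p},\chi_p;t,n)$. The coefficient $\chi^{(\ve)}(T)=(\det T)^\ve\chi(d)$ means that forming $\chi^{(0)}+n\chi^{(1)}$ amounts to requiring $n=\sgn\det T$, which is already built into the parametrization. This step is essentially an elaboration of the squarefree case in \cite{BS07}, with the local $\boldS_p$ to be computed directly in Lemma~\ref{lem:boldSp}.

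For $(\Cu+\Eis)(\Gamma,\chi;n)$ I would proceed in several substeps. Take the standard set of cusp representatives $\eta_j$ for $\Gamma_0(N)\bsl\HH$, choose scaling matrices $\N_j$, compute the action of $V=\sm-1&0\\0&1\esm$ on cusps (which is an involution giving the pairing $k(j)$), and produce the elements $U_j,T_{j,v}$ and constants $\mbc_{j,v}$ from \eqref{e:Uj}--\eqref{e:mbc}. For $n=1$ the scattering matrix piece is controlled by $\varphi^\Gamma(s)$, which factors as a product of local Euler factors (closely related to ratios of Dirichlet $L$-functions of characters induced from $\chi_p$) by a standard computation with the Eisenstein series \eqref{e:Eis_j_Gamma}; differentiating and separating the $\Gamma'/\Gamma$ contributions yields the expression on the right of \eqref{e:Cu+Eis_N_1}, with $\Psi_1(N,q)=\tr\Phi^\Gamma(\tfrac12)$ and with $\Psi_3$, $\Psi_2$ arising from the constant term of $(\varphi^\Gamma)'/\varphi^\Gamma$ on $\Re(s)=\tfrac12$ and from the Maass--Selberg-type contribution $\tr\Phi^\Gamma(\tfrac12)h(0)/4$. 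The arithmetic sum over primary $m=p^k$ with weight $\Re\chi_p(m)\Phi_+(p^{e_p},m)$ comes from expanding each local Euler factor as a Dirichlet series and matching with $\NEl$-style parabolic contributions. For $n=-1$ one runs the same analysis for $\chi^{(1)}$, noting that only cusps $j$ with $\chi(V^{-1}U_j)=+1$ survive in $R_{\Gamma,\chi}$; this is where the purity indicator $I_\chi$, the $2^{\omega(N)}$, and $\Omega_1,\Omega_2,\tPsi_2$ (keeping track of $4\mid p-1$ subtleties and the $p=2$ anomaly) come from, yielding \eqref{e:Cu+Eis_N_-1}.

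The main obstacle will be the cusp/Eisenstein analysis: writing down $k(j)$, $U_j$, $\mbc_{j,v}$ and explicitly evaluating $\varphi^\Gamma(s)$ for a \emph{non-primitive} character $\chi$ at arbitrary level $N$. Passing from a character of conductor $q$ to modulus $N$ introduces local Euler correction factors that one must handle prime by prime, and the case $p=2$ requires a long subdivision (particularly for $s_2=0$ versus $s_2\ge 3$, and for $e_2$ small versus large) which is responsible for the piecewise definitions of $\Psi_1,\Psi_2,\tPsi_2,\Psi_3,\Omega_1,\Omega_2$. The elliptic $\boldS_p$ at $p=2$ will be similarly delicate. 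Once these local factors are all computed (most efficiently by a single prime-by-prime lemma like Lemma~\ref{lem:boldSp}), the assembly into \eqref{e:STF_N} is a bookkeeping exercise.
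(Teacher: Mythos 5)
Your overall plan matches the paper's: apply the general trace formula for $(\Gamma_0^\pm(N),\chi^{(\ve)})$ with $\ve=0,1$, combine via \eqref{e:Tn_epsilon}, treat identity, elliptic/hyperbolic, cuspidal and Eisenstein terms separately, and parametrize the elliptic/hyperbolic classes locally by trace $t$ and determinant $n$. But two points need correcting. First, a factual slip that would derail the bookkeeping: $\Psi_1(N,q)$ is \emph{not} $\tr\Phi^\Gamma(\tfrac12)$. By Lemma~\ref{lem:open_cusp} we have $\Psi_1(N,q)=|C_{\Gamma,\chi}|$, the number of open cusps, and it enters the cuspidal contribution through the $|C_{\Gamma,\chi}|$-weighted term in \eqref{e:Cu} and the Eisenstein contribution through $|F^\ve|=|R_{\Gamma,\chi}|$. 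In contrast, $\tr\Phi^\Gamma(\tfrac12)=-|F_0^\ve|$ (Lemma~\ref{lem:TrPhiGamma_1/2}), which after combining $\ve=0,1$ yields the $\Psi_2$ and $\tPsi_2$ pieces.

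Second, the phrase ``factors as a product of local Euler factors \dots\ by a standard computation'' glosses over what is really the technical heart of the proof. At non-squarefree level with a non-primitive character, the scattering matrix cannot be diagonalized directly; one must (following Huxley) introduce the auxiliary family $B_{\chi_1}^{\chi_2}(mz,s)$ indexed by triples $(m,\chi_1,\chi_2)\in F$ with $mq_1\mid N$, $q_2\mid m$, $\cond(\chi\chi_2\overline{\chi_1})=1$, express these as explicit linear combinations of the cuspidal Eisenstein series (Lemma~\ref{lem:E_chi1chi2} and its $\Gamma$-version Lemma~\ref{lem:eE_chi1chi2}), prove invertibility of the change-of-basis matrix via a block-triangularization (Lemma~\ref{lem:detM}), and then read off $\Phi^\Gamma(s)=M(s)^{-1}L(s)^{-1}D(s)PL(1-s)M(1-s)$. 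Without identifying this device, neither $\det\Phi^\Gamma$ nor $\tr\Phi^\Gamma(\tfrac12)$ can be evaluated, so this step of your outline has a genuine gap even as a plan. The identity and elliptic/hyperbolic parts of your outline, by contrast, are accurate reflections of what the paper does.
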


%%%%%

\subsection{Identity $I(\Gamma, \chi)$}
%%%
\begin{lemma}\label{lem:I_N}
For $n\in \{\pm1\}$, we have
$$
	I(\Gamma, \chi; n)
	=
	\frac{1+n}{2} \frac{N \prod_{p\mid N} (1+p^{-1})}{12}
	\int_\R rh(r) \tanh(\pi r) \; dr.
$$
\end{lemma}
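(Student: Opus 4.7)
The plan is to unpack $\I(\Gamma,\chi;n)$ by applying the general identity term \eqref{e:I} to each of the two character extensions $\chi^{(0)},\chi^{(1)}$ of $\chi$ to $\Gamma=\Gamma_0^\pm(N)$ defined in \eqref{e:chi-epsilon}, following the recipe of \eqref{e:Tn_epsilon}. That is, one writes
$$
\I(\Gamma,\chi;n) \;=\; \I(\Gamma,\chi^{(0)}) \;+\; n\,\I(\Gamma,\chi^{(1)}).
$$

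The central observation is that the right-hand side of \eqref{e:I} depends only on the covolume $\Area(\Gamma\bsl\HH)$ and the test function $h$, not on the particular character. Consequently $\I(\Gamma,\chi^{(0)}) = \I(\Gamma,\chi^{(1)})$, and the two terms collapse into a single prefactor $(1+n)$, which accounts for the $\frac{1+n}{2}$ appearing in the target expression (it vanishes when $n=-1$ and doubles when $n=1$).

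It remains only to evaluate $\Area(\Gamma\bsl\HH)$. Since $\Gamma=\Gamma_0^\pm(N)$ contains $\Gamma_0(N)$ as an index-two subgroup with $V$ as nontrivial coset representative, the hyperbolic area is half that of $\Gamma_0(N)\bsl\HH$. Using the standard index $[\SL_2(\Z):\Gamma_0(N)] = N\prod_{p\mid N}(1+p^{-1})$ together with $\Area(\SL_2(\Z)\bsl\HH)=\pi/3$, this gives
$$
\Area(\Gamma\bsl\HH) \;=\; \frac{\pi N}{6}\prod_{p\mid N}(1+p^{-1}).
$$
Substituting into \eqref{e:I} and combining with the $(1+n)$ factor produces the claimed formula. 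No serious obstacle is anticipated: the lemma is really a bookkeeping step that isolates the simplest piece of the trace formula for $\Gamma_0^\pm(N)$, and in particular makes manifest the automatic vanishing of the identity contribution for $T_{-1}$.
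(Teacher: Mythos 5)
Your proposal is correct and follows essentially the same route as the paper: decompose $\I(\Gamma,\chi;n)$ via \eqref{e:Tn_epsilon}, note that \eqref{e:I} is character-independent so the $\ve=0,1$ contributions coincide, and compute $\Area(\Gamma\bsl\HH)$ as half the area of $\Gamma_0(N)\bsl\HH$ using the standard index formula. The paper cites the area of $\Gamma_0(N)\bsl\HH$ directly from Miyake rather than deriving it from the index, but this is a cosmetic difference.
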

\begin{proof}
It is well known that $\Area(\Gamma^+\bsl \HH) = \frac{\pi}{3} N\prod_{p\mid N} (1+p^{-1})$ (cf.~\cite[Theorem~4.2.5(2)]{Miy89}),
and the area of $\Gamma\bsl \HH$ is half as large.
By \eqref{e:I}, we have
$$
	\I(\Gamma, \chi^{(\ve)}) = \frac{N \prod_{p\mid N}(1+p^{-1})}{24} \int_\R h(r) r\tanh(\pi r) \; dr
\qquad\text{for }\:\ve=\pm1.
$$
\end{proof}
%%%

%%%%%
\subsection{Non-cuspidal contributions $\NEl+\El(\Gamma, \chi)$}

%%%
The aim of this section is to prove the following proposition.
\begin{proposition}\label{prop:NEll+Ell_N}
For $n\in \{\pm1\}$, we have
\begin{equation}\label{e:NEll+Ell_N}
	\NEl(\Gamma, \chi; n)+\El(\Gamma, \chi; n)
	=
	\sum_{\substack{ t\in \Z, t^2-4n=d\ell^2, \\\sqrt{t^2-4n}\notin \Q}}
	A_{t, n}[h]\, \boldh(d) \prod_p \boldS_p(p^{e_p}, \chi_p; t, n),
\end{equation}
where $d$ is a fundamental discriminant and $\ell\in \Z_{>0}$.
\end{proposition}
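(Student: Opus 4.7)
The plan is to treat the non-elliptic and elliptic sums uniformly by reindexing according to $(t, n)$ where $t = \tr T$ and $n = \det T$ for $T \in M$, with associated discriminant $D = t^2 - 4n$. The defining condition $T \in M$ excludes precisely those $T$ that fix a cusp of $\Gamma^+$, which is equivalent to $\sqrt{D} \in \Q$; the remaining cases split into $D > 0$ (hyperbolic and glide-reflection elements, contributing to $\NEl$) and $D < 0$ (elliptic elements, forcing $n = 1$, contributing to $\El$), and together they exhaust the sum on the left-hand side of \eqref{e:NEll+Ell_N}.

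First I would extract the analytic factor $A_{t,n}[h]$ of \eqref{e:Atnh}, which depends only on $(t,n)$ and the test function. For $D > 0$, the eigenvalues of $T$ are $\frac{t \pm \sqrt{D}}{2}$, so $N(T) = \left(\frac{|t|+\sqrt{D}}{2}\right)^2$ and $g(\log N(T))$ becomes $g\!\left(2\log\frac{|t|+\sqrt{D}}{2}\right)$, while the denominator $N(T)^{1/2} - \sgn(n) N(T)^{-1/2}$ simplifies to $\sqrt{D}$. For $D < 0$ we have $n = 1$ and $2\sin\theta(T) = \sqrt{|D|}$, and a standard Fourier-analytic manipulation converts $\int_\R \frac{e^{-2\theta r}}{1+e^{-2\pi r}} h(r)\,dr$ into an integral of $g(u)\cosh(u/2)$ against a kernel depending only on $|D|$, matching the form appearing in Theorem~\ref{THMmintf}.

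Second I would separate the arithmetic factors. For fixed $(t, n)$ with $\sqrt{D}\notin\Q$, the $\Gamma^+$-conjugacy classes of integer matrices of trace $t$, determinant $n$, and lower-left entry divisible by $N$ are in bijection with equivalence classes of binary quadratic forms of discriminant $D$ and level $N$, and hence (after relating to invertible ideal classes in the order of discriminant $D$ inside $\Q(\sqrt{d})$, with $D = d\ell^2$ and $d$ fundamental) give rise to the narrow class number $\boldh^+(d)$ together with the $L$-factor from $\ell$ recalled in the formula for $L(1, \psi_D)$ in \S\ref{sec:prelim}. The character $\chi = \prod_p \chi_p$ factors by the Chinese remainder theorem, and the level condition $p^{e_p} \mid c$ localizes prime-by-prime, so the remaining sum at each prime depends only on $(t, n) \bmod p^{e_p}$ and $\chi_p$; this yields the local factors $\boldS_p(p^{e_p}, \chi_p; t, n)$. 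The passage from $\Gamma^+$-classes to $\Gamma$-classes, governed by conjugation by $V$, together with the $\ve = 0, 1$ decomposition in \eqref{e:chi-epsilon} and \eqref{e:Tn_epsilon}, is what correctly merges the contributions of $\G^+$ and $\G \setminus \G^+$ and produces the factor $n$ in $A_{t,n}[h]$.

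The main obstacle will be the explicit local computation, to be recorded in Lemma~\ref{lem:boldSp}, of the factors $\boldS_p$: one must enumerate $\GL_2(\Z_p)$-conjugacy classes of matrices with given trace and determinant modulo $p^{e_p}$, stratified by $\ord_p(D)$ and the conductor exponent $s_p = \ord_p\cond(\chi_p)$. This case analysis is especially delicate at $p = 2$, where parity conditions introduce additional subcases, and in the regime $2s_p \le e_p$ where the ramification of $\chi_p$ interacts non-trivially with the congruence $p^{e_p} \mid c$. A secondary bookkeeping task is correctly tracking the reflection and glide-reflection contributions from $\G \setminus \G^+$ so that they combine cleanly with the orientation-preserving contributions into the single product $\prod_p \boldS_p$ appearing on the right-hand side.
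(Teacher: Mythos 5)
Your plan is essentially the paper's proof. Both enumerate the non-cuspidal conjugacy classes by $(t,n)=(\tr T,\det T)$, use Miyake's local-to-global principle to identify the $\Gamma^+$- (and $R^1$-) conjugacy classes of matrices of discriminant $D=d\ell^2$ and level $N$ with a fiber of cardinality $\boldh(\rr[f])$ over each order $\rr[f]$, $f\mid\ell$, and then reduce the remaining character sums to explicit $p$-adic computations yielding the local factors $\boldS_p$, with the real work concentrated in Lemma~\ref{lem:boldSp} (and its ingredients, Lemmas~\ref{lem:Omega_p} and \ref{lem:avg_char}) — exactly the ``delicate case analysis at $p=2$'' you flag.

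One step you elide that the paper must address explicitly: the $\langle n,t\rangle$ that actually contribute to the left-hand side are precisely those for which a representative $T\in R$ with the given trace and determinant exists (the set $P$), whereas the right-hand side of \eqref{e:NEll+Ell_N} is an unrestricted sum over all $t$ with $\sqrt{t^2-4n}\notin\Q$. The paper's Lemma~\ref{lem:ntP} characterizes membership in $P$ by local conditions at each $p\mid N$, and one then reads off from Lemma~\ref{lem:Omega_p} that $\boldS_p(p^{e_p},\chi_p;t,n)=0$ for some $p\mid N$ exactly when $\langle n,t\rangle\notin P$, so the two index sets agree after discarding the vanishing terms. Your ``the conjugacy classes are in bijection with\dots'' formulation handles this implicitly (an empty set gives zero), but the fact that the $\boldS_p$ genuinely vanish outside $P$ needs to be verified, or else the right-hand side would have spurious extra terms.
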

%%%

The method we use to enumerate the conjugacy classes appearing
in these sums is well known, cf., e.g., \cite{Eic56}, \cite{Vig80} and \cite{Miy89}.
Here we will follow the setup in \cite{Miy89} fairly closely
(as was done in \cite{BS07} in the case of $N$ squarefree).

We start by introducing some notations.
Set
$$
	R = \left\{
	\bpm a & b\\ c& d\ebpm \in \M_2(\Z)\;:\; c\equiv_N0
	\right\}
$$
and
$$
	R^1=\{T\in R\;:\; \det(T)=1\}.
$$
Let
$$
	P = \left\{\langle n, t\rangle\in \{\pm1\} \times \Z\;:\;
	\sqrt{t^2-4n}\notin \Q,\;
	\exists T_{n, t}\in R, \; \det(T_{n, t})=n \text{ and } \tr(T_{n, t}) = t\right\}.
$$
For each $\langle n, t\rangle\in P$, we fix one such element $T_{n, t}\in R$.

Given $\langle n, t\rangle\in P$, we let $d$ and $\ell$ be the unique integers such that
$t^2-4n=d\ell^2$, $\ell\in \Z_{\geq 1}$ and $d$ is a fundamental discriminant
(that is $d\equiv_41$, is squarefree and $d\neq 1$ or else $d\equiv_40$, $d/4$ is squarefree and $d/4\not\equiv_41$).
Then the subalgebra $\Q[T_{n, t}]\subset \M_2(\Q)$ is isomorphic to the quadratic field $\Q(\sqrt{d})$.
For any positive integer $f$, we write $\rr[f]$ for the order
$\Z+f\frac{d+\sqrt{d}}{2}\Z$ in $\Q(\sqrt{d})$.
Using $\Q[T_{n, t}]\cong \Q(\sqrt{d})$ (either of the two possible isomorphisms),
we let $\rr[f]$ denote also the corresponding order in $\Q[T_{n, t}]$.
For any order $\rr$ in $\Q[T_{n, t}]$ we set
$$
	C(T_{n, t}, \rr)
	=
	\{ \delta T_{n, t} \delta^{-1}\;:\;
	\delta\in \GL_2(\Z), \; \Q[T_{n, t}]\cap \delta^{-1} R\delta=\rr
	\}.
$$
Note that $C(T_{n, t}, \rr)$ is closed under conjugation by elements in $R^1$.
We denote by $C(T_{n, t}, \rr) \ccg R^1$
a set of representatives for the inequivalent $R^1$-conjugacy classes in $C(T_{n, t}, \rr)$.
Also for $U=\sm a& b\\ c& d\esm \in C(T_{n, t}, \rr)$, we write
$$
	\chi^{(\ve)}(U) = \det(U)^{\ve} \chi(d).
$$
For every $f\mid \ell$, it holds that $C(T_{n, t}, \rr[f])\subset R$
and using this one checks that $\chi^{(\ve)}(U)$ for $U\in C(T_{n, t}, \rr[f])$
only depends on the $R^1$-conjugacy class of $U$.

Let $\boldh(\rr[f])$ be the narrow class number for $\rr[f]$.
We also define
\begin{equation}\label{e:Atnh}
	A_{t, n}[h] =
	\begin{cases}
	\frac{\log \epsilon_1}{\sqrt{t^2-4n}} g\left(\log \frac{(|t|+\sqrt{t^2-4n})^2}{4}\right),
	& \text{ if } t^2-4n>0, \\
	\frac{2}{|\rr[1]^1| \sqrt{4n-t^2}} \int_\R \frac{e^{-2r\cdot \arccos(|t|/2)}}{1+e^{-2\pi r}} h(r) \; dr,
	& \text{ if } t^2-4n < 0,
	\end{cases}
\end{equation}
where in the first case $\epsilon_1>1$ is the proper fundamental unit in $\Q(\sqrt{d})$.

Following the discussion in \cite[pp.\ 136-137]{BS07},
we see that $\NEl(\Gamma, \chi^{(\ve)}) + \El(\Gamma, \chi^{(\ve)})$ can be collected as
\begin{multline}\label{e:NEl+El_N_1}
	\NEl(\Gamma, \chi^{(\ve)}) + \El(\Gamma, \chi^{(\ve)})
	\\
	=\frac12
	\sum_{\langle n, t\rangle\in P}
	\bigg\{\sum_{f\mid \ell}
	\left[\rr[1]^1:\rr[f]^1\right]
	\bigg( \sum_{U\in C(T_{n, t}, \rr[f]) \ccg  R^1} \chi^{(\ve)}(U)\bigg)
	\bigg\}
	A_{t, n}[h]
	\begin{cases}
	1 & \text{if } t^2-4n>0, \\
	\frac{1}{2} & \text{if } t^2-4n<0.
	\end{cases}
\end{multline}
%

%%%
\begin{remark}
The expression \eqref{e:NEl+El_N_1} would of course look slightly nicer
if we did not include the factor ``$2$" in the definition of $A_{t, n}[h]$ \eqref{e:Atnh}
in the case $t^2-4n<0$.
However our definition makes the final expression slightly simpler.
\end{remark}
%%%

The following lemma generalizes \cite[Lemma~3.4]{Str00}.
%%%
\begin{lemma}\label{lem:ntP}
Let $n, t$ be any integers satisfying $\sqrt{t^2-4n}\notin \Q$,
and let $d, \ell$ be the unique integers such that $t^2-4n=d\ell^2$, $\ell>0$ and $d$ is a fundamental discriminant.
Then there exists $T\in R$ satisfying $\det(T)=n$ and $\tr(T)=t$
if and only if, for each prime $p\mid N$, we have
\begin{equation}\label{e:ntP}
	\ord_p(\ell) \geq \lc\frac{e_p}{2}\rc \text{ or }
	\left(\frac{d}{p}\right)=1
	\text{ or }
	\left[ 2\nmid e_p, \; \ord_p(\ell) \geq \lf\frac{e_p}{2}\rf \text{ and } p\mid d\right].
\end{equation}
Here $e_p = \ord_p(N)$.
\end{lemma}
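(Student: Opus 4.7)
The first step is an elementary reduction: a matrix $T=\sm a & b\\ c & t-a\esm\in R$ has $\det T=n$ and $\tr T=t$ precisely when $bc=at-a^2-n$ and $N\mid c$. One direction is immediate, and the other follows by taking $c=N$, $b=(at-a^2-n)/N$. Thus the existence of $T$ is equivalent to solvability of $a^2-at+n\equiv 0\pmod*{N}$, which by the Chinese remainder theorem splits into the local conditions
\begin{equation*}
a^2-at+n\equiv 0\pmod*{p^{e_p}}\qquad\text{for each }p\mid N.
\end{equation*}

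For odd $p$, I would complete the square: setting $y=2a-t$ after multiplying by $4$ converts the congruence to $y^2\equiv d\ell^2\pmod*{p^{e_p}}$, so the question becomes whether $d\ell^2$ is a square in $\Z/p^{e_p}\Z$. Hensel's lemma (applied to the unit part) says a nonzero $D\in\Z$ is a square mod $p^{e_p}$ iff either $v_p(D)\geq e_p$, or $v_p(D)<e_p$ is even with $D/p^{v_p(D)}$ a QR mod $p$. When $p\nmid d$, $v_p(d\ell^2)=2v_p(\ell)$ is even, yielding $v_p(\ell)\geq\lc e_p/2\rc$ or $\bigl(\tfrac{d}{p}\bigr)=1$; when $p\mid d$, $v_p(d\ell^2)=1+2v_p(\ell)$ is odd, forcing $v_p(\ell)\geq\lc(e_p-1)/2\rc$, which reduces to $v_p(\ell)\geq\lc e_p/2\rc$ for $e_p$ even and to $v_p(\ell)\geq\lf e_p/2\rf$ for $e_p$ odd. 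Together these recover \eqref{e:ntP} at odd primes.

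The case $p=2$ is where I expect the main obstacle, since completing the square is not clean modulo powers of $2$. I would separate by the parity of $t$. If $t$ is odd, then $d$ and $\ell$ are both odd with $d\equiv 1\pmod*{4}$, and multiplying by $4$ still yields $y^2\equiv d\pmod*{2^{e_2+2}}$ with $y$ odd; since an odd integer is a square mod $2^r$ (for $r\geq 3$) iff it is $\equiv 1\pmod*{8}$, this isolates the $\bigl(\tfrac{d}{2}\bigr)=1$ clause. If $t=2t'$ is even, the congruence becomes $b^2\equiv t'^2-n\pmod*{2^{e_2}}$ with $b=a-t'$, and I would split further on whether $d$ is odd (so $\ell$ is even and the problem reduces to whether $d$ is a square modulo a smaller power of $2$) or $d$ is even (so $d=4D$ with $D\equiv 2,3\pmod*{4}$ squarefree, forcing the $2$-adic valuation of $D\ell^2$ to have a fixed parity and dictating $v_2(\ell)\geq\lc(e_2-1)/2\rc$). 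A routine case-by-case check---tracking $2$-adic valuations alongside congruences mod $8$ for unit parts---matches each branch to exactly one clause of \eqref{e:ntP} at $p=2$.
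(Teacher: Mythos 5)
Your proposal is correct and follows essentially the same route as the paper: reduce existence of $T\in R$ to solvability of $a(t-a)\equiv n\pmod*{p^{e_p}}$ for each $p\mid N$, complete the square, and split by the parity of $e_p$ and by $p=2$ versus $p>2$. Your spot-checks at odd $p$ and at $p=2$ (parity of $t$, then oddness of $d$) recover the three clauses of \eqref{e:ntP} exactly as the paper intends, so nothing is missing.
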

%%%
\begin{proof}
Such an element $T\in R$ exists if and only if there is some $a\in \Z$
satisfying $a(t-a)-n\equiv_N0$, and this holds if and only if the same congruence equation is solvable modulo $p^{e_p}$ for each prime $p\mid N$.
The lemma now follows by completing the square in the expression $a(t-a)-n$,
and splitting into the classes $2\mid e_p$ and $2\nmid e_p$,
as well as $p=2$ and $p>2$.
\end{proof}
%%%

The $R^1$-conjugacy classes in each $C(T_{n, t}, \rr)$ can be enumerated using a local-to-global principle.
Let $T=T_{n, t}$ and $\rr = \rr[f]$ for some $f\mid \ell$.
For each prime $p$, we set
$$
	C_p(T, \rr)
	=
	\left\{x T x^{-1}\;:\; x\in \GL_2(\Q_p), \; \Q_p[T]\cap x^{-1}R_p x=\rr_p\right\},
$$
where
$$
	R_p = \left\{\bpm a& b\\ c& d\ebpm \in \M_2(\Z_p)\;:\;
	\gamma\equiv_N0\right\}
$$
is the closure of $R$ in $\M_2(\Q_p)$ and $\rr_p$ is the closure of $\rr$ in $\Q_p(\sqrt{d})\cong\Q_p[T]$.
One checks that $C_p(T, \rr)$ is closed under $R_p^\times$-conjugation.
We also set
$$
	C_\infty(T, \rr) = \left\{xT x^{-1}\;:\; x\in \GL_2(\R)\right\},
$$
which is closed under $R_\infty^\times$-conjugation, where we have set $R_\infty^\times = \GL_2^+(\R)$.
Let us now mildly alter our previous definition of $C(T, \rr) \ccg R^1$ (a set of representatives), so as to instead
let $C(T, \rr) \ccg R^1$ denote the set of $R^1$-conjugacy classes in $C(T, \rr)$.
Similarly for each place $v$ of $\Q$, we let $C_v(T, \rr) \ccg R_v^\times$ be the set of $R_v^\times$-conjugacy classes in $C_v(T, \rr)$.
Clearly we have a natural map
\begin{equation}\label{e:theta_map}
	\theta: C(T, \rr)  \ccg  R^1 \to \prod_{v} C_v(T, \rr) \ccg R_v^\times.
\end{equation}
By \cite[Lemma~6.5.2]{Miy89} (trivially generalized as noted in \cite{BS07}),
the map $\theta$ is surjective, and in fact $\theta$ is exactly $\boldh(\rr)$-to-$1$.

It remains to understand each factor in the right hand side of \eqref{e:theta_map}.
For $v=\infty$, we have, as in \cite[(6.6.1)]{Miy89},
$$
	\left|C_\infty(T, \rr[f]) \ccg R_\infty^\times\right|
	=
	\begin{cases}
	1 & \text{if } d>0, \\
	2 & \text{if } d<0.
	\end{cases}
$$
Also for primes $p\nmid N$, we have $R_p \subset \M_2(\Z_p)\subset \M_2(\Q_p)$ and thus
$$
	\left|C_p(T, \rr) \ccg  R_p^\times\right| = 1
$$
(cf., e.g., \cite[Theorem~6.6.7]{Miy89}).

Finally, if $p\mid N$, let $e = \ord_p(N)$ and $\rho = \ord_p(\ell/f)$. 
For $\alpha\in \Z_{\geq 1}$ let 
$$
	\Omega(p^{\alpha}; n, t)
	=
	\left\{ \xi\in \Z_p\;:\; \xi^2-t\xi+n\equiv_{p^{\alpha}} 0\right\}.
$$
Then by \cite[Theorem~6.6.6]{Miy89}
\footnote{We correct for a misprint in \cite[Theorem~6.6.6]{Miy89}:
$\Omega'/p^{\nu+\rho+1}$ should be replaced by $\Omega'/p^{\nu+\rho}$},
a complete set of representatives for $C_p(T, \rr) \ccg R_p^\times$
(where $T=T_{n, t}$, $\rr = \rr[f]$ and $f\mid \ell$)
is given by
\begin{multline*}
	\left\{\bpm \xi & p^{\rho}\\-\frac{\xi^2-t\xi+n}{p^{\rho}} & t-\xi\ebpm\;:\;
	\xi\in \Omega(p^{e+2\rho}; n, t)/p^{e+\rho}\right\}
	\\
	\bigcup
	\begin{cases}
	\left\{\bpm t-\xi & -\frac{t^2-t\xi+n}{p^{e+\rho}}\\ p^{e+\rho} & \xi\ebpm\;:\;
	\xi\in \Omega(p^{e+2\rho+1}; n, t)/p^{e+\rho}\right\}
	& \text{if } p^{2\rho+1} \mid \ell^2 d, \\
	\emptyset & \text{otherwise.}
	\end{cases}
\end{multline*}
The sets $\Omega(p^{e+2\rho}; n, t)$ and $\Omega(p^{e+2\rho+1}; n, t)$ are both closed under addition with any element from $p^{e+\rho}\Z_p$,
and $\Omega(p^{e+2\rho}; n, t)/p^{e+\rho}$ and $\Omega(p^{e+2\rho+1}; n, t)/p^{e+\rho}$
denote complete sets of representatives for
$\Omega(p^{e+2\rho}; n, t)\mod p^{e+\rho} \Z_p$
and $\Omega(p^{e+2\rho+1}; n, t) \mod p^{e+\rho}\Z_p$,
respectively.

Using the above facts together with $\chi = \prod_{p\mid N} \chi_p$
and the fact that $\xi^2-t\xi+n = \xi(\xi-t)+n$
is invariant under $\xi\mapsto t-\xi$,
it follows that, for any $f\mid \ell$,
\begin{multline}\label{e:sum_chi(U)}
	\sum_{U\in C(T_{n, t}, \rr[f]) \ccg  R^1} \chi^{(\ve)}(U)
	=
	\prod_{p\mid N}
	\bigg(
	\sum_{\xi\in \Omega(e^{e+2\rho}; n, t)/p^{e+\rho}} \chi_p(\xi)
	+
	\delta_{p^{2\rho+1}\mid \ell^2d} \sum_{\xi\in \Omega(p^{e+2\rho+1}; n, t) / p^{e+\rho}} \chi_p(\xi)
	\bigg)
	\\
	\times
	\boldh(\rr[f]) n^{\ve}
	\begin{cases}
	1 & \text{if } d>0, \\
	2 & \text{if } d<0.
	\end{cases}
\end{multline}

For $p\mid N$ with $e=\ord_p(N)$ and $f=\ord_p(\ell)$, set
\begin{multline}\label{e:boldSp_0}
	\boldS_p(p^{e}, \chi; t, n)
	=
	\sum_{\xi\in \Omega(p^{e+2f}; n, t)/p^{e+f}} \chi(\xi)
	+
	\delta_{p\mid d} \sum_{\xi\in \Omega(p^{e+2f+1}; n, t)/p^{e+f}} \chi(\xi)
	\\
	+
	\left(p-\left(\frac{d}{p}\right)\right) \sum_{\beta\in \{0, 1\}}
	\sum_{k=1}^f p^{k-1}
	\sum_{\xi\in \Omega(p^{e+2f-2k+\beta}; n, t)/p^{e+f-k}} \chi(\xi).
\end{multline}
Furthermore for $p\nmid N$, set
\begin{equation}\label{e:boldSp_e=0_0}
	\boldS_p(1, 1; t, n) = 1+\left(p-\left(\frac{d}{p}\right)\right)
	\frac{p^{\ord_p(\ell)}-1}{p-1}.
\end{equation}
Since
$$
	\boldh(\rr[f])\left[\rr[1]^1:\rr[f]^1\right]
	=
	\boldh(d) f\prod_{p\mid f} p^{-1}\left(p-\left(\frac{d}{p}\right)\right),
$$
for each $\langle n, t\rangle\in P$, we have
\begin{equation}\label{e:sum_f|ell_prod}
	\sum_{f\mid \ell}
	\left[\rr[1]^1:\rr[f]^1\right]
	\left( \sum_{U\in C(T_{n, t}, \rr[f]) \ccg  R^1} \chi^{(\ve)}(U)\right)
	=
	\bigg(\prod_p \boldS_p(p^{e_p}, \chi_p; t, n)\bigg)
	n^{\ve} \boldh(d)
	\begin{cases}
	1 & \text{if } d>0, \\
	2 & \text{if } d<0.
	\end{cases}
\end{equation}
Applying \eqref{e:sum_f|ell_prod} to \eqref{e:NEl+El_N_1},
then by \eqref{e:Tn_epsilon},
\begin{equation}\label{e:NEl+El_N_2}
	\NEl(\Gamma, \chi; n) + \El(\Gamma, \chi; n)
	=
	\sum_{\substack{t\in\Z\\(\langle n, t\rangle\in P)}}
	A_{t, n}[h] \boldh(d) \prod_p \boldS_p(p^{e_p}, \chi_p; t, n).
\end{equation}
In the remainder of this section, we prove Lemma~\ref{lem:boldSp} and get an explicit formula for $\boldS_p(p^e, \chi; t, n)$.
Let us first record the general solution to the congruence equation $\xi^2-t\xi+n\equiv0$ modulo a prime power in the following lemma.
%%%
\begin{lemma}\label{lem:Omega_p}
Let $p$ be a prime and $\alpha$ a positive integer.
Then
\begin{multline}\label{e:Omega_p}
	\Omega(p^{\alpha}; n, t)
	\\
	=
	\begin{cases}
	\left\{ \frac{t+p^{\lc\frac{\alpha}{2}\rc} \omega}{2} + p^{\lc\frac{\alpha}{2}\rc}u\;:\; u\in \Z_p\right\}
	& \text{if } \ord_p(t^2-4n) \geq \alpha+\max\{\ord_p(d)-1, 0\}, \\
	\left\{ \frac{t}{2}\pm \frac{\sqrt{d}\ell}{2} + p^{\alpha-\ord_p(\ell)}u\;:\;
	u\in \Z_p\right\}
	& \text{if } \ord_p(t^2-4n) < \alpha \text{ and } \left(\frac{d}{p}\right)=1, \\
	\emptyset & \text{otherwise,}
	\end{cases}
\end{multline}
where
$$
	\omega =
	\begin{cases}
	1 & \text{if } p=2, \alpha=\ord_p(t^2-4n)-1 \text{ and } 8\nmid d, \\
	& \text{or } p=2, \alpha=\ord_p(t^2-4n) \text{ and } 2\nmid d, \\
	& \text{or } p\text{ and } t \text{ odd}, \\
	0 & \text{otherwise.}
	\end{cases}
$$
and $t^2-4n = d\ell^2$ with $d$ a fundamental discriminant and $\ell\in \Z$.
\end{lemma}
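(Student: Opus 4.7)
The plan is to complete the square: the identity $4(\xi^2 - t\xi + n) = (2\xi - t)^2 - (t^2 - 4n)$ shows that $\xi^2 - t\xi + n \equiv 0 \pmod{p^{\alpha}}$ is equivalent to $y^2 \equiv d\ell^2 \pmod{4p^{\alpha}}$, where $y = 2\xi - t$. For $p$ odd the factor $4$ is a unit in $\Z_p$ and the substitution $\xi \leftrightarrow y$ is an automorphism of $\Z_p$, so the congruence reduces to $y^2 \equiv d\ell^2 \pmod{p^{\alpha}}$. For $p = 2$ we must retain the full modulus $2^{\alpha + 2}$, and the parity of $y$ is the opposite of that of $t$.

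For $p$ odd I would split on $v := \ord_p(d\ell^2) = \ord_p(d) + 2\ord_p(\ell)$ versus $\alpha$. When $v \geq \alpha$ the equation $y^2 \equiv 0 \pmod{p^{\alpha}}$ forces $y \in p^{\lceil \alpha/2 \rceil}\Z_p$; solving for $\xi$ yields the first case of \eqref{e:Omega_p}, where $\omega = 1$ precisely when $t$ is odd so that $(t + p^{\lceil \alpha/2 \rceil}\omega)/2$ is an integer (this is exactly the ``$p$ and $t$ odd'' clause in the definition of $\omega$). When $v < \alpha$, an even-valuation comparison forces $\ord_p(d)$ to be even, hence equal to $0$ since $d$ is a fundamental discriminant; a Hensel-type lift then shows solutions exist iff $\left(\frac{d}{p}\right) = 1$, in which case there are exactly two residue classes $y \equiv \pm\ell\sqrt{d} \pmod{p^{\alpha - \ord_p(\ell)}}$ giving the second case of the lemma. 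The remaining configurations ($v < \alpha$ with $p \mid d$ or $\left(\frac{d}{p}\right) = -1$) are then ruled out, producing the empty third case.

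For $p = 2$ the argument is structurally parallel but must accommodate the three possible values $\ord_2(d) \in \{0, 2, 3\}$ of a fundamental discriminant and the coarser structure of squares in $(\Z/2^k)^{\times}$: for $k \geq 3$ a $2$-adic unit is a square iff it is $\equiv 1 \pmod 8$, and there are then four square roots modulo $2^k$. I would split by the parity of $t$ (which is constrained modulo $4$ by the identity $t^2 \equiv d\ell^2 + 4n$), and within each branch compare $\ord_2(y^2)$ with $\ord_2(d\ell^2)$ inside the modulus $2^{\alpha+2}$. The offset $\max\{\ord_2(d) - 1, 0\}$ in the Case~1 threshold absorbs both the extra factor of $4$ from completing the square and the parity of $\ord_2(d\ell^2)$, while the three listed subcases defining $\omega = 1$ correspond exactly to the boundary situations where the forced $2$-adic valuation of $y$ is one less than $\lceil (\alpha+2)/2 \rceil$, so that $\xi$ must be shifted by $2^{\lceil \alpha/2 \rceil - 1}$. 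I expect the main obstacle to be this $p = 2$ bookkeeping: verifying that the three clauses for $\omega = 1$ arise in exactly the correct combinations of $\alpha$, $\ord_2(t^2 - 4n)$, and $\ord_2(d)$, and that no residue class is double-counted or missed at the transitions between the regimes $\ord_2(d) \in \{0, 2, 3\}$.
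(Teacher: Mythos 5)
Your overall plan is the paper's plan: complete the square, reducing $\xi^2 - t\xi + n \equiv 0 \pmod{p^\alpha}$ to $y^2 \equiv d\ell^2$ mod $p^{\alpha}$ (or mod $2^{\alpha+2}$ when $p=2$), and then compare $2\ord_p(y)$ to $\ord_p(d\ell^2)$. For $p$ odd your argument is essentially complete and correct, and your remark about why $\omega=1$ for $t$ odd (to make the listed representative an ordinary integer) is the right reading of what $\omega$ is for.

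The genuine gap is exactly the part you flag and then decline to do: the $p=2$ case. The lemma's first-case threshold $\ord_p(t^2-4n)\geq\alpha+\max\{\ord_p(d)-1,0\}$ is \emph{not} the naive $\ord_2(d\ell^2/4)\geq\alpha$ (i.e.\ $\ord_2(d\ell^2)\geq\alpha+2$): that naive comparison is correct only when $8\mid d$. When $\ord_2(d)=2$ the threshold drops to $\alpha+1$, and when $d$ is odd it drops to $\alpha$; in each instance the boundary values $\alpha=\ord_2(d\ell^2)-1$ and (for $d$ odd) $\alpha=\ord_2(d\ell^2)$ still give a nonempty solution set, but with the whole set shifted by $2^{\lceil\alpha/2\rceil-1}$ — this is where $\omega=1$ comes from, and it is precisely the part where one has to check that $d/4$ is odd exactly when $\ord_2(d)=2$, that $d/4\equiv 2,3\pmod 4$ forces emptiness once $\alpha-2\ord_2(\ell)\geq2$, and so on. Without working this out you have not proved the stated formula; you have only asserted that the bookkeeping should come out right.

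Two smaller points. First, with $y=2\xi-t$ one has $y\equiv -t\equiv t\pmod 2$, so $y$ and $t$ have the \emph{same} parity, not opposite as you wrote; what is true (and what the paper uses) is that $\xi$ and $t$ have opposite parity once solutions exist. Second, your plan to ``split by the parity of $t$'' is unnecessary: since $n\in\{\pm1\}$ is odd, $\xi(\xi-t)\equiv -n\pmod 2$ forces both $\xi$ and $\xi-t$ odd, hence $t$ even, so the odd-$t$ branch is vacuous for $\alpha\geq 1$ (and the lemma's ``otherwise $\emptyset$'' clause already covers it, since then $\ord_2(t^2-4n)=0$ and $(\tfrac d2)=-1$). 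You should state this explicitly rather than leave the impression that both parities require analysis.
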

%%%
\begin{proof}
Suppose first that $p$ is odd.
For $\alpha\geq 1$, since $\xi\in \Omega(p^{\alpha}; n, t)$,
we have
$$
\xi^2-t\xi+n \equiv \left(\xi-\frac{t}{2}\right)^2 -\frac{t^2}{4}+n
\equiv 0\pmod*{p^{\alpha}},
$$
so that
\begin{equation}\label{e:eq_p_comp}
\left(\xi-\frac{t}{2}\right)^2 \equiv \frac{d\ell^2}{4} \pmod*{p^{\alpha}}.
\end{equation}
When $\ord_p(d\ell^2) \geq \alpha$ this yields
$\left(\xi-\frac{t}{2}\right)^2\equiv 0\pmod*{p^{\alpha}}$. 
For some $u\in \Z_{p}$, we have
$$
\xi = \frac{t}{2}+p^{\lc\frac{\alpha}{2}\rc} u. 
$$
Next consider the case when $\ord_p(d\ell^2) < \alpha$.
We must have
$p\nmid d$ and $\ord_p(d\ell^2)=2\ord_p(\ell) < \alpha$ 
for $\xi$ to satisfy \eqref{e:eq_p_comp}. 
In this case \eqref{e:eq_p_comp} has a solution only if $\left(\frac{d}{p}\right)=1$,
and then $\ord_p\left(\xi-\frac{t}{2}\right)=\ord_p(\ell) = \frac{\ord_p(d\ell^2)}{2}$.
Therefore, for some $u\in \Z_p$, 
$$
\xi = \frac{t}{2}\pm \frac{\sqrt{d}\ell}{2} + p^{\alpha-\ord_p(\ell)} u.
$$

Suppose now that $p=2$.
Since $n\in \{\pm1\}$ and
$$
	\xi^2-t\xi+n = \xi(\xi-t) +n\equiv 0\pmod*{2^{\alpha}},
$$
both $\xi$ and $\xi-t$ must be odd,
so $t$ is even.
Note that when $\alpha=1$ and $t$ is even,
$$
	\Omega(2; n, t)
	=
	\left\{1+2u\;:\; u\in \Z_2\right\}.
$$
From now on, we assume that $\alpha\geq 2$ and $t$ is even.
Then $d\ell^2 = t^2-4n = 4\left(\frac{t^2}{4}-n\right)$, so $4\mid d$ or $2\mid \ell$.
Therefore
\begin{equation}\label{e:eq_2_comp}
	\left(\xi-\frac{t}{2}\right)^2 \equiv \frac{t^2-4n}{4} \equiv
	\frac{d\ell^2}{4} \pmod*{2^{\alpha}}.
\end{equation}
If $\ord_2(d\ell^2) -2 \geq \alpha$, we have
$\left(\xi-\frac{t}{2}\right)^2\equiv 0\pmod*{2^{\alpha}}$,
so
$$
	\Omega(2^{\alpha}; n, t)
	=
	\left\{
	\frac{t}{2}+2^{\lc\frac{\alpha}{2}\rc} u\;:\; u\in \Z_2
	\right\}.
$$

Next consider the case when $\ord_2(d\ell^2)-2 < \alpha$. 
When $4\mid d$, we have $(\ord_2(d)-2) + 2\ord_2(\ell) < \alpha$.
Dividing both sides of \eqref{e:eq_2_comp} by $2^{2\ord_2(\ell)}$, we get
$$
	\left(\frac{\xi-\frac{t}{2}}{2^{\ord_2(\ell)}}\right)^2
	\equiv
	\frac{d}{4}\left(\frac{\ell}{2^{\ord_2(\ell)}}\right)^2
	\pmod*{2^{\alpha-2\ord_2(\ell)}}.
$$
Note that $d/4\equiv_4 2$ or $3$, i.e.\ $d/4$ is not a square modulo $4$,
which implies that $\Omega(2^{\alpha}; n, t)=\emptyset$ if $\alpha-2\ord_2(\ell)\geq 2$.
If $\alpha-2\ord_2(\ell)=1$ then
$$
	\frac{\xi-\frac{t}{2}}{2^{\ord_2(\ell)}}
	\equiv
	\frac{d}{4} \pmod*{2}.
$$
Thus, when $\alpha=2\ord_2(\ell)+1$ and $4\mid d$, we get
$$
	\Omega(2^{\alpha}; n, t)
	=
	\left\{\frac{t}{2}+2^{\ord_2(\ell)}\left(\frac{d}{4} + 2u\right)\;:\;
	u\in \Z_2\right\}.
$$
When $d\equiv 1\pmod*{4}$, we have $2\ord_2(\ell) -2< \alpha$.
Dividing both sides of \eqref{e:eq_2_comp} by $2^{2\ord_2(\ell) -2}$,
we get
$$
	\left(\frac{\xi-\frac{t}{2}}{2^{\ord_2(\ell)-1}}\right)^2
	\equiv
	d\left(\frac{\ell}{2^{\ord_2(\ell)}}\right)^2
	\pmod*{2^{\alpha-2\ord_2(\ell)+2}}.
$$
When $\alpha=2\ord_2(\ell)-1$ or $\alpha=2\ord_2(\ell)$, we have
$$
	\Omega(2^{\alpha}; n, t)
	=
	\left\{\frac{t}{2}+2^{\ord_2(\ell)-1} (1+2u)\;:\; u\in \Z_2\right\}.
$$
For $\alpha>2\ord_2(\ell)$, following a similar procedure to the case when $p$ is odd, 
we get 
$$
\Omega(2^{\alpha}; n, t)
=
\left\{ \frac{t}{2}\pm \frac{\sqrt{d}\ell}{2}+2^{\alpha-\ord_2(\ell)} u\;:\; u\in \Z_2 \right\}.
$$
\end{proof}
%%%

Next, by Lemma~\ref{lem:ntP}, if $\langle n, t\rangle\in \{\pm1\}\times \Z$
satisfies $\sqrt{t^2-4n}\notin \Q$ and $\langle n, t\rangle\notin P$,
then there is a prime $p\mid N$ such that either
\begin{itemize}
	\item $\left(\frac{d}{p}\right)=-1$ and $e_p>2\ord_p(\ell)$;
	or
	\item $p\mid d$ and $e_p > 2\ord_p(\ell) + 1$.
\end{itemize}
By inspection in \eqref{e:Omega_p}, we see that the summation condition
``$\langle n, t\rangle\in P$" in \eqref{e:NEl+El_N_2} may be replaced
by summation over all $\langle n, t\rangle\in\{\pm1\}\times \Z$
satisfying $\sqrt{t^2-4n}\notin \Q$, as in \eqref{e:NEll+Ell_N}.
%

%%%
\begin{lemma}\label{lem:avg_char}
Let $p$ be a prime, $s\in \Z_{\geq 0}$ and $\psi$ a primitive Dirichlet character of conductor $p^s$.
For $a, b\in \Z_{\geq 0}$, $a+b\geq s$, we have
\begin{equation}\label{e:avg_char}
	\sum_{u\pmod*{p^{b}}} \psi(x+p^{a} u)
	=
	\begin{cases}
	p^b \psi(x) & \text{if } s\leq a, \\
	0 & \text{otherwise.}
	\end{cases}
\end{equation}
\end{lemma}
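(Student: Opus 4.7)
\medskip

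\noindent\textbf{Proof proposal.} The plan is to exploit the fact that a primitive character $\psi$ modulo $p^s$ is well-defined on $\Z/p^s\Z$ (and vanishes on non-units) and use its primitivity in the form: $\psi$ does not factor through $(\Z/p^{s-1}\Z)^\times$. I would split according to whether $s\leq a$ or $s>a$.

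If $s\leq a$, then $p^s\mid p^a$, so $\psi(x+p^a u)=\psi(x)$ for every $u$, and the sum collapses to $p^b\psi(x)$. If $s>a$, I first use periodicity: as a function of $u$, the value $\psi(x+p^a u)$ depends only on $u\bmod p^{s-a}$. Since $a+b\geq s$ implies $p^{s-a}\mid p^b$, the hypothesis gives
\[
\sum_{u\pmod*{p^b}}\psi(x+p^a u)=p^{a+b-s}\sum_{u\pmod*{p^{s-a}}}\psi(x+p^a u),
\]
so it suffices to prove the reduced inner sum vanishes. Writing $y=x+p^a u$, as $u$ runs over $\Z/p^{s-a}\Z$, $y$ runs over representatives of the coset $x+p^a\Z$ inside $\Z/p^s\Z$.

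To finish I would distinguish subcases. If $a=0$, then $s\geq 1$ and the inner sum is $\sum_{y\pmod*{p^s}}\psi(y)=0$ by non-triviality of $\psi$. If $a\geq 1$ and $p\mid x$, then every $y=x+p^a u$ satisfies $p\mid y$, so $\psi(y)=0$ and the sum vanishes termwise. The remaining case is $a\geq 1$ and $\gcd(x,p)=1$. Here each $y$ in the coset is a unit mod $p^s$, so I factor $y=xz$ with $z\equiv 1\pmod*{p^a}$; this identifies the inner sum with $\psi(x)\sum_{z\in H}\psi(z)$, where $H=\{z\in(\Z/p^s\Z)^\times:z\equiv 1\pmod*{p^a}\}$ is a subgroup. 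The key point, and the only place where primitivity of $\psi$ is genuinely used, is that $\psi\bigr|_H$ must be non-trivial: otherwise $\psi$ would factor through $(\Z/p^s\Z)^\times/H\cong(\Z/p^a\Z)^\times$ and thus be induced from a character of conductor dividing $p^a<p^s$, contradicting primitivity. Hence $\sum_{z\in H}\psi(z)=0$ by orthogonality of characters of $H$. This last subcase is the main (and essentially only) substantive step; the rest is bookkeeping of the periodicity argument.
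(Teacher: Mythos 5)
Your proof is correct, and it takes a genuinely different route from the paper's. The paper opens with the Gauss-sum expansion $\psi(x+p^au)=\tau(\overline\psi)^{-1}\sum_{\alpha\pmod*{p^s}}\overline{\psi(\alpha)}\,e^{2\pi i(x+p^au)\alpha/p^s}$, swaps the order of summation, evaluates the resulting geometric sum in $u$, and invokes $\psi(\alpha)=0$ for $p\mid\alpha$ to kill the surviving $\alpha$. You instead avoid Fourier analysis entirely: you use periodicity to shrink the outer sum to $u\pmod*{p^{s-a}}$, identify the values $x+p^au$ (when $p\nmid x$ and $a\ge1$) as a coset $xH$ of the subgroup $H=1+p^a\Z/p^s\Z$ of $(\Z/p^s\Z)^\times$, and then use primitivity in the form that $\psi|_H$ cannot be trivial, so the character sum over $H$ vanishes by orthogonality. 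The degenerate subcases ($a=0$, or $p\mid x$ with $a\ge1$) you correctly dispatch separately — the former is the full character sum over $(\Z/p^s\Z)^\times$, the latter vanishes termwise. The paper's approach is shorter once one accepts the nonvanishing $\tau(\overline\psi)\ne0$ for primitive $\psi$ (itself a theorem), whereas yours is more self-contained and makes explicit exactly which subgroup primitivity must see; both are fine proofs.
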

%%%
\begin{proof}
Since $\psi$ is primitive, $\tau(\overline{\psi})\neq 0$ and we have
$$
	\psi(x+p^a u)
	=
	\frac{1}{\tau(\overline{\psi})}
	\sum_{\alpha\pmod*{p^s}} \overline{\psi(\alpha)} e^{2\pi i (x+p^a u) \frac{\alpha}{p^s}}.
$$
Then
$$
	\sum_{u\pmod*{p^b}} \psi(x+p^a u)
	=
	\frac{1}{\tau(\overline{\psi})}
	\sum_{\alpha\pmod*{p^s}} \overline{\psi(\alpha)} e^{2\pi ix\frac{\alpha}{p^s}}
	\begin{cases}
	\sum_{u\pmod*{p^b}} e^{2\pi i\alpha\frac{u}{p^{s-a}}} & \text{if } s> a, \\
	p^b & \text{if } s\leq a.
	\end{cases}
$$
Since $a+b\geq s$, if $s>a$,
$$
	\sum_{u\pmod*{p^b}} e^{2\pi i \alpha \frac{u}{p^{s-a}}}
	=
	\sum_{u\pmod*{p^b}} e^{2\pi i \alpha p^{a+b-s} \frac{u}{p^b}}
	=
	\begin{cases}
	p^b & \text{if } p^{s-a}\mid \alpha, \\
	0 & \text{otherwise.}
	\end{cases}
$$
Since $\psi(\alpha)=0$ when $s>0$ and $p\mid \alpha$, we get \eqref{e:avg_char}.
\end{proof}
%%%

\begin{lemma}\label{lem:boldSp}
For $t\in\Z$ and $n\in \{\pm1\}$, let $t^2-4n = d\ell^2$
where $d$ is a fundamental discriminant and $\ell\in\Z_{>0}$.
For $e=\ord_p(N)\geq 1$ and $s=\ord_p(\cond(\chi))$,
let $h = \max\{2s-1, e\}$, $g=\ord_p(t^2-4n)$ and $f=\frac{g-\ord_p(d)}{2}=\ord_p(\ell)$.

When $p$ is odd, we have
\begin{multline}\label{e:boldSp_odd}
	\boldS_p(p^e, \chi; t, n)
	\\
	=
	\delta_{g\geq h>0} \chi\!\left(\frac{t+p^s \delta_{2\nmid t}}{2}\right)
	\bigg\{
	p^{e-1} \big(p^{f-\lf\frac{h-1}{2}\rf}+p^{f-\lf\frac{h}{2}\rf}\big)
	+
	\left(1-\left(\frac{d}{p}\right)\right) \frac{p^{e-1}}{p-1}
	\big(p^{f-\lf\frac{h-1}{2}\rf} + p^{f-\lf\frac{h}{2}\rf} - p-1\big)
	\bigg\}
	\\
	+
	\delta_{\substack{g\leq h-1, \\ \left(\frac{d}{p}\right)=1}}
	\left(\chi\!\left(\frac{t+\sqrt{d}\ell}{2}\right)+ \chi\!\left(\frac{t-\sqrt{d}\ell}{2}\right)\right)
	p^{f+\min\{e-s, f\}}.
\end{multline}
When $p=2$ we have $\boldS_2(2^e, \chi; t, n)=0$ if $t$ is odd, while if $t$ is even then
\begin{multline}\label{e:boldSp_even}
	\boldS_2(2^e, \chi; t, n)
	\\
	=
	\chi\!\left(\frac{t}{2}\right)
	\begin{cases}
	\delta_{2\nmid d} 2^{e-1}
	\big(2^{\frac{g}{2}-\lf\frac{h-1}{2}\rf} +2^{\frac{g}{2}-\lf\frac{h}{2}\rf}\big)
	\\
	+
	\left(1-\left(\frac{d}{2}\right)\right) 2^{e-1}
	\big(2^{\lf\frac{g}{2}\rf-\lf\frac{h-1}{2}\rf} + 2^{\lf\frac{g}{2}\rf-\lf\frac{h}{2}\rf}-3\big)
	& \text{if } g\geq h+1 \text{ and } 2\nmid g, \\
	& \text{or } g\geq \max\{h, 2s+2\} \text{ and } 2\mid g, \\
	-\delta_{2\nmid d} 2^{e+1} - \left(1-\left(\frac{d}{2}\right)\right) 2^{e-1}
	& \text{if } g=2s\geq e+1, \\
	-3\cdot 2^{e-1} & \text{if } g=2s=e \text{ and } 2\nmid d, \\
	0 & \text{otherwise}
	\end{cases}
	\\
	+
	\delta_{\substack{g\leq h-1, \\ \left(\frac{d}{2}\right)=1}}
	\left(\chi\!\left(\frac{t+\sqrt{d}\ell}{2}\right) + \chi\!\left(\frac{t-\sqrt{d}\ell}{2}\right)\right)
	2^{f+\min\{e-s, f\}}.
\end{multline}
\end{lemma}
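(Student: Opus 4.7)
The plan is to substitute the explicit description of $\Omega(p^\alpha; n, t)$ from Lemma~\ref{lem:Omega_p} into the definition \eqref{e:boldSp_0} of $\boldS_p(p^e, \chi; t, n)$, and then to evaluate the resulting character sums using Lemma~\ref{lem:avg_char}. Each summand in \eqref{e:boldSp_0} is of the form
\[
S(\alpha, M) \;:=\; \sum_{\xi\in\Omega(p^\alpha;n,t)/p^M}\chi(\xi),
\]
where $\alpha$ is one of $e+2f$, $e+2f+1$, or $e+2f-2k+\beta$ (with $1\le k\le f$ and $\beta\in\{0,1\}$) and $M\in\{e+f, e+f-k\}$.

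First, by Lemma~\ref{lem:Omega_p}, $\Omega(p^\alpha;n,t)$ lies in one of three regimes: (i) a single coset of $p^{\lceil\alpha/2\rceil}\Z_p$ centred at $\tfrac{t+p^{\lceil\alpha/2\rceil}\omega}{2}$, when $g\ge\alpha+\max\{\ord_p(d)-1,0\}$; (ii) a pair of cosets of $p^{\alpha-f}\Z_p$ centred at $\tfrac{t\pm\sqrt{d}\ell}{2}$, when $g<\alpha$ and $\left(\tfrac{d}{p}\right)=1$; (iii) empty otherwise. In the non-empty regimes, Lemma~\ref{lem:avg_char} collapses $S(\alpha,M)$ to a single value of $\chi$ at the coset centre, times an explicit power of $p$, provided the step size of the coset is at least $p^s$; otherwise $S(\alpha,M)=0$.

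Next, the two cases $g\ge h$ and $g\le h-1$ in the conclusion correspond respectively to whether the dominant terms (with $\alpha\in\{e+2f, e+2f+1\}$) fall in regime (i) or regime (ii). When $g\ge h$, summing the $\chi$-weighted geometric series in $k$ combines the $k=0$ contribution with the $k\ge 1$ contributions weighted by $(p-\left(\tfrac{d}{p}\right))p^{k-1}$ to produce the $\frac{p^{e-1}}{p-1}$-coefficient visible in \eqref{e:boldSp_odd}; the factor $\chi\!\left(\tfrac{t+p^s\delta_{2\nmid t}}{2}\right)$ emerges because the character only depends on the centre modulo a sufficiently high power of $p$. When $g\le h-1$, the step-size condition in Lemma~\ref{lem:avg_char} kills every intermediate contribution, and only the two split-centre terms survive, yielding the factor $p^{f+\min\{e-s,f\}}$ after combining the $k=0$ and $k\ge 1$ pieces.

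The principal obstacle is the case $p=2$. There the value of $\omega$ in Lemma~\ref{lem:Omega_p} depends on the parity of $g$, the residue of $d$ modulo $8$, and the precise relation between $\alpha$ and $g$; this forces the ``big'' regime to split into the sub-cases $g$ odd; $g$ even with $g\ge\max\{h,2s+2\}$; $g=2s\ge e+1$; and $g=2s=e$ with $2\nmid d$, matching exactly the six-line case split inside the large brace of \eqref{e:boldSp_even}. In particular, the coefficient $-3\cdot 2^{e-1}$ on the line $g=2s=e$, $2\nmid d$ arises from a delicate cancellation between the $\alpha=e+2f$ term (where $\omega$ is forced to be $1$), the $\delta_{p\mid d}$-term (which vanishes since $d$ is odd), and the $k\ge 1$ contributions. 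Verifying these signs and multiplicities is the main arithmetic hurdle; once done, identical but simpler bookkeeping gives \eqref{e:boldSp_odd} for $p$ odd.
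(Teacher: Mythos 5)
Your plan coincides with the paper's own proof: both substitute Lemma~\ref{lem:Omega_p}'s parametrization of $\Omega(p^\alpha;n,t)$ into the defining formula~\eqref{e:boldSp_0} for $\boldS_p$, collapse each inner character sum via Lemma~\ref{lem:avg_char}, sum the resulting geometric series over $k$ and $\beta$, and handle $p=2$ separately because of the $\omega$-dependence.

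One factual slip in your sketch of the subcase $g=2s=e$, $2\nmid d$: the sign-flipping $\omega$ is not forced to equal $1$ in the $\alpha=e+2f$ (i.e.\ $k=0$, $\beta=0$) summand. There $f=g/2=e/2$, so $\alpha=2e>g$ and that summand falls into the second or third regime of Lemma~\ref{lem:Omega_p}, where $\omega$ does not enter at all; what matters there is whether $\bigl(\tfrac{d}{2}\bigr)=1$. The $\omega=1$ flip actually occurs in the interior summand $k=f=e/2$, $\beta=0$, for which $\alpha=g=2s$; there the coset centre shifts to $\tfrac{t}{2}+2^{s-1}$, and primitivity of $\chi$ modulo $2^s$ gives $\chi\bigl(\tfrac{t}{2}+2^{s-1}\bigr)=-\chi\bigl(\tfrac{t}{2}\bigr)$. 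Locating this correctly matters: when $\bigl(\tfrac{d}{2}\bigr)=-1$ it is the only nonzero summand, producing $-3\cdot 2^{e-1}\chi(t/2)$ directly; when $\bigl(\tfrac{d}{2}\bigr)=1$ the remaining third-regime summands form a geometric series that combines with it to give the same coefficient.
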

%%%
\begin{proof}
By Lemma~\ref{lem:Omega_p}, setting $\alpha=e+2(f-k)+\beta$ for $\beta\in\{0, 1\}$ and $0\leq k \leq f$ in \eqref{e:Omega_p},
we get
\begin{multline*}
	\Omega(p^{e+2(f-k)+\beta}; n, t)/p^{e+f-k}
	\\
	=
	\begin{cases}
	\left\{\frac{t+p^{\lc\frac{e+\beta}{2}\rc+f-k} \omega}{2} +
	p^{\lc\frac{e+\beta}{2}\rc +f-k} u\;:\; u\pmod*{p^{\lc\frac{e+\beta}{2}\rc-\beta}}\right\}
	& \text{if } 2k+\ord_p(d) \geq e+\beta+\delta, \\
	\left\{\frac{t\pm \sqrt{d}\ell}{2}+p^{e+\beta+f-2k} u\;:\; u\pmod*{p^{k-\beta}}\right\}
	& \text{if } 2k \leq e+\beta-1 \text{ and } \left(\frac{d}{p}\right)=1, \\
	\emptyset & \text{otherwise,}
	\end{cases}
\end{multline*}
where $\delta=\max\{\ord_p(d)-1, 0\}$ and
$$
	\omega = \begin{cases}
	1
	& \text{if } p=2, 2\nmid e+\beta, k=\frac{e+\beta+1}{2}-\frac{\ord_2(d)}{2}\leq f
	\text{ and } 8\nmid d, \\
	& \text{or } p=2, 2\mid e+\beta, k=\frac{e+\beta}{2}\leq f
	\text{ and } 2\nmid d, \\
	& \text{or } p\neq 2 \text{ and } t\text{ odd, }\\
	0 & \text{otherwise.}
	\end{cases}
$$

When $p$ is odd, applying Lemma~\ref{lem:avg_char},
\begin{multline*}
	\sum_{\xi\in \Omega(p^{e+2(f-k)+\beta}; n, t)/p^{e+f-k}} \chi(\xi)
	\\
	=
	\begin{cases}
	p^{\lf\frac{e+\beta}{2}\rf-\beta}
	\chi\!\left(\frac{t+\delta_{2\nmid t} p^s}{2}\right)
	& \text{if } \lc\frac{e+\beta-\ord_p(d)}{2}\rc\leq k\leq f+\lc\frac{e+\beta}{2}\rc-\lc\frac{h+\beta}{2}\rc, \\
	p^{k-\beta}
	\left(\chi\!\left(\frac{t+\sqrt{d}\ell}{2}\right) + \chi\!\left(\frac{t-\sqrt{d}\ell}{2}\right)\right)
	& \text{if } k\leq\min\{\lf\frac{e+\beta+f-s}{2}\rf, \lf\frac{e-1+\beta}{2}\rf, f\}
	\text{ and } \left(\frac{d}{p}\right)=1, \\
	0 & \text{otherwise.}
	\end{cases}
\end{multline*}
Recalling \eqref{e:boldSp_0}, we have
\begin{multline*}
	\boldS_p(p^e, \chi; t, n)
	\\
	=
	\delta_{\left(\frac{d}{p}\right)=1}
	\left(\chi\!\left(\frac{t+\sqrt{d}\ell}{2}\right) + \chi\!\left(\frac{t-\sqrt{d}\ell}{2}\right)\right)
	\bigg\{
	1+\sum_{\beta\in \{0, 1\}} p^{-1-\beta}(p-1)
	\sum_{k-1}^{\lf\frac{\min\{e+\beta+f-s, e-1+\beta, 2f\}}{2}\rf}
	p^{2k}
	\bigg\}
	\\
	+
	\chi\!\left(\frac{t+\delta_{2\nmid t} p^s}{2}\right)
	\left(p-\left(\frac{d}{p}\right)\right)
	\sum_{\beta\in \{0, 1\}} p^{\lf\frac{e+\beta}{2}\rf-\beta-1}
	\sum_{k=\lc\frac{e+\beta-\ord_p(d)}{2}\rc}^{f+\lc\frac{e+\beta}{2}\rc-\lc\frac{h+\beta}{2}\rc}
	p^k.
\end{multline*}
Since
\begin{multline}\label{e:sum_d/p=1}
	1+\sum_{\beta\in \{0, 1\}} p^{-1-\beta}(p-1)
	\sum_{k-1}^{\lf\frac{\min\{e+\beta+f-s, e-1+\beta, 2f\}}{2}\rf}
	p^{2k}
	\\
	=
	p^{\min\{e+f-s, e-1, 2f\}}
	=
	\begin{cases}
	p^{e-1} & \text{if } g\geq h, \\
	p^{\min\{e+f-s, 2f\}} & \text{if } g\leq h-1
	\end{cases}
\end{multline}
and
\begin{multline*}
	\sum_{\beta\in \{0, 1\}} p^{\lf\frac{e+\beta}{2}\rf-\beta-1}
	\sum_{k=\lc\frac{e+\beta-\ord_p(d)}{2}\rc}^{f+\lc\frac{e+\beta}{2}\rc-\lc\frac{h+\beta}{2}\rc}
	p^k
	\\
	=
	p^{e-1} \frac{p^{\max\{0, \lf\frac{g}{2}\rf-\lc\frac{h}{2}\rc+1\}}-1}{p-1}
	+
	p^{e-1-\ord_p(d)} \frac{p^{\max\{0, \lc\frac{g}{2}\rc -\lf\frac{h}{2}\rf\}}-1}{p-1}
	\\
	=
	\begin{cases}
	p^{e-1} \frac{p^{\lf\frac{g}{2}\rf-\lc\frac{h}{2}\rc+1}-1}{p-1}
	+
	p^{e-1-\ord_p(d)} \frac{p^{\lc\frac{g}{2}\rc-\lf\frac{h}{2}\rf}-1}{p-1}
	& \text{if } g\geq h, \\
	0 & \text{if } g\leq h-1,
	\end{cases}
\end{multline*}
we get \eqref{e:boldSp_odd}.

Now assume $p=2$.
If $t$ is odd then $d\equiv5$ mod $8$; thus $\bigl(\frac d2\bigr)=-1$,
and $\Omega(p^\alpha;n,t)=\emptyset$ for all $\alpha\geq1$
by Lemma \ref{lem:Omega_p};
therefore all sums in \eqref{e:boldSp_0} are empty, 
and so $\boldS_2(2^e, \chi; t, n)=0$.
From now on we assume $t$ is even.
Applying Lemma~\ref{lem:avg_char}, we get
\begin{multline*}
	\sum_{\xi\in \Omega(2^{e+2(f-k)+\beta}; n, t) /2^{e+f-k}} \chi(\xi)
	\\
	=
	\begin{cases}
	2^{\lf\frac{e+\beta}{2}\rf-\beta} \chi\!\left(\frac{t}{2}\right)
	& \text{if } \lc\frac{e+\beta-\delta_{4\mid d}}{2}\rc \leq k \leq f+\lc\frac{e+\beta}{2}\rc -\lc\frac{h+\beta}{2}\rc, \\
	& \text{and } \left[2\nmid g \text{ or } 2\mid g \text{ and } \frac{g}{2}-1\right], \\
	2^{\lf\frac{e+\beta}{2}\rf-\beta} \chi\!\left(\frac{t}{2}+2^{s-1}\right)
	& \text{if } g=2s, k=\lc\frac{e+\beta}{2}\rc \leq f\text{ and } 2\nmid d, \\
	& \text{or } g=2s, 2\nmid e+\beta-1, k=\frac{e+\beta-1}{2}\leq f\text{ and } 4\mid d, \\
	2^{k-\beta} \left(\chi\!\left(\frac{t+\sqrt{d}\ell}{2}\right) + \chi\!\left(\frac{t-\sqrt{d}\ell}{2}\right)\right)
	& \text{if } 2k \leq \min\{e+\beta+f-s, e+\beta-1, 2f\}
	\text{ and } \left(\frac{d}{2}\right)=1, \\
	0 & \text{otherwise.}
	\end{cases}
\end{multline*}
Note that if $2\nmid d$ then $d\equiv_41$.
So for $s=2$ and $\left(\frac{d}{2}\right)=1$, $\sqrt{d}\pmod*{4}$ is well defined.
Furthermore, for $2\nmid d$, since $4\mid d\ell^2$, so $f=\ord_p(\ell) \geq 1$.
Also note that since $(x+2^{s-1})^2\equiv x^2\pmod*{2^{s}}$ for $s\geq 2$,
we have $\chi(x+2^{s-1}) = \pm \chi(x)$.
Because $\chi$ is primitive, we get
$$
	\chi(x+2^{s-1}) = -\chi(x).
$$

By \eqref{e:boldSp_0},
\begin{multline*}
	\boldS_2(2^e, \chi; t, n)
	\\
	=
	\delta_{\left(\frac{d}{2}\right)=1}
	\left(\chi\!\left(\frac{t+\sqrt{d}\ell}{2}\right) + \chi\!\left(\frac{t-\sqrt{d}\ell}{2}\right)\right)
	\bigg\{ 1+ \sum_{\beta\in \{0, 1\}} \sum_{k=1}^{\lf\frac{\min\{e+\beta+f-s, e+\beta-1, 2f\}}{2}\rf}
	2^{2k-1-\beta}\bigg\}\\
	+
	\delta_{\substack{2\nmid g \\ \text{ or } 2\mid g \text{ and } s\leq \frac{g}{2}-1}}
	\left(2-\left(\frac{d}{2}\right)\right)
	\chi\!\left(\frac{t}{2}\right)
	\sum_{\beta\in\{0, 1\}} 2^{\lf\frac{e+\beta}{2}\rf-\beta}
	\sum_{k=\lc\frac{e+\beta-\delta_{4\mid d}}{2}\rc}^{f+\lc\frac{e+\beta}{2}\rc-\lc\frac{h+\beta}{2}\rc} 2^{k-1}
	\\
	-
	\delta_{g=2s} \chi\!\left(\frac{t}{2}\right)
	\left(2-\left(\frac{d}{2}\right)\right)
	\sum_{\beta\in \{0, 1\}} 2^{\lf\frac{e+\beta}{2}\rf-\beta}
	\begin{cases}
	2^{\lc\frac{e+\beta}{2}\rc-1} & \text{if } \lc\frac{e+\beta}{2}\rc\leq f \text{ and } 2\nmid d, \\
	2^{\frac{e+\beta-1}{2}-1} & \text{if } \frac{e+\beta-1}{2}\leq f \text{ and } 4\mid d, \\
	0 & \text{otherwise.}
	\end{cases}
\end{multline*}
Applying \eqref{e:sum_d/p=1} and
\begin{multline*}
	\sum_{\beta\in \{0, 1\}} 2^{\lf\frac{e+\beta}{2}\rf-\beta-1}
	\sum_{k=\lc\frac{e+\beta-\delta_{4\mid d}}{2}\rc}^{f+\lc\frac{e+\beta}{2}\rc-\lc\frac{h+\beta}{2}\rc} 2^k
	\\
	=
	\begin{cases}
	2^{e-1-\delta_{4\mid d} }
	\big(2^{\lf\frac{g}{2}\rf-\lf\frac{h-1}{2}\rf} + 2^{\lf\frac{g}{2}\rf-\lf\frac{h}{2}\rf} -2-\delta_{4\mid d}\big)
	& \text{if } g\geq h+\delta_{4\mid d}, \\
	0 & \text{otherwise,}
	\end{cases}
\end{multline*}
we get \eqref{e:boldSp_even}.
\end{proof}

%%%%%
\subsection{Cusps}\label{ss:cusps}
One easily verifies that every cusp of $\Gamma^+ = \Gamma_0(N)$ is equivalent to a cusp of the form
\begin{equation}\label{e:cusp_N}
	\tfrac{a}{c}, \; a, c\in \Z, \; c\geq 1, \; c\mid N \text{ and } \gcd(c, a)=1.
\end{equation}
Any two such cusps $\frac{a_1}{c_1}$ and $\frac{a_2}{c_2}$ are $\Gamma^+$-equivalent if and only if
$$
	c_1 = c_2 \text{ and } a_1\equiv_{\gcd(c_1, N/c_1)} a_2
$$
and we then write $\frac{a_1}{c_1}\sim_{\Gamma^+} \frac{a_2}{c_2}$.
We fix, once and for all, a set $C_\Gamma$ containing exactly one representative
$\frac{a}{c}$ satisfying \eqref{e:cusp_N} for each cusp class, i.e.\
$$
	C_\Gamma
	=
	\left\{\tfrac{a}{c}\;:\; a, c\in \Z, \; c\geq 1, \; c\mid N, \; \gcd(a, c)=1\right\}
	/\sim_{\Gamma^+}.
$$
We make our choice of $C_\Gamma$ in such a way that
\begin{equation}\label{e:CGamma_gcd}
\begin{cases}
\text{for every $\frac ac\in C_\Gamma$ with $\gcd(c,N/c)>2$, also $\:\frac{-a}c\in C_\Gamma$;}
\\[5pt]
\text{for every $c\mid N$ with $\gcd(c,N/c)\leq2$, $\:\frac 1c\in C_\Gamma$.}
\end{cases}
\end{equation}
This is possible since $\frac{a}{c}\not\sim_{\Gamma^+} \frac{-a}{c}$
whenever $\gcd(c,N/c)>2$.
Note also that it follows from our choice of $C_\Gamma$ that for every $c\mid N$ with $\gcd(c,N/c)\leq2$,
$\frac1c$ is the unique cusp in $C_\Gamma$ with denominator $c$.

For each $\frac{a}{c}\in C_{\Gamma}$, we fix a choice of scaling matrix
$$
	W_{a/c} = \bpm a & b\\ c& d\ebpm\in \SL_2(\Z).
$$
We make these choices so that
$$
	W_{-a/c} = \bpm -a & b\\ c& -d\ebpm
	\text{ for } \gcd(c, N/c)>2
	\text{ and }
	W_{1/c} = \bpm 1 & 0 \\ c & 1\ebpm
	\text{ for } \gcd(c, N/c)\leq 2
$$
holds for any $\frac{a}{c}\in C_{\Gamma}$ (cf.\ \eqref{e:CGamma_gcd}).
Note that $W_{a/c}(\infty) = \frac{a}{c}$,
so $W_{a/c}^{-1}(\frac{a}{c}) = \infty$.
We also set
\begin{equation}\label{e:Na/c}
	\N_{a/c}
	=
	\bpm \sqrt{\frac{\gcd(c^2, N)}{N}} & 0 \\ 0 & \sqrt{\frac{N}{\gcd(c^2, N)}}\ebpm
	W_{a/c}^{-1}
\end{equation}
and
\begin{equation}\label{e:Ta/c}
	T_{a/c} = \N_{a/c}^{-1} \bpm1 & -1\\ 0 & 1\ebpm \N_{a/c}\in \Gamma^+.
\end{equation}
We then verify that the fixator subgroup of $\frac{a}{c}$ in $\Gamma^+$ is the cyclic subgroup generated by $T_{a/c}$.
That is, for the cusp $\eta_j = \frac{a}{c}$, $\N_{a/c}$ and $T_{a/c}$ correspond to $\N_j$ and $T_j$ in \S\ref{ss:STF_general}.
%%%
\begin{lemma}\label{lem:chi_Ta/c}
For any $\frac{a}{c}\in C_{\Gamma}$,
$$
	T_{a/c} =
	\bpm 1+ac\frac{N}{\gcd(c^2, N)} & -a^2 \frac{N}{\gcd(c^2, N)}\\
	N\frac{c^2}{\gcd(c^2, N)} & 1-ac\frac{N}{\gcd(c^2, N)}\ebpm
$$
and
\begin{equation}\label{e:chi_Ta/c}
	\chi(T_{a/c})
	=
	\chi\!\left(1-ac\frac{N}{\gcd(c^2, N)}\right).
\end{equation}
\end{lemma}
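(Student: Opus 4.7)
The proof should be a direct matrix computation, unfolding the definitions \eqref{e:Na/c} and \eqref{e:Ta/c}.

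The plan is as follows. Set $\alpha = N/\gcd(c^2,N)$, which is a positive integer, so that $\N_{a/c} = \operatorname{diag}(\alpha^{-1/2},\alpha^{1/2})\, W_{a/c}^{-1}$ and $\N_{a/c}^{-1} = W_{a/c}\operatorname{diag}(\alpha^{1/2},\alpha^{-1/2})$. The first step is to observe that the conjugation by the diagonal factor rescales the unipotent:
$$
\operatorname{diag}(\alpha^{1/2},\alpha^{-1/2})\bpm 1 & -1 \\ 0 & 1\ebpm \operatorname{diag}(\alpha^{-1/2},\alpha^{1/2}) = \bpm 1 & -\alpha \\ 0 & 1\ebpm.
$$
Consequently $T_{a/c} = W_{a/c}\bpm 1 & -\alpha \\ 0 & 1\ebpm W_{a/c}^{-1}$.

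Next, I would use the explicit formula $W_{a/c}^{-1} = \bpm d & -b \\ -c & a\ebpm$ (which holds because $ad-bc=1$) and carry out the $2\times 2$ multiplication. The cross terms in the upper-left and lower-right entries collect as $ad-bc = 1$ plus $\pm ac\alpha$, the upper-right entry becomes $-a^2\alpha$, and the lower-left becomes $c^2\alpha$. Substituting $\alpha = N/\gcd(c^2,N)$ gives precisely the matrix claimed in the lemma.

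Two small verifications remain. First, to confirm $T_{a/c}\in \Gamma_0(N)$, note that the lower-left entry $c^2\alpha = N\cdot c^2/\gcd(c^2,N)$ is a multiple of $N$, while the upper-right entry $-a^2 N/\gcd(c^2,N)$ is an integer since $\alpha\in\Z$. Second, the character formula \eqref{e:chi_Ta/c} follows immediately from the convention $\chi(\sm * & * \\ * & d\esm) = \chi(d)$ on $\Gamma^+ = \Gamma_0(N)$, applied to the lower-right entry $1-ac\,N/\gcd(c^2,N)$.

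There is no real obstacle here; the only mild subtlety is keeping track of the factor $\alpha = N/\gcd(c^2,N)$ under the diagonal conjugation, and verifying the integrality of each entry after conjugating back by $W_{a/c}\in\SL_2(\Z)$. Everything else is bookkeeping.
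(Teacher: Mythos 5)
Your proof is correct and is exactly the computation the paper has in mind; the paper's own proof is simply the one-line remark that the lemma follows from the definition \eqref{e:Ta/c}, and your write-up fills in the matrix multiplication and integrality checks that this remark leaves implicit.
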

%%%
\begin{proof}
This lemma follows from the definition of $T_{a/c}$ in \eqref{e:Ta/c}.
\end{proof}
%%%

For later use, we prove the following lemma and then compute the set of open cusps.
From now on, for each prime $p\mid N$, set $e_p = \ord_p(N)$ and $s_p = \ord_p(q)$, where $q=\cond(\chi)$.
%%%
\begin{lemma}\label{lem:varphi_gcd}
For every $x\in\R$,
\begin{multline}\label{e:varphi_gcd}
	\sum_{c\mid N, q\mid \frac{N}{\gcd(c, N/c)}}
	\varphi(\gcd(c, N/c)) (\gcd(c, N/c))^x
	\\
	=
	\prod_{p\mid N}
	\bigg[2+
	(p-1)p^{x}\frac{p^{\min\{\lf\frac{e_p}{2}\rf, e_p-s_p\}(x+1)}+p^{\min\{\lf\frac{e_p-1}{2}\rf, e_p-s_p\}(x+1)}-2}{p^{x+1}-1}\bigg].
\end{multline}
\end{lemma}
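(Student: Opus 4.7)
The plan is to verify the identity by factoring the sum multiplicatively over primes dividing $N$. Write $N=\prod_{p\mid N} p^{e_p}$ and $q=\prod_{p\mid N}p^{s_p}$, and parametrize divisors $c\mid N$ by $c=\prod_{p\mid N}p^{c_p}$ with $0\leq c_p\leq e_p$. Then $\gcd(c,N/c)=\prod_p p^{m_p}$ with $m_p=\min(c_p,e_p-c_p)$, and the condition $q\mid N/\gcd(c,N/c)$ translates locally to $m_p\leq e_p-s_p$ for every $p\mid N$. Since $\varphi$ is multiplicative, the left-hand side of \eqref{e:varphi_gcd} factors as a product over $p\mid N$ of the local sums
$$
L_p=\sum_{\substack{c_p=0\\ \min(c_p,e_p-c_p)\leq e_p-s_p}}^{e_p}\varphi(p^{m_p})p^{m_p x}.
$$

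The next step is to reorganize $L_p$ according to the value of $m_p$. The substitution $c_p=j$ with $0\leq j\leq\lfloor e_p/2\rfloor$ enumerates all $c_p$ lying in the ``lower half'' of $\{0,\dots,e_p\}$ and gives $m_p=j$; the substitution $c_p=e_p-j$ with $0\leq j\leq\lfloor(e_p-1)/2\rfloor$ enumerates the ``strict upper half'' and also gives $m_p=j$. These two ranges partition $\{0,1,\dots,e_p\}$ (one checks this separately for $e_p$ even and odd). Therefore
$$
L_p=\sum_{j=0}^{\min\{\lfloor e_p/2\rfloor,\,e_p-s_p\}}\varphi(p^j)p^{jx}
+\sum_{j=0}^{\min\{\lfloor(e_p-1)/2\rfloor,\,e_p-s_p\}}\varphi(p^j)p^{jx}.
$$

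Finally, each of these sums is evaluated by a geometric series computation. Using $\varphi(p^0)=1$ and $\varphi(p^j)=(p-1)p^{j-1}$ for $j\geq 1$,
$$
\sum_{j=0}^{M}\varphi(p^j)p^{jx}=1+(p-1)p^{x}\cdot\frac{p^{M(x+1)}-1}{p^{x+1}-1}
$$
for any integer $M\geq 0$. Applying this with $M=\min\{\lfloor e_p/2\rfloor,e_p-s_p\}$ and with $M=\min\{\lfloor(e_p-1)/2\rfloor,e_p-s_p\}$, then adding, yields exactly the bracketed local factor on the right-hand side of \eqref{e:varphi_gcd}. Multiplying over all primes $p\mid N$ completes the proof.

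There is essentially no obstacle here: the argument is pure bookkeeping. The only point where one must pause is to confirm that the two substitutions $c_p=j$ and $c_p=e_p-j$ (with the stated ranges) give a disjoint partition of $\{0,1,\dots,e_p\}$ in both parities of $e_p$, and that they each realize $m_p=j$ within the allowed range $m_p\leq e_p-s_p$ exactly when $j$ lies in the truncated range indicated. Once this indexing is set up correctly, the geometric sum evaluation is routine.
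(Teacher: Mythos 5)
Your proof is correct and follows essentially the same route as the paper: factor the left-hand side multiplicatively into local sums over primes dividing $N$, split each local sum by partitioning $\{0,\dots,e_p\}$ into the lower half ($c_p=j$, $0\le j\le\lfloor e_p/2\rfloor$) and strict upper half ($c_p=e_p-j$, $0\le j\le\lfloor (e_p-1)/2\rfloor$), and evaluate each resulting sum of $\varphi(p^j)p^{jx}$ as a truncated geometric series. Your write-up is a bit more explicit about verifying the partition in both parities, but the underlying argument is identical to the one in the paper.
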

%%%
\begin{proof}
By the multiplicativity of the Euler $\varphi$-function,
the left hand side of \eqref{e:varphi_gcd} becomes a product over primes dividing $N$:
$$
	\prod_{p\mid N} \left(\sum_{\substack{j\geq0, \\ e_p-s_p\geq \min\{j, e_p-j\}}}
	\varphi(p^{\min\{j, e_p-j\}}) p^{x\min\{j, e_p-j\}}\right).
$$
For each prime $p\mid N$, the inner sum is
\begin{multline}\label{e:varphi_gcd_p}
	\sum_{\substack{j\geq0, \\ e_p-s_p\geq \min\{j, e_p-j\}}}
	\varphi(p^{\min\{j, e_p-j\}}) p^{x\min\{j, e_p-j\}}
	\\
	=
	\sum_{j=0}^{\min\{\lf\frac{e_p}{2}\rf, e_p-s_p\}} \varphi(p^j) p^{jx}
	+
	\sum_{j=0}^{\min\{\lf\frac{e_p-1}{2}\rf, e_p-s_p\}} \varphi(p^j) p^{jx}
	\\
	=
	2+ (p-1)p^{x} \frac{p^{\min\{\lf\frac{e_p}{2}\rf, e_p-s_p\}(x+1)}-1}{p^{x+1}-1}
	+
	(p-1)p^{x} \frac{p^{\min\{\lf\frac{e_p-1}{2}\rf, e_p-s_p\}(x+1)}-1}{p^{x+1}-1}
	.
\end{multline}
\end{proof}
%%%
\begin{lemma}[cf.\ \cite{Iwa97}, Lemma~13.5]\label{lem:open_cusp}
The set of open cusps is 
\begin{equation}\label{e:open_cusp_N}
	C_{\Gamma, \chi}
	=
	\left\{\tfrac{a}{c}\in C_{\Gamma}\;:\; q\mid \frac{N}{\gcd(c, N/c)}\right\}.
\end{equation}
Its cardinality is
$$
	|C_{\Gamma, \chi}| = \sum_{c\mid N, q\mid \frac{N}{\gcd(c, N/c)}}
	\varphi(\gcd(c, N/c))
	=
	\Psi_1(N, q).
$$
Here $q=\cond(\chi)$ and $\Psi_1(N, q)$ is given in \eqref{e:Psi1}.
\end{lemma}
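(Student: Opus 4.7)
The plan is to first reduce the condition $\chi(T_{a/c})=1$ to an explicit congruence, then characterize it by local analysis at each prime $p \mid q$, and finally count the open cusps via Lemma~\ref{lem:varphi_gcd}.

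\textbf{Step 1.} By Lemma~\ref{lem:chi_Ta/c}, $\chi(T_{a/c}) = \chi(1 - ac\cdot N/\gcd(c^2,N))$. A short prime-by-prime computation (splitting on whether $2\ord_p c \leq e_p$) shows $c \cdot N/\gcd(c^2,N) = N/\gcd(c,N/c)$. Setting $g := \gcd(c, N/c)$ we obtain $\chi(T_{a/c}) = \chi(1 - aN/g)$, and the implication $q \mid N/g \Rightarrow \chi(T_{a/c})=1$ is immediate from $aN/g \equiv 0 \pmod{q}$.

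\textbf{Step 2 (main obstacle).} For the converse, suppose $q \nmid N/g$, and factor $\chi = \prod_{p \mid q} \chi_p^*$ into primitive characters modulo $p^{s_p}$. Pick any prime $p$ with $s_p > u := \ord_p(N/g)$. Using $\ord_p(N/g) = \max(\ord_p c,\, e_p - \ord_p c)$, one verifies $u \geq \lceil e_p/2 \rceil \geq 1$; moreover $p \nmid c$ would give $u = e_p \geq s_p$, contradicting $s_p > u$, so $p \mid c$ and hence $p \nmid a$. Then $1 - aN/g \equiv 1 + p^u z \pmod{p^{s_p}}$ with $\gcd(z, p) = 1$. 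The subgroup $U_u := \{x \in (\Z/p^{s_p})^\times : x \equiv 1 \pmod{p^u}\}$ is cyclic of order $p^{s_p-u}$ (automatic for $p$ odd; for $p = 2$, the case $u = 1$ can only occur with $e_2 = 2$, where $U_1 = (\Z/4\Z)^\times$ is cyclic). Primitivity of $\chi_p^*$ makes it nontrivial on the unique order-$p$ subgroup of $U_u$, hence on $U_u$; and a nontrivial character of a cyclic $p$-group is faithful on its generators, while $1 + p^u z$ with $\gcd(z, p) = 1$ is a generator of $U_u$. Therefore $\chi_p^*(1 + p^u z) \ne 1$. Globally, the nontrivial local factors at distinct primes $p$ lie in cyclic groups of coprime $p$-power orders, so $\prod_p \chi_p^*(1 - aN/g) = 1$ forces each factor to be $1$; since one is not, $\chi(T_{a/c}) \ne 1$.

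\textbf{Step 3 (counting).} Each cusp class in $C_\Gamma$ with denominator $c$ is represented by some $a \bmod g$ coprime to $g$; conversely, every such residue admits a lift with $\gcd(a, c) = 1$ by CRT (the primes $p \mid c$ with $p \nmid g$ satisfy $\ord_p c = e_p$ and can be handled independently, choosing $a \not\equiv 0 \pmod p$). Hence the number of cusp classes with denominator $c$ is $\varphi(g)$, and
$$|C_{\Gamma,\chi}| = \sum_{\substack{c \mid N \\ q \mid N/g}} \varphi(g).$$
Specializing Lemma~\ref{lem:varphi_gcd} at $x=0$ yields $\prod_{p \mid N}\bigl[p^{\min(\lfloor e_p/2\rfloor,\, e_p - s_p)} + p^{\min(\lfloor (e_p-1)/2\rfloor,\, e_p - s_p)}\bigr]$, which a short case distinction ($e_p \geq 2 s_p$ vs.\ $e_p < 2 s_p$) identifies with $\Psi_1(p^{e_p}, p^{s_p})$, completing the proof.
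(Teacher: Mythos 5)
Your proof is correct and uses the same skeleton as the paper: Lemma~\ref{lem:chi_Ta/c} to reduce openness to $\chi\bigl(1 - aN/g\bigr)=1$ with $g=\gcd(c,N/c)$, and Lemma~\ref{lem:varphi_gcd} at $x=0$ for the count. The difference is that the paper states the equivalence $\chi(1-aN/g)=1\iff q\mid N/g$ essentially as an assertion, while your Step~2 gives a genuine justification via a prime-by-prime analysis (and you correctly isolate and handle the only awkward case $p=2$, $u=1$, which forces $e_2=s_2=2$). That check is the real content of the lemma, and it is right. There is a somewhat cleaner way to see the same equivalence that avoids the local casework: since $g^2\mid N$, the map $a\mapsto 1-aN/g$ is a group homomorphism from $\Z/g\Z$ onto a cyclic subgroup $H=1+(N/g)\Z/N\Z\subset(\Z/N\Z)^\times$ of order $g$; any $a$ with $\gcd(a,c)=1$ (hence $\gcd(a,g)=1$, as $g\mid c$) maps to a generator of $H$; and $\chi$ is trivial on $H$ precisely when $\chi$ factors through $(\Z/(N/g)\Z)^\times$, i.e.\ $\cond(\chi)\mid N/g$. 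This packages your coprime-order argument into a single statement about a cyclic subgroup; both approaches buy the same thing, but this one sidesteps the $p=2$ subtleties entirely. Your Step~3 (each denominator $c$ contributes $\varphi(g)$ cusps, via a CRT lifting argument) is also fine and matches the paper.
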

%%%
\begin{proof}
Using \eqref{e:chi_Ta/c} and $\cond(\chi)=q$ we have
$$
	\chi(T_{a/c}) = \chi\!\left(1-a\tfrac{N}{\gcd(c, N/c)}\right)=1
	\iff
	q\mid \tfrac{N}{\gcd(c, N/c)}.
$$
By Lemma~\ref{lem:varphi_gcd}, taking $x=0$, we get
$$
	|C_{\Gamma, \chi}|
	=
	\sum_{c\mid N, q\mid\frac{N}{\gcd(c, N/c)}}\varphi(\gcd(c, N/c))
	=
	\prod_{p\mid N}\bigg[p^{\min\{\lf\frac{e_p}{s}\rf, e_p-s_p\}}+p^{\min\{\lf\frac{e_p-1}{2}\rf, e_p-s_p\}}\bigg]
	=
	\Psi_1(N, q).
$$
\end{proof}
%%%

Adapting the notation from \S\ref{ss:STF_general} to our present explicit setting,
for each $\frac{a}{c}\in C_{\Gamma}$ we write $k(a/c)$ for the representative in $C_\Gamma$ for the cusp $V\frac{a}{c} = \frac{-a}{c}$.
Then by \eqref{e:CGamma_gcd},
\begin{equation}\label{e:ka/c}
	k(a/c) = \begin{cases}
	\frac{-a}{c} & \text{if } \gcd(c, N/c) >2, \\
	\frac{1}{c} & \text{if } \gcd(c, N/c)\leq 2.
	\end{cases}
\end{equation}
Furthermore, as in \eqref{e:Uj} we have fixed a choice of an element $U_{a/c}\in \Gamma^+$ satisfying
\begin{equation}\label{e:Ua/c}
	U_{a/c} = V \N_{a/c}^{-1} \bpm -1 & u \\ 0 & 1\ebpm \N_{k(a/c)}
\end{equation}
for some $u\in \R$ (which may depend on $\frac ac$).
%%%
\begin{lemma}\label{lem:Ua/c}
For $\frac{a}{c}\in C_{\Gamma}$, when $\gcd(c, N/c)>2$ we have
$$
	U_{a/c} =
	V\N_{a/c}^{-1} \bpm -1 & u \\ 0 & 1\ebpm \N_{-a/c}
	=
	\bpm -1 + au\frac{N}{\gcd(c, N/c)} & a^2 u \frac{N/c}{\gcd(c, N/c)}\\
	-Nu \frac{c}{\gcd(c, N/c)} & -1-au \frac{N}{\gcd(c, N/c)}\ebpm,
$$
for some $u\in \Z$.
When $\gcd(c, N/c)\leq 2$,
\begin{equation}\label{e:U1/c}
	U_{1/c} =
	V \N_{1/c}^{-1}
	\bpm -1 & u_1\frac{\gcd(c, N/c)}{N/c} \\ 0 & 1\ebpm
	\N_{1/c}
\end{equation}
for some $u_1\in \Z$ satisfying $cu_1\equiv_{N/c} -2$.
Furthermore, for every $\frac{a}{c}\in C_{\Gamma, \chi}$,
\begin{equation}\label{e:chi_U1/c_1}
	\chi(U_{a/c})= \chi_{\alpha(N, c)}(-1),
\end{equation}
where
\begin{equation}\label{e:alpha}
	\alpha(N, c)
	=
	\begin{cases}
	N & \text{if } \gcd(c, N/c)>2, \\
	\frac{N}{c} & \text{if } \gcd(c, N/c)=1, \\
	\frac{N}{2c} & \text{if } \gcd(c, N/c) = 2 \text{ and } \frac{N}{2c} \text{ is odd, }\\
	\frac{2N}{c} & \text{if } \gcd(c, N/c) = 2 \text{ and } \frac{N}{2c} \text{ is even.}
	\end{cases}
\end{equation}
\end{lemma}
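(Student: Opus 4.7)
The lemma has two independent components: an explicit matrix formula for $U_{a/c}$, and the evaluation $\chi(U_{a/c})=\chi_{\alpha(N,c)}(-1)$.

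For the matrix formula, I start from \eqref{e:Ua/c} and substitute \eqref{e:Na/c}. Setting $\lambda^2 = N/\gcd(c^2,N) = (N/c)/\gcd(c,N/c)$, the diagonal factor in $\N_{a/c}$ together with its inverse in $\N_{k(a/c)}$ conjugates the inner matrix $\sm -1 & u \\ 0 & 1\esm$ to $\sm -1 & u\lambda^2 \\ 0 & 1\esm$, reducing the whole expression to $U_{a/c}=V\,W_{a/c}\sm -1 & u\lambda^2 \\ 0 & 1\esm W_{k(a/c)}^{-1}$. When $\gcd(c,N/c)>2$, substituting $W_{-a/c}=\sm -a & b \\ c & -d\esm$ and multiplying out using $ad-bc=1$ gives the matrix claimed in the lemma, and requiring $U_{a/c}\in\Gamma^+$ then forces $u\in\Z$. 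When $\gcd(c,N/c)\le 2$, the extra factor $\gcd(c,N/c)/(N/c)$ inside \eqref{e:U1/c} is designed to cancel $\lambda^2$ exactly, so the computation reduces to $V\,W_{1/c}\sm -1 & u_1 \\ 0 & 1\esm W_{1/c}^{-1}$; this product has lower-left entry $-c(2+u_1c)$, and the condition $N\mid c(2+u_1c)$ produces the congruence $cu_1\equiv -2\pmod{N/c}$.

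For the character evaluation, I apply $\chi\bigl(\sm *&*\\ *&d\esm\bigr)=\chi(d)$ to the lower-right entry of $U_{a/c}$. In the case $\gcd(c,N/c)>2$, the entry is $-1-au\,N/\gcd(c,N/c)$; since $\tfrac{a}{c}\in C_{\Gamma,\chi}$ forces $q\mid N/\gcd(c,N/c)$, this entry is $\equiv -1\pmod q$, so $\chi(U_{a/c})=\chi(-1)=\chi_N(-1)$, matching $\alpha(N,c)=N$. In the case $\gcd(c,N/c)=1$, the entry is $1+u_1c$; using $\chi=\chi_c\chi_{N/c}$ by CRT, together with the congruences $1+u_1c\equiv 1\pmod c$ and $1+u_1c\equiv -1\pmod{N/c}$ (the latter from $cu_1\equiv -2\pmod{N/c}$), gives $\chi(U_{1/c})=\chi_{N/c}(-1)$, matching $\alpha(N,c)=N/c$.

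The subcase $\gcd(c,N/c)=2$ is the main technical obstacle. A preliminary issue is well-definedness of $\chi(U_{1/c})$: two solutions of $cu_1\equiv -2\pmod{N/c}$ differ by a multiple of $N/(2c)$, so $1+u_1c$ shifts by a multiple of $N/2$, and the hypothesis $q\mid N/2$ (from $\tfrac{1}{c}\in C_{\Gamma,\chi}$) ensures $\chi$ is invariant under this shift. Writing $c=2c_0$ and $N/c=2m_0$ with $\gcd(c_0,m_0)=1$, I split according to the parity of $m_0$. If $m_0$ is odd, then $N=4c_0\cdot m_0$ with $\gcd(4c_0,m_0)=1$ and $\chi=\chi_{4c_0}\chi_{m_0}$; exploiting the shift freedom $u_1\mapsto u_1+m_0$ (which changes parity since $m_0$ is odd), I may assume $u_1$ is even, whence $1+u_1c\equiv 1\pmod{4c_0}$ and $\equiv -1\pmod{m_0}$, yielding $\chi(U_{1/c})=\chi_{m_0}(-1)$, i.e.\ $\alpha(N,c)=m_0=N/(2c)$. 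If $m_0$ is even, then $c_0$ must be odd and $N=c_0\cdot 4m_0$ with $\gcd(c_0,4m_0)=1$; a parallel argument yields $\alpha(N,c)=4m_0=2N/c$. The bookkeeping to show that the chosen reductions of $1+u_1c$ land in exactly the right component of $\chi$ in each subcase is the essence of the computation.
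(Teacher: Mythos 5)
Your proposal is correct and, for the matrix/integrality computations and the character evaluations when $\gcd(c,N/c)>2$ or $\gcd(c,N/c)=1$, follows essentially the same path as the paper. For $\gcd(c,N/c)=2$ you take a genuinely different route. The paper leaves $u_1$ unnormalized, writes $\chi=\chi_{\alpha}\chi_{\beta}$ for the relevant coprime factorization of $N$, and then deduces from $q\mid N/2$ a conductor-divisibility statement (e.g.\ $\cond(\chi_{2c})\mid c$ when $N/(2c)$ is odd, $\cond(\chi_{2N/c})\mid N/c$ when $N/(2c)$ is even) so that the ``unseen'' component of $\chi$ evaluates to $1$ on $1+cu_1$ without knowing its residue modulo the full modulus. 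You instead exploit the $u_1\mapsto u_1+N/(2c)$ shift to normalize $u_1$ so that $1+cu_1$ is exactly $\pm1$ modulo each coprime factor, making the CRT computation trivial and bypassing the conductor argument. This works, but beware that in your $m_0=N/(2c)$ even subcase the ``parallel'' normalization is not a parity choice: shifting $u_1$ by $m_0$ (even) preserves parity. What does work is that the shift changes $1+cu_1$ by $cm_0=2c_0m_0\equiv 2m_0\pmod{4m_0}$ (using that $c_0=c/2$ must be odd here), so one can force $1+cu_1\equiv-1\pmod{4m_0}$; combined with the automatic $1+cu_1\equiv1\pmod{c_0}$ this gives $\chi(U_{1/c})=\chi_{4m_0}(-1)=\chi_{2N/c}(-1)$. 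With that detail filled in, your approach is a valid and arguably slightly more elementary alternative to the paper's, trading one conductor-divisibility deduction for one residue normalization.
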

%%%
\begin{proof}
Given $\frac{a}{c}\in C_{\Gamma}$, recalling \eqref{e:Ua/c},
$$
	U_{a/c} = V \N_{a/c}^{-1} \bpm -1 & u \\ 0 & 1\ebpm \N_{k(a/c)} \in \Gamma^+,
$$
for some $u\in \R$, which is uniquely determined modulo one.
If $\gcd(c, N/c)>2$, then $k(a/c) = -a/c$ and we get
$$
	U_{a/c} = \bpm -1 + acu \frac{N}{\gcd(c^2, N)} & a^2u \frac{N}{\gcd(c^2, N)}\\
	-Nu\frac{c^2}{\gcd(c^2, N)} & -1-acu \frac{N}{\gcd(c^2, N)}\ebpm.
$$
One easily checks that $U_{a/c}\in \Gamma^+=\Gamma_0(N)$ if and only if $u\in \Z$.
If $\frac{a}{c}\in C_{\Gamma, \chi}$, then $q\mid \frac{N}{\gcd(c, N/c)}$.
Because
$$
	-1-acu\frac{N}{\gcd(c^2, N)} = -1-au\frac{N}{\gcd(c, N/c)} \equiv_q -1
$$
and $\chi$ is even, we get $\chi(U_{a/c}) = \chi(-1) = 1$.

If $\gcd(c, N/c) \leq 2$, then
$$
	U_{1/c} = \bpm 1+ cu\frac{N}{\gcd(c^2, N)} & -u \frac{N}{\gcd(c^2, N)}\\
	-2c-c^2u \frac{N}{\gcd(c^2, N)} & 1+ cu\frac{N}{\gcd(c^2, N)}\ebpm.
$$
Let $u_1 = u\frac{N}{\gcd(c^2, N)}$.
Then $U_{1/c}\in \Gamma_0(N)$ if and only if $u_1\in \Z$ and satisfying $cu_1\equiv_{N/c}-2$.
Then $1+cu_1 \equiv_{N/c}-1$.
Moreover,
$$
	\chi(U_{1/c}) = \chi(1+cu_1).
$$
If $\gcd(c, N/c)=1$, then $\chi = \chi_c \chi_{N/c}$, so
$$
	\chi(U_{1/c}) = \chi_{N/c}(1+cu_1) = \chi_{N/c}(-1).
$$
Assume that $\gcd(c,N/c)=2$ and $\frac{1}{c}\in C_{\Gamma, \chi}$,
so that $q\mid \frac{N}{2}$.
If $\frac{N}{2c}$ is odd then $\gcd(2c, \frac{N}{2c})=1$ and $\chi = \chi_{2c} \chi_{\frac{N}{2c}}$.
Since $q = \cond(\chi_{2c}) \cond(\chi_{\frac{N}{2c}})$ divides $\frac{N}{2}$
and $\cond(\chi_{\frac{N}{2c}})\mid \frac{N}{2c}$,
we have $\cond(\chi_{2c})\mid c$.
Thus
$$
	\chi(U_{1/c}) = \chi_{2c}(1+cu_1) \chi_{\frac{N}{2c}}(1+cu_1)
	=
	\chi_{\frac{N}{2c}} (-1).
$$
If $\frac{N}{2c}$ is even, then $\frac{c}{2}$ is odd, so $\gcd\!\left(\frac{2N}{c}, \frac{c}{2}\right)=1$
and $\chi = \chi_{\frac{2N}{c}} \chi_{\frac{c}{2}}$.
Since $q=\cond(\chi_{\frac{2N}{c}}) \cond(\chi_{\frac{c}{2}})$ divides $\frac{N}{2}$, $\cond(\chi_{\frac{2N}{c}})$ must divide $\frac{N}{c}$,
so we get
$$
	\chi(U_{1/c}) =\chi_{\frac{c}{2}}(1+cu_1) \chi_{\frac{2N}{c}} (1+cu_1) = \chi_{\frac{2N}{c}}(-1).
$$
\end{proof}
%%%

Note that in our present setting, \eqref{e:Tjv} says that for each $c\mid N$ with $\gcd(c, N/c)\leq 2$,
and each $v\in \Z$, we set
\begin{equation}\label{e:T1/cv}
	T_{1/c, v} = U_{1/c}^{-1} V T_{1/c}^v.
\end{equation}
%%%
Also, for any such $c$ and $v$, we have a number $\mbc_{1/c,v}>0$ defined by \eqref{e:mbc}.
The following two lemmas evaluate $\chi^{(\ve)}(T_{1/c, v})$ and $\mbc_{1/c,v}$.
Recall that $\chi^{(\ve)}(V) = (-1)^{\ve}$.
\begin{lemma}\label{lem:chi_T1/cv}
For $c\mid N$ and $\gcd(c, N/c)\leq 2$, we have
\begin{equation}\label{e:chi_T1/cv}
	\chi^{(\ve)}(T_{1/c, v}) =
	\begin{cases}
	(-1)^{\ve} \chi_{\alpha(N, c)}(-1) & \text{if } \frac{1}{c}\in C_{\Gamma, \chi}, \\
	(-1)^{\ve+v+w_c} & \text{if } \frac{1}{c}\notin C_{\Gamma, \chi},
	\end{cases}
\end{equation}
where $w_c\in\{0, 1\}$ is a constant determined by $(-1)^{w_c} = \chi(U_{1/c})$.
Here $\alpha(N, c)$ as in \eqref{e:alpha}.
\end{lemma}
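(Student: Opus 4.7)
The plan is to compute $\chi^{(\ve)}(T_{1/c,v})$ by factoring it via \eqref{e:T1/cv} and evaluating each factor using the previously established lemmas. Applying the multiplicativity of the character $\chi^{(\ve)}$ to $T_{1/c,v} = U_{1/c}^{-1} V T_{1/c}^v$ gives
$$
\chi^{(\ve)}(T_{1/c,v}) = \chi^{(\ve)}(U_{1/c})^{-1}\cdot \chi^{(\ve)}(V)\cdot \chi^{(\ve)}(T_{1/c})^v.
$$
From the definition \eqref{e:chi-epsilon} we read off $\chi^{(\ve)}(V) = (-1)^{\ve}$, while the fact that $U_{1/c},T_{1/c}\in\Gamma^+$ (so their determinants equal $1$) means that $\chi^{(\ve)}(U_{1/c}) = \chi(U_{1/c})$ and $\chi^{(\ve)}(T_{1/c}) = \chi(T_{1/c})$.

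If $\frac{1}{c}\in C_{\Gamma,\chi}$, then by the definition of an open cusp we have $\chi(T_{1/c})=1$, so the $v$-dependence drops out. Lemma~\ref{lem:Ua/c} identifies $\chi(U_{1/c}) = \chi_{\alpha(N,c)}(-1)\in\{\pm 1\}$, a value that is its own multiplicative inverse. Substituting gives the first branch of \eqref{e:chi_T1/cv}.

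If $\frac{1}{c}\notin C_{\Gamma,\chi}$, I first claim that $\chi(T_{1/c})=-1$ and $\chi(U_{1/c})\in\{\pm 1\}$, so that $w_c$ is well defined by $(-1)^{w_c}=\chi(U_{1/c})$. Since the case $\gcd(c,N/c)=1$ gives an open cusp automatically (one has $q\mid N/\gcd(c,N/c)=N$ trivially), the hypothesis forces $\gcd(c,N/c)=2$, hence $4\mid N$. Then Lemma~\ref{lem:chi_Ta/c} specializes to $\chi(T_{1/c}) = \chi(1-N/2)$, and the congruence $(1-N/2)^2 = 1-N+N^2/4\equiv 1\pmod*{N}$ (valid because $4\mid N$) shows $\chi(T_{1/c})^2 = 1$; combined with $\chi(T_{1/c})\ne 1$, this yields $\chi(T_{1/c})=-1$. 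The fact that $\chi(U_{1/c})\in\{\pm 1\}$ is already recorded in the case analysis of Lemma~\ref{lem:Ua/c}, where $\chi(U_{1/c})$ is identified with $\chi_{N/(2c)}(-1)$ or $\chi_{2N/c}(-1)$ according as $N/(2c)$ is odd or even. Plugging these values in gives
$$
\chi^{(\ve)}(T_{1/c,v}) = (-1)^{-w_c}\cdot (-1)^{\ve}\cdot (-1)^v = (-1)^{\ve+v+w_c},
$$
which is the second branch of \eqref{e:chi_T1/cv}.

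The argument is essentially bookkeeping on top of Lemmas~\ref{lem:chi_Ta/c} and~\ref{lem:Ua/c}; the only mildly delicate point is the squaring argument that pins down $\chi(T_{1/c})=-1$ in the non-open case, which I would expect to be the one step requiring a bit of care.
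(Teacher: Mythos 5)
Your decomposition via \eqref{e:T1/cv} and the handling of the open case match the paper's argument, and your derivation of $\chi(T_{1/c})=-1$ in the closed case (squaring, combined with $\chi(T_{1/c})\ne1$ from the definition of $C_{\Gamma,\chi}$) is correct and is essentially the paper's reasoning.

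However, your justification that $\chi(U_{1/c})\in\{\pm1\}$ in the closed case has a real gap. You appeal to the case analysis in Lemma~\ref{lem:Ua/c} that identifies $\chi(U_{1/c})$ with $\chi_{N/(2c)}(-1)$ or $\chi_{2N/c}(-1)$, but that identification is established there only under the hypothesis $\frac1c\in C_{\Gamma,\chi}$, i.e.\ $q\mid N/2$ — exactly the hypothesis you are excluding. The condition $q\mid N/2$ is what forces $\cond(\chi_{2c})\mid c$ (resp.\ $\cond(\chi_{2N/c})\mid N/c$) and makes the extra factor $\chi_{2c}(1+cu_1)$ equal $1$; when $q\nmid N/2$ that step fails, and in fact $\chi(U_{1/c})$ then depends on the particular choice of $u_1$ in \eqref{e:U1/c} and need not equal $\chi_{\alpha(N,c)}(-1)$. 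The correct route — which is what the paper does — is the same squaring trick you already used for $T_{1/c}$: from Lemma~\ref{lem:Ua/c}, $\chi(U_{1/c})=\chi(1+cu_1)$ with $cu_1\equiv_{N/c}-2$, and then
$$
(1+cu_1)^2 = 1+u_1\bigl(2c+c^2u_1\bigr)\equiv 1\pmod N,
$$
since $N/c\mid 2+cu_1$ gives $N\mid c(2+cu_1)$; hence $\chi(U_{1/c})^2=1$. With that replacement your argument closes correctly and coincides with the paper's proof.
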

%%%
\begin{proof}
It follows from \eqref{e:T1/cv} that 
$\chi^{(\ve)}(T_{1/c, v}) = (-1)^{\ve} \overline{\chi(U_{1/c})} \chi(T_{1/c})^v$.

If $\frac{1}{c}\in C_{\Gamma, \chi}$ then $\chi(T_{1/c})=1$, and using
\eqref{e:chi_U1/c_1} we have
$$
	\chi^{(\ve)}(T_{1/c, v}) = (-1)^{\ve} \chi_{\alpha(N, c)}(-1),
$$
for all $v\in \Z$.

Now assume $\frac{1}{c}\notin C_{\Gamma, \chi}$. Then $\gcd(c, N/c)=2$ and $q\nmid \frac{N}{2}$,
and by Lemma~\ref{lem:chi_Ta/c} we have
$$
	\chi(T_{1/c}) = \chi\!\left(1-\tfrac{N}{2}\right)=-1,
$$
since $\left(1-\frac{N}{2}\right)^2 \equiv_N1$ and $q\nmid \frac{N}{2}$.
By Lemma~\ref{lem:Ua/c}, $\chi(U_{1/c}) = \chi(1+cu_1)$ for some $u_1\in \Z$ satisfying $cu_1\equiv_{N/c}-2$.
Then
$$
	(1+cu_1)^2 = 1+ 2cu_1\left(1+\tfrac{c}{2} u_1\right) \equiv_N1,
$$
so $\chi(U_{1/c})^2 = 1$ and $\chi(U_{1/c})\in \{\pm1\}$.
Choosing $w_c\in\{0, 1\}$ so that $\chi(U_{1/c}) = (-1)^{w_c}$,
we get
$$
\chi^{(\ve)}(T_{1/c, v}) = (-1)^{\ve+v+w_c}.
$$
\end{proof}
%%%

\begin{lemma}\label{lem:mbc}
Assume that $c\mid N$ and $\gcd(c, N/c)\leq 2$.
If $4\mid N$ then $\mbc_{1/c, v}=\sqrt N$ for all $v$.
On the other hand, if $4\nmid N$,
then there is some $s\in\{0,1\}$
(which depends on $c$ and $U_{1/c}$)
such that for all $v$,
\begin{equation}\label{e:mbc_1/c_v}
	\mbc_{1/c, v}
	=
	\begin{cases}
	\sqrt{N} & \text{if } v\equiv_2 s,\\
	\sqrt{2N} & \text{if } v\not\equiv_2 s\text{ and }2\mid N\\
	2\sqrt{N} & \text{if } v\not\equiv_2 s\text{ and }2\nmid N.
	\end{cases}
\end{equation}
\end{lemma}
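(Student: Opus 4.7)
The approach is to compute the reflection $T_{1/c,v}$ explicitly, locate its second fixed point $\xi_v\in\partial\HH$, identify the $\Gamma_0(N)$-cusp class containing $\xi_v$, and then extract the lower-left entry $\mbc_{1/c,v}$ of $\N_{1/c}\,V_2\,\N_{a'/c''}^{-1}$ in \eqref{e:mbc}. By \eqref{e:T1/cv}, Lemma~\ref{lem:Ua/c}, and Lemma~\ref{lem:chi_Ta/c}, together with the fact that $T_{1/c}-I$ is nilpotent so $T_{1/c}^v = I + v(T_{1/c}-I)$, a direct matrix multiplication shows that $T_{1/c,v}$ has the same shape as $T_{1/c,0}$, but with $u_1$ replaced by $\tilde u := u_1+vm$, where $m := N/\gcd(c^2,N)$. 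Since one fixed point of $T_{1/c,v}$ is $1/c$, Vieta's formula applied to the resulting quadratic gives the second as $\xi_v = \tilde u/(2+c\tilde u)$.

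Next I reduce $\xi_v$ to lowest terms $A_0/B_0$. Since $\gcd(\tilde u,\,2+c\tilde u)=\gcd(\tilde u,2)\in\{1,2\}$, two cases arise: Case A when $\tilde u$ is odd, with $(A_0,B_0)=(\tilde u,\,2+c\tilde u)$ and $B_0-cA_0=2$; Case B when $\tilde u$ is even, with $(A_0,B_0)=(\tilde u/2,\,1+c\tilde u/2)$ and $B_0-cA_0=1$. A standard divisibility argument, using $N\mid\gamma$ for $V_2\in\Gamma_0(N)$ together with $\gcd(a',c'')=1$, shows that the canonical cusp class of $\xi_v$ has denominator $c''=\gcd(B_0,N)$, and further that the first column of $V_2\,W_{a'/c''}$ must equal $\pm(A_0,B_0)^T$ by primitivity. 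Block-multiplying $\N_{1/c}\,V_2\,\N_{a'/c''}^{-1}$ using the factored form \eqref{e:Na/c} then yields
\[
\mbc_{1/c,v} \;=\; \sqrt{\tfrac{N}{\gcd(c^2,N)}}\,\sqrt{\tfrac{N}{\gcd({c''}^2,N)}}\,|B_0-cA_0|.
\]

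The remainder is a case analysis, hinging on the relation $cu_1\equiv-2\pmod{N/c}$ from Lemma~\ref{lem:Ua/c}. Writing $cu_1+2=(N/c)k$, one checks that $d:=\gcd(k,c)$ divides $\gcd(2,c)$, so $d\in\{1,2\}$; this is precisely the constraint that forces $\mbc_{1/c,v}^2\in\{N,2N,4N\}$. Splitting on $\gcd(c,N/c)\in\{1,2\}$, tracking the parity of $\tilde u$ and of $L:=k+vc$, and using the $2$-adic valuations of $c$ and $N/c$, one verifies that when $4\mid N$ all subcases collapse to $\mbc_{1/c,v}^2=N$, giving $\mbc_{1/c,v}=\sqrt N$ uniformly; and when $4\nmid N$, exactly one parity of $v$ gives $\mbc_{1/c,v}=\sqrt N$ and the other gives $\mbc_{1/c,v}=\sqrt{2N}$ (when $2\mid N$) or $\mbc_{1/c,v}=2\sqrt N$ (when $2\nmid N$). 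The parity $s\in\{0,1\}$ is determined by the parities of $u_1$ and $k$, which in turn depend on our choice of $U_{1/c}$. The main obstacle is this collapse in the $4\mid N$ case: individually both $c''$ and the $2$-parts of $\gcd(c^2,N)$ and $\gcd({c''}^2,N)$ vary with $v$, yet they conspire with $(B_0-cA_0)^2\in\{1,4\}$ to produce the single value $N$ in every subcase.
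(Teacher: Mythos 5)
Your proof takes essentially the same route as the paper: locate the second fixed point of the reflection $T_{1/c,v}$, reduce it to lowest terms to identify its cusp class, and extract $\mbc_{1/c,v}$ from the block-factored form of $\N_{1/c}V_2\N^{-1}$. Your $\tilde u$, $A_0/B_0$ and $|B_0-cA_0|$ correspond to the paper's $u_2$, $u_2/u_3$, $(cu_2+2)/u_3$ and $2/u_3$, and your final case analysis via $k$, $L$ and $d$ is the same $2$-adic bookkeeping that the paper organizes via $\rho$, $f=\ord_2(c)$ and $g=\ord_2(N/c)$.
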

%%%

\begin{proof}
For given $c$ and $v$,
following the definition of $\mbc_{1/c,v}$ in \eqref{e:mbc},
we recall that $T_{1/c,v}$ is a reflection fixing the point $\frac1c$,
and that also the other fixpoint of $T_{1/c,v}$ in $\partial\HH$ must be a $\Gamma^+$-cusp,
which we call $\eta$.
We choose $V_2\in\Gamma^+$ and $\frac{a'}{c'}\in C_\Gamma$ so that $\eta=V_2(\frac{a'}{c'})$.
Then $\frac{a'}{c'}$ is a fixpoint of $V_2^{-1}T_{\frac 1c,v}V_2\in\Gamma^+ V$,
and this forces $k(\frac{a'}{c'})=\frac{a'}{c'}$,
i.e.\ $\gcd(c',N/c')\leq2$ and $a'=1$.

Set $M=\gcd(c,N/c)\leq2$.
From \eqref{e:U1/c} and \eqref{e:T1/cv}
we get
\begin{align}\label{lem:mbcPF1}
	\N_{1/c} T_{1/c, v} \N_{1/c}^{-1}=  \bpm -1 & u_1\frac{M}{N/c}+v \\ 0 & 1\ebpm,
\end{align}
where $u_1$ is an integer satisfying $cu_1\equiv_{N/c} -2$;
and the two fixpoints in $\partial\HH$ of the reflection in \eqref{lem:mbcPF1} are
$\infty$ and $\frac12(u_1\frac M{N/c}+v)$.
Hence
$$
\eta
=
	\N_{1/c}^{-1} \biggl(\frac12\Bigl(u_1\frac{M}{N/c} +v\Bigr)\biggr)
	=
	\bpm 1 & 0\\ c& 1\ebpm \frac{u_2}{2} = \frac{u_2}{cu_2+2},
\qquad\text{with }\:u_2:=u_1+v\frac{N/c}{M}.
$$
But we know from above that the $C_\Gamma$-representative for the cusp $\eta$ is $\frac1{c'}$,
and one easily verifies that the $C_\Gamma$-representative of an arbitrary $\Gamma^+$-cusp 
$\frac{\alpha}{\gamma}$ with $\alpha, \gamma\in \Z$ has
denominator $\gcd\bigl(N, \frac{\gamma}{\gcd(\alpha, \gamma)}\bigr)$.
Hence, letting $u_3:=\gcd(u_2,cu_2+2)=\gcd(u_2,2)\in\{1,2\}$, we have
$$
c'=\gcd\Bigl(N,\frac{cu_2+2}{u_3}\Bigr)=\frac1{u_3}\gcd\bigl(Nu_3,cu_2+2\bigr)=\frac N{cu_3}\gcd\Bigl(cu_3,\frac{cu_2+2}{N/c}\Bigr).
$$
In the last equality we used the fact that $\frac Nc\mid cu_2+2$, since $cu_1\equiv_{N/c} -2$.
To make the last expression more explicit, note that 
$\gcd(cu_3,\frac{cu_2+2}{N/c})\in\{1,2\}$, since $\gcd(cu_3,cu_2+2)=\gcd(cu_3,2)\in\{1,2\}$.
Therefore,
\begin{equation}\label{e:c'}
c'=\begin{cases}
N/c&\text{if }\: u_2\equiv\frac{cu_2+2}{N/c}\pmod* 2,
\\
N/(2c)&\text{if }\: 2\mid u_2\text{ and }2\nmid\frac{cu_2+2}{N/c},
\\
2N/c&\text{if }\: 2\nmid u_2\text{ and }2\mid\frac{cu_2+2}{N/c}.
\end{cases}
\end{equation}

Let us write $W_{1/c} V_2 W_{1/c'}=\sm \alpha & \beta\\ \gamma & \delta \esm\in\SL_2(\Z)$ with $\gamma\geq0$.
Since $\frac{\alpha}{\gamma}=W_{1/c} V_2 W_{1/c'} (\infty) = \frac{u_2}{2}$,
we get $\gamma = \frac{2}{u_3}$.
Using this in
the definition of $\mbc_{1/c,v}$, \eqref{e:mbc},
we get
\begin{equation}\label{e:mbc_1/c_v_0A}
\mbc_{1/c,v}=\frac2{u_3}\sqrt{\frac{N/c}{\gcd(c,N/c)}}\sqrt{\frac{N/c'}{\gcd(c',N/c')}}.
\end{equation}
Let us set $f=\ord_2(c)$ and $g=\ord_2(N/c)$;
then $\gcd(c,N/c)\leq2$ implies $\min(f,g)\leq1$.
Using \eqref{e:c'}, the formula \eqref{e:mbc_1/c_v_0A} can be re-expressed as
$\mbc_{1/c,v}=\sqrt N\cdot 2^{\rho/2}$, where
\begin{equation}\label{e:mbc_1/c_v_0B}
\rho=\begin{cases}
-2\min(f,g)& \text{if } 2\mid u_2 \text{ and } 2\mid\frac{cu_2+2}{N/c}, 
\\
1-\min(f,g)-\min(f+1,g-1)& \text{if } 2\mid u_2 \text{ and } 2\nmid\frac{cu_2+2}{N/c},
\\
1-\min(f,g)-\min(f-1,g+1)& \text{if } 2\nmid u_2 \text{ and } 2\mid \frac{cu_2+2}{N/c}, 
\\
2-2\min(f,g)	& \text{if } 2\nmid u_2 \text{ and } 2\nmid \frac{cu_2+2}{N/c}.
\end{cases}
\end{equation}

Let us first assume $\min(f,g)=1$, i.e.\
both $c$ and $\frac Nc$ are even.
In this case, if either $4\mid c$ or $2\mid u_2$, then $cu_2+2\equiv_42$,
which implies $2\nmid \frac{cu_2+2}{N/c}$ and $4\nmid\frac Nc$.
Hence, the first case in \eqref{e:mbc_1/c_v_0B} cannot occur,
and if $f\geq2$ ($\Rightarrow g=1$) then also the third case in \eqref{e:mbc_1/c_v_0B} is excluded,
while if $g\geq2$ ($\Rightarrow f=1$) then the second case in \eqref{e:mbc_1/c_v_0B} is excluded.
By inspection, it then follows that $\rho=0$.
Next assume $\min(f,g)=0$.
Note that if $f\geq1$ ($\Rightarrow g=0$) then $2\mid\frac{cu_2+2}{N/c}$,
and on the other hand if if $g\geq1$ ($\Rightarrow f=0$) then the condition $cu_1\equiv_{N/c}-2$ forces $u_1$ to be even,
and then also $u_2=u_1+v\frac{N/c}M$ is even.
If $f\geq2$ or $g\geq2$ then these observations imply $\rho=0$, by
\eqref{e:mbc_1/c_v_0B}.
We have thus proved the lemma in the case $4\mid N$,
and it only remains to consider the three cases with $f,g\leq1$, not both $1$.
If $f=1$ ($\Rightarrow g=0$) then \eqref{e:mbc_1/c_v_0B} implies $\rho=\delta_{2\nmid u_2}$,
and so, recalling $u_2=u_1+v\frac{N/c}M$, it follows that \eqref{e:mbc_1/c_v} holds with $s\equiv_2 u_1$.
If $g=1$ ($\Rightarrow f=0$) then \eqref{e:mbc_1/c_v_0B} implies $\rho=\delta_{2\nmid \frac{cu_2+2}{N/c}}
=\delta_{4\mid u_2}$,
and so \eqref{e:mbc_1/c_v} holds with $s\equiv_2 1+\frac{u_1}2$.
Finally if $f=g=0$, i.e.\ $2\nmid c$ and $2\nmid\frac Nc$,
then $u_2\equiv_2\frac{cu_2+2}{N/c}$,
and thus \eqref{e:mbc_1/c_v_0B} implies $\rho=2\delta_{2\nmid u_2}$;
therefore \eqref{e:mbc_1/c_v} again holds
with $s\equiv_2u_1$.
\end{proof}

%%%%%
\subsection{Cuspidal contributions $\Cu(\Gamma, \chi)$}
Recall that we write $\Cu(N,\chi;n)$ for the cuspidal contribution
in the trace formula in Theorem \ref{thm:STF_N};
thus $\Cu(N,\chi;n)=\Cu(\Gamma, \chi^{(0)})+n\Cu(\Gamma, \chi^{(1)})$
where $C(\Gamma,\chi^{(\ve)})$ is the cuspidal contribution (cf.\ \eqref{e:Cu})
in the trace formula for $(\Gamma_0^\pm (N), \chi^{(\ve)})$.
Our aim in this section is to prove the following proposition,
giving an explicit formula for $\Cu(N,\chi;n)$.
Let
\begin{equation}\label{e:veN}
	\ve_N
	=
	\begin{cases}
	1 & \text{if } N \text{ is odd}, \\
	\frac{1}{2} & \text{if } 2\parallel N, \\
	0 & \text{otherwise,}
	\end{cases}
\end{equation}
and for each prime $p\mid N$, define
\begin{equation}\label{e:Psi4}
	\Psi_4(p^{e_p}, p^{s_p})
	=
	p^{e_p-s_p}
	\max\{2s_p-e_p-1, 0\}.
\end{equation}

%%%
\begin{proposition}\label{prop:Cu}
We have
\begin{multline}\label{e:Cu_N_1}
	\Cu(N, \chi; 1)
	=
	\Psi_1(N, q)
	\bigg\{ \frac{1}{4} h(0) - \frac{1}{2\pi} \int_\R h(r) \frac{\Gamma'}{\Gamma}(1+ir)\; dr
	\bigg\}
	\\
	-
	\bigg\{\Psi_1(N, q) \sum_{p\mid N} \frac{\Psi_4(p^{e_p}, p^{s_p})}{\Psi_1(p^{e_p}, p^{s_p})} \log p
	+ \Psi_1(N, q) \log 2\bigg\}
	g(0)
\end{multline}
and
\begin{multline}\label{e:Cu_N_-1}
	\Cu(N, \chi; -1)
	=
	I_\chi 2^{\omega(N)} \Omega_1(N, q)
	\bigg\{
	\frac{1}{4} h(0)
	+ \frac{1}{2\pi}
	\int_\R h(r) \bigg(\frac{\Gamma'}{\Gamma}\!\left(\frac{1}{2}+ir\right) - \frac{\Gamma'}{\Gamma}(1+ir)\bigg)
	\; dr
	\bigg\}
	\\
	+
	\bigg\{I_\chi 2^{\omega(N)-1} \big(\Omega_1(N, q) \log N + \ve_N \log 2\big)
	\bigg\}
	g(0).
\end{multline}
Here $\Psi_1$, $\Psi_4$, $\Omega_1$ and $\ve_N$ are given in \eqref{e:Psi1}, \eqref{e:Psi4}, \eqref{e:Omega1} and \eqref{e:veN} respectively.
\end{proposition}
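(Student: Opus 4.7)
The plan is to apply the general formula~\eqref{e:Cu} separately to each of the two extensions $\chi^{(\ve)}$ of $\chi$ ($\ve \in \{0,1\}$), then combine via $\Cu(N,\chi;n) = \Cu(\Gamma,\chi^{(0)}) + n\Cu(\Gamma,\chi^{(1)})$ as in~\eqref{e:Tn_epsilon}. The key structural observation is that $\chi^{(\ve)}(T) = \det(T)^\ve\chi(T)$, so matrix arguments in $\Gamma^+$ contribute identically for both values of $\ve$, whereas arguments of determinant $-1$ contribute with opposite signs. Consequently, the third line of~\eqref{e:Cu} (depending only on $|C_{\Gamma,\chi}|$) and the subterm with $\log|1-\chi^{(\ve)}(T_j)|$ (where $T_j \in \Gamma^+$) appear only in $\Cu(N,\chi;1)$, while the first line (involving $T_{j,0}$) and the subterm $\chi^{(\ve)}(T_{j,v})\log\mbc_{j,v}$ (involving $T_{j,v}$ of determinant $-1$) appear only in $\Cu(N,\chi;-1)$.

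For $\Cu(N,\chi;1)$, the $h$-dependent contribution is immediate from $|C_{\Gamma,\chi}| = \Psi_1(N,q)$ supplied by Lemma~\ref{lem:open_cusp}, and yields also the $-\Psi_1(N,q)\log 2 \cdot g(0)$ piece. The remaining task is to evaluate $\sum_{j \notin C_{\Gamma,\chi}} \log|1-\chi(T_j)|$ and match it to $\Psi_1(N,q)\sum_{p \mid N}\Psi_4(p^{e_p},p^{s_p})/\Psi_1(p^{e_p},p^{s_p}) \cdot \log p$. I would stratify by the denominator $c$ of the cusp representatives; using Lemma~\ref{lem:chi_Ta/c} one has $\chi(T_{a/c}) = \chi(1-acN/\gcd(c^2,N))$, and the inner sum over $a$ (with appropriate multiplicity coming from~\eqref{e:CGamma_gcd}) reduces via the cyclotomic identity $\prod_{k=1}^{r-1}(1-e^{2\pi ik/r}) = r$ to a logarithmic contribution at each prime $p \mid N$. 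The product structure over primes then yields the announced expression after matching with~\eqref{e:Psi4}.

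For $\Cu(N,\chi;-1)$, Lemma~\ref{lem:chi_T1/cv} gives $\chi^{(\ve)}(T_{1/c,0}) = (-1)^\ve \chi_{\alpha(N,c)}(-1)$ on open self-fixed cusps, so the first line of~\eqref{e:Cu} contributes (after subtraction and simplification) a coefficient
\[
A := \sum_{\substack{c\mid N,\;\gcd(c,N/c)\leq 2\\ \frac{1}{c}\,\in\,C_{\Gamma,\chi}}} \chi_{\alpha(N,c)}(-1)
\]
times the displayed bracket. Splitting $A$ over primes $p \mid N$ using the multiplicativity of $\chi = \prod_p \chi_p$ and the description of $\alpha(N,c)$ in Lemma~\ref{lem:Ua/c}, the local factor at each odd prime $p$ collapses to $1 + \chi_p(-1)$, which vanishes unless $\chi_p(-1)=1$; this produces the purity indicator $I_\chi$ together with a factor of $2$ per odd prime divisor of $N$. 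The dyadic local factor, read off by a short case analysis on $e_2$ and $s_2$, yields exactly $\Omega_1(N,q)$, so that $A = I_\chi 2^{\omega(N)}\Omega_1(N,q)$. For the $g(0)$-coefficient, the $\log\mbc_{1/c,v}$ sum splits into an open-cusp part (where Lemma~\ref{lem:chi_T1/cv} gives $\chi^{(0)}-\chi^{(1)} = 2\chi_{\alpha(N,c)}(-1)$, producing $\tfrac{1}{2}\chi_{\alpha(N,c)}(-1)\log(\mbc_{1/c,0}\mbc_{1/c,1})$) and a closed-cusp part (where the $v$-dependent sign gives $\tfrac{1}{2}(-1)^{w_c}\log(\mbc_{1/c,0}/\mbc_{1/c,1})$). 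Combining Lemma~\ref{lem:mbc} with~\eqref{e:veN} evaluates these logarithms to $\log N + \ve_N\log 2$ in the open case and, in the closed case, supplies the correction needed to factor $\Omega_1(N,q)\log N$ cleanly apart from the $\ve_N\log 2$ piece, yielding the stated coefficient $I_\chi 2^{\omega(N)-1}(\Omega_1(N,q)\log N + \ve_N\log 2)g(0)$.

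The main obstacle is the prime-by-prime bookkeeping, particularly at $p=2$. Cusps $\frac{1}{c}$ with $\gcd(c,N/c) = 2$ require separate treatment in both computations: for $\Cu(N,\chi;1)$ they may lie in $C_{\Gamma,\chi}$ or not depending on whether $q \mid N/2$; and for $\Cu(N,\chi;-1)$ the value of $\chi_{\alpha(N,c)}(-1)$ from Lemma~\ref{lem:Ua/c} hinges on the parity of $N/(2c)$, while the values of $\mbc_{1/c,v}$ in Lemma~\ref{lem:mbc} are also split into sub-cases by the $2$-adic valuation of $N$. Collecting all of these so that the final answer factors cleanly through $\Omega_1(N,q)$ and $\ve_N\log 2$ is the most delicate step of the proof.
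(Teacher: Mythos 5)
Your overall strategy matches the paper's exactly: apply the general cuspidal formula \eqref{e:Cu} to $\chi^{(0)}$ and $\chi^{(1)}$, and observe that the terms involving matrix arguments of determinant $-1$ (namely $T_{j,0}$ and $T_{j,v}$) survive only in $\Cu(N,\chi;-1)$, while the $|C_{\Gamma,\chi}|$ term and the $\log|1-\chi(T_j)|$ term (with $T_j\in\Gamma^+$) survive only in $\Cu(N,\chi;1)$. You then invoke the same lemmas the paper does (Lemmas \ref{lem:chi_Ta/c}, \ref{lem:chi_T1/cv}, \ref{lem:mbc}, \ref{lem:open_cusp}), and your decomposition of the three cusp sums is the one the paper proves as Lemmas \ref{lem:sum_chiT1/c0}--\ref{lem:sum_notinCGamachi}.

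Two technical points in your sketch are off, though neither threatens the overall plan. First, the cyclotomic identity you cite, $\prod_{k=1}^{r-1}(1-e^{2\pi ik/r})=r$, is not the one the computation needs: the cusps of a fixed denominator $c$ are indexed by $a\in(\Z/\gcd(c,N/c)\Z)^\times$, so the inner sum reduces to a product of $|1-e(y/B)|$ over $y$ \emph{coprime} to $B$, and the relevant identity is $\prod_{y\in(\Z/B\Z)^\times}|1-e(y/B)|=p$ if $B$ is a power of a prime $p$ (with $r\geq 1$) and $=1$ otherwise, i.e.\ the value $\Phi_B(1)$ of the cyclotomic polynomial. Applying your identity literally would return $B$ in place of $p$ or $1$. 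Second, in your discussion of the $g(0)$-coefficient for $\Cu(N,\chi;-1)$, the closed-cusp part does \emph{not} supply a correction; it is identically zero, because closed cusps with $\gcd(c,N/c)\leq 2$ only occur when $4\mid N$, and in that case Lemma \ref{lem:mbc} gives $\mbc_{1/c,v}=\sqrt N$ independently of $v$, so $\log(\mbc_{1/c,0}/\mbc_{1/c,1})=0$. The reason $\Omega_1(N,q)\log N+\ve_N\log 2$ factors out cleanly is instead the compatibility $\Omega_1(N,q)=1$ whenever $\ve_N\neq 0$ (i.e.\ whenever $4\nmid N$), so that $\Omega_1(N,q)\,\ve_N=\ve_N$ holds automatically.
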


%%%
The proposition will be proved by evaluating the various sums appearing in \eqref{e:Cu}
in our explicit setting.
By \eqref{e:open_cusp_N}, we can write
\begin{align}
	&
	\sum_{j\in C_{\Gamma, \chi}, k(j)=j} \chi(T_{j, 0})
	=
	\sum_{c\mid N, \gcd(c, N/c)\leq 2, q\mid \frac{N}{\gcd(c, N/c)}}
	\chi^{(\ve)}(T_{1/c, 0});
	\\
	&
	\sum_{1\leq j \leq\kappa, k(j)=j} \sum_{v\in \{0, 1\}}
	\chi(T_{j, v}) \log\mbc_{j, v}
	=
	\sum_{c\mid N, \gcd(c, N/c)\leq 2}
	\sum_{v\in\{0, 1\}}
	\chi^{(\ve)}(T_{1/c, v}) \log \mbc_{1/c, v}; \\
	&
	\sum_{\substack{1\leq j\leq\kappa, \\ j\notin C_{\Gamma, \chi}}} \log\left|1-\chi(T_j)\right|
	=
	\sum_{\substack{\frac{a}{c}\in C_{\Gamma}, \\ q\nmid \frac{N}{\gcd(c, N/c)}}}
	\log \left|1-\chi(T_{a/c})\right|.
\end{align}
We get formulas for each of these sums in the following lemmas.
%%%

\begin{lemma}\label{lem:sum_chiT1/c0}
$$
	\sum_{c\mid N, \gcd(c, N/c)\leq 2, q\mid \frac{N}{\gcd(c, N/c)}}
	\chi^{(\ve)}(T_{1/c, 0})
	=
	(-1)^{\ve} I_\chi 2^{\omega(N)} \Omega_1(N, q),
$$
where $\Omega_1(N, q)$ is given in \eqref{e:Omega1}.
\end{lemma}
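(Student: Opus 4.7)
The plan is to first apply Lemma~\ref{lem:chi_T1/cv}. The summation conditions $c\mid N$, $\gcd(c,N/c)\leq 2$, and $q\mid N/\gcd(c,N/c)$ are precisely the characterization of $\frac{1}{c}\in C_{\Gamma,\chi}$ given by \eqref{e:open_cusp_N} together with the choice of $C_\Gamma$, so every summand equals $(-1)^\ve\chi_{\alpha(N,c)}(-1)$, with $\alpha(N,c)$ as in \eqref{e:alpha}. Pulling out the sign, the task reduces to showing
$$
S\;:=\;\sum_{\substack{c\mid N\\\gcd(c,N/c)\leq 2\\q\mid N/\gcd(c,N/c)}}\chi_{\alpha(N,c)}(-1)\;=\;I_\chi\,2^{\omega(N)}\,\Omega_1(N,q).
$$
Inspection of \eqref{e:alpha} shows that $\gcd(\alpha(N,c),N/\alpha(N,c))=1$ in every case, so $\chi_{\alpha(N,c)}(-1)=\prod_{p\mid\alpha(N,c)}\chi_p(-1)$; moreover, both the admissibility constraint on $c$ and the criterion ``$p\mid\alpha(N,c)$'' depend only on $\ord_p(c)$ and the local data $(e_p,s_p)$. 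Thus $S$ factors as a product $S=\prod_{p\mid N}S_p$ of local sums.

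For each odd prime $p\mid N$, the constraint $\gcd(c,N/c)\leq 2$ forces $\ord_p(c)\in\{0,e_p\}$, with no further restriction from $q$, and by inspection of \eqref{e:alpha} one has $p\mid\alpha(N,c)$ iff $\ord_p(c)=0$. Hence $S_p=1+\chi_p(-1)=2\,\delta_{\chi_p(-1)=1}$, and the product over odd $p\mid N$ contributes $2^{\omega(N)-\delta_{2\mid N}}\,I_\chi$ by the very definition of purity. When $2\nmid N$ this already gives the result, since $\Omega_1(N,q)=1$; when $2\mid N$, it remains to show $S_2=2\Omega_1(N,q)$ whenever $\chi_2(-1)=1$, and $S_2=0$ otherwise.

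The 2-primary computation is the main obstacle. I would handle it by a case split on $(e_2,s_2)$, using that the permissible conductors of characters mod $2^{e_2}$ are $s_2\in\{0,2,3,\ldots,e_2\}$ and that the $q$-constraint restricts only the branch $\ord_2(c)\in\{1,e_2-1\}$ (forcing $s_2<e_2$ there). For $e_2\in\{0,1\}$ or $(e_2,s_2)=(2,0)$ the character $\chi_2$ is trivial and direct enumeration gives $S_2\in\{1,2,3\}$, matching $2\Omega_1(N,q)$. For $(e_2,s_2)=(2,2)$ the unique conductor-$4$ character is odd, forcing $S_2=1+\chi_2(-1)=0$ and simultaneously making $\chi$ non-pure, so both sides vanish. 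For $e_2\geq 3$ with $s_2=e_2$ only $\ord_2(c)\in\{0,e_2\}$ are admissible, giving $S_2=1+\chi_2(-1)$. The delicate case is $e_2\geq 3$ with $s_2<e_2$, where the admissible set is $\{0,1,e_2-1,e_2\}$ and one must follow the branches of \eqref{e:alpha}: because $e_2\geq 3$, the quotient $N/(2c)$ is \emph{even} when $\ord_2(c)=1$ (so $\alpha=2N/c$ includes the prime $2$) but \emph{odd} when $\ord_2(c)=e_2-1$ (so $\alpha=N/(2c)$ excludes it), yielding $S_2=2(1+\chi_2(-1))$.

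In every case $S_2=2\Omega_1(N,q)$ precisely when $\chi_2(-1)=1$, and $S_2=0$ otherwise; since $\chi$ is even, $\chi_2(-1)=-1$ is incompatible with $\chi_p(-1)=1$ for every odd $p\mid N$, so one side of the identity vanishes exactly when the other does. Multiplying $S_2$ by the odd-prime contribution $2^{\omega(N)-1}I_\chi$ then yields the claimed formula. The principal difficulty is the bookkeeping at $p=2$, particularly the parity-of-$N/(2c)$ dichotomy in \eqref{e:alpha} that distinguishes $\ord_2(c)=1$ from $\ord_2(c)=e_2-1$ in the $s_2<e_2$ branch.
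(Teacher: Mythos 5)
Your proof is correct, and it takes a cleanly organized but genuinely different route from the paper. You begin, as the paper does, by invoking Lemma~\ref{lem:chi_T1/cv} together with the characterization \eqref{e:open_cusp_N} of $C_{\Gamma,\chi}$ to reduce everything to the sum $S=\sum_c\chi_{\alpha(N,c)}(-1)$, and you correctly observe from \eqref{e:alpha} that $\gcd(\alpha(N,c),N/\alpha(N,c))=1$ in all relevant cases, so that $\chi_{\alpha(N,c)}(-1)$ factors over primes. Your key move is then to note that both the admissibility conditions on $\ord_p(c)$ and the predicate ``$p\mid\alpha(N,c)$'' are purely local, so that $S$ factors as a global product $S=\prod_{p\mid N}S_p$; the whole identity is then an evaluation of $S_2$ in each $(e_2,s_2)$ regime, with $S_p=1+\chi_p(-1)$ trivially for odd $p$. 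The paper instead never states a global factorization: it splits the sum by the value $\gcd(c,N/c)\in\{1,2\}$ (and, within the second piece, by the parity of $N/(2c)$), then evaluates each piece as a product over the odd primes, yielding the three contributions $I_\chi 2^{\omega(N)}$, $I_\chi 2^{\omega(N)-1}$, $I_\chi 2^{\omega(N)-1}$ that sum to $I_\chi 2^{\omega(N)}\Omega_1(N,q)$. The two decompositions meet at exactly the same delicate point — the dichotomy $\ord_2(c)=1$ versus $\ord_2(c)=e_2-1$ when $8\mid N$ and $s_2<e_2$, and the fact that $\chi_2(-1)=1$ is forced when $I_\chi=1$ because $\chi$ is even — so the essential content is identical. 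Your version makes the multiplicativity and the vanishing for non-pure $\chi$ more transparent (each odd-prime factor $1+\chi_p(-1)$ kills the product); the paper's version is marginally more economical because the three pieces line up immediately with the shape of $\Omega_1$. There is no gap in your argument.
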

\begin{proof}
By Lemma \ref{lem:chi_T1/cv} and the definition of $\alpha(N,c)$ in \eqref{e:alpha}, we have
\begin{multline*}
	\sum_{c\mid N, \gcd(c, N/c)\leq 2, q\mid \frac{N}{\gcd(c, N/c)}}
	\chi^{(\ve)}(T_{1/c, 0})
	=
	(-1)^{\ve}
	\sum_{c\mid N, \gcd(c, N/c)\leq 2, q\mid \frac{N}{\gcd(c, N/c)}}
	\chi_{\alpha(N, c)} (-1)
	\\
	=
	(-1)^{\ve} 
\biggl\{
	\sum_{c\mid N, \gcd(c, N/c)=1} \chi_{N/c}(-1)
	+
	\delta_{4\mid N, q\mid \frac{N}{2}}
	\sum_{c\mid N, \gcd(c, N/c)=2}
	\big\{
	\delta_{2\nmid\frac{N}{2c}}\,
	\chi_{\frac{N}{2c}}(-1)
	+
	\delta_{2\mid\frac{N}{2c}}\,
	\chi_{\frac{2N}{c}}(-1)
	\big\}\biggr\}.
\end{multline*}
For $\gcd(c, N/c)=1$, we have
\begin{equation}\label{e:avr_chi_gcd1}
	\sum_{c\mid N, \gcd(c, N/c)=1}
	\chi_{N/c}(-1)
	=
	\prod_{p\mid N} (1+\chi_p(-1))
	=
	I_\chi 2^{\omega(N)}.
\end{equation}
When $4\mid N$, $q\mid\frac{N}{2}$ and $\gcd(c,N/c)=2$, then either
$\frac{N}{2c}$ is odd so $2^{e_2-1}\parallel c$ or $\frac{N}{2c}$ is
even, $8\mid N$ and $2\parallel c$.
For $\frac{N}{2c}$ odd, we have
$$
	\sum_{c\mid N, \gcd(c, N/c)=2, 2\nmid \frac{N}{2c}}
	\chi_{\frac{N}{2c}}(-1)
	=
	\prod_{p\mid N, p>2} (1+\chi_p(-1))
	=
	I_\chi 2^{\omega(N)-1}.
$$
For $8\mid N$, $q\mid \frac{N}{2}$ and $\frac{N}{2c}$ even, we have
$$
	\sum_{c\mid N, \gcd(c, N/c)=2, 2\mid \frac{N}{2c}}
	\chi_{\frac{2N}{c}}(-1)
	=
	\chi_2(-1) \prod_{p\mid N, p>2} (1+\chi_p(-1))
	=
	I_\chi 2^{\omega(N)-1}.
$$
Combining these, we obtain the formula stated in the lemma.
\end{proof}
%%%

%%%
\begin{lemma}\label{lem:sum_T1/cv_mbc}
$$
	\sum_{c\mid N, \gcd(c, N/c)\leq 2}
	\sum_{v\in\{0, 1\}}
	\chi^{(\ve)}(T_{1/c, v}) \log \mbc_{1/c, v}
	=
	(-1)^{\ve} I_\chi 2^{\omega(N)}
	\big(
	\Omega_1(N, q)\log N + \ve_N \log 2
	\big).
$$
Here $\Omega_1(N, q)$ and $\ve_N$ are given in \eqref{e:Omega1} and \eqref{e:veN} respectively.
\end{lemma}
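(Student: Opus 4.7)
The plan is to split the outer sum into two pieces according to whether $\frac 1c\in C_{\Gamma,\chi}$ (equivalently, $q\mid\frac{N}{\gcd(c,N/c)}$) or not, and to combine Lemma~\ref{lem:chi_T1/cv} (evaluating $\chi^{(\ve)}(T_{1/c,v})$) with Lemma~\ref{lem:mbc} (evaluating $\mbc_{1/c,v}$).

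The key preliminary step is to verify that for every $c\mid N$ with $\gcd(c,N/c)\leq 2$,
$$
\log\mbc_{1/c,0}+\log\mbc_{1/c,1}=\log N+\ve_N\log 2.
$$
This follows by a direct case analysis of Lemma~\ref{lem:mbc}: the multiset $\{\mbc_{1/c,0},\mbc_{1/c,1}\}$ equals $\{\sqrt N,\sqrt N\}$ when $4\mid N$, $\{\sqrt N,\sqrt{2N}\}$ when $2\parallel N$, and $\{\sqrt N,2\sqrt N\}$ when $N$ is odd, which matches $\ve_N\in\{0,\tfrac12,1\}$ respectively.

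For the open-cusp piece, Lemma~\ref{lem:chi_T1/cv} gives $\chi^{(\ve)}(T_{1/c,v})=(-1)^\ve\chi_{\alpha(N,c)}(-1)$ independently of $v$. Summing over $v\in\{0,1\}$, using the preliminary identity, and then invoking Lemma~\ref{lem:sum_chiT1/c0}, this part equals
$$
(-1)^\ve(\log N+\ve_N\log 2)\,I_\chi 2^{\omega(N)}\Omega_1(N,q).
$$
For the non-open piece, I note that $\gcd(c,N/c)=1$ automatically satisfies $q\mid N$, so a non-open cusp must have $\gcd(c,N/c)=2$ and hence $4\mid N$. In that regime Lemma~\ref{lem:mbc} gives $\mbc_{1/c,v}=\sqrt N$ independently of $v$, while Lemma~\ref{lem:chi_T1/cv} supplies the sign $(-1)^{\ve+v+w_c}$; summing over $v\in\{0,1\}$, the alternating sign cancels the constant $\log\sqrt N$, so the non-open cusps contribute nothing.

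Reconciling with the target formula then amounts to a short two-case check: when $4\mid N$ we have $\ve_N=0$ (so the extra $\ve_N\log 2$ term vanishes), while when $4\nmid N$ we have $\Omega_1(N,q)=1$ and no non-open cusps are present. In both regimes the open-cusp contribution coincides with $(-1)^\ve I_\chi 2^{\omega(N)}\bigl(\Omega_1(N,q)\log N+\ve_N\log 2\bigr)$. The main obstacle is the bookkeeping in the preliminary $\mbc$-identity together with the reconciliation when $\Omega_1(N,q)>1$; once these are verified, the rest of the proof is immediate from the cited lemmas.
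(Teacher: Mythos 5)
Your proof is correct, and it uses the same ingredients (Lemmas~\ref{lem:chi_T1/cv}, \ref{lem:mbc}, \ref{lem:sum_chiT1/c0}) and the same two observations as the paper: that $\chi^{(\ve)}(T_{1/c,v})$ is $v$-independent on open cusps while $\mbc_{1/c,v}$ is $v$-independent on non-open cusps (which force $4\mid N$), so the sign-alternating non-open contribution vanishes. Your organization — a single identity $\log\mbc_{1/c,0}+\log\mbc_{1/c,1}=\log N+\ve_N\log2$ valid for all $c$, followed by the open/non-open split and the final observation that $\ve_N(\Omega_1(N,q)-1)=0$ — is slightly cleaner bookkeeping than the paper's $4\mid N$ versus $4\nmid N$ dichotomy, but it is essentially the same argument.
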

%%%
\begin{proof}
Recalling \eqref{e:chi_T1/cv} and \eqref{e:mbc_1/c_v},
when $4\mid N$, we get
\begin{multline*}
	\sum_{c\mid N, \gcd(c, N/c)\leq 2}
	\sum_{v\in\{0, 1\}}
	\chi^{(\ve)}(T_{1/c, v}) \log \mbc_{1/c, v}
	\\
	=
	(-1)^{\ve}
	\log N
	\left\{
	\sum_{c\mid N, \gcd(c, N/c)=1}
	\chi_{\alpha(N, c)}(-1)
	+
	\frac{1}{2}
	\sum_{c\mid N, \gcd(c, N/c)=2}
	\begin{cases}
	2\chi_{\alpha(N, c)}(-1)
	& \text{if } q\mid \frac{N}{2}, \\
	\sum_{v\in \{0, 1\}} (-1)^{v+w_c}
	& \text{if } q\nmid \frac{N}{2}
	\end{cases}
	\right\}
	\\
	=
	(-1)^{\ve}
	\log N
	\sum_{c\mid N, \gcd(c, N/c)\leq 2, q\mid \frac{N}{\gcd(c, N/c)}}
	\chi_{\alpha(N, c)}(-1)
=(-1)^{\ve} I_\chi 2^{\omega(N)} \Omega_1(N, q) \log N,
\end{multline*}
where the last equality holds by
Lemma \ref{lem:sum_chiT1/c0} (and its proof).

For $4\nmid N$, by \eqref{e:chi_T1/cv} and \eqref{e:mbc_1/c_v}, we have
\begin{multline*}
	\sum_{c\mid N, \gcd(c, N/c)\leq 2}
	\sum_{v\in\{0, 1\}}
	\chi^{(\ve)}(T_{1/c, v}) \log \mbc_{1/c, v}
	\\
	=
	(-1)^{\ve}
	\sum_{c\mid N, \gcd(c, N/c)=1}
	\chi_{\alpha(N, c)}(-1)
	\bigg( \log N + \frac{1}{\gcd(2, N)} \log 2\bigg)
\\
=(-1)^{\ve}I_\chi 2^{\omega(N)} \Omega_1(N, q)\bigg( \log N + \frac{1}{\gcd(2, N)} \log 2\bigg),
\end{multline*}
where we again used Lemma \ref{lem:sum_chiT1/c0}.
The desired formula follows if we also note that  $4\nmid N$ implies $\Omega_1(N, q)=1$.
\end{proof}

%%%
\begin{lemma}\label{lem:sum_notinCGamachi}
$$
	\sum_{\substack{\frac{a}{c}\in C_{\Gamma}, \\ q\nmid \frac{N}{\gcd(c, N/c)}}}
	\log \left|1-\chi(T_{a/c})\right|
	=
	\Psi_1(N, q)
	\sum_{p\mid N} \frac{\Psi_4(p^{e_p}, p^{s_p})}{\Psi_1(p^{e_p}, p^{s_p})}
	\log p.
$$
Here $\Psi_1$ and $\Psi_4$ are given in \eqref{e:Psi1} and \eqref{e:Psi4}.
\end{lemma}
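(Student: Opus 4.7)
The plan is to perform a CRT reduction of the inner sum over $a$ and evaluate it using a cyclotomic identity. By Lemma~\ref{lem:chi_Ta/c}, $\chi(T_{a/c})=\chi(1-aM_c)$ where $K_c:=\gcd(c,N/c)$ and $M_c:=N/K_c$. Writing $c=\prod_p p^{f_p}$, set $k_p=\min(f_p,e_p-f_p)=\ord_p K_c$ and $m_p=e_p-k_p=\ord_p M_c$. Factoring $\chi=\prod_p\chi_p$, define $P_1(c):=\{p\mid N:s_p>m_p\}$; the summation condition $q\nmid M_c$ is precisely $P_1(c)\neq\emptyset$, and for $p\notin P_1(c)$ the factor $\chi_p(1-aM_c)=1$.

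The key observation is that $m_p\geq\lceil e_p/2\rceil\geq s_p/2$, hence $2m_p\geq s_p$. Writing $M_c=u_p p^{m_p}$ with $u_p\in(\Z/p^{s_p}\Z)^\times$, the congruence $(1-x_1 u_p p^{m_p})(1-x_2 u_p p^{m_p})\equiv 1-(x_1+x_2)u_p p^{m_p}\pmod*{p^{s_p}}$ shows that $\tilde\chi_p(x):=\chi_p(1-x u_p p^{m_p})$ is an additive character of $\Z/p^{s_p-m_p}\Z$, injective by primitivity of $\chi_p$ (which is faithful on the subgroup $1+p^{m_p}\Z/p^{s_p}\Z$). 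Hence $\tilde\chi_p$ is an isomorphism onto $\mu_{p^{s_p-m_p}}$. Setting $L_c:=\prod_{p\in P_1(c)}p^{s_p-m_p}$ and combining via CRT yields an isomorphism $\tilde\chi:\Z/L_c\Z\to\mu_{L_c}$ satisfying $\chi(T_{a/c})=\tilde\chi(a\bmod L_c)$.

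To evaluate the inner sum, I would count lifts of $b\in(\Z/L_c\Z)^\times$ to $a\in(\Z/K_c\Z)^\times$: using $L_c\mid K_c$ and CRT, the fiber size equals $\prod_{p\in P_1(c)}p^{e_p-s_p}\cdot\prod_{p\mid K_c,\,p\notin P_1(c)}\varphi(p^{k_p})$, while $b\notin(\Z/L_c\Z)^\times$ admits no lifts since units reduce to units. Thus the inner sum factors as this prefactor times $\sum_{b\in(\Z/L_c\Z)^\times}\log|1-\tilde\chi(b)|=\log|\Phi_{L_c}(1)|$, where $\Phi_n$ denotes the $n$-th cyclotomic polynomial. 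The classical identity $\Phi_n(1)=p$ when $n=p^k$ for $k\geq 1$ and $\Phi_n(1)=1$ otherwise forces the contribution from $c$ to vanish unless $|P_1(c)|=1$.

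Finally, for each prime $p_0\mid q$ I sum over $c$ with $P_1(c)=\{p_0\}$: the constraint at $p_0$ gives $f_{p_0}\in\{e_{p_0}-s_{p_0}+1,\ldots,s_{p_0}-1\}$ (of cardinality $\max(2s_{p_0}-e_{p_0}-1,0)$, contributing the factor $p_0^{e_{p_0}-s_{p_0}}\log p_0$), while for each other prime $p\neq p_0$ the constraint $\max(f_p,e_p-f_p)\geq s_p$ yields the local sum $\sum_{f_p}\varphi(p^{k_p})=\Psi_1(p^{e_p},p^{s_p})$ by the computation \eqref{e:varphi_gcd_p} from the proof of Lemma~\ref{lem:varphi_gcd}. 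Using $\Psi_4(p^{e_p},p^{s_p})=p^{e_p-s_p}\max(2s_p-e_p-1,0)$, summing over $p_0$ produces the claimed formula. The main technical step is verifying that $\tilde\chi_p$ is a well-defined bijection uniformly across primes, including $p=2$ where $(\Z/2^{s_2}\Z)^\times$ is non-cyclic, but the hypothesis $2m_p\geq s_p$ (forced by $m_p\geq\lceil e_p/2\rceil$) guarantees additivity while primitivity of $\chi_p$ guarantees injectivity in all cases.
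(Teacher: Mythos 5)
Your argument is correct and essentially the same as the paper's: both start from $\chi(T_{a/c})=\chi\bigl(1-a\tfrac{N}{\gcd(c,N/c)}\bigr)$, recognize that $a\mapsto\chi(1-aN/M)$ is an additive character of $\Z/M\Z$ of some order $B$, reduce the inner sum to $\tfrac{\varphi(M)}{\varphi(B)}\log|\Phi_B(1)|$, and apply the cyclotomic identity so that only $B$ a prime power contributes, before summing over $c$ prime-by-prime. The only cosmetic difference is that you perform the CRT decomposition into local additive characters $\tilde\chi_p$ up front (identifying $L_c$ explicitly as $\prod_{p\in P_1(c)}p^{s_p-m_p}$), whereas the paper keeps the argument global (introducing the integer $A$ with $\chi(1-aN/M)=e(aA/M)$ and setting $B=M/\gcd(A,M)=q/\gcd(N/M,q)$) and only passes to a product over primes at the very last summation; the two quantities $L_c$ and $B$ coincide, and your fiber-counting reproduces the factor $\varphi(M)/\varphi(B)$.
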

%%%
\begin{proof}
Recalling \eqref{e:chi_Ta/c},
and writing $M=\gcd(c,N/c)$, we have
\begin{align}\label{lem:sum_notinCGamachipf2}
	\sum_{\substack{\frac{a}{c}\in C_{\Gamma}, \\ q\nmid \frac{N}{M}}} \log \left|1-\chi(T_{a/c})\right|
	=
	\sum_{c\mid N, q\nmid \frac{N}{M}}
	\sum_{a\in (\Z/M\Z)^\times}
	\log \left|1-\chi\!\left(1-a\tfrac{N}{M}\right)\right|.
\end{align}
Now let $c$ be fixed subject to the conditions $c\mid N$ and $q\nmid\frac NM$.
Note that $N\mid (N/M)^2$, and therefore the map
$a\mapsto 1-aN/M$ is 
a homomorphism from the additive group $\Z/M\Z$ 
to the multiplicative group $(\Z/N\Z)^\times$.
In particular there is some $A\in\Z$ (uniquely determined modulo $M$)
such that $\chi(1-aN/M)=e(aA/M)$ for all $a\in\Z$,
and we have $M\nmid A$ since $q\nmid\frac NM$.
It follows that, with $B=M/\gcd(A,M)>1$,
\begin{align*}
	\sum_{a\in (\Z/M\Z)^\times}
	\log \left|1-\chi\!\left(1-a\tfrac{N}{M}\right)\right|
&=\frac{\varphi(M)}{\varphi(B)}\,\log\biggl(\,\prod_{y\in(\Z/B\Z)^\times}\left|1-e\left(\tfrac yB\right)\right|\biggr)
\hspace{100pt}&
\\
&=\frac{\varphi(M)}{\varphi(B)}\begin{cases}
\log p&\text{ if $B=p^r$ for some prime $p$ and $r\geq1$,}
\\
0&\text{ otherwise.}
\end{cases}
\end{align*}
(cf., e.g., \cite[\S46(7)]{Nag51}).    
% see proof in Sec.\ TRIGSUM in tfsqtechn.tex!
% Also: http://math.stackexchange.com/questions/49956/value-of-cyclotomic-polynomial-evaluated-at-1
Note that for $a\in\Z$ we have $\chi(1-aN/M)=e(aA/M)=1$ if and only if $B\mid a$;
hence in fact 
\begin{align}\label{lem:sum_notinCGamachipf1}
B=\frac{q}{\gcd(N/M,q)}.
\end{align}

For any prime $p\mid N$, let $f_p=\ord_p(c).$
It follows from \eqref{lem:sum_notinCGamachipf1} that
$B=p^r$ holds for some prime $p$ and some $r\geq1$ if and only if
$\min\{f_p,e_p-f_p\}>e_p-s_p$ while 
$\min\{f_{p'},e_{p'}-f_{p'}\}\leq e_{p'}-s_{p'}$ for every prime $p'\neq p$.
Furthermore, when this holds,
we have
$$
	\frac{\varphi(M)}{\varphi(B)}
	=
	p^{e_p-s_p}
	\prod_{p'\mid N, p'\neq p} \varphi\!\left({p'}^{\min\{e_{p'}-f_{p'}, f_{p'}\}}\right).
$$
Hence the expression in \eqref{lem:sum_notinCGamachipf2} equals
\begin{align*}
	\sum_{p\mid N}
	\bigg[
	\prod_{p'\mid N, p'\neq p}
	\bigg( \sum_{\substack{ 0\leq f_{p'}\leq e_{p'}, \\
	\min\{f_{p'},e_{p'}-f_{p'}\}\leq  e_{p'}-s_{p'} }}
 \varphi\!\left({p'}^{\min\{e_{p'}-f_{p'}, f_{p'}\}}\right)
	\bigg)
	\bigg]
\,
p^{e_p-s_p}\log p
\sum_{\substack{0\leq f_p\leq e_p\\
\min\{f_p,e_p-f_p\}>e_p-s_p}}1.
\end{align*}
Here the sum over $f_p$ equals $\max\{2s_p-e_p-1,0\}$,
and each sum over $f_{p'}$ can be evaluated
using \eqref{e:varphi_gcd_p} (with $x=0$).
This leads to the statement of the lemma.
\end{proof}
%%%

\begin{proof}[Proof of Proposition \ref{prop:Cu}]
The proposition follows from
$$
\Cu(N,\chi;n)=\Cu(\Gamma, \chi^{(0)})+n\Cu(\Gamma, \chi^{(1)})
$$
by evaluating the various sums appearing in \eqref{e:Cu}
using the above three lemmas
and also the formula $|C_{\Gamma, \chi}|=\Psi_1(N, q)$ (cf.\ Lemma~\ref{lem:open_cusp}).
\end{proof}

%%%%%
\subsection{Eisenstein series}\label{EISsec}
Huxley \cite{Hux84} gave explicit expressions (involving Dirichlet $L$-functions)
for the scattering matrix and its determinant for the congruence subgroups
$\Gamma^0(N)$, $\Gamma^1(N)$ and $\Gamma(N)$ of the modular group,
for arbitrary level $N$.
Note that $\Gamma^0(N)$ is conjugate to the group $\Gamma_0(N)$, which we consider here.
The case of squarefree $N$ was previously considered in
\cite[Ch.~11]{Hej83}.
The calculations in this section could likely be shortened by appealing to
work of Young \cite{You19} that was written subsequently to this paper;
we have included our original treatment for completeness.

In our setting, the Eisenstein series for $\langle\Gamma^+, \chi\rangle$ associated to an open cusp $\frac{a}{c}\in C_{\Gamma, \chi}$ is given by (cf.~\eqref{e:Eis_j})
\begin{equation}\label{e:Eis_a/c}
	E_{a/c}(z, s, \chi) = \sum_{U\in [T_{a/c}]\bsl \Gamma^+} \overline{\chi(U)} (\Im(\N_{a/c} Uz))^s,
	\text{ for } z\in \HH \text{ and } \Re(s)>1.
\end{equation}
Substituting $U=W_{a/c} \sm \alpha & \beta\\ \gamma & \delta\esm$,
using \eqref{e:Na/c} and \eqref{e:Ta/c},
and writing $z=x+iy$ and $W_{a/c} = \sm a & b\\ c& d\esm\in \SL_2(\Z)$,
one sees that \eqref{e:Eis_a/c} can be rewritten as
\begin{equation}\label{e:Eis_a/c_2}
	E_{a/c}(z, s, \chi)
	=
	\frac{\gcd(c^2, N)^{s}}{2N^s}
	\sum_{\langle\gamma, \delta\rangle}
	\chi(a\alpha+b\gamma) \frac{y^{s}}{|\gamma z+\delta|^{2s}},
\end{equation}
where now the sum runs over all $\langle\gamma, \delta\rangle\in \Z^2$
satisfying $\gcd(\gamma, \delta)=1$, $\gcd(\gamma, N)=c$
and $\delta\frac{\gamma}{c}\equiv_{\gcd(c, N/c)} -a$.
Also in \eqref{e:Eis_a/c_2},
$\alpha=\alpha(\gamma, \delta)\in \Z$ denotes any integer satisfying
both $\alpha\delta\equiv_\gamma 1$ and $\alpha\equiv_{N/c} -d\frac{\gamma}{c}$.
Such $\alpha$ exists for every relevant pair $\langle\gamma, \delta\rangle$.

We wish to make the factor $\chi(a\alpha+b\gamma)$ somewhat more explicit.
For a prime $p$, from now on we are using the following notations:
$$
	e_p = \ord_p(N), \quad s_p = \ord_p(q)\quad \text{ and }\quad f_p = \ord_p(c).
$$
Since $\frac{a}{c}\in C_{\Gamma, \chi}$, by Lemma~\ref{lem:open_cusp}, we have $s_p\leq \max\{e_p-f_p, f_p\}$.
Also $\gcd(\gamma, N)=c$ implies that $\ord_p(\gamma)\geq f_p$
with equality unless $f_p=e_p$.
Now the conditions on $\alpha=\alpha(\gamma, \delta)$ imply
$$
	a\alpha+b\gamma
	\equiv
	\begin{cases}
	a\delta^{-1} & \mod{p^{f_p}}, \\
	a(-d\frac{\gamma}{c}) + b\gamma \equiv -\frac{\gamma}{c} & \mod{p^{e_p-f_p}}.
	\end{cases}
$$
Hence
\begin{equation}\label{e:chip_ab}
	\chi_p(a\alpha+b\gamma)
	=
	\begin{cases}
	\chi_p(a) \overline{\chi_p(\delta)} & \text{if } e_p \leq 2f_p, \\
	\chi_p(-\gamma/c) & \text{if } e_p> 2f_p.
	\end{cases}
\end{equation}
Note that both relations are valid in the special case $e_p=2f_p$.

As in \cite{Hux84} we now introduce a family of sums similar to but simpler than \eqref{e:Eis_a/c_2}.
It will turn out that these sums can be expressed as linear combinations of the Eisenstein series $E_{a/c}(z, s, \chi)$ (cf.~Lemma~\ref{lem:E_chi1chi2} below),
and a key step in computing the scattering matrix will be to invert these linear relations.

Throughout this and the next section, we let $\chi_1$ and $\chi_2$ denote primitive Dirichlet characters satisfying $\chi_1(-1) = \chi_2(-1)$.
For $j\in \{1, 2\}$, we write $q_j = \cond(\chi_j)$ and $\chi_{j, p} = (\chi_j)_p$ for any prime $p$.
As in \cite[p.~143]{Hux84}, for $\Re(s)>1$ and $z=x+iy$, we define
$$
	B_{\chi_1}^{\chi_2}(z, s) = \sum_{(c, d)\in \Z^2-\{(0, 0)\}} \chi_1(c) \chi_2(d) \frac{y^s}{|cz+d|^{2s}}
$$
and
$$
	E_{\chi_1}^{\chi_2}(z, s) = \sum_{(c, d)\in \Z^2, \gcd(c, d)=1} \chi_1(c) \chi_2(d) \frac{y^{s}}{|cz+d|^{2s}}
	=
	L(2s, \chi_1\chi_2)^{-1} B_{\chi_1}^{\chi_2}(z, s).
$$
As in \cite{Hux84}, we have
\begin{equation}\label{e:B_chi1chi2_modular}
	B_{\chi_1}^{\chi_2}\!\left(\sm A& B\\ C& D\esm z, s\right)
	=
	\chi_1(D) \chi_2(A) B_{\chi_1}^{\chi_2}(z, s),
\end{equation}
for all $\sm A& B\\ C& D\esm\in \Gamma_0(1)$ with $q_1\mid C$ and $q_2\mid B$.
Similarly,
\begin{equation}\label{e:Bchi1chi2_V_fe}
	B_{\chi_1}^{\chi_2}(Vz, s) = \chi_1(-1) B_{\chi_1}^{\chi_2}(z, s).
\end{equation}
The same transformation formulas also hold for $E_{\chi_1}^{\chi_2}(z, s)$.

We let
\begin{equation}\label{e:F}
	F = \left\{(m, \chi_1, \chi_2)\;:\; m\in \Z_{\geq 1},\; mq_1 \mid N, \; q_2\mid m, \; \cond(\chi\chi_2\overline{\chi_1}) = 1\right\}.
\end{equation}
Note that $\cond(\chi\chi_2\overline{\chi_1})=1$ if and only if $\chi\chi_2(x) = \chi_1(x)$ for all $x\in (\Z/N\Z)^\times$;
however this does not imply $\chi\chi_2=\chi_1$ since the product $\chi\chi_2$ is not necessarily primitive.
It follows from \eqref{e:B_chi1chi2_modular} that for every $(m, \chi_1, \chi_2)\in F$, the function $E_{\chi_1}^{\chi_2}(mz, s)$ is $\langle\Gamma^+, \chi\rangle$-invariant,
that is $E_{\chi_1}^{\chi_2}(mTz, s) = \chi(T) E_{\chi_1}^{\chi_2}(mz, s)$ for all $T\in \Gamma^+$.
In fact, each such function $E_{\chi_1}^{\chi_2}(mz, s)$ may be expressed as a linear combination of the Eisenstein series for $\langle\Gamma^+, \chi\rangle$ as follows.
%%%
\begin{lemma}\label{lem:E_chi1chi2}
For any $(m, \chi_1, \chi_2)\in F$, we have
$$
	E_{\chi_1}^{\chi_2}(mz, s)
	=
	\frac{2N^s}{m^s}
	\sum_{\frac{a}{c}\in C_{\Gamma, \chi}} \frac{\gcd(m, c)^{2s}}{\gcd(c^2, N)^s}
	\,\,\chi_1\Bigl(-\frac{c}{\gcd(m, c)}\Bigr)\, \chi_2\Bigl(a\frac{m}{\gcd(m, c)}\Bigr)
	\,E_{a/c}(z, s, \chi).
$$
\end{lemma}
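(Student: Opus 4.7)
The plan is to verify the identity by comparing the coefficient of $y^s/|\gamma_0 z + \delta_0|^{2s}$ on both sides, for each coprime pair $(\gamma_0, \delta_0) \in \Z^2$. The main tools will be the explicit expansion \eqref{e:Eis_a/c_2} of $E_{a/c}(z, s, \chi)$ and the local computation \eqref{e:chip_ab} of $\chi_p(a\alpha+b\gamma)$.

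For the LHS, the first step is to substitute $(\gamma, \delta) = (mC, D)$ in the defining series of $E_{\chi_1}^{\chi_2}(mz, s)$ and then regroup by the coprime pair $(\gamma_0, \delta_0) = (\gamma/u, \delta/u)$, where $u := \gcd(\gamma, \delta) = \gcd(m, D)$. A short local analysis shows that for a given coprime $(\gamma_0, \delta_0)$ there is exactly one valid $u$, namely $u = m/\gcd(m, \gamma_0)$: the condition $m \mid \gamma$ forces $\ord_p u \geq \max\{0, \ord_p m - \ord_p \gamma_0\}$, while $\gcd(C, D) = 1$ combined with $\gcd(\gamma_0, \delta_0) = 1$ rules out strict inequality. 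The LHS coefficient of $y^s/|\gamma_0 z + \delta_0|^{2s}$ is therefore
$$\frac{\gcd(m,\gamma_0)^{2s}}{m^s}\,\chi_1\!\left(\frac{\gamma_0}{\gcd(m,\gamma_0)}\right)\chi_2\!\left(\frac{m\delta_0}{\gcd(m,\gamma_0)}\right).$$
On the RHS, substituting \eqref{e:Eis_a/c_2} into each summand, each coprime $(\gamma_0, \delta_0)$ contributes to exactly one cusp, namely $\frac{a}{c}$ with $c = \gcd(\gamma_0, N)$ and $a \equiv -\delta_0\gamma_0/c \pmod*{\gcd(c,N/c)}$. Since $m \mid N$ we have $\gcd(m,\gamma_0) = \gcd(m,c)$, so the two $\gcd^{2s}/m^s$ prefactors already agree.

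The remaining task is to verify the character identity
$$\chi_1(-\gamma_0/c)\,\overline{\chi_2(a)}\,\chi_2(\delta_0) = \chi(a\alpha + b\gamma_0).$$
The condition $\cond(\chi\chi_2\overline{\chi_1}) = 1$ localizes to $\chi_p = \chi_{1,p}\overline{\chi_{2,p}}$ on $(\Z/p^{e_p}\Z)^\times$, and combined with \eqref{e:chip_ab} the identity at each prime $p \mid N$ reduces to a congruence $a \equiv -\gamma_0\delta_0/c \pmod*{p^t}$, where $t = s_{1,p}$ when $e_p \leq 2f_p$ and $t = s_{2,p}$ when $e_p > 2f_p$ (here $f_p = \ord_p c$). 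The hypotheses $mq_1 \mid N$ and $q_2 \mid m$ defining $F$, together with the character-support constraints making the LHS coefficient nonzero (namely $\ord_p m \geq f_p$ at primes $p \mid q_1$ and $\ord_p m \leq f_p$ at primes $p \mid q_2$), yield the bound $t \leq \min\{f_p, e_p-f_p\} = \ord_p\gcd(c,N/c)$, which is precisely what is needed for the congruence to follow from the definition of $a$. A parallel argument verifies that when $\frac{a}{c} \notin C_{\Gamma,\chi}$, so that the RHS summand is absent, the LHS coefficient vanishes by the character support conditions.

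The main obstacle I anticipate is the careful local bookkeeping: one must track the exponents $e_p, s_p, s_{1,p}, s_{2,p}, \ord_p m$, and $f_p$ simultaneously, and reconcile the two cases of \eqref{e:chip_ab} with the bounds coming from $(m, \chi_1, \chi_2) \in F$ and from the nonvanishing of the character factors.
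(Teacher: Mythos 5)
Your proof is correct and converges to the same key computation as the paper's, but the route to get there is slightly different and, I think, cleaner. The paper starts from Huxley's intermediate reformulation \eqref{e:Echi1chi2_Hux} of $E_{\chi_1}^{\chi_2}(mz,s)$, indexed by factorizations $m=gh$ and residues $e,f$, and then constructs a bijection between the tuples $(g,h,e,f,\gamma,\delta)$ appearing there and the cusp data $(a,c,\gamma,\delta)$ appearing in \eqref{e:Eis_a/c_2}. You instead unfold $E_{\chi_1}^{\chi_2}(mz,s)$ directly from its definition, group by the primitive direction $(\gamma_0,\delta_0)$, and observe that the only valid scaling factor is $u=m/\gcd(m,\gamma_0)$; that uniqueness claim checks out by the local analysis you sketch, and it eliminates the need to cite Huxley's decomposition as a black box. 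From that point on the two arguments coincide: both reduce to a local character identity at each prime $p\mid N$ which follows from \eqref{e:chip_ab}, the relation $\chi_p=\chi_{1,p}\overline{\chi_{2,p}}$, the congruence $a\equiv-\delta_0\gamma_0/c\pmod*{\gcd(c,N/c)}$, and the conductor bounds $s_{1,p}\le e_p-f_p$ (in the case $e_p\le 2f_p$) and $s_{2,p}\le f_p$ (in the case $e_p>2f_p$), which are exactly what the hypotheses $mq_1\mid N$, $q_2\mid m$ and the nonvanishing of the character factors force — this is the content of the paper's \eqref{e:chi2chi1chi2_p}. Your vanishing claim for $\frac{a}{c}\notin C_{\Gamma,\chi}$ likewise matches \eqref{e:chi2chi1=0}. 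One small caution for a write-up: the simplified form of the character identity you state, $\chi_1(-\gamma_0/c)\overline{\chi_2(a)}\chi_2(\delta_0)=\chi(a\alpha+b\gamma_0)$, is only obtained after dividing out common character factors, so it should really be phrased as the full identity between the two products (allowing both sides to vanish), with the vanishing cases handled as in the final step; but this is cosmetic and does not affect the substance.
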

%%%
\begin{remark}
With some work, one can see that
Lemma \ref{lem:E_chi1chi2} is equivalent to
\cite[Thm.\ 7.1]{You19}.
However, our proof
is significantly different;
indeed note that 
the proof in \cite{You19}
is based on
inversion of a formula expressing $E_{a/c}(z,s,\chi)$ as a linear combination of functions of the form
$E_{\chi_1}^{\chi_2}(mz, s)$
\cite[Thm.\ 6.1]{You19}.
\end{remark}
\begin{proof}
For $(m, \chi_1, \chi_2)\in F$, by \cite[p.~146(top)]{Hux84},
$E_{\chi_1}^{\chi_2}(mz, s)$ can be expressed as
\begin{equation}\label{e:Echi1chi2_Hux}
	\sum_{m=gh}\,\,
	\sum_{\substack{e\pmod*{N/h}, \\ \gcd(e, g)=1}}
	\sum_{\substack{f\pmod*{N}, \\ \gcd(f, he, N)=1}}
	\chi_2(g) \chi_1(e) \chi_2(f) \Bigl(\frac{h}{g}\Bigr)^{\!s}
	\sum_{\gamma\in he+N\Z}
	\sum_{\substack{\delta\in f+N\Z, \\ \gcd(\gamma, \delta)=1}}
	\frac{y^s}{|\gamma z+\delta|^{2s}}.
\end{equation}
Given any $g, h, e, f$ as in this sum,
let $a$ and $c$ be the uniquely determined integers such that
$$
	c = \gcd(he, N), \quad
	a\equiv_{\gcd(c, N/c)} -f\frac{he}{c}
	\:\:\text{ and }\:\: \tfrac{a}{c}\in C_{\Gamma}.
$$
One easily checks that this transformation gives a bijection between the set of tuples
$$(g, h, e, f, \gamma, \delta)$$
appearing in the sum,
and the set
$$
	\left\{(a, c, \gamma, \delta)\;:\; \tfrac{a}{c}\in C_{\Gamma},
	\gamma, \delta\in \Z, \gcd(\gamma, \delta)=1, \gcd(\gamma, N)=c,
	\delta\equiv_{\gcd(c, N/c)} -a\left(\frac{\gamma}{c}\right)^{-1}
	\right\}.
$$
The inverse map is given by
$$
	h=\gcd(m, c), \quad
	g=\frac{m}{h}, \quad
	e\equiv_{N/h} \frac{\gamma}{h}
	\text{ and }
	f\equiv_N\delta.
$$

Now let $(a, c, \gamma, \delta)$ be any tuple in the above set,
with corresponding $(g, h, e, f, \gamma, \delta)$.
We then have, for each prime $p\mid N$,
\begin{equation}\label{e:chi2chi1chi2_p}
	\chi_{2, p}(g) \chi_{1, p}(e) \chi_{2, p}(f)
	=
	\chi_{2, p}(g) \chi_{2, p}(a) \chi_{1, p}\left(-\frac{c}{h}\right)
	\begin{cases}
	\chi_p(a) \overline{\chi_p(\delta)} & \text{if } \ord_p(N/c) \leq \ord_p(c), \\
	\chi_p(-\gamma/c) & \text{if } \ord_p(N/c) > \ord_p(c).
	\end{cases}
\end{equation}
To prove this, first note that if $\ord_p(q_2) > \ord_p(c)$ then
$g = \frac{m}{\gcd(c, m)}\equiv_p 0$,
and thus $\chi_{2, p}(g)=0$ so \eqref{e:chi2chi1chi2_p} holds.
Hence from now on we may assume $\ord_p(q_2) \leq \ord_p(c)$.

Let us first assume that $\ord_p(c) > \ord_p(m)$.
Then $p\mid e$ but $p\nmid a$, $p\nmid \delta$ (since $\gcd(a, c)=1$ and $\gcd(\gamma, \delta)=1$).
If $p\mid q_1$ then both sides of \eqref{e:chi2chi1chi2_p} are $0$.
On the other hand, if $p\nmid q_1$, viz. $\chi_{1, p}\equiv 1$ then $\cond(\chi_p\chi_{2, p})=1$
and so $\chi_{2, p}(f) = \overline{\chi_p(f)} = \overline{\chi_p(\delta)}$
and $\chi_{2, p}(a) = \overline{\chi_p(a)}$;
and if $\ord_p(N/c) > \ord_p(c)$ then also $\delta \equiv_{\gcd(c, p^\infty)} -a\left(\frac{\gamma}{c}\right)^{-1}$;
hence \eqref{e:chi2chi1chi2_p} holds.

It remains to consider the case $\ord_p(c) \leq \ord_p(m)$.
Then since $mq_1\mid N$ we have $\ord_p(q_1) \leq \ord_p(N/c)$
and hence $\chi_{1, p}(e) = \chi_{1, p}(-\gamma/c) \chi_{1, p}(-c/\gcd(m, c))$.
Also, using
$$
f\equiv \delta\equiv -a\left(\frac{\gamma}{c}\right)^{-1} \pmod*{\gcd(c, N/c)}
$$
we get
$$
	\begin{cases}
	\chi_{1, p}(-\gamma/c) = \chi_{1, p}(a) \overline{\chi_{1, p}(\delta)} & \text{if } \ord_p(N/c) \leq \ord_p(c); \\
	\chi_{2, p}(f) = \chi_{2, p}(a) \overline{\chi_{2, p}(-\gamma/c)} & \text{if } \ord_p(N/c) > \ord_p(c).
	\end{cases}
$$
The second identity holds since we are assuming $\ord_p(q_2) \leq \ord_p(c)$ from start.
From this \eqref{e:chi2chi1chi2_p} follows and so we have proved that \eqref{e:chi2chi1chi2_p} holds in all cases.

Next let us note that
\begin{equation}\label{e:chi2chi1=0}
	\chi_2(g)\chi_1(e)=0 \text{ whenever } \tfrac{a}{c}\notin C_{\Gamma, \chi}.
\end{equation}
To prove this, assume that $\frac{a}{c}\notin C_{\Gamma, \chi}$;
then by Lemma~\ref{lem:open_cusp}, there is a prime $p\mid N$ such that
$$\ord_p(q) > \ord_p\!\left(\tfrac{N}{\gcd(c, N/c)}\right)=\max\{\ord_p(N/c), \ord_p(c)\}.$$
As noted above, if $\ord_p(q_2) > \ord_p(c)$ then $\chi_{2, p}(g) = 0$ and \eqref{e:chi2chi1=0} holds.
On the other hand, if $\ord_p(q_2) \leq \ord_p(c)$ then using $\cond(\chi\chi_2\overline{\chi_1})=1$ we get $\ord_p(q_1) = \ord_p(q)$
and since $mq_1\mid N$ we have
$$\ord_p(m) < \ord_p(\gcd(c, N/c))\leq \ord_p(c), $$
hence $p\mid e$ so $\chi_{1, p}(e)=0$ and \eqref{e:chi2chi1=0} holds.

This lemma is now a direct consequence of \eqref{e:Echi1chi2_Hux},
the bijection between $\{(g, h, e, f, \gamma, \delta)\}$ and $\{(a, c, \gamma, \delta)\}$
and \eqref{e:chi2chi1chi2_p}, \eqref{e:chi2chi1=0}, \eqref{e:Eis_a/c_2} and \eqref{e:chip_ab}.
\end{proof}
%%%

Next, recalling the definition of $F$ in \eqref{e:F}, for any $m\mid N$ we now set
\begin{equation}\label{e:Fm}
	F_m = \{ (\chi_1, \chi_2) \;:\; (m, \chi_1, \chi_2)\in F\}.
\end{equation}
The following lemma gives an explicit enumeration of the set $F_m$.
For any $c\in \Z_{\geq 1}$, we write
$$
	X_c = \left\{\psi \text{ primitive Dirichlet character }:\; \cond(\psi)\mid c\right\}.
$$
This set is in one-to-one correspondence with the dual of the group $(\Z/c\Z)^\times$.
In particular $|X_c| = \varphi(c)$.
Recall that $q=\cond(\chi)$.
Let us write
$$
	G_{N, q} = \left\{m\mid N\;:\; q\mid\tfrac{N}{\gcd(m, N/m)}\right\}.
$$
%%%
\begin{lemma}\label{lem:map_Fm_X}
Given $m\mid N$, the set $F_m$ is nonempty if and only if $m\in G_{N, q}$.
When this holds, the map
\begin{equation}\label{e:map_Fm_X}
	(\chi_1, \chi_2) \mapsto \prod_{p\mid N}
	\begin{cases}
	\chi_{1, p} & \text{if } \ord_p(q) \leq \ord_p(m), \\
	\chi_{2, p} & \text{if } \ord_p(q) > \ord_p(m)
	\end{cases}
\end{equation}
is a bijection from $F_m$ onto $X_{\gcd(m, N/m)}$.
In particular $|F_m| = \varphi(\gcd(m, N/m))$.
\end{lemma}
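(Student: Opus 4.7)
The plan is to reduce everything to a prime-by-prime analysis. Factor $\chi=\prod_{p\mid N}\chi_p$ and write any element of $F_m$ as $\chi_1=\prod_p\chi_{1,p}$, $\chi_2=\prod_p\chi_{2,p}$, where each factor is a primitive character of prime-power conductor. Setting $e_p=\ord_p N$, $s_p=\ord_p q$, and $f_p=\ord_p m$, the three conditions $q_1\mid N/m$, $q_2\mid m$, and $\cond(\chi\chi_2\overline{\chi_1})=1$ become, at each $p\mid N$: $\cond(\chi_{1,p})\mid p^{e_p-f_p}$, $\cond(\chi_{2,p})\mid p^{f_p}$, and $\chi_p\chi_{2,p}=\chi_{1,p}$ as characters on $(\Z/p^{e_p}\Z)^\times$. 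The required parity condition $\chi_1(-1)=\chi_2(-1)$ will then follow automatically from the evenness of $\chi$.

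Next, I would analyse the local problem at a fixed prime $p\mid N$. Given $\chi_{1,p}$ satisfying its conductor bound, the equation forces $\chi_{2,p}=\overline{\chi_p}\chi_{1,p}$, reducing the question to when this has conductor dividing $p^{f_p}$. Using the standard fact that the conductor of a product of primitive characters of prime-power conductor is the maximum of the two unless the two conductors coincide, I expect to split into two cases: if $s_p\le f_p$, the conductor bound on $\chi_{2,p}$ becomes a further conductor bound on $\chi_{1,p}$, so $\chi_{1,p}$ ranges over characters of conductor dividing $p^{\min(f_p,e_p-f_p)}$ with $\chi_{2,p}$ then determined; if $s_p>f_p$, then $\chi_{1,p}=\chi_p\chi_{2,p}$ automatically has conductor $p^{s_p}$, so the bound on $\chi_{1,p}$ holds iff $s_p\le e_p-f_p$, in which case $\chi_{2,p}$ is free of conductor dividing $p^{f_p}=p^{\min(f_p,e_p-f_p)}$ and $\chi_{1,p}$ is determined. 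In both cases local solvability amounts to $s_p\le\max(f_p,e_p-f_p)=e_p-\ord_p\gcd(m,N/m)$, and the free character at $p$ runs over all of $X_{p^{\ord_p\gcd(m,N/m)}}$.

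Multiplying these local conditions across $p\mid N$, global solvability is equivalent to $q\mid N/\gcd(m,N/m)$, which is precisely $m\in G_{N,q}$, giving the first claim. The bijection in the lemma is then constructed by selecting at each $p$ the "free" character --- $\chi_{1,p}$ when $s_p\le f_p$ and $\chi_{2,p}$ when $s_p>f_p$ --- exactly matching \eqref{e:map_Fm_X}; taking the product over $p$ and invoking the Chinese Remainder Theorem yields a bijection onto $X_{\gcd(m,N/m)}$, of cardinality $\varphi(\gcd(m,N/m))$. The main technical obstacle is the careful case analysis around the coincidence of prime-power conductors, where the conductor of a product can drop below the maximum; this is the only place where nontrivial work is required, and handling it cleanly is the key to a smooth proof.
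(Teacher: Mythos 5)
Your proof is correct and follows essentially the same strategy as the paper's: factor into local components at each prime $p\mid N$, observe that the condition $\cond(\chi\chi_2\overline{\chi_1})=1$ forces $s_p\le\max\{s_{1,p},s_{2,p}\}\le\max\{\ord_p m,\ord_p(N/m)\}$ (giving necessity of $m\in G_{N,q}$), and then split into the cases $s_p\le\ord_p m$ and $s_p>\ord_p m$, in each of which one of $\chi_{1,p}$, $\chi_{2,p}$ is a free element of $X_{p^{\ord_p\gcd(m,N/m)}}$ and the other is determined. You are somewhat more explicit than the paper in verifying (using the fact that $\cond(\xi_1\xi_2)=\max\{\cond\xi_1,\cond\xi_2\}$ when the conductors differ) that the determined companion actually satisfies its conductor bound, but the decomposition, the pivotal observation, and the construction of the bijection are the same.
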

%%%
\begin{proof}
By the definition of $F$, two primitive Dirichlet characters $\chi_1$ and $\chi_2$ satisfy $(\chi_1, \chi_2)\in F_m$ if and only if,
for every prime $p\mid N$,
\begin{equation}\label{e:Fm_p}
	s_2 \leq \ord_p(m), \quad s_1 \leq \ord_p(N/m) \quad \text{ and } \quad
	\cond(\chi_p \overline{\chi_{1, p}} \chi_{2, p})=1,
\end{equation}
Here $s_j = \ord_p(q_j)$ for $j\in \{1, 2\}$.
Let $s = \ord_p(q)$.
Note that $\cond(\chi_p \overline{\chi_{1, p}} \chi_{2, p})=1$ implies that $s\leq \max\{s_1, s_2\}$.
Hence by \eqref{e:Fm_p},
$$
	s \leq \max\{\ord_p(m), \ord_p(N/m)\} = \ord_p\!\left(\tfrac{N}{\gcd(m, N/m)}\right).
$$
Since this must hold for every $p\mid N$, 
it follows that $F_m$ can be nonempty only if $m\in G_{N,q}$.

From now on we assume $m\in G_{N, q}$.
We will prove that the map in \eqref{e:map_Fm_X} is a bijection from $F_m$ onto $X_{\gcd(m, N/m)}$.
In particular this will imply that $F_m$ is non-empty,
with $|F_m| = \varphi(\gcd(m, N/m))$.

First consider any prime $p\mid N$ satisfying $s\leq\ord_p(m)$ (with $s=\ord_p(q)$).
For such $p$, the first and third relations in \eqref{e:Fm_p}
imply $s_1\leq\ord_p(m)$,
which together with the second relation in \eqref{e:Fm_p}
implies $s_1 \leq \ord_p(\gcd(m, N/m))$.
Conversely if $\chi_{1,p}$ is \emph{any} primitive character with conductor $p^{s_1}$ subject to
$s_1\leq\ord_p(\gcd(m,N/m))$,
and if $\chi_{2,p}$ is the unique primitive character satisfying $\cond(\chi_p\overline\chi_{1,p}\chi_{2,p})=1$,
then all the conditions in \eqref{e:Fm_p} are fulfilled.

Next consider any $p\mid N$ for which $s>\ord_p(m)$.
Recall that we are assuming $m\in G_{N,q}$;
hence $s\leq\max\{\ord_p(m),\ord_p(N/m)\}$
and so $s\leq\ord_p(N/m)$.
Then the two last relations in \eqref{e:Fm_p} together imply
$s_2\leq\ord_p(N/{m})$,
and in combination with the first relation this gives
$s_2\leq\ord_p (\gcd({m},N/{m}))$.
Conversely if $\chi_{2,p}$ is \emph{any} primitive character with conductor $p^{s_2}$
subject to $s_2\leq\ord_p(\gcd(m,N/m))$,
and if $\chi_{1,p}$ is the unique primitive character satisfying $\cond(\chi_p\overline\chi_{1,p}\chi_{2,p})=1$,
then all the conditions in \eqref{e:Fm_p} are fulfilled.

The above observations imply that the 
map in \eqref{e:map_Fm_X} is indeed a bijection as stated.
\end{proof}
%%%

We now turn to the full group $\Gamma = \Gamma_0^\pm (N)$.
Let $\frac{a}{c}\in C_{\Gamma, \chi}$ be given.
By Lemma~\ref{lem:Ua/c} and \cite[(2.8)]{BS07}, we have
\begin{equation}\label{e:Eis_a/c_V}
	E_{a/c}(Vz, s, \chi)
	=
	\chi_{\alpha(N, c)}(-1) E_{k(a/c)}(z, s, \chi),
\end{equation}
where $\alpha(N, c)$ is as in \eqref{e:alpha} and $k(a/c)$ as in \eqref{e:ka/c}.
Hence \eqref{e:Eis_j_Gamma} gives
\begin{equation}\label{e:Eis_a/c_Gamma}
	E_{a/c}^{\Gamma}(z, s, \chi^{(\ve)})
	=
	\begin{cases}
	E_{a/c}(z, s, \chi) + (-1)^{\ve} E_{-a/c}(z, s, \chi) & \text{if } \gcd(c, N/c)>2, \\
	\big(\chi_{\alpha(N, c)}(-1)(-1)^{\ve} + 1\big) E_{1/c}(z, s, \chi)
	& \text{if } \gcd(c, N/c)\leq 2.
	\end{cases}
\end{equation}
It is also natural to introduce, for any primitive Dirichlet characters $\chi_1$ and $\chi_2$ satisfying $\chi_1(-1) = \chi_2(-1)$ as before,
$$
	\eB_{\chi_1}^{\chi_2}(z, s)
	=
	B_{\chi_1}^{\chi_2}(z, s) + \chi^{(\ve)}(V^{-1}) B_{\chi_1}^{\chi_2}(Vz, s)
	=
	\begin{cases}
	2B_{\chi_1}^{\chi_2}(z, s) & \text{if } \chi_1(-1) = (-1)^{\ve}, \\
	0 & \text{otherwise}
	\end{cases}
$$
(cf.~\eqref{e:Bchi1chi2_V_fe}).
This function is $\langle\Gamma, \chi\rangle$-invariant,
since one easily checks that it is $\langle\Gamma^+,\chi\rangle$-invariant and also transforms correctly under $V$.
We also set
\begin{equation}\label{e:eE_chi1chi2}
	\eE_{\chi_1}^{\chi_2}(z, s)
	=
	L(2s, \chi_1\chi_2)^{-1}\, \eB_{\chi_1}^{\chi_2}(z, s)
	=
	\begin{cases}
	2E_{\chi_1}^{\chi_2}(z, s) & \text{if } \chi_1(-1) = (-1)^{\ve}, \\
	0 & \text{otherwise.}
	\end{cases}
\end{equation}

Recall from \eqref{e:R_Gammachi} that we have fixed a subset $R_{\Gamma, \chi}\subset C_{\Gamma, \chi}$,
which in our present notation
(cf.\ also \eqref{e:ka/c} and Lemma~\ref{lem:Ua/c})
has the property that for every $\frac{a}{c}\in C_{\Gamma, \chi}$ with $\gcd(c, N/c)\leq 2$, $\frac{a}{c}$ lies in $R_{\Gamma, \chi}$ if and only if $\chi_{\alpha(N, c)}(-1) = (-1)^{\ve}$.
Set
\begin{multline*}
	F^{\ve}
	= \{(m, \chi_1, \chi_2)\in F\;:\; \chi_1(-1)=(-1)^{\ve}\}
	\\
	= \{(m, \chi_1, \chi_2)\;:\; m\in \Z_{\geq 1}, \; q_2\mid m,\; mq_1\mid N, \; \cond(\chi\chi_2\overline{\chi_1})=1,\; \chi_1(-1) = (-1)^{\ve}\}.
\end{multline*}
We now have the following analogue of Lemma~\ref{lem:E_chi1chi2}.
%%%
\begin{lemma}\label{lem:eE_chi1chi2}
For any $(m, \chi_1, \chi_2)\in F^{\ve}$,
$$
	\eE_{\chi_1}^{\chi_2}(mz, s)
	=
	\frac{2N^s}{m^s}
	\sum_{\frac{a}{c}\in R_{\Gamma, \chi}} \frac{\gcd(m, c)^{2s}}{\gcd(c^2, N)^{s}}
	\,\chi_1\Bigl(-\frac{c}{\gcd(m, c)}\Bigr)
	\,\chi_2\Bigl(\frac{am}{\gcd(m, c)}\Bigr)
	I_c
	\,E_{a/c}^\Gamma(z, s, \chi^{(\ve)}),
$$
where $I_c = 1+\delta_{\gcd(c, N/c) >2}$.
\end{lemma}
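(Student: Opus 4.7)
The plan is to derive the lemma from Lemma~\ref{lem:E_chi1chi2} by regrouping its $C_{\Gamma, \chi}$-expansion according to the $V$-action on cusps and then collapsing each $V$-orbit into a multiple of $E_{a/c}^\Gamma$ via \eqref{e:Eis_a/c_Gamma}. Since $(m, \chi_1, \chi_2) \in F^{\ve}$ we have $\chi_1(-1) = (-1)^{\ve}$, so \eqref{e:eE_chi1chi2} gives $\eE_{\chi_1}^{\chi_2}(mz, s) = 2 E_{\chi_1}^{\chi_2}(mz, s)$, and Lemma~\ref{lem:E_chi1chi2} then rewrites this as $\tfrac{4N^s}{m^s} \sum_{a/c \in C_{\Gamma, \chi}} K(a/c)\, E_{a/c}(z, s, \chi)$ with
\begin{equation*}
K(a/c) := \frac{\gcd(m,c)^{2s}}{\gcd(c^2,N)^s}\,\chi_1\bigl(-\tfrac{c}{\gcd(m,c)}\bigr)\,\chi_2\bigl(\tfrac{am}{\gcd(m,c)}\bigr).
\end{equation*}

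First I would partition the sum according to whether $\gcd(c, N/c) > 2$ or $\gcd(c, N/c) \le 2$. For the first range, the cusps $\tfrac{a}{c}$ and $\tfrac{-a}{c}$ are distinct in $C_\Gamma$, both lie in $C_{\Gamma, \chi}$, exactly one lies in $R_{\Gamma, \chi}$ by \eqref{e:R_Gammachi}, and $\alpha(N, c) = N$ forces $\chi_{\alpha(N,c)}(-1) = \chi(-1) = 1$. The identity $K(-a/c) = \chi_2(-1)\, K(a/c) = (-1)^{\ve}\, K(a/c)$ (using $\chi_2(-1) = \chi_1(-1) = (-1)^{\ve}$) combined with \eqref{e:Eis_a/c_Gamma} collapses the paired contribution to $K(a/c)\, E_{a/c}^\Gamma(z, s, \chi^{(\ve)})$, and the prefactor ratio $4/2 = 2 = I_c$ matches the claimed $R_{\Gamma, \chi}$-term.

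Next I would address cusps with $\gcd(c, N/c) \le 2$, where $a = 1$ and $k(1/c) = 1/c$, so \eqref{e:Eis_a/c_Gamma} gives $E_{1/c}^\Gamma = (\chi_{\alpha(N,c)}(-1)(-1)^{\ve} + 1)\, E_{1/c}$. If $1/c \in R_{\Gamma, \chi}$, then $\chi_{\alpha(N,c)}(-1) = (-1)^{\ve}$ and $E_{1/c}^\Gamma = 2 E_{1/c}$; the single $C_{\Gamma, \chi}$-term rewrites as $\tfrac12 K(1/c) E_{1/c}^\Gamma$, and together with $I_c = 1$ and the prefactor $\tfrac{4N^s}{m^s}$ this reproduces the target. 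The subcase $1/c \in C_{\Gamma, \chi} \setminus R_{\Gamma, \chi}$, where $E_{1/c}^\Gamma = 0$ but $K(1/c) E_{1/c}$ is a priori nonzero, is the main obstacle.

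To dispose of that subcase I would invoke the $\chi^{(\ve)}$-covariance of $\eE_{\chi_1}^{\chi_2}(mz, s)$ under $V$. Using $E_{1/c}(Vz, s, \chi) = \chi_{\alpha(N,c)}(-1)\, E_{1/c}(z, s, \chi)$ together with the linear independence of the system $\{E_{a/c}(z, s, \chi) : a/c \in C_{\Gamma, \chi}\}$ (immediate from their distinct asymptotics at distinct cusps), matching coefficients of $E_{1/c}$ forces $\chi_{\alpha(N,c)}(-1)\, K(1/c) = (-1)^{\ve}\, K(1/c)$, so that $K(1/c) = 0$ precisely when $\chi_{\alpha(N,c)}(-1) \ne (-1)^{\ve}$, i.e.\ when $1/c \notin R_{\Gamma, \chi}$. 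The corresponding matching for the paired cusps of the first range is automatic, reducing to $K(-a/c) = (-1)^{\ve} K(a/c)$ together with $\chi_{\alpha(N,c)}(-1) = 1$. As a sanity check the vanishing of $K(1/c)$ in the bad subcase can also be verified directly from \eqref{e:alpha}, the parity relation $\chi_p(-1) = \chi_{1,p}(-1)\chi_{2,p}(-1)$ (which follows from $\cond(\chi\chi_2\overline{\chi_1}) = 1$), and the bijection of Lemma~\ref{lem:map_Fm_X}.
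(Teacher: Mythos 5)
Your proposal is correct and follows essentially the same route as the paper: starting from Lemma~\ref{lem:E_chi1chi2}, pairing $\pm a/c$ via \eqref{e:Eis_a/c_Gamma} when $\gcd(c,N/c)>2$, and disposing of the obstructive subcase $1/c\in C_{\Gamma,\chi}\setminus R_{\Gamma,\chi}$ by exploiting $V$-covariance of $E_{\chi_1}^{\chi_2}(mz,s)$ together with linear independence of the cuspidal Eisenstein series (the paper's claim \eqref{e:chi1chi2=0}). Your closing remark that one could alternatively check the vanishing of $K(1/c)$ directly from \eqref{e:alpha} mirrors the remark the authors make after their proof, where they note such a direct verification exists but entails a lengthy case analysis.
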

%%%
\begin{proof}
Fix $(m, \chi_1, \chi_2)\in F^{\ve}$.
Then $\eE_{\chi_1}^{\chi_2}(mz, s) = 2E_{\chi_1}^{\chi_2}(mz, s)\not\equiv0$
and this is expressed in Lemma~\ref{lem:E_chi1chi2} as a linear combination of
$E_{a/c}(z, s, \chi)$ over all $\frac{a}{c}\in C_{\Gamma, \chi}$.
For $\frac{a}{c}\in C_{\Gamma, \chi}$ with $\gcd(c, N/c)>2$,
we see immediately that the combined contribution from $\frac{a}{c}\in C_{\Gamma, \chi}$ and $\frac{-a}{c}\in C_{\Gamma, \chi}$
to the sum in Lemma~\ref{lem:E_chi1chi2} (doubled) is:
$$
	\frac{4N^s}{m^s}
	\frac{\gcd(m, c)^{2s}}{\gcd(c^2, N)^s}
	\chi_1\!\left(-\frac{c}{\gcd(m, c)}\right) \chi_2\!\left(a\frac{m}{\gcd(m, c)}\right)
	E_{a/c}^\Gamma(z, s, \chi^{(\ve)}),
$$
since $\chi_2(-1) = \chi_1(-1) = (-1)^{\ve}$.

Next take $\frac{a}{c}\in C_{\Gamma, \chi}$ with $\gcd(c, N/c)\leq 2$ (thus $a=1$).
If $\chi_{\alpha(N, c)}(-1) = -(-1)^{\ve}$, then as we will prove below
\begin{equation}\label{e:chi1chi2=0}
	\chi_1\!\left(-\frac{c}{\gcd(m, c)}\right) \chi_2\!\left(\frac{m}{\gcd(m, c)}\right)=0,
\end{equation}
so that the contribution from $E_{1/c}(z, s, \chi)$ in Lemma~\ref{lem:E_chi1chi2} vanishes.
If $\chi_{\alpha(N, c)}(-1) = (-1)^{\ve}$,
then the contribution to the doubled sum is
$$
	\frac{4N^s}{m^s} \frac{\gcd(c, m)^{2s}}{\gcd(c^2, N)^{s}}
	\chi_1\!\left(-\frac{c}{\gcd(m, c)}\right) \chi_2\!\left(\frac{m}{\gcd(m, c)}\right)
	\frac{1}{2} E_{1/c}^\Gamma(z, s, \chi^{(\ve)}),
$$
since $E_{1/c}^{\Gamma}(z, s, \chi^{(\ve)}) = 2E_{1/c}(z, s, \chi)$.
Hence, adding up the contributions, we obtain \eqref{e:eE_chi1chi2}.

It remains to prove the claim \eqref{e:chi1chi2=0} for every $\frac{1}{c}\in C_{\Gamma, \chi}$ satisfying $\gcd(c,N/c)\leq 2$
and $\chi_{\alpha(N, c)}(-1) = -(-1)^{\ve}$.
To do so, let us fix $s>1$.
Using \eqref{e:Bchi1chi2_V_fe} and $\chi_1(-1)=(-1)^{\ve}$, we see that
$E_{\chi_1}^{\chi_2}(mVz, s) \equiv (-1)^{\ve} E_{\chi_1}^{\chi_2}(mz, s)$
and applying Lemma~\ref{lem:E_chi1chi2} and \eqref{e:Eis_a/c_V},
this leads to a linear relation between the functions $z\mapsto E_{a/c}(z, s, \chi)$ for $\frac{a}{c}$ running through $C_{\Gamma, \chi}$.
However these functions are linearly independent,
since $E_{a/c}(z, s,\chi)$ grows like $(\Im(\N_{a/c}(z)))^s$ when $z$ approaches the cusp $\frac{a}{c}$
but is bounded by $O((\Im(\N_{a'/c'}(z)))^{1-s})$ as $z$ approaches any other cusp $\frac{a'}{c'}\in C_{\Gamma}$;
cf.\ \cite[p.~280 (Prop.~3.7)]{Hej83}.
It follows that all coefficients in the linear relation which we have obtained mush vanish.
In particular for $\frac{1}{c}\in C_{\Gamma, \chi}$ with $\gcd(c, N/c)\leq 2$
this implies
$$
	(\chi_{\alpha(N, c)}(-1)-(-1)^{\ve}) \chi_1\!\left(-\frac{c}{\gcd(m,
	c)}\right) \chi_2\!\left(\frac{m}{\gcd(m, c)}\right)
	=0,
$$
and \eqref{e:chi1chi2=0} follows.
\end{proof}
%%%
\begin{remark}
The claim \eqref{e:chi1chi2=0} for every $\frac{1}{c}\in C_{\Gamma, \chi}$ satisfying $\gcd(c, N/c)\leq 2$ and $\chi_{\alpha(N, c)}(-1) = -(-1)^{\ve}$
may alternatively be proved directly from the definition of $\alpha(N, c)$,
using no facts about Eisenstein series.
However we have not found any way of doing this without going through a somewhat lengthy case-by-case analysis.
\end{remark}
%%%

Mimicking now \cite[p.~147]{Hux84},
we let $\vecB(s)$ be the vector of the functions
$$
	\vecB(s) = (\eB_{\chi_1}^{\chi_2}(mz, s))_{(m, \chi_1, \chi_2)\in F^{\ve}}.
$$
So far our discussion has been for $\Re(s)>1$.
However the functions $B_{\chi_1}^{\chi_2}(z, s)$ can be meromorphically continued to all $s\in \C$;
hence so can $E_{\chi_1}^{\chi_2}(z, s)$, $\eB_{\chi_1}^{\chi_2}(z, s)$ and $\eE_{\chi_1}^{\chi_2}(z, s)$;
and we have the functional equation (cf.~\cite[p.~145]{Hux84})
$$
	\vecB(s) = \vecD(s) P\, \vecB(1-s),
$$
where $P$ is a permutation which in each row 
$(m, \chi_1, \chi_2)\in F^{\ve}$
has an entry one in column $(q_1m/q_2, \overline{\chi_2}, \overline{\chi_1}))\in F^{\ve}$
and zeros elsewhere,
and  $\vecD(s)$ is a diagonal matrix with entries
$$
	\frac{q_1^{1-s}}{q_2^{s}} \pi^{2s-1} \frac{\tau(\chi_2)}{\tau(\overline{\chi_1})}
	\frac{\Gamma(1-s)}{\Gamma(s)}.
$$
Using the fact that the map $(m, \chi_1, \chi_2)\mapsto (q_1m/q_2, \overline{\chi_2}, \overline{\chi_1})$,
which is the permutation given by $P$,
is an involution of the set $F^{\ve}$, we obtain
\begin{equation}\label{e:detP}
	\det(P) = (-1)^{\frac{|F^{\ve}|-|F_0^{\ve}|}{2}},
\end{equation}
where
\begin{equation}\label{e:Fve0}
	F_0^{\ve} = \{(m, \chi_1, \chi_2)\in F^{\ve}\;:\; \chi_1=\overline{\chi_2}\}
\end{equation}
is the set of fixpoints of the involution.
Making use of the same permutation we also see that
\begin{equation}\label{e:detD}
	\det(\vecD(s))
	=
	\left(\frac{\pi^{2s-1} \Gamma(1-s)}{\Gamma(s)}\right)^{|F^{\ve}|}
	\prod_{(m, \chi_1, \chi_2)\in F^{\ve}} q_1^{1-2s}.
\end{equation}

Let $\vecE^{\Gamma}(s)$ be the vector of the functions $E_{a/c}^{\Gamma}(z, s, \chi^{(\ve)})$
for all $\frac{a}{c}\in R_{\Gamma, \chi}$.
By \eqref{e:eE_chi1chi2} and Lemma~\ref{lem:eE_chi1chi2}, we have
\begin{equation}\label{e:vecB_vecEGamma}
	\vecB(s) = L(s) M(s) \vecE^{\Gamma}(s),
\end{equation}
where $L(s)$ is the $|F^{\ve}|\times |F^{\ve}|$ diagonal matrix with entries $L(2s, \chi_1\chi_2)$
and $M(s)$ is an $|F^{\ve}|\times |R_{\Gamma, \chi}|$ matrix, whose entry in row $(m, \chi_1, \chi_2)\in F^{\ve}$, column $\frac{a}{c}\in R_{\Gamma, \chi}$ is
\begin{equation}\label{e:Ms}
	\frac{2N^s}{m^s} \frac{\gcd(m, c)^{2s}}{\gcd(c^2, N)^{s}}
	\,\chi_1\!\left(-\frac{c}{\gcd(m, c)}\right) \chi_2\!\left(a\frac{m}{\gcd(m, c)}\right) I_c.
\end{equation}
At first \eqref{e:vecB_vecEGamma} holds for $\Re(s)>1$;
however $\vecE^{\Gamma}(s)$ can be meromorphically continued to all
$s\in \C$ (cf.~\eqref{e:Eis_a/c_Gamma} and \cite[Ch.~6.11 and Ch.~8.3]{Hej83})
and \eqref{e:vecB_vecEGamma} remains valid for $s\in \C$.

Let $Q(s)$ be the $|F^{\ve}|\times |F^{\ve}|$ matrix whose entry in row $(m, \chi_1, \chi_2)\in F^{\ve}$ and column $(m', \chi_1', \chi_2')\in F^{\ve}$ is
\begin{equation}\label{e:Qs}
	\begin{cases}
	\mu\!\left(\frac{m}{m'}\right) \chi_2\!\left(\frac{m}{m'}\right) \left(\frac{m}{m'}\right)^{-s}
	& \text{if } m'\mid m, \chi_1' = \chi_1 \text{ and }\chi_2' = \chi_2, \\
	0 & \text{otherwise.}
	\end{cases}
\end{equation}
Then the product matrix $Q(s) M(s)$ has the following entry in row $(m, \chi_1, \chi_2)\in F^{\ve}$ and column $\frac{a}{c}\in R_{\Gamma, \chi}$:
\begin{multline}\label{e:QsMs}
	I_c \sum_{m'\mid m} \mu\!\left(\frac{m}{m'}\right) \chi_2\!\left(\frac{m}{m'}\right) \left(\frac{m}{m'}\right)^{-s}
	\frac{2N^s}{{m'}^s} \frac{\gcd(c, m')^{2s}}{\gcd(c^2, N)^s}
	\,\chi_1\!\left(-\frac{c}{\gcd(c, m')}\right) \chi_2\!\left(a\frac{m'}{\gcd(c, m')}\right)
	\\
	=
	\frac{2N^s}{m^s \gcd(c^2, N)^{s}} I_c
	\sum_{d\mid m} \mu(d) \gcd(m/d, c)^{2s}
	\chi_1 \!\left(-\frac{c}{\gcd(m/d, c)}\right) \chi_2\!\left(a\frac{m/d}{\gcd(m/d, c)}\right)
	\\
	=
	\frac{2N^s}{m^s} \frac{\gcd(c, m)^{2s}}{\gcd(c^2, N)^{s}}
	\, I_c\, \chi_1\!\left(-\frac{c}{\gcd(c, m)}\right) \chi_2\!\left(a\frac{m}{\gcd(c, m)}\right)
	\\
	\times
	\prod_{p\mid m}
	\left(1-\begin{cases}
	1 & \text{if } \ord_p(c) < \ord_p(m), \\
	\chi_1(p) \chi_2(p) p^{-2s} & \text{if } \ord_p(c) \geq \ord_p(m)
	\end{cases}
	\right)
	\\
	=
	\begin{cases}
	0 & \text{if } m\nmid c, \\
	\frac{2N^s m^s}{\gcd(c^2, N)^s}
	I_c \chi_1\!\left(-\frac{c}{m}\right) \chi_2(a) \prod_{p\mid m} (1-\chi_1(p)\chi_2(p) p^{-2s})
	& \text{if } m\mid c.
	\end{cases}
\end{multline}

The following lemma is immediate from Lemma~\ref{lem:open_cusp} and the definition of $R_{\Gamma, \chi}$ in \eqref{e:R_Gammachi}.
\begin{lemma}\label{lem:|RGammachi|}
For every $c\in G_{N, q}$, we have
$$
	\left|\left\{a\;:\; \tfrac{a}{c}\in R_{\Gamma, \chi}\right\}\right|
	=
	\frac{1}{2}
	\begin{cases}
	\varphi\!\left(\gcd(c, N/c)\right)
	& \text{if } \gcd(c, N/c)>2, \\
	(-1)^{\ve} \chi_{\alpha(N, c)}(-1)+1 & \text{if } \gcd(c, N/c)\leq 2.
	\end{cases}
$$
Here $\chi^{(\ve)}(V) = (-1)^{\ve}$ and $\alpha(N, c)$ is given in \eqref{e:alpha}.
\end{lemma}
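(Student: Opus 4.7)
The plan is to classify the cusps $\frac ac\in C_\Gamma$ with denominator $c$ and determine which fall in $R_{\Gamma,\chi}$ by inspecting the involution $\frac ac\mapsto k(\frac ac)$ and, when it has fixed points, the extra selection criterion in \eqref{e:R_Gammachi}. First I would record that since $c\in G_{N,q}$, by Lemma~\ref{lem:open_cusp} every cusp of $C_\Gamma$ with denominator $c$ belongs to $C_{\Gamma,\chi}$, and that the number of such cusps equals $\varphi(\gcd(c,N/c))$ (each class being determined by $a\bmod\gcd(c,N/c)$ with $\gcd(a,\gcd(c,N/c))=1$).

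Next I split on the size of $M:=\gcd(c,N/c)$. In the case $M>2$, formula \eqref{e:ka/c} gives $k(\frac ac)=\frac{-a}{c}$, and this differs from $\frac ac$ in $C_\Gamma$: indeed $a\equiv-a\pmod M$ would force $M\mid 2a$, which together with $\gcd(a,M)=1$ gives $M\mid2$, contradicting $M>2$. Hence the involution $k$ pairs up the $\varphi(M)$ cusps with denominator $c$ into $\frac12\varphi(M)$ orbits, and by the first clause of \eqref{e:R_Gammachi} exactly one representative of each orbit lies in $R_{\Gamma,\chi}$. This accounts for the top line of the claimed formula.

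In the case $M\le2$, the convention \eqref{e:CGamma_gcd} leaves only the single cusp $\frac1c$ with denominator $c$, and $k(\frac1c)=\frac1c$ by \eqref{e:ka/c}. The second clause of \eqref{e:R_Gammachi} then says that $\frac1c\in R_{\Gamma,\chi}$ if and only if $\chi^{(\ve)}(V^{-1}U_{1/c})=1$. Since $V^2=1_2$ we have $\chi^{(\ve)}(V^{-1})=(-1)^\ve$, and since $U_{1/c}\in\Gamma^+$ the definition \eqref{e:chi-epsilon} together with \eqref{e:chi_U1/c_1} yields $\chi^{(\ve)}(U_{1/c})=\chi(U_{1/c})=\chi_{\alpha(N,c)}(-1)$. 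Hence $\chi^{(\ve)}(V^{-1}U_{1/c})=(-1)^\ve\chi_{\alpha(N,c)}(-1)$, so $\frac1c\in R_{\Gamma,\chi}$ precisely when $\chi_{\alpha(N,c)}(-1)=(-1)^\ve$. This gives the bottom case as $\tfrac12\bigl((-1)^\ve\chi_{\alpha(N,c)}(-1)+1\bigr)$, completing the proof.

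There is no real obstacle: the argument is a direct bookkeeping combining \eqref{e:R_Gammachi}, \eqref{e:ka/c}, \eqref{e:CGamma_gcd}, and \eqref{e:chi_U1/c_1}. The only minor technical point to check carefully is the counting of $\Gamma^+$-equivalence classes of cusps with fixed denominator $c$, which is standard and follows directly from the equivalence criterion recalled in \S\ref{ss:cusps}.
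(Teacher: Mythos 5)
Your proof is correct and takes essentially the same route the paper intends: the paper asserts the lemma is immediate from Lemma~\ref{lem:open_cusp} and the definition \eqref{e:R_Gammachi} of $R_{\Gamma,\chi}$, and your argument is precisely the detailed bookkeeping behind that claim, using the fixed-point-free involution $a\mapsto-a$ for $\gcd(c,N/c)>2$ and the fixed-point criterion $\chi^{(\ve)}(V^{-1}U_{1/c})=1$ via \eqref{e:chi_U1/c_1} for $\gcd(c,N/c)\le2$.
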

%%%

For any $m\mid N$, we set
$$
	F_m^{\ve} = \{(\chi_1, \chi_2)\;:\; (m, \chi_1, \chi_2)\in F^{\ve}\}
	=
	\{(\chi_1, \chi_2)\in F_m\;:\; \chi_1(-1) = (-1)^{\ve}\}.
$$
%%%
\begin{lemma}\label{lem:|Fm_ve|}
Assume $m\in G_{N, q}$.
Then for any $(\chi_1, \chi_2)\in F_m$, $(\chi_1, \chi_2)\in F_m^{\ve}$ holds
if and only if
\begin{equation}\label{e:varpi_chi_-1}
	\varpi(-1) \prod_{p\mid N, \ord_p(q) > \ord_p(m)} \chi_p(-1)  = (-1)^{\ve},
\end{equation}
where $\varpi\in X_{\gcd(m, N/m)}$ is the image of $(\chi_1, \chi_2)$ under the map \eqref{e:map_Fm_X}.
It follows that
\begin{equation}\label{e:|Fm_ve|}
	|F_m^{\ve}|
	=
	\frac{1}{2}
	\begin{cases}
	\varphi(\gcd(m, N/m)) & \text{if } \gcd(m, N/m) >2, \\
	(-1)^{\ve} \chi_{\alpha(N, m)}(-1)+1 & \text{if } \gcd(m, N/m)\leq 2,
	\end{cases}
\end{equation}
\end{lemma}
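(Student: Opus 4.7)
The plan is to use the bijection of Lemma~\ref{lem:map_Fm_X} to translate the defining parity condition $\chi_1(-1)=(-1)^\ve$ of $F_m^\ve\subset F_m$ into a sign condition on the associated character $\varpi\in X_{\gcd(m,N/m)}$, and then to count using orthogonality of characters.

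For the first assertion, write $\chi_1(-1)=\prod_{p\mid N}\chi_{1,p}(-1)$, with the convention that a trivial local component contributes $1$. The condition $\cond(\chi_p\overline{\chi_{1,p}}\chi_{2,p})=1$ built into the definition of $F$ gives, on units, the pointwise identity $\chi_{1,p}(x)=\chi_p(x)\chi_{2,p}(x)$, and in particular
\begin{equation*}
\chi_{1,p}(-1)=\chi_p(-1)\chi_{2,p}(-1)\qquad\text{for every }p\mid N.
\end{equation*}
By Lemma~\ref{lem:map_Fm_X}, the $p$-th local component $\varpi_p$ of $\varpi$ equals $\chi_{1,p}$ when $\ord_p(q)\leq\ord_p(m)$ and equals $\chi_{2,p}$ when $\ord_p(q)>\ord_p(m)$. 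Substituting $\chi_{1,p}(-1)=\chi_p(-1)\varpi_p(-1)$ in the second case and recombining yields
\begin{equation*}
\chi_1(-1)=\varpi(-1)\prod_{\substack{p\mid N\\ \ord_p(q)>\ord_p(m)}}\chi_p(-1),
\end{equation*}
which is precisely the condition \eqref{e:varpi_chi_-1}.

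For the counting formula \eqref{e:|Fm_ve|}, set $\sigma_m:=\prod_{p\mid N,\,\ord_p(q)>\ord_p(m)}\chi_p(-1)\in\{\pm1\}$, so the parity condition reads $\varpi(-1)=(-1)^\ve\sigma_m$. When $\gcd(m,N/m)>2$, $-1$ has order exactly $2$ in $(\Z/\gcd(m,N/m)\Z)^\times$, so by character orthogonality exactly half of the $\varphi(\gcd(m,N/m))$ characters in $X_{\gcd(m,N/m)}$ take any prescribed value in $\{\pm1\}$ at $-1$; pulling back through the bijection gives $|F_m^\ve|=\tfrac12\varphi(\gcd(m,N/m))$. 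When $\gcd(m,N/m)\leq 2$, $X_{\gcd(m,N/m)}$ consists of the trivial character alone, so $\varpi(-1)=1$, and the number of $\varpi$ satisfying the condition is $\tfrac12(1+(-1)^\ve\sigma_m)$; this matches the claimed formula provided $\sigma_m=\chi_{\alpha(N,m)}(-1)$.

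The main obstacle is thus verifying the identity $\sigma_m=\chi_{\alpha(N,m)}(-1)$ in the regime $\gcd(m,N/m)\leq 2$, which I would do by a short case analysis over the three subcases of \eqref{e:alpha}. The unifying observation is that for any prime $p\mid N$ with $\ord_p(m)>0$, the hypothesis $\gcd(m,N/m)\leq 2$ severely restricts $\ord_p(N/m)$, and combined with $m\in G_{N,q}$ (which gives $\ord_p(q)\leq\ord_p(N)-\ord_p(\gcd(m,N/m))$) this forces $\ord_p(q)\leq\ord_p(m)$ except possibly when $p=2$, $\gcd(m,N/m)=2$, and $N/(2m)$ is even; in that exceptional subcase $\ord_2(m)=1$, $\alpha(N,m)=2N/m$ contains the full $2$-part of $N$, and one verifies directly that $\chi_2(-1)=1$ precisely when $\ord_2(q)\leq 1$, so the $p=2$ contributions to $\sigma_m$ and to $\chi_{\alpha(N,m)}(-1)$ agree. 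For primes with $\ord_p(m)=0$ the match is transparent: $p\mid\alpha(N,m)$, and $\chi_p(-1)$ appears in $\sigma_m$ exactly when $\chi_p$ is nontrivial, which is the same condition under which it appears in $\chi_{\alpha(N,m)}(-1)$. Combining these observations over all $p\mid N$ yields $\sigma_m=\chi_{\alpha(N,m)}(-1)$ and completes the proof.
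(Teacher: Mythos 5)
Your proof is correct and follows essentially the same route as the paper: translate the parity of $\chi_1(-1)$ through the bijection of Lemma~\ref{lem:map_Fm_X}, count evens and odds for $\gcd(m,N/m)>2$, and reduce the $\gcd(m,N/m)\leq 2$ case to the local identity $\sigma_m=\chi_{\alpha(N,m)}(-1)$, which you verify prime by prime exactly as the paper does via its claim \eqref{lem:|Fm_ve|PF1}. One small imprecision: the phrase ``one verifies directly that $\chi_2(-1)=1$ precisely when $\ord_2(q)\leq1$'' is not an equivalence (e.g.\ the even primitive character of conductor $8$ has $\ord_2(q)=3$), but only the implication $\ord_2(q)\leq1\Rightarrow\chi_2$ trivial $\Rightarrow\chi_2(-1)=1$ is actually used, and that holds since there is no primitive character modulo $2$; so the argument is sound.
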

%%%
\begin{proof}
Using \eqref{e:map_Fm_X} and $\chi_{1, p}(-1) = \chi_{2, p}(-1) \chi_p(-1)$,
which holds for every prime $p$ since $q(\chi\chi_2\overline{\chi_1}) = 1$,
we see that for the left hand side of \eqref{e:varpi_chi_-1} equals to $\chi_1(-1)$.
This proves the first statement of the lemma.
It follows that $|F_m^{\ve}| = \frac{1}{2} \varphi(\gcd(m, N/m))$ if $\gcd(m, N/m)>2$ since then exactly half of the characters of the group $(\Z/\gcd(m, N/m)\Z)^\times$ are even and half are odd.

It remains to prove the statement \eqref{e:|Fm_ve|} in the case $\gcd(m, N/m)\leq 2$.
In this case $X_{\gcd(m, N/m)}$ consists of the trivial character only,
and we see that our task is to prove that
$$
	\prod_{p\mid N, \ord_p(q) > \ord_p(m)} \chi_p(-1) = \chi_{\alpha(N, m)}(-1).
$$
But $\chi_p(-1)=1$ whenever $p\nmid q$.
Hence we are done if we can show that for all primes $p$
\begin{align}\label{lem:|Fm_ve|PF1}
	\ord_p(\alpha(N, m)) = \begin{cases}
	\ord_p(N) & \text{if } \ord_p(q) > \ord_p(m), \\
	0 & \text{if } 1\leq \ord_p(q) \leq \ord_p(m).
	\end{cases}
\end{align}
When $p>2$ this claim is immediate from the definition of $\alpha(N, m)$ in \eqref{e:alpha},
since it follows from $\gcd(m, N/m) \leq 2$ that $\ord_p(m)\in \{0, \ord_p(N)\}$ for every such prime $p$.
Similarly the claim is immediate also for $p=2$ when $\gcd(m, N/m)=1$.

Finally assume $p=2$ and $\gcd(m, N/m)=2$.
Then $\ord_p(N) \geq 2$, $\ord_p(m)\in\{1, \ord_p(N)-1\}$ and \eqref{e:alpha} 
implies that $\ord_p(\alpha)=0$ if $\ord_p(m) = \ord_p(N)-1$, otherwise $\ord_p(\alpha)=\ord_p(N)$.
Also $\ord_p(q) < \ord_p(N)$ since $m\in G_{N,q}$;
and recall that $\ord_p(q)$ cannot equal to $1$ since there is no primitive character modulo $2$.
From these observations we see that \eqref{lem:|Fm_ve|PF1} holds also when $p=2$ and $\gcd(m, N/m)=2$.
\end{proof}

It follows from Lemma~\ref{lem:|RGammachi|} and Lemma~\ref{lem:|Fm_ve|} that
\begin{equation}\label{e:|Fve|_sumG}
	|F^{\ve}| = \sum_{m\in G_{N, q}} |F_m^{\ve}| = |R_{\Gamma, \chi}|.
\end{equation}
In particular $M(s)$ is a square matrix.
%%%
\begin{lemma}\label{lem:detM}
We have $\det(Q(s)) = 1$ and
\begin{equation}\label{e:detM}
	\det(M(s)) = c(N, \chi) \prod_{m\in G_{N, q}} \left(\frac{N}{\gcd(m, N/m)}\right)^{|F_v^{\ve}|s}
	\prod_{(m, \chi_1, \chi_2)\in F^{\ve}} \prod_{p\mid m}
	(1-\chi_1\chi_2(p) p^{-2s}),
\end{equation}
for some nonzero constant $c(N, \chi)\in \C$.
Here $Q(s)$ and $M(s)$ are given as in \eqref{e:Qs} and \eqref{e:Ms}
\end{lemma}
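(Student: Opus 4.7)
The plan is to prove both assertions by exploiting block-triangular structure. For $\det Q(s) = 1$: the matrix $Q(s)$ defined in \eqref{e:Qs} is block-diagonal when the rows and columns are grouped by the pair $(\chi_1,\chi_2)$, since $Q(s)_{(m,\chi_1,\chi_2),(m',\chi_1',\chi_2')}$ vanishes unless $(\chi_1,\chi_2)=(\chi_1',\chi_2')$. Within each such diagonal block, the indexing is by $m$-values with $(m,\chi_1,\chi_2)\in F^{\ve}$, and the entry vanishes unless $m'\mid m$, with diagonal value $\mu(1)\chi_2(1)=1$. Linearizing the divisibility partial order on $m$-values makes each block upper triangular with unit diagonal, so $\det Q(s)=1$.

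For the formula for $\det M(s)$, I would compute $\det(Q(s)M(s))$ instead, using the simplified expression \eqref{e:QsMs}. Since the $(Q(s)M(s))$-entry at row $(m,\chi_1,\chi_2)$ and column $\frac ac$ is zero whenever $m\nmid c$, ordering rows by the row-level $m$ and columns by the column-level $c$ (compatibly with divisibility) makes the matrix block upper triangular, with diagonal blocks at pairs $(m,c)$ satisfying $m=c$. By \eqref{e:|Fve|_sumG} together with Lemmas \ref{lem:|RGammachi|} and \ref{lem:|Fm_ve|}, each such diagonal block $B_m(s)$ is square (both dimensions equal the same formula). Thus
$$\det M(s) = \det(Q(s)M(s)) = \prod_{m\in G_{N,q}}\det B_m(s).$$

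The diagonal block $B_m(s)$ has entry at row $(m,\chi_1,\chi_2)\in F_m^{\ve}$ and column $\frac am$ (with $\frac am\in R_{\Gamma,\chi}$) equal to
$$\frac{2N^s m^s}{\gcd(m^2,N)^s}\, I_m\,\chi_1(-1)\,\chi_2(a)\prod_{p\mid m}\bigl(1-\chi_1\chi_2(p)p^{-2s}\bigr).$$
Using $\gcd(m^2,N)=m\gcd(m,N/m)$ and $\chi_1(-1)=(-1)^{\ve}$, each entry factors as a row-dependent scalar times $\chi_2(a)$. Pulling the row factors out of the determinant isolates the $s$-dependence as $\bigl(\tfrac{N}{\gcd(m,N/m)}\bigr)^{|F_m^{\ve}|s}\prod_{(\chi_1,\chi_2)\in F_m^{\ve}}\prod_{p\mid m}(1-\chi_1\chi_2(p)p^{-2s})$, multiplied by the pure character determinant $\det(\chi_2(a))_{(\chi_1,\chi_2),\,a}$ and by the $s$-independent constants $(-1)^{\ve}\cdot 2\,I_m$. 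Taking the product over $m\in G_{N,q}$ yields the claimed formula, provided that the character-matrix determinant is nonzero for each $m$.

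The main obstacle is verifying this last nonvanishing. Using the bijection of Lemma~\ref{lem:map_Fm_X}, $(\chi_1,\chi_2)\in F_m$ corresponds to a character $\varpi\in X_{\gcd(m,N/m)}$, and the parity condition defining $F_m^{\ve}$ cuts out exactly the $\varpi$ of prescribed parity (Lemma~\ref{lem:|Fm_ve|}). Meanwhile the columns $\{a:\frac am\in R_{\Gamma,\chi}\}$ select one representative from each $\{\pm a\}$-orbit modulo $\gcd(m,N/m)$ when $\gcd(m,N/m)>2$, and a single class otherwise. Writing $\chi_2(a)$ explicitly in terms of $\varpi(a)$ (via the bijection, the remaining prime-by-prime dependence is a fixed multiplicative factor independent of $\varpi$), the matrix becomes, up to row scaling, the character table of the quotient group $(\Z/\gcd(m,N/m)\Z)^{\times}/\{\pm 1\}$, whose non-singularity follows from the orthogonality relations. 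The boundary cases $\gcd(m,N/m)\le 2$ reduce to a $1\times 1$ block that is trivially nonzero. Absorbing all $s$-independent factors (including the sign from the block reordering and the determinants of the character matrices) into $c(N,\chi)\in\C^{\times}$ gives \eqref{e:detM}.
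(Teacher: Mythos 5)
Your proposal follows essentially the same route as the paper's proof: triangularize $Q(s)$ to get $\det Q=1$, pass to $Q(s)M(s)$ which is block upper triangular under a divisibility-compatible ordering of $m$ and $c$, observe (via Lemmas~\ref{lem:|RGammachi|} and~\ref{lem:|Fm_ve|}) that the diagonal blocks are square, factor each block into row scalars times the character matrix $(\chi_2(a))$, and invoke Lemmas~\ref{lem:map_Fm_X} and~\ref{lem:|Fm_ve|} together with character orthogonality for $(\Z/\gcd(m,N/m)\Z)^\times/\{\pm1\}$ to see that this determinant is nonzero. The only slip is the phrase ``up to row scaling'' near the end: the factor $\prod_{p:\,\ord_p(q)\leq\ord_p(m)}\overline{\chi_p(a)}$ relating $\chi_2(a)$ to $\varpi(a)$ depends on the column index $a$, not the row, so one rescales \emph{columns} (as the paper does) to reach the character table; this does not affect nonvanishing.
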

%%%
\begin{proof}
Let us fix an ordering $F^{\ve}$ such that $(m, \chi_1, \chi_2)$ comes before $(m', \chi_1', \chi_2')$ whenever $m< m'$.
Giving both the rows and columns of $Q(s)$ this ordering, then $Q(s)$ is lower triangular by definition,
and has all diagonal entries equal to one.
Therefore $\det(Q(s))=1$.

It also follows that $\det(M(s)) = \det(Q(s) M(s))$.
Now $Q(s) M(s)$ is a square matrix whose rows (respectively columns) are naturally indexed by $(m, \chi_1, \chi_2)\in F^{\ve}$ (respectively $\frac{a}{c}\in R_{\Gamma, \chi})$.
Let us view $Q(s)M(s)$ as a block matrix, with the blocks indexed by $m$ (row) and $c$ (column).
Note that Lemma~\ref{lem:|RGammachi|} and Lemma~\ref{lem:|Fm_ve|}
then show that each diagonal block is a square matrix;
furthermore, \eqref{e:QsMs} implies that $Q(s)M(s)$ is upper block triangular.
Hence, again using \eqref{e:QsMs},
we see that the determinant is given by the expression in the right hand side of \eqref{e:detM}, for some constant $C = c(N, \chi)\in \C$.

It also follows that in order to prove $C\neq 0$, it suffices to check that for any $c\in G_{N, q}$ for which
\begin{equation}\label{e:Rc}
	R_c = \left\{a\;:\; \tfrac{a}{c}\in R_{\Gamma, \chi}\right\}
\end{equation}
is nonempty, the $|R_c|\times |R_c|$-matrix $(\chi_2(a))$, with rows indexed by $(\chi_1, \chi_2)\in F_c$ and columns indexed by $a\in R_c$ (recall $|F_c| = |R_c|$)
has nonvanishing determinant.
This is trivial if $\gcd(c, N/c) \leq 2$ since then $R_c=\{1\}$ (if $R_c\neq \emptyset$).
Now assume that $\gcd(c, N/c) >2$,
and set  $t=|F_c|=|R_c|=\frac{1}{2}\varphi(\gcd(c, N/c))$.
Using Lemma~\ref{lem:map_Fm_X} and Lemma~\ref{lem:|Fm_ve|}
we find that by multiplying the columns with appropriate constants of absolute value one,
the determinant can be transformed into $\det(\psi_i(g_j))$ where $g_1,
\ldots, g_t$ are the elements and $\psi_1, \ldots, \psi_t$ are the
characters of the Abelian group $(\Z/\gcd(c, N/c)\Z)^{\times}/\{\pm1\}$.
Such determinant is nonvanishing, since multiplying the matrix with its
conjugate transpose gives $t$ times the $t\times t$ identity matrix
(cf.~\cite[p.~149]{Hux84}).
\end{proof}
%%%

Now by \eqref{e:eE_chi1chi2} and \eqref{e:vecB_vecEGamma} we have
$$
	\vecE^{\Gamma}(s) = M(s)^{-1} L(s)^{-1} D(s) P L(1-s) M(1-s) \vecE^{\Gamma}(1-s)
$$
and hence (by the same type of uniqueness argument as in \cite[below (2.32)]{BS07})
the ``$\Gamma$-scattering matrix" introduced in \cite[(2.33)]{BS07} is
$$
	\Phi^{\Gamma}(s) = M(s)^{-1} L(s)^{-1} D(s) P L(1-s) M(1-s).
$$
Hence by \eqref{e:detP}, \eqref{e:detD} and Lemma~\ref{lem:detM},
\begin{multline}\label{e:detPhiGamma}
	\det(\Phi^{\Gamma}(s))
	=
	(-1)^{\frac{|F^{\ve}|-|F_0^{\ve}|}{2}}
	\left(\prod_{m\in G_{N, q}} \left(\frac{N}{\gcd(m, N/m)}\right)^{|F_m^{\ve}|}\right)^{1-2s}
	\left(\frac{\pi^{2s-1} \Gamma(1-s)}{\Gamma(s)}\right)^{|F^{\ve}|}
	\\
	\times
	\prod_{(m, \chi_1, \chi_2)\in F^{\ve}} \left(q_1^{1-2s} \frac{L(2-2s, \chi_1\chi_2\omega_m)}{L(2s, \chi_1\chi_2\omega_m)}\right),
\end{multline}
where $\omega_m$ denotes the trivial Dirichlet character modulo $m$.
We note that for each $(m, \chi_1, \chi_2)\in F^{\ve}$, $\chi_1\chi_2\omega_m$ is a Dirichlet character modulo $mq_1$.
%%%
\begin{remark}
By an entirely similar discussion one computes the determinant of the
scattering matrix $\Phi(s)$ for $\langle\Gamma^+, \chi\rangle$.
We will not need this formula, but we state it here for possible future reference:
\begin{multline}\label{e:detPhi}
	\det(\Phi(s)) = (-1)^{\frac{|F|-|F_0|}{2}} \left(\prod_{m\in G_{N, q}}\left(\frac{N}{\gcd(m, N/m)}\right)^{\varphi(\gcd(m, N/m))}\right)^{1-2s}
	\left(\frac{\pi^{2s-1} \Gamma(1-s)}{\Gamma(s)}\right)^{|F|}
	\\
	\times
	\prod_{(m, \chi_1, \chi_2)\in F} \left(q_1^{1-2s} \frac{L(2-2s, \chi_1\chi_2\omega_m)}{L(2s, \chi_1\chi_2\omega_m)}\right).
\end{multline}
\end{remark}
%%%
From \eqref{e:detPhiGamma} we obtain:
\begin{multline}\label{e:varphiGamma_ratio}
	\frac{(\varphi^{\Gamma})'}{\varphi^\Gamma}(s)
	=
	-2\sum_{m\in G_{N, q}} |F_m^{\ve}| \log\!\left(\frac{N}{\gcd(m, N/m)}\right)
	+
	|F^{\ve}| \left(2\log \pi - \frac{\Gamma'}{\Gamma}(1-s) - \frac{\Gamma'}{\Gamma}(s)\right)
	\\
	-2\sum_{(m, \chi_1, \chi_2)\in F^{\ve}} \left(\log q_1 + \frac{L'}{L}(2-2s, \chi_1\chi_2\omega_m)
	+
	\frac{L'}{L}(2s, \chi_1\chi_2\omega_m)\right).
\end{multline}

We can now evaluate the integral appearing in \eqref{e:Eis}:
%%%
\begin{lemma}\label{lem:Eis_integral}
\begin{multline}\label{e:Eis_integral}
	\frac{1}{4\pi} \int_{\R} h(r)
	\frac{(\varphi^{\Gamma})'}{\varphi^{\Gamma}}\!\left(\frac{1}{2}+ir\right)\; dr
	\\
	=
	\left(|F^{\ve}| \log \pi - \sum_{m\in G_{N, q} } |F_m^{\ve}| \log\!\left(\frac{N}{\gcd(m, N/m)}\right)
	-
	\sum_{(m, \chi_1,\chi_2)\in F^{\ve}}  \log q_1\right)
	g(0)
	\\
	-
	\frac{|F^{\ve}|}{2\pi} \int_\R h(r) \frac{\Gamma'}{\Gamma}\!\left(\frac{1}{2}+ir\right) \; dr
	-
	\frac{|F_0^{\ve}|}{2} h(0)
	+
	2\sum_{r=1}^\infty \frac{\Lambda(r) \{\chi\}_{\ve}(r)}{r} g(2\log r),
\end{multline}
where
\begin{equation}\label{e:chi_ve}
	\{\chi\}_{\ve}(r) = \sum_{(m, \chi_1, \chi_2)\in F^{\ve}} \chi_1\chi_2\omega_m(r).
\end{equation}
\end{lemma}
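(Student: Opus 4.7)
The plan is to substitute $s = 1/2 + ir$ into the explicit formula \eqref{e:varphiGamma_ratio} for $(\varphi^\Gamma)'/\varphi^\Gamma$, multiply by $\tfrac{1}{4\pi}h(r)$, integrate over $r \in \R$, and evaluate each piece separately. For any $s$-independent constant $C$, Fourier inversion gives $\tfrac{C}{4\pi}\int_\R h(r)\,dr = \tfrac12 Cg(0)$, so the three $s$-independent lines of \eqref{e:varphiGamma_ratio} produce exactly the coefficient of $g(0)$ in the lemma. For the gamma factor, $\tfrac{\Gamma'}{\Gamma}(1-s)+\tfrac{\Gamma'}{\Gamma}(s) = \tfrac{\Gamma'}{\Gamma}(\tfrac12-ir)+\tfrac{\Gamma'}{\Gamma}(\tfrac12+ir)$, and the change of variables $r\mapsto -r$ together with evenness of $h$ folds the two halves into the stated $-\tfrac{|F^\ve|}{2\pi}\int h(r)\tfrac{\Gamma'}{\Gamma}(\tfrac12+ir)\,dr$ term.

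The bulk of the work is the $L$-function terms. For each $(m,\chi_1,\chi_2)\in F^\ve$ set $\psi=\chi_1\chi_2\omega_m$ and
\[
I(\psi) := -\frac{1}{2\pi}\int_\R h(r)\Bigl[\tfrac{L'}{L}(1+2ir,\psi)+\tfrac{L'}{L}(1-2ir,\psi)\Bigr]\,dr.
\]
When $(m,\chi_1,\chi_2)\notin F_0^\ve$, the character $\psi$ is non-principal, $L(s,\psi)$ is entire and zero-free on $\Re s=1$, and $L'/L(1+2ir,\psi)$ is holomorphic in a horizontal neighborhood of $\R$. Shifting the contour down to $\R - ic$ for small $c>0$ puts us into $\Re(1+2ir) > 1$, where $-L'/L(s,\psi) = \sum_n \Lambda(n)\psi(n)n^{-s}$ converges absolutely; interchanging sum and integral and shifting $\int h(r-ic)n^{-2ir}\,dr$ back to $\R$ (valid because $h(w)n^{-2iw}$ is entire) gives $\int h(r)L'/L(1+2ir,\psi)\,dr = -2\pi\sum_n\Lambda(n)\psi(n)n^{-1}g(2\log n)$. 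The companion integral with $1-2ir$ equals the same by $r\mapsto -r$ and evenness of $h$, so $I(\psi) = 2\sum_n\Lambda(n)\psi(n)n^{-1}g(2\log n)$.

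When $(m,\chi_1,\chi_2)\in F_0^\ve$ one has $\chi_1=\overline{\chi_2}$, so $\psi=\omega_m$ and $L(s,\omega_m)=\zeta(s)\prod_{p\mid m}(1-p^{-s})$. Split $\tfrac{L'}{L}(s,\omega_m) = \tfrac{\zeta'}{\zeta}(s) + \sum_{p\mid m}\frac{p^{-s}\log p}{1-p^{-s}}$. The Euler-correction sum is absolutely convergent on $\Re s=1$ and evaluates term-by-term to $-2\sum_n\Lambda(n)(1-\omega_m(n))n^{-1}g(2\log n)$. For the $\zeta'/\zeta$ piece, subtract the pole by setting $F(s):=-\tfrac{\zeta'}{\zeta}(s) - \tfrac{1}{s-1}$, which is holomorphic in a horizontal strip about $\R$ (since $\zeta$ is zero-free on $\Re s=1$). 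Applying the same contour shift to $\int h(r) F(1+2ir)\,dr$ produces $2\pi\sum_n\Lambda(n)n^{-1}g(2\log n)$ from the Dirichlet-series side and a term $\int_{\R-ic}h(r)(-\tfrac{1}{2ir})\,dr = -\pi h(0)/2$ from the $-\tfrac{1}{s-1}$ part: the principal value on $\R$ vanishes by evenness of $h$, while shifting past the simple pole at $r=0$ with residue $h(0)/(2i)$ contributes $\int_{\R-ic}h(r)/(2ir)\,dr = \pi h(0)/2$. Doubling by $r\mapsto -r$ symmetry and applying the prefactor $-\tfrac{1}{2\pi}$ yields $I(\omega_m) = 2\sum_n\Lambda(n)\omega_m(n)n^{-1}g(2\log n) - h(0)/2$, in which the unrestricted $\Lambda(n)/n$ sums from the $\zeta'/\zeta$ and Euler-correction contributions cancel.

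Summing $I(\psi)$ over $(m,\chi_1,\chi_2)\in F^\ve$ and swapping the order of summation produces $2\sum_n\Lambda(n)\{\chi\}_\ve(n)n^{-1}g(2\log n) - \tfrac12|F_0^\ve|h(0)$; combining with the constant and gamma contributions gives the lemma. The main obstacle is the principal-character case: the $\zeta'/\zeta$ and $-1/(s-1)$ pieces are only conditionally convergent when taken separately on the line $\R$, so one must carefully justify each contour shift, each interchange of sum and integral, and every residue sign in order to obtain an unambiguous answer consistent with the fact that the original integrand $(\varphi^\Gamma)'/\varphi^\Gamma(\tfrac12+ir)$ is holomorphic on $\R$.
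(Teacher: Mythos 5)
Your proposal is correct and follows essentially the same route as the paper: the paper derives the result from \eqref{e:varphiGamma_ratio} by citing the key integral identity $\frac{1}{4\pi}\int h(r)\bigl(\frac{L'}{L}(1-2ir,\psi)+\frac{L'}{L}(1+2ir,\psi)\bigr)\,dr = -\sum_n \frac{\Lambda(n)\psi(n)}{n}g(2\log n)+\delta_{\chi_1=\overline{\chi_2}}\frac14 h(0)$ to Huxley \cite[p.509]{Hux85}, whereas you spell out the contour-shift argument behind it. Your computations (including the residue bookkeeping $\int_{\R-ic}h(r)/(2ir)\,dr=\pi h(0)/2$ and the resulting $I(\omega_m)=2\sum\Lambda(n)\omega_m(n)n^{-1}g(2\log n)-h(0)/2$) all check out; note only the minor imprecision that $F(s)=-\zeta'/\zeta(s)-1/(s-1)$ need only be holomorphic for $\Re s\ge1$ (which is what the downward shift uses), not in a full two-sided strip about $\Re s=1$.
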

%%%
\begin{proof}
This follows from \eqref{e:varphiGamma_ratio} using
\begin{multline*}
	\frac{1}{4\pi} \int_\R h(r) \left(\frac{L'}{L} (1-2ir, \chi_1\chi_2\omega_m)
	+ \frac{L'}{L} (1+2ir, \chi_1\chi_2\omega_m) \right) \; dr
	\\
	=
	-\sum_{r=1}^\infty \frac{\Lambda(r) \chi_1\chi_2\omega_m(r)}{r} g(2\log r)
	+
	\delta_{\chi_1=\overline{\chi_2}} \frac{1}{4} h(0),
\end{multline*}
which follows by imitating \cite[p.~509]{Hej83} and \eqref{e:Fve0}.
\end{proof}
%%%
\begin{remark}
We stress that our $\{\chi\}_{\ve}(r)$ (cf.~\eqref{e:chi_ve})
is not exactly a generalization of the number which was denoted
by that symbol in \cite[(2.41)]{BS07}.
Rather, in the special case when $N$ is squarefree,
``$\{\chi\}_{\ve}(r)$" in \cite{BS07} equals
$2^{1-\omega(N/\gcd(r,N))}\{\chi\}_{\ve}(r)$ in our present notation.
\end{remark}
%%%
In order to evaluate \eqref{e:Eis}, we also need to compute $\tr\Phi^{\Gamma}(\frac{1}{2})$.
%%%
\begin{lemma}\label{lem:TrPhiGamma_1/2}
We have
\begin{equation}\label{e:TrPhiGamma_1/2}
	\tr\Phi^{\Gamma}\!\left(\tfrac{1}{2}\right) = -|F_0^{\ve}|.
\end{equation}
\end{lemma}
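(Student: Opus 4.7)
The plan is to exploit the explicit factorization $\Phi^\Gamma(s) = M(s)^{-1} L(s)^{-1} \vecD(s) P L(1-s) M(1-s)$ derived just above the lemma, combined with cyclicity of trace. At $s=\tfrac12$ the two outer factors $M(\tfrac12)^{-1}$ and $M(\tfrac12)$ are mutual inverses, and $M(\tfrac12)$ is nonsingular by Lemma~\ref{lem:detM}, so the claim reduces to evaluating
$$\tr\Phi^\Gamma(\tfrac12) = \tr\bigl(L(\tfrac12)^{-1} \vecD(\tfrac12) P L(\tfrac12)\bigr).$$

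First I would examine the structure of the middle matrix $Y(s) := L(s)^{-1} \vecD(s) P L(1-s)$. Since $L(s)$ and $\vecD(s)$ are diagonal and $P$ is a permutation, a direct computation gives $Y(s)_{ij} = L(s)_{ii}^{-1} \vecD(s)_{ii} L(1-s)_{jj} P_{ij}$, which vanishes unless $j = P(i)$. So the diagonal entries $Y(s)_{ii}$ are nonzero only when $i$ is a fixed point of the involution $P$, i.e.\ when $i=(m,\chi_1,\chi_2)$ lies in $F_0^\ve$ (so $\chi_1=\overline{\chi_2}$). For such an $i$ one has $q_1=q_2$ and $\tau(\chi_2)=\tau(\overline{\chi_1})$, so the Gauss-sum ratio appearing in $\vecD(s)_{ii}$ collapses to $1$; straightforward substitution then gives $\vecD(\tfrac12)_{ii}=1$. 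Therefore $Y(\tfrac12)_{ii}$ equals the limit of $L(2-2s,\chi_1\chi_2)/L(2s,\chi_1\chi_2)$ as $s\to\tfrac12$.

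The main substantive point is evaluating this $L$-ratio. Since $\chi_1\chi_2 = \chi_1\overline{\chi_1}$ is the principal character modulo $q_1$, one has $L(2s,\chi_1\chi_2) = \zeta(2s)\prod_{p\mid q_1}(1-p^{-2s})$, with the analogous formula for $L(2-2s,\chi_1\chi_2)$. The finite Euler products are holomorphic and nonvanishing at $s=\tfrac12$, and their ratio tends to $1$; meanwhile, expanding $\zeta$ around its simple pole at $1$ gives
\begin{align*}
\frac{\zeta(2-2s)}{\zeta(2s)} = \frac{-1/(2s-1)+\gamma+O(2s-1)}{1/(2s-1)+\gamma+O(2s-1)} \longrightarrow -1 \quad\text{as } s\to\tfrac12.
\end{align*}
Since $\Phi^\Gamma(s)$ is holomorphic at $s=\tfrac12$, this limit must equal $Y(\tfrac12)_{ii}$, giving $-1$ for every fixed point. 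Summing over $F_0^\ve$ then yields $\tr\Phi^\Gamma(\tfrac12) = -|F_0^\ve|$.

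The main obstacle is this last step: each diagonal entry $Y(s)_{ii}$ at a fixed point is an indeterminate $\infty/\infty$ if one naively plugs in $s=\tfrac12$, and one must carefully extract the sign from the pole expansion of $\zeta$ (equivalently, from its functional equation). Everything else is bookkeeping about diagonal matrices conjugated by a permutation.
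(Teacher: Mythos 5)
Your proof is correct and follows the same route as the paper: cyclicity of trace to move the $M$ factors together (where they cancel at $s=\tfrac12$ by Lemma~\ref{lem:detM}), then observing that $L(s)^{-1}\vecD(s)PL(1-s)$ has nonzero diagonal entries only at the $P$-fixed points $\chi_1=\overline{\chi_2}$, where all the $q$, $\pi$, $\Gamma$, and Gauss-sum factors collapse to $1$ and the ratio $L(2-2s,\chi_1\chi_2)/L(2s,\chi_1\chi_2)$, being $\zeta$ times a finite Euler factor, tends to $-1$. The only cosmetic difference is that the paper expands $L(2s,\chi_1\chi_2)$ itself around its pole rather than expanding $\zeta$, but the computation is the same.
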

%%%
\begin{proof}
We have
\begin{multline*}
	\tr \Phi^{\Gamma}\!\left(\frac{1}{2}\right)
	=
	\lim_{s\to \frac{1}{2}} \tr (M(s)^{-1} L(s)^{-1} D(s) P L(1-s) M(1-s))
	\\
	=
	\lim_{s\to \frac{1}{2}} \tr(L(s)^{-1} D(s) P L(1-s) M(1-s) M(s)^{-1}),
\end{multline*}
and using the definition of $L(s)$, $D(s)$ and $P$,
we find that $L(s)^{-1} D(s) P L(1-s)$ is the $|F^{\ve}|\times |F^{\ve}|$
matrix that in row $(m, \chi_1, \chi_2)\in F^{\ve}$ has all entries
zero except in column $(q_1 m/q_2, \overline{\chi_2}, \overline{\chi_1})$,
where the entry is
\begin{equation}\label{e:trPhiGamma_ratio_L}
	\frac{q_1^{1-s}}{q_2} \pi^{2s-1}
	\frac{\tau(\chi_2)}{\tau(\overline{\chi_1})}
	\frac{\Gamma(1-s)}{\Gamma(s)}
	\frac{L(2-2s, \overline{\chi_1\chi_2})}{L(2s, \chi_1\chi_2)}.	
\end{equation}
If $\chi_1\neq \overline{\chi_2}$ then $\chi_1\chi_2$ is a nontrivial
Dirichlet character and thus $L(s, \chi_1\chi_2)$ is holomorphic at
$s=1$ with $L(1, \chi_1\chi_2)\neq 0$ (cf., e.g., \cite[\S X.11]{Coh80}),
and similarly $L(1, \overline{\chi_1\chi_2})\neq 0$.
However if $\chi_1=\overline{\chi_2}$, then
$$
	L(2s, \chi_1\chi_2) = L(2s, \overline{\chi_1\chi_2})
	= \zeta(2s) \prod_{p\mid q_1}(1-p^{-2s})
	= (\prod_{p\mid q_1} (1-p^{-1}))(2s-1)^{-1}+O(1)
$$
as $s\to \frac{1}{2}$.
Hence when $s\to \frac{1}{2}$, the number in \eqref{e:trPhiGamma_ratio_L} tends to
$$
	\begin{cases}
	-1 & \text{if } \chi_1=\overline{\chi_2}, \\
	\sqrt{\frac{q_1}{q_2}} \frac{\tau(\chi_2)}{\tau(\overline{\chi_1})}
	\frac{L(1, \overline{\chi_1\chi_2})}{L(1, \chi_1\chi_2)}
	& \text{if } \chi_1\neq \overline{\chi_2}.
	\end{cases}
$$
Also note that $M(1-s) M(s)^{-1}$ tends to the $|F^{\ve}|\times |F^{\ve}|$ identity matrix as $s\to \frac{1}{2}$.
Indeed, $M(s)$ is by definition a holomorphic function of $s$ in the entire complex plane,
and Lemma~\ref{lem:detM} shows that $M(s)^{-1}$ is also holomorphic when $\Re(s)\neq 0$.
Hence we obtain the stated formula.
\end{proof}
%%%

%%%%%
\subsection{Continuous spectrum $\Eis(\Gamma, \chi)$}
Recall that we write $\Eis(N,\chi;n)$ for the contribution from the continuous spectrum
in the trace formula in Theorem \ref{thm:STF_N};
thus $\Eis(N,\chi;n)=\Eis(\Gamma, \chi^{(0)})+n\Eis(\Gamma, \chi^{(1)})$
where $\Eis(\Gamma,\chi^{(\ve)})$ is the corresponding contribution (cf.\ \eqref{e:Eis})
in the trace formula for $(\Gamma_0^\pm (N), \chi^{(\ve)})$.
Our aim in this section is to prove the following proposition,
giving an explicit formula for $\Eis(N,\chi;n)$.
Let
\begin{multline}\label{e:Psi5}
	\Psi_5(p^{e_p},p^{s_p})
	=
	\frac{\Psi_1(p^{e_p}, p^{s_p})-2}{p-1}
	\\
	+\max\left\{\lc\tfrac{e_p}{2}\rc, s_p\right\} p^{\min\left\{\lf\frac{e_p}{2}\rf, e_p-s_p\right\}}
	+
	\max\left\{\lc\tfrac{e_p+1}{2}\rc, s_p\right\} p^{\min\left\{\lf\frac{e_p-1}{2}\rf, e_p-s_p\right\}}.
\end{multline}
Also let
$$
	B_p(x) =
	\begin{cases}
	xp^x - 2\frac{p^x-1}{p-1} & \text{if } x\in \Z_{\geq 0}, \\
	0 & \text{if } x< 0.
	\end{cases}
$$
and
\begin{equation}\label{e:Psi6}
	\Psi_6(p^{e_p}, p^{s_p})
	=
	\begin{cases}
	B_p(e_p-s_p) + s_p p^{e_p-s_p} & \text{if } e_p < 2s_p, \\
	B_p\!\left(\lf\tfrac{e_p}{2}\rf\right) + B_p\!\left(\lf\tfrac{e_p-1}{2}\rf\right)
	- B_p(s_p-1) + s_pp^{s_p-1}
	& \text{if } e_p \geq 2s_p.
	\end{cases}
\end{equation}
Finally, let
\begin{equation}\label{e:Psi7}
	\Psi_7(p^{e_p}, p^{s_p})
	=
	\begin{cases}
	\begin{cases}
	-e_p+1 & \text{if } s_p=0, \\
	-e_p+2s_p & \text{if } s_p\geq 1, \\
	\end{cases}
	& \text{if } p>2, \\
	\begin{cases}
	-e_p+3 & \text{if } s_p=0\text{ and } e_p\geq 3, \\
	-e_p+2s_p+1 & \text{if } e_p > s_p\geq 2, \\
	e_p & \text{if } e_p=s_p\geq 2, \\
	0 & \text{otherwise,}
	\end{cases}
	& \text{if } p=2.
	\end{cases}
\end{equation}

%%%
\begin{proposition}\label{prop:Eis_N}
\begin{multline}\label{e:Eis_N_1}
	\Eis(\Gamma, \chi; 1)
	=
	\bigg\{
	\Psi_1(N, q) \log\pi
	-
	\Psi_1(N, q) \sum_{p\mid N} \frac{\Psi_5(p^{e_p}, p^{s_p})+\Psi_6(p^{e_p}, p^{s_p})}{\Psi_1(p^{e_p}, p^{s_p})} \log p
	\bigg\} g(0)
	\\
	-
	\Psi_1(N, q) \frac{1}{2\pi} \int_\R h(r) \frac{\Gamma'}{\Gamma}\!\left(\frac{1}{2}+ir\right) \; dr
	-
	\frac{1}{4} I_\chi \Psi_2(N, q)  h(0)
	\\
	+
	2\sum_{m=1}^\infty \frac{\Lambda(m)}{m}
	\prod_{p\mid N} \Big[(\Re\chi_p(m)) \Phi_+(p^{e_p}; m) \Big]\,
	g(2\log m)
\end{multline}
and
\begin{multline}\label{e:Eis_N_-1}
	\Eis(\Gamma, \chi; -1)
	\\
	=
	-I_\chi
	\bigg\{
	2^{\omega(N)} \big(\Omega_1(N, q) (\log N - \log (2\pi)) + \log 2\big)
	+
	\Omega_1(N, q) 2^{\omega(N)-1} \sum_{p\mid N} \Psi_7(p^{e_p}, p^{s_p}) \log p
	\bigg\} g(0)
	\\
	-
	I_\chi 2^{\omega(N)} \Omega_1(N, q)
	\frac{1}{2\pi} \int_\R h(r) \frac{\Gamma'}{\Gamma}\!\left(\frac{1}{2}+ir\right) \; dr
	\\
	-
	\frac{1}{4} I_\chi
	\bigg\{
	I_{q, 4}\, \chi(\sqrt{-1}) \,
	\min\{e_2+1, 4\}
	\prod_{p\mid N, p\equiv_41} \Psi_2(p^{e_p}, p^{s_p})
	\prod_{p\mid N, p\equiv_4-1} 2
	\bigg\} h(0)
	\\
	+
	2\sum_{m=1}^\infty \frac{\Lambda(m)}{m}
	\prod_{p\mid N}
	\Big[(\{\Re/i\Im\}\chi_p(m))\Phi_-(p^{e_p}; m) \Big]\,
	g(2\log m).
\end{multline}
Here $\Psi_1$, $\Psi_5$, $\Psi_6$, $\Psi_7$, $\Psi_2$ and $\Phi_\pm$ are given in \eqref{e:Psi1}, \eqref{e:Psi5}, \eqref{e:Psi6}, \eqref{e:Psi7}, \eqref{e:Psi2} and \eqref{e:Phi} respectively.
Also $\sqrt{-1}$ denotes a square root of $-1$ in $\Z/q\Z$
(as exists when $I_{q,4}=1$; and $\chi(\sqrt{-1})$ is independent of the
choice of the square root when $I_\chi=1$).
\end{proposition}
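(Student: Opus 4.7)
The plan is to substitute Lemma~\ref{lem:Eis_integral} and Lemma~\ref{lem:TrPhiGamma_1/2} into \eqref{e:Eis}, yielding
\begin{align*}
\Eis(\Gamma,\chi^{(\ve)})
&= \Bigl(|F^\ve|\log\pi-\sum_{m\in G_{N,q}}|F_m^\ve|\log\tfrac{N}{\gcd(m,N/m)}-\sum_{(m,\chi_1,\chi_2)\in F^\ve}\log q_1\Bigr)g(0)\\
&\quad-\frac{|F^\ve|}{2\pi}\int_\R h(r)\frac{\Gamma'}{\Gamma}\!\left(\tfrac12+ir\right)dr-\frac{|F_0^\ve|}{4}h(0)+2\sum_{r=1}^\infty\frac{\Lambda(r)\{\chi\}_\ve(r)}{r}g(2\log r),
\end{align*}
and then to form $\Eis(\Gamma,\chi;n)=\Eis(\Gamma,\chi^{(0)})+n\Eis(\Gamma,\chi^{(1)})$. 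The Proposition follows once we explicitly evaluate the five invariants $|F^\ve|$, $\sum_m|F_m^\ve|\log\frac{N}{\gcd(m,N/m)}$, $\sum\log q_1$, $|F_0^\ve|$, and the Dirichlet series $\{\chi\}_\ve(r)$.

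By Lemmas~\ref{lem:map_Fm_X}, \ref{lem:|RGammachi|}, and~\ref{lem:|Fm_ve|}, each $F_m^\ve$ admits a parameterization that decouples at each prime $p\mid N$, so all the above invariants can be written as sums (weighted by $\log p$ where appropriate) of local factors. The prefactor of the $\Gamma'/\Gamma$-integral in the $n=1$ case equals $|F|=|C_{\Gamma,\chi}|=\Psi_1(N,q)$ by Lemma~\ref{lem:open_cusp}, while for $n=-1$ Lemma~\ref{lem:|Fm_ve|} forces $|F_m^{(0)}|-|F_m^{(1)}|$ to vanish unless $\gcd(m,N/m)\leq 2$; the residual sum is handled exactly as in Lemma~\ref{lem:sum_chiT1/c0}, giving $I_\chi 2^{\omega(N)}\Omega_1(N,q)$. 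The logarithmic $g(0)$-coefficient decomposes as $\sum_{p\mid N}\log p\cdot(\text{local factor})$, and the prime-by-prime evaluation produces $\Psi_5+\Psi_6$ in the $n=1$ case and (after extracting $I_\chi 2^{\omega(N)-1}\Omega_1$) $\Psi_7$ in the $n=-1$ case.

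The constant $|F_0^\ve|$ counts triples $(m,\chi_1,\overline{\chi_1})\in F^\ve$; the condition $\cond(\chi\chi_2\overline{\chi_1})=1$ reduces to $\chi_1^2=\chi$ on $(\Z/N\Z)^\times$, so this counts primitive square roots of $\chi$ of prescribed parity and of conductor bounded locally by $\gcd(m,N/m)$. In the $n=1$ combination the parity constraint drops, and a multiplicative count produces $\tfrac14 I_\chi\Psi_2(N,q)$. For $n=-1$ one weights by $\chi_1(-1)$; a local square root of $\chi_p$ can be odd only at $p=2$, at $p\equiv 3\pmod 4$, or via the $\sqrt{-1}$-twist at $p\equiv 1\pmod 4$, which is precisely the support of $\tPsi_2$ and produces the $\chi(\sqrt{-1})I_{q,4}$ factor appearing in~\eqref{e:Eis_N_-1}. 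Finally, the combinations
\begin{align*}
\{\chi\}_0(r)+\{\chi\}_1(r)&=\sum_{(m,\chi_1,\chi_2)\in F}\chi_1\chi_2\omega_m(r),\\
\{\chi\}_0(r)-\{\chi\}_1(r)&=\sum_{(m,\chi_1,\chi_2)\in F}\chi_1(-1)\chi_1\chi_2\omega_m(r)
\end{align*}
each factor multiplicatively over primes $p\mid N$; at a prime $p\nmid r$, a direct evaluation identifies the local factor with $(\Re\chi_p(r))\Phi_+(p^{e_p},r)$ and $\{\Re/i\Im\}\chi_p(r)\Phi_-(p^{e_p},r)$ respectively, matching \eqref{e:Phi}.

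The main obstacle is the pervasive case analysis at $p=2$: there is no primitive character of conductor $2$, $(\Z/2^e\Z)^\times$ has two generators, and squaring is not generically $2$-to-$1$ on the character group. This forces the piecewise definitions of $\Psi_2,\tPsi_2,\Psi_5,\Psi_6,\Psi_7,\Omega_1$, and $\Omega_2$, and requires separate verification at $p=2$ of the local factors governing each of the five invariants above, split according to the ranges $e_2\in\{0,1\}$, $e_2=2$, $e_2=3$, and $e_2\geq 4$. The computations are routine but lengthy; consistency with the $p=2$ contributions in Lemmas~\ref{lem:open_cusp}, \ref{lem:sum_chiT1/c0}, and~\ref{lem:sum_T1/cv_mbc} provides a useful sanity check.
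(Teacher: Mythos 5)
Your proposal is correct and follows exactly the paper's own route: substitute Lemmas~\ref{lem:Eis_integral} and~\ref{lem:TrPhiGamma_1/2} into \eqref{e:Eis}, form $\Eis(\Gamma,\chi^{(0)})+n\Eis(\Gamma,\chi^{(1)})$, and then evaluate the resulting combinatorial sums over $F^\ve$ prime by prime via the $F_m\leftrightarrow X_{\gcd(m,N/m)}$ parameterization, which is precisely what Lemmas~\ref{lem:|Fm_ve|log}, \ref{lem:|Fve0|}, \ref{lem:sum_logq1}, and~\ref{lem:chi_ve} carry out. The local case analysis you flag at $p=2$ is indeed where the bulk of the work lies, and the paper isolates it into exactly the lemmas you cite.
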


The proposition will be proved by
making the right hand sides in the formulas in Lemma~\ref{lem:Eis_integral}
and Lemma \ref{lem:TrPhiGamma_1/2} more explicit.
We will carry this out in a sequence of lemmas.

%%%
\begin{lemma}\label{lem:|Fm_ve|log}
Recall that $\ve\in \{0, 1\}$ is given by $\chi^{(\ve)}(V) = (-1)^{\ve}$.
We have
\begin{multline}\label{e:|Fm_ve|log}
	\sum_{m\in G_{N,q}} |F_m^{\ve}| \log\!\left(\frac{N}{\gcd(m, N/m)}\right)
	=
	\frac{1}{2}\Psi_1(N, q) \sum_{p\mid N} 
	\frac{\Psi_5(p^{e_p}, p^{s_p})}{\Psi_1(p^{e_p}, p^{s_p})}\log p
	\\
	+
	\frac{1}{2} (-1)^{\ve}
	I_\chi 2^{\omega(N)}
	\bigg\{
	\Omega_1(N, q)\log N - (\Omega_1(N, q) -1)\log 2
	\bigg\}.
\end{multline}
Here $\Psi_1$, $\Psi_5$ and $\Omega_1$ are given in \eqref{e:Psi1}, \eqref{e:Psi5} and \eqref{e:Omega1} respectively.
Furthermore,
\begin{equation}\label{e:|Fve|}
	|F^{\ve}|
	=
	\frac{1}{2}\Psi_1(N, q)
	+
	\frac{1}{2} (-1)^{\ve}
	I_\chi 2^{\omega(N)} \Omega_1(N, q).
\end{equation}
\end{lemma}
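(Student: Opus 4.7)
The plan is to decompose, using Lemma~\ref{lem:|Fm_ve|}, the count $|F_m^\ve|$ for $m\in G_{N,q}$ as
\begin{equation*}
|F_m^\ve| = \tfrac12\varphi(\gcd(m,N/m)) + \tfrac12(-1)^\ve\delta_{\gcd(m,N/m)\le 2}\,\chi_{\alpha(N,m)}(-1),
\end{equation*}
which is valid because $\varphi(\gcd(m,N/m))=1$ whenever $\gcd(m,N/m)\le 2$. Each of the two stated formulas then splits into an $\ve$-independent ``symmetric'' piece and an $\ve$-odd ``antisymmetric'' piece.

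For \eqref{e:|Fve|}, I sum the decomposition over $m\in G_{N,q}$. The symmetric piece contributes $\tfrac12\Psi_1(N,q)$ by Lemma~\ref{lem:open_cusp}, and the antisymmetric piece is $\tfrac12(-1)^\ve$ times exactly the sum already evaluated in Lemma~\ref{lem:sum_chiT1/c0} (with $c$ replaced by $m$), producing $\tfrac12(-1)^\ve I_\chi 2^{\omega(N)}\Omega_1(N,q)$.

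For \eqref{e:|Fm_ve|log}, I weight the same decomposition by $\log(N/\gcd(m,N/m))$. For the symmetric piece, parametrise $m=\prod_{p\mid N}p^{f_p}$, so that both $\varphi(\gcd(m,N/m))$ and the condition $m\in G_{N,q}$ factorise over primes; combined with the identity
\begin{equation*}
\log\!\left(\frac{N}{\gcd(m,N/m)}\right) = \sum_{p\mid N}(e_p-\min\{f_p,e_p-f_p\})\log p,
\end{equation*}
the sum decomposes as $\sum_{p\mid N}\log p \cdot A_p\cdot\prod_{p'\ne p}B_{p'}$, where $B_{p'}$ is the local $\Psi_1$-factor computed in the proof of Lemma~\ref{lem:varphi_gcd}, and $A_p$ is the single-prime sum of $\varphi(p^{\min\{f_p,e_p-f_p\}})(e_p-\min\{f_p,e_p-f_p\})$ over $0\le f_p\le e_p$ with $\min\{f_p,e_p-f_p\}\le e_p-s_p$. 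A case analysis (splitting on $e_p<2s_p$ vs.\ $e_p\ge 2s_p$ and on the parity of $e_p$) shows $A_p=\Psi_5(p^{e_p},p^{s_p})$, matching the definition in \eqref{e:Psi5}; this yields the $\Psi_5$ contribution in \eqref{e:|Fm_ve|log}.

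The antisymmetric piece of \eqref{e:|Fm_ve|log} is supported only on $m\in G_{N,q}$ with $\gcd(m,N/m)\le 2$, where $\log(N/\gcd(m,N/m))$ equals $\log N$ or $\log N-\log 2$. The $\log N$ contribution reproduces $I_\chi 2^{\omega(N)}\Omega_1(N,q)\log N$ by the antisymmetric computation from the proof of \eqref{e:|Fve|}, while the $-\log 2$ correction runs over $m\in G_{N,q}$ with $\gcd(m,N/m)=2$; handling the three sub-cases of $\alpha(N,m)$ in \eqref{e:alpha} exactly as in the proof of Lemma~\ref{lem:sum_chiT1/c0} gives $-I_\chi 2^{\omega(N)}(\Omega_1(N,q)-1)\log 2$, which vanishes precisely when $\Omega_1(N,q)=1$ (i.e.\ $4\nmid N$ or $s_2=e_2$). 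The main obstacle is the explicit evaluation of $A_p$: checking that the single-prime sum agrees with the piecewise definition of $\Psi_5$ in \eqref{e:Psi5} requires care in each region of $(e_p,s_p)$, but once that identity is established, all remaining ingredients come from lemmas already proved.
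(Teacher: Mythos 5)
Your proposal is correct and follows essentially the same route as the paper: split $|F_m^\ve|$ into the $\varphi(\gcd(m,N/m))$ piece and the $(-1)^\ve$-weighted piece via Lemma~\ref{lem:|Fm_ve|}, evaluate the symmetric piece by prime-local factorization (the paper packages this via the generating function $f(x)$ and $f'(0)$, but that is the same multiplicative computation you do directly), and evaluate the antisymmetric piece by reducing to the sums in Lemma~\ref{lem:sum_chiT1/c0}. The identification $A_p=\Psi_5(p^{e_p},p^{s_p})$ does hold as you claim, and your handling of the $\gcd(m,N/m)\le2$ correction matches the paper's.
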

%%%
\begin{proof}
By \eqref{e:|Fm_ve|}, we have
\begin{multline*}
	\sum_{m\in G_{N,q}} |F_m^{\ve}| \log\left(\frac{N}{\gcd(m, N/m)}\right)
	=
	\frac{1}{2}\sum_{m\mid N, q\mid \frac{N}{\gcd(m, N/m)}}
	\varphi(\gcd(m, N/m)) \log\left(\frac{N}{\gcd(m, N/m)}\right)
	\\
	+
	\frac{1}{2} (-1)^{\ve}
	\sum_{\substack{m\mid N, \gcd(m, N/m)\leq 2, \\
	q\mid \frac{N}{\gcd(m, N/m)}}}
	\chi_{\alpha(N, m)}(-1) \log\!\left(\frac{N}{\gcd(m, N/m)}\right).
\end{multline*}

For the first term, recalling Lemma~\ref{lem:varphi_gcd},
let
$$
	f(x) = \sum_{m\mid N, q\mid \frac{N}{\gcd(m, N/m)}} \varphi(\gcd(m, N/m)) (\gcd(m,N/m))^{x}.
$$
Then we have
\begin{equation}\label{e:f0_Psi1}
	f(0) = \sum_{m\mid N, q\mid \frac{N}{\gcd(m, N/m)}} \varphi(\gcd(m, N/m))
	=
	\Psi_1(N, q)
\end{equation}
and
$$
	f'(0) = \sum_{m\mid N, q\mid \frac{N}{\gcd(m, N/m)}}
	\varphi(\gcd(m, N/m)) \log(\gcd(m,N/m)).
$$
By \eqref{e:varphi_gcd},
\begin{multline*}
	f'(0)
	=
	\sum_{p\mid N}
	\bigg(\prod_{p'\mid N, p'\neq p}\Psi_1({p'}^{e_{p'}}, {p'}^{s_{p'}})\bigg)
	\log p
	\bigg[\frac{-\Psi_1(p^{e_p}, p^{s_p})+2}{p-1}
	\\
	+
	\min\{\lf\tfrac{e_p}{2}\rf, e_p-s_p\} p^{\min\{\lf\frac{e_p}{2}\rf, e_p-s_p\}}
	+
	\min\{\lf\tfrac{e_p-1}{2}\rf, e_p-s_p\} p^{\min\{\lf\frac{e_p-1}{2}\rf, e_p-s_p\}}
	\bigg].
\end{multline*}
Note that by definition in \eqref{e:Psi1}, $\Psi_1(p^{e_p}, p^{s_p})\neq 0$.
We have
\begin{multline*}
	\sum_{m\mid N, q\mid \frac{N}{\gcd(m, N/m)}}
	\varphi(\gcd(m, N/m)) \log\left(\frac{N}{\gcd(m, N/m)}\right)
	=
	\log N f(0)-f'(0)
	\\
	=
	\Psi_1(N, q)
	\sum_{p\mid N}
	\frac{\log p}{\Psi_1(p^{e_p}, p^{s_p})}\Psi_5(p^{e_p}, p^{s_p}).
\end{multline*}
For the second term, we get
\begin{multline*}
	\sum_{\substack{m\mid N, \gcd(m, N/m) \leq 2, \\ q\mid \frac{N}{\gcd(m, N/m)}}}
	\chi_{\alpha(N, m)} (-1) \log\!\left(\frac{N}{\gcd(m, N/m)}\right)
	\\
	=
	\log N
	\sum_{\substack{m\mid N, \gcd(m, N/m) \leq 2, \\ q\mid \frac{N}{\gcd(m, N/m)}}}
	\chi_{\alpha(N, m)}(-1)
	-
	\log 2
	\sum_{\substack{m\mid N, \gcd(m, N/m) = 2, \\ q\mid \frac{N}{\gcd(m, N/m)}}}
	\chi_{\alpha(N, m)}(-1)
	.
\end{multline*}
As in Lemma \ref{lem:sum_chiT1/c0} and its proof, we have
\begin{equation}\label{e:sum_chialpha_2}
	\sum_{\substack{m\mid N, \gcd(m, N/m) \leq 2, \\ q\mid \frac{N}{\gcd(m, N/m)}}}
	\chi_{\alpha(N, m)}(-1)
	=
	I_\chi 2^{\omega(N)} \Omega_1(N, q).
\end{equation}
and
$$
	\sum_{\substack{m\mid N, \gcd(m, N/m) = 2, \\ q\mid \frac{N}{\gcd(m, N/m)}}}
	\chi_{\alpha(N, m)}(-1)
	=
	I_\chi 2^{\omega(N)} (\Omega_1(N, q)-1).
$$

Similarly, recalling \eqref{e:|Fve|_sumG} and \eqref{e:|Fm_ve|},
$$
	|F^{\ve}| = \sum_{m\in G_{N, q}} |F_m^{\ve}|
	=
	\frac{1}{2}\sum_{m\mid N, q\mid \frac{N}{\gcd(m, N/m)}}
	\varphi(\gcd(m, N/m))
	+
	\frac{1}{2} (-1)^{\ve} \sum_{\substack{m\mid N, \gcd(m, N/m)\leq 2, \\ q\mid \frac{N}{\gcd(m, N/m)}}}
	\chi_{\alpha(N, m)}(-1),
$$
so by \eqref{e:f0_Psi1} and \eqref{e:sum_chialpha_2}, we get \eqref{e:|Fve|}.
\end{proof}
%%%

For $p\mid N$, let
\begin{equation}\label{e:Sp}
	S_p = \left\{0\leq f \leq e_p\;:\; \min\{f, e_p-f\} \leq e_p-s_p\right\}.
\end{equation}
Then note that
$$
	S_p
	=
	\left\{0\leq f\leq e_p\;:\; f\leq e_p-s_p \text{ or } f\geq s_p\right\}.
$$
%%%
\begin{lemma}\label{lem:Fp}
Fix a prime $p\mid N$.
For $f\in S_p$, if $f\geq s_p$,
$$
	F_{p^{f}} =
	\left\{(\chi_1, \chi_2)\;:\; \chi_1\in X_{p^{\min\{f, e_p-f\}}},
	\chi_2\text{ primitive, determined by }
	\cond(\chi_p\chi_2\overline{\chi_1})=1\right\}
$$
and if $f<s_p$,
$$
	F_{p^{f}}
	=
	\left\{(\chi_1, \chi_2) \;:\; \chi_2\in X_{p^{\min\{f, e_p-f\}}}, \chi_1\text{ primitive, determined by }
	\cond(\chi_p\chi_2\overline{\chi_1})=1\right\}.
$$
Here $F_{p^f}$ is given in \eqref{e:Fm}.
\end{lemma}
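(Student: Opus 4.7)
My plan is to derive this statement as a direct specialization of Lemma~\ref{lem:map_Fm_X} to the prime power $m=p^f$. The content of Lemma~\ref{lem:Fp} is to read off, for the two possible positions of $s_p$ relative to $f$, which of $\chi_1,\chi_2$ is the ``free parameter'' in the parameterization of $F_{p^f}$ by $X_{\gcd(m,N/m)}$, and which is determined by the local relation $\cond(\chi_p\chi_2\overline{\chi_1})=1$.

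First I would check that the hypothesis $f\in S_p$ is precisely what is needed for $F_{p^f}$ to be non-empty, i.e.\ for $p^f$ to lie in $G_{N,q}$. Since $\gcd(p^f,N/p^f)=p^{\min\{f,e_p-f\}}$ (other prime factors of $N/p^f$ being coprime to $p^f$), the condition at the prime $p$ is $s_p\le e_p-\min\{f,e_p-f\}$, equivalently $\min\{f,e_p-f\}\le e_p-s_p$, which is exactly the defining condition of $S_p$ in \eqref{e:Sp}. At any other prime $p'\mid N$ the condition $s_{p'}\le e_{p'}$ is automatic. Hence $f\in S_p$ iff $p^f\in G_{N,q}$, and Lemma~\ref{lem:map_Fm_X} applies, giving a bijection
\[
F_{p^f}\xrightarrow{\sim} X_{\gcd(p^f,N/p^f)}=X_{p^{\min\{f,e_p-f\}}},\qquad (\chi_1,\chi_2)\mapsto\varpi,
\]
where the $p$-component of $\varpi$ is $\chi_{1,p}$ when $s_p\le f$ and $\chi_{2,p}$ when $s_p>f$, while the $p'$-component of $\varpi$ (for $p'\ne p$) is trivial, forcing the off-$p$ parts of the pair to be uniquely determined.

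To finish, I would split into the two cases. When $f\ge s_p$, the bijection sends $(\chi_1,\chi_2)$ to $\chi_{1,p}\in X_{p^{\min\{f,e_p-f\}}}$, so $\chi_1$ ranges freely over $X_{p^{\min\{f,e_p-f\}}}$; the global condition $\cond(\chi\chi_2\overline{\chi_1})=1$ decomposes prime-by-prime, and at $p$ it becomes $\cond(\chi_p\chi_{2,p}\overline{\chi_{1,p}})=1$, which uniquely determines the primitive character $\chi_{2,p}$. Away from $p$, the same local condition together with $\chi_2$ having $p$-power conductor (imposed by $q_2\mid p^f$) forces $\chi_{1,p'}=\chi_{p'}$ at every $p'\ne p$, so the pair is rigidly described in terms of $\chi_{1,p}$. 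The case $f<s_p$ is completely symmetric: $\chi_{2,p}$ becomes the free parameter, and $\chi_{1,p}$ is determined by the same local relation.

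The main work has already been done in Lemma~\ref{lem:map_Fm_X}; the only point requiring care is matching the ``forced'' off-$p$ components with the implicit convention in the statement of Lemma~\ref{lem:Fp}, which identifies the pair $(\chi_1,\chi_2)$ with its $p$-local data (since the off-$p$ data is uniquely forced by the global relations). Once this identification is made, no genuine obstacle remains, and the proof is essentially a bookkeeping exercise.
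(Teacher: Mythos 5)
Your proposal is correct and takes essentially the same approach as the paper, which simply states that the proof is ``similar to the proof of Lemma~\ref{lem:map_Fm_X}.'' You correctly identify the heart of the matter: specializing Lemma~\ref{lem:map_Fm_X} to $m=p^f$ gives a bijection $F_{p^f}\to X_{p^{\min\{f,e_p-f\}}}$ (and the equivalence of $f\in S_p$ with $p^f\in G_{N,q}$), and the constraints $q_2\mid p^f$ and $\cond(\chi\chi_2\overline{\chi_1})=1$ force the off-$p$ components of $\chi_1,\chi_2$, so the free parameter is precisely the $p$-component of $\chi_1$ (when $f\geq s_p$) or of $\chi_2$ (when $f<s_p$). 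You also correctly flag the minor abuse of notation in the lemma's statement, namely that $F_{p^f}$ is to be read as its $p$-local incarnation ($N=p^{e_p}$, $\chi=\chi_p$), consistent with how it is used in the multiplicative computations of Lemmas~\ref{lem:|Fve0|}, \ref{lem:sum_logq1} and \ref{lem:chi_ve}; this is the only gap between the literal specialization of Lemma~\ref{lem:map_Fm_X} and what Lemma~\ref{lem:Fp} asserts, and you handle it appropriately.
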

%%%
\begin{proof}
Similar to the proof of Lemma~\ref{lem:map_Fm_X}.
\end{proof}
%%%

%%%
\begin{lemma}\label{lem:|Fve0|}
$$
	|F_0^{\ve}|
	=
	\frac{1}{2} I_\chi
	\bigg\{
	\Psi_2(N, q)
	+
	(-1)^{\ve} I_{q, 4}\, \chi(\sqrt{-1}) \min\{e_2+1, 4\}
	\prod_{p\mid N, p\equiv_4 1} \Psi_2(p^{e_p}, p^{s_p})
	\prod_{p\mid N, p\equiv_4{-1}} 2
	\bigg\}
$$
Here $\Psi_2$ is given in \eqref{e:Psi2},
and $\sqrt{-1}$ denotes a square root of $-1$ in $\Z/q\Z$
(as exists when $I_{q,4}=1$; and $\chi(\sqrt{-1})$ is independent of the
choice of the square root when $I_\chi=1$).
\end{lemma}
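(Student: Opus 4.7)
The plan is to unwind the definition of $F_0^\ve$ and reduce the count to a product over primes, then evaluate each local factor by a case analysis. A triple $(m,\chi_1,\chi_2) \in F^\ve$ lies in $F_0^\ve$ exactly when $\chi_2 = \overline{\chi_1}$, so $q_1 = q_2$ and the membership conditions become $q_1 \mid m$, $mq_1 \mid N$, $\cond(\chi \overline{\chi_1}^2)=1$, and $\chi_1(-1) = (-1)^\ve$. The first two conditions force $q_1^2 \mid N$ and, after writing $m = q_1 d$, allow $d$ to range freely over divisors of $N/q_1^2$; hence for each admissible $\chi_1$, the number of valid $m$ equals $\tau(N/q_1^2) = \prod_{p\mid N}(e_p - 2 s_{1,p} + 1)$, where $s_{1,p} = \ord_p q_1$. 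The condition $\cond(\chi\overline{\chi_1}^2)=1$ is equivalent to the local relations $\chi_p = \chi_{1,p}^2$ for every prime $p$, and factors over primes.

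To handle the one remaining global condition $\chi_1(-1)=(-1)^\ve$, I apply the indicator $\frac12\bigl(1 + (-1)^\ve \chi_1(-1)\bigr)$. This yields
\[
|F_0^\ve| \;=\; \tfrac12 \prod_{p\mid N} A_p \;+\; \tfrac{(-1)^\ve}{2}\prod_{p\mid N} B_p,
\]
where, for each $p\mid N$,
\[
A_p = \!\!\sum_{\substack{\chi_{1,p}^2 = \chi_p\\ 2s_{1,p}\leq e_p}}\!\! (e_p - 2s_{1,p} + 1), \qquad B_p = \!\!\sum_{\substack{\chi_{1,p}^2 = \chi_p \\ 2s_{1,p}\leq e_p}}\!\! \chi_{1,p}(-1)(e_p - 2s_{1,p} + 1),
\]
the sums being over primitive Dirichlet characters whose square (as a Dirichlet character) equals $\chi_p$.

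For the $A_p$, I use the structure of $(\Z/p^{s}\Z)^\times$ to enumerate square roots of $\chi_p$. For $p$ odd, the group is cyclic, so $\chi_p$ admits a primitive square root iff $\chi_p$ is even; there are then exactly two such roots, both of conductor $p^{s_p}$ (if $s_p \geq 1$), or trivial and Legendre (if $s_p = 0$). A direct summation, separately for $s_p = 0$ and $s_p \geq 1$, gives $A_p = \Psi_2(p^{e_p}, p^{s_p})$ when $\chi_p$ is even and $A_p = 0$ otherwise. For $p = 2$ and $s_p = 0$ the primitive $\chi_{1,2}$ with $\chi_{1,2}^2 = 1$ are the trivial character (conductor $1$), the unique primitive mod $4$ (conductor $4$), and the two primitive mod $8$ characters (conductor $8$); for $s_p \geq 3$ with $\chi_2$ even, the presentation $(\Z/2^s\Z)^\times = \langle -1\rangle \times \langle 5\rangle$ shows that the condition $\chi_{1,2}(5)^2 = \chi_2(5)$ together with $\chi_{1,2}(-1)^2 = 1$ forces $s_{1,2} = s_p + 1$ and yields exactly four square roots. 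In every case one checks $A_p = \Psi_2(p^{e_p}, p^{s_p})$ when solutions exist and $A_p = 0$ otherwise. Multiplying, and using that $\chi$ is even so that $\chi_p$ is even for every $p$ precisely when $\chi$ is pure, gives $\prod_p A_p = I_\chi \Psi_2(N,q)$.

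For the $B_p$, the two primitive square roots of $\chi_p$ at an odd prime differ by the unique order-$2$ character mod $p^{s_{1,p}}$, whose value at $-1$ is $+1$ when $p\equiv 1 \pmod 4$ and $-1$ when $p \equiv 3 \pmod 4$. Hence for odd $p \equiv 1\pmod 4$ with $s_p \geq 1$, both roots share $\chi_{1,p}(-1) = \chi_p(\sqrt{-1})$ and $B_p = \chi_p(\sqrt{-1})\Psi_2(p^{e_p},p^{s_p})$, while for $p \equiv 3\pmod 4$ with $s_p \geq 1$ the two signs cancel and $B_p = 0$. When $s_p = 0$ the contributions from trivial and Legendre give $B_p = \Psi_2$ or $2$ according to the residue of $p \pmod 4$. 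For $p = 2$ and $s_p = 0$, the two primitive mod $8$ characters contribute with opposite signs and cancel, leaving $B_2 = (e_p+1) - (e_p-3)\delta_{e_p \geq 4} = \min\{e_2+1,4\}$; while for $s_p \geq 2$ the four square roots split evenly between $\chi_{1,2}(-1) = \pm 1$ and $B_2 = 0$. Multiplying over $p$, and observing that $I_{q,4} = 1$ exactly characterizes the condition that $B_p \neq 0$ at every $p \mid q$ (namely, $2 \nmid q$ and every odd $p \mid q$ is $\equiv 1 \pmod 4$), together with $\chi(\sqrt{-1}) = \prod_{p\equiv 1\pmod 4} \chi_p(\sqrt{-1})$, produces the second product in the stated formula. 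The main obstacle is the case analysis at $p = 2$, where the failure of cyclicity of $(\Z/2^s\Z)^\times$ means that the conductor of $\chi_{1,2}^2$ can be strictly smaller than $\cond(\chi_{1,2})$, and the two choices of $\chi_{1,2}(5)$ together with the two choices of $\chi_{1,2}(-1)$ must be carefully tracked to see which combinations give primitive square roots and how their values at $-1$ interact.
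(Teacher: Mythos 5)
Your argument follows the paper's own proof closely: you reduce $|F_0^\ve|$ to a product of local factors via the indicator $\tfrac12\bigl(1+(-1)^\ve\chi_1(-1)\bigr)$, then enumerate the primitive square roots of each $\chi_p$ (two at an odd prime, differing by the Legendre symbol; up to four at $p=2$, differing by $\xi_4$ and $\xi_8$) and sum the divisor counts, exactly as the paper does with its reference character $\psi_p$. The only cosmetic difference is that you factor out the divisor count $\tau(N/q_1^2)=\prod_p(e_p-2s_{1,p}+1)$ directly rather than summing over $f$ as the paper does; also, the case you label ``$s_p\ge 2$'' at $p=2$ should read $s_p\ge 3$, since $s_2=2$ forces $\chi_2=\xi_4$ to be odd, so there are no square roots at all — but this does not affect the conclusion, since $A_2=B_2=0$ either way.
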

%%%
\begin{proof}
Recalling \eqref{e:Fve0}, we can write
$$
	F_0^{\ve} = \left\{(m, \chi_1) \;:\;
	m\in \Z_{\geq 1}, \; m\mid N, \; q_1\mid \gcd(m, N/m), \; \cond(\chi\overline{\chi_1}^2)=1, \; \chi_1(-1)=(-1)^{\ve}\right\}.
$$
Note that $F_0^{\ve}=\emptyset$ unless $\chi$ is square in the sense that there exists a primitive character $\chi_1$ such that $\chi \overline{\chi_1}^2(n) = 1$ for any $n$ with $\gcd(n, q)=1$.
There exists such a primitive character if and only if for every prime
$p\mid N$, $\chi_p(-1)=1$, i.e.\ $\chi$ is pure.
From now on, we assume that $\chi$ is pure.

Let
$$
	F_0(N)
	= \left\{(m, \chi_1)\;:\; m\in \Z_{\geq 1}, \; m\mid N,\; q_1 \mid \gcd(m, N/m), \; \cond(\chi\overline{\chi_1}^2)=1\right\}
$$
and
$$
	S_0^{\pm} = \sum_{(m, \chi_1)\in F_0(N)}\chi_1(\pm1).
$$
Then
$$
	|F_0^{\ve}| = \frac{1}{2} \big(S_0^+ + (-1)^{\ve} S_0^-\big).
$$
Note that $S_0^\pm$ is multiplicative.
One can write
$$
	S_0^\pm
	=
	\prod_{p\mid N}
	\bigg(\sum_{f\in S_p} \sum_{(p^f, \chi_1)\in F_0(p^{e_p})} \chi_1(\pm 1)\bigg).
$$
So for $p\mid N$, let
$$
	S_{0, p}^\pm =
	\sum_{f\in S_p} \sum_{(p^f, \chi_1)\in F_0(p^{e_p})} \chi_1(\pm 1).
$$

For every prime $p\mid N$,
fix a primitive Dirichlet character $\psi_p$ of minimal conductor subject to
$\cond(\chi_p\overline{\psi_p}^2)=1$.
We further assume that
$$
	\ord_p(\cond(\psi_p))
	=
	\begin{cases}
	0 & \text{if } p=2 \text{ and } s_p=0, \\
	s_p+1 & \text{if } p=2 \text{ and } s_p\geq 3, \\
	s_p & \text{if } p>2. \\
	\end{cases}
$$
Note that $s_2$ cannot be $1$ or $2$.

When $p$ is odd, let $\xi_p$ be the quadratic character modulo $p$.
When $p=2$ let $\xi_4$ be the quadratic character modulo $4$ and $\xi_8$
be the even quadratic character modulo $8$.
Then $\xi_4\xi_8$ is another quadratic character,
and $\xi_8$, $\xi_4\xi_8$ are the only primitive characters modulo $8$.

When $\chi_1$ is the primitive character such that $\cond(\chi_p\overline{\chi_1}^2)=1$,
then
$$
	\chi_1\in\{\psi_p \xi_p^{u_p}\;:\; u_p\in \{0, 1\}\},
$$
when $p$ is odd, and
$$
	\chi_1\in\{\psi_2 \xi_4^{u_2} \xi_8^{v_2}\;:\; u_2, v_2\in \{0, 1\}\},
$$
when $p=2$.
Here $\psi_p$ is as fixed above, so $q(\chi\overline{\chi_1}^2)=1$.
Note that
$$
	\ord_p(q_1) = \begin{cases}
	\max\{s_p, u_p\} & \text{if } p\text{ odd }, \\
	s_p+1 & \text{if } p=2\text{ and } s_p\geq 3, \\
	\max\{2u_2, 3v_2\} & \text{if } p=2 \text{ and } s_p=0.
	\end{cases}
$$

When $p$ is odd, we have
\begin{multline*}
	S_{0, p}^\pm
	=
	\sum_{u_p\in\{0, 1\}}
	\sum_{\max\{s_p, u_p\} \leq \min\{f, e_p-f\}}
	\psi_p\xi_p^{u_p}(\pm 1)
	\\
	=
	\psi_p(\pm 1)
	\begin{cases}
	(e_p-1)(1+\xi_p(\pm 1)) + 2 & \text{if } s_p=0, \\
	(e_p-2s_p+1)(1+\xi_p(\pm 1)) & \text{if } s_p\geq 1\text{ and }2s_p \leq e_p, \\
	0 & \text{if } 2s_p > e_p.
	\end{cases}
\end{multline*}
Note that $1+\xi_p(-1)=0$ if $s_p>0$ and $p\equiv_4-1$.
So
$$
	S_{0, p}^+ = \Psi_2(p^{e_p}, p^{s_p})
$$
and
$$
	S_{0, p}^- = \psi_p(-1) \begin{cases}
	2 & \text{if } s_p=0 \text{ and } p\equiv_{4}-1, \\
	\Psi_2(p^{e_p}, p^{s_p}) & \text{if } p\equiv_41, \\
	0 & \text{if } s_p>0 \text{ and } p\equiv_{4}-1.
	\end{cases}
$$

When $p=2$, we have
$$
	S_{0, p}^\pm
	=
	\sum_{u_2, v_2\in\{0, 1\}}
	\begin{cases}
	\sum_{\max\{2u_2, 3v_2\} \leq \min\{f, e_p-f\}} \xi_4^{u_2} \xi_8^{v_2}(\pm 1)
	& \text{if } s_p=0, \\
	\sum_{s_p+1 \leq \min\{f, e_p-f\}} \psi_p \xi_4^{u_2} \xi_8^{v_2}(\pm 1)
	& \text{if } s_p \geq 3.
	\end{cases}
$$
So we get
$$
	S^\pm_{0, p}
	=
	\psi_p(\pm 1)
	\begin{cases}
	e_p+1 + \max\{e_p-5, 0\} (1+\xi_4(\pm 1)) + \max\{e_p-3, 0\}\xi_4(\pm 1)
	& \text{if } s_p=0, \\
	2\max\{e_p-2s_p-1, 0\} \psi_p(\pm 1) (1+\psi_4(\pm 1))
	& \text{if } s_p\geq 3.
	\end{cases}
$$
Hence
$$
	S_{0, p}^+ = \Psi_2(p^{e_p}, p^{s_p})
$$
and
$$
	S_{0, p}^- =
	\begin{cases}
	4 & \text{if } e_p\geq 3 \text{ and } s_p=0, \\
	e_p+1 & \text{if } e_p\in\{1, 2\} \text{ and } s_p=0, \\
	0 & \text{otherwise.}
	\end{cases}
$$

\end{proof}

%%%
\begin{lemma}\label{lem:sum_logq1}
\begin{multline*}
	\sum_{(m, \chi_1, \chi_2)\in F^{\ve}}
	\log q_1
	=
	\frac{1}{2}
	\Psi_1(N, q) \sum_{p\mid N} \frac{\Psi_6(p^{e_p}, p^{s_p})}{\Psi_1(p^{e_p}, p^{s_p})} \log p
	\\
	+
	(-1)^{\ve} I_\chi 2^{\omega(N)-2} \Omega_1(N, q)
	\sum_{p\mid N} \Psi_7(p^{e_p}, p^{s_p}) \log p.
\end{multline*}
Here $\Psi_1$, $\Omega_1$, $\Psi_6$ and $\Psi_7$ are given in \eqref{e:Psi1},
\eqref{e:Omega1}, \eqref{e:Psi6} and \eqref{e:Psi7}.
\end{lemma}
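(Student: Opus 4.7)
The plan is to follow the same template used for Lemma~\ref{lem:|Fm_ve|log}. First, use the parity trick
\begin{equation*}
\sum_{(m,\chi_1,\chi_2)\in F^\ve}\log q_1
= \tfrac12\bigl(S^+ + (-1)^\ve S^-\bigr),
\qquad
S^\pm := \sum_{(m,\chi_1,\chi_2)\in F}\chi_1(\pm1)\log q_1,
\end{equation*}
which is legitimate because $\frac12(1+(-1)^\ve\chi_1(-1))$ is the indicator of $\chi_1(-1)=(-1)^\ve$. Then expand $\log q_1=\sum_{p\mid N}\ord_p(q_1)\log p$ and interchange the order of summation. Using Lemma~\ref{lem:Fp}, the set $F$ factors as a product over primes of the local sets $F_{p^{f_p}}$ (with $f_p=\ord_p(m)$ running through $S_p$ of \eqref{e:Sp}), so each $S^\pm$ becomes
\begin{equation*}
S^\pm = \sum_{p\mid N} \log p \; U_p^\pm \prod_{p'\mid N,\,p'\ne p} T_{p'}^\pm,
\end{equation*}
where $T_p^\pm = \sum_{f\in S_p}\sum_{(\chi_{1,p},\chi_{2,p})\in F_{p^f}}\chi_{1,p}(\pm 1)$ and $U_p^\pm$ is the same sum weighted by $\ord_p(q_1)=\ord_p(\cond(\chi_{1,p}))$.

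Next, I would identify the two families of local factors. The totals $T_p^\pm$ have already been computed implicitly in Lemma~\ref{lem:|Fm_ve|log}: $T_p^+ = \Psi_1(p^{e_p},p^{s_p})$ by Lemma~\ref{lem:varphi_gcd} applied at $x=0$, and the product $\prod_p T_p^-$ equals $I_\chi 2^{\omega(N)}\Omega_1(N,q)$, so each $T_p^-$ is the corresponding local factor (vanishing unless $\chi_p(-1)=1$). For the weighted local factor $U_p^+$, one splits the sum according to the dichotomy in Lemma~\ref{lem:Fp}: for $f\ge s_p$ one sums $\ord_p(\cond(\chi_{1,p}))$ over $\chi_{1,p}\in X_{p^{\min\{f,e_p-f\}}}$, which reduces via the count of primitive characters of each conductor to a telescoping expression involving $B_p(\min\{f,e_p-f\})$; for $f<s_p$ the conductor exponent $\ord_p(q_1)$ is forced to equal $s_p$ (since $\cond(\chi_{2,p})<p^{s_p}$ forces $\cond(\chi_p\chi_{2,p})=p^{s_p}$), so the contribution is $s_p$ times a simple character count. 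Adding the two cases and simplifying over the ranges $e_p<2s_p$ vs.\ $e_p\ge 2s_p$ reproduces exactly $\Psi_6(p^{e_p},p^{s_p})$ of \eqref{e:Psi6}. An analogous split yields $U_p^-$ as $\Psi_7(p^{e_p},p^{s_p})$ times the corresponding factor of $I_\chi 2^{\omega(N)}\Omega_1(N,q)$, after the $\chi_{1,p}(-1)$ sign gets absorbed using $\chi_{1,p}(-1)=\chi_p(-1)\chi_{2,p}(-1)$ in the $f<s_p$ branch.

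The routine bookkeeping, combined with the observation $\prod_{p'\ne p}T_{p'}^\pm = T^\pm/T_p^\pm$ (interpreting factors that vanish correctly), yields the claimed formula after assembling $\frac12(S^++(-1)^\ve S^-)$. The main obstacle will be the $p=2$ analysis: as in the proof of Lemma~\ref{lem:|Fve0|}, the parity of characters modulo $2^k$ behaves irregularly (no primitive character of conductor $2$, two primitive characters of conductor $8$, etc.), which is precisely what produces the piecewise cases $e_2\in\{1,2\}$, $s_2\ge 3$, and $s_2=e_2\ge 2$ appearing in $\Psi_7$. I expect the cleanest route through this is to mirror the case split already used in Lemma~\ref{lem:|Fve0|}, checking each sub-case by hand.
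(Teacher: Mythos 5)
Your proposal is correct and follows essentially the same route as the paper: the parity decomposition $\frac12(S^+ + (-1)^\ve S^-)$, the factorization of $F$ into local pieces $F_{p^f}$ via Lemma~\ref{lem:Fp}, and the product-rule extraction of $\log p$ coefficients all match, the only cosmetic difference being that the paper packages the last step as the derivative at $x=0$ of a generating function $f_n(x)=\sum_{(m,\chi_1,\chi_2)\in F}\chi_1(n)q_1^x$ rather than expanding $\log q_1=\sum_p\ord_p(q_1)\log p$ directly. Your anticipation that the hardest bookkeeping is the $p=2$ sub-case analysis for $\Psi_7$, and your observation that $\ord_p(q_1)$ is forced to equal $s_p$ on the $f<s_p$ branch, both agree with what the paper does.
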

%%%
\begin{proof}
Let
$$
	S^{\pm} = \sum_{(m, \chi_1, \chi_2)\in F} \chi_1(\pm 1) \log q_1.
$$
Then
$$
	\sum_{(m, \chi_1, \chi_2)\in F^{\ve}} \log q_1
	=
	\frac{1}{2}\big(S^++ (-1)^{\ve} S^-).
$$
For $n\in \{\pm1\}$, let
$$
	f_n (x)
	=
	\sum_{(m, \chi_1, \chi_2)\in F}
	\chi_1(n) q_1^x.
$$
Then
$$
	f_n'(0) = \begin{cases}
	S^+ & \text{if } n=1, \\
	S^- & \text{if } n=-1.
	\end{cases}
$$
Recalling \eqref{e:Sp},
$$
	f_n(x)
	=
	\prod_{p\mid N}
	\bigg(
	\sum_{f\in S_p}
	\sum_{(\chi_1, \chi_2)\in F_{p^{f}}} \chi_1(n) q_1^x
	\bigg).
$$
For each prime $p\mid N$,
$$
	f_{n, p}(x) = \sum_{f\in S_p}
	\sum_{(\chi_1, \chi_2)\in F_{p^{f}}} \chi_1(n) q_1^x.
$$
Then
$$
	f_n'(0)
	=
	\sum_{p\mid N} f_{n, p}'(0) \prod_{p'\mid N, p'\neq p} f_{n, p'}(0).
$$

For each prime $p\mid N$ and $f\in S_p$,
by Lemma~\ref{lem:Fp}, we have
$$
	\sum_{(\chi_1, \chi_2)\in F_{p^{f}}} \chi_1(n) q_1^x
	=
	\begin{cases}
	\sum_{\ell=0}^{\min\{f, e_p-f\}}
	\big(\sum_{\substack{\psi\text{ primitive}\\ \cond(\psi) = p^\ell}}
	\psi(n) \big)
	p^{\ell x}
	& \text{if } f\geq s_p, \\
	\sum_{\ell=0}^{\min\{f, e_p-f\}}
	\big(\sum_{\substack{\psi\text{ primitive}\\ \cond(\psi) = p^\ell}}
	\chi_p \psi(n) \big)
	p^{s_p x}
	& \text{if } f< s_p.
	\end{cases}
$$
Note that
$$
	\sum_{\substack{\psi\text{ primitive}\\ \cond(\psi) = p^\ell}}
	\psi(n)
	=
	\begin{cases}
	1 & \text{if } \ell=0, \\
	p^{\ell-2}(p-1)^2 & \text{if } n=1 \text{ and } \ell \geq 2, \\
	p-2 & \text{if } n=1 \text{ and } \ell=1, \\
	-1 & \text{if } p>2, n=-1 \text{ and } \ell=1, \\
	& \text{ or } p=2, n=-1 \text{ and } \ell=2, \\
	0 & \text{otherwise.}
	\end{cases}
$$
So when $f\geq s_p$,
\begin{multline*}
	\sum_{(\chi_1, \chi_2)\in F_{p^{f}}} \chi_1(n) q_1^{x}
	\\
	=
	\begin{cases}
	1 & \text{if } \min\{f, e_p-f\}=0, \\
	& \text{or } p=2, n=-1 \text{ and } \min\{f, e_p-f\}=1, \\
	\frac{(p-1)^2 p^{\min\{f, e_p-f\}(x+1)+x-1} - (p^x-1)^2}{p^{x+1}-1}
	%1+ (p-2) p^x + (p-1)^2 p^{2x}\frac{p^{(\min\{f, e_p-f\}-1)(x+1)}-1}{p^{x+1}-1}
	& \text{if } n=1 \text{ and } \min\{f, e_p-f\}\geq 1, \\
	1-p^{x(1+\delta_{p=2})} & \text{if } n=-1 \text{ and } \min\{f, e_p-f\} \geq 1+\delta_{p=2}.
	\end{cases}
\end{multline*}
When $f< s_p$, we have
\begin{multline*}
	\sum_{(\chi_1, \chi_2)\in F_{p^{f}}} \chi_1(n) q_1^{x}
	\\
	=
	\chi_p(n)p^{s_px}
	\begin{cases}
	1 & \text{if } \min\{f, e_p-f\}=0, \\
	& \text{or } p=2, n=-1 \text{ and } \min\{f, e_p-f\}=1, \\
	(p-1) p^{\min\{f,e_p-f\}-1}
	& \text{if } n=1\text{ and } \min\{f, e_p-f\} \geq 1, \\
	0 & \text{if } n=-1 \text{ and } \min\{f, e_p-f\}\geq 1+\delta_{p=2}.
	\end{cases}
\end{multline*}

For $n=1$ and $s_p> \lf\frac{e_p}{2}\rf$, we have
\begin{multline*}
	f_{1, p}(x)
	=
	\sum_{f\in S_p}
	\sum_{(\chi_1, \chi_2)\in F_{p^{f}}} \chi_1(n) q_1^x
	\\
	=
	p^{s_p x + e_p-s_p}+1
	+
	\frac{1}{p^{x+1}-1}
	\bigg(
	(p-1)^2 p^{2x} \frac{p^{(e_p-s_p)(x+1)}-1}{p^{x+1}-1}
	-
	(p^x-1)^2(e_p-s_p+1)
	\bigg).
\end{multline*}
For $s_p \leq \lf\frac{e_p}{2}\rf$, we have
\begin{multline*}
	f_{1, p}(x)
	=
	p^{s_p(x+1)-1}+1
	+
	\frac{(p-1)^2 p^{x-1}}{(p^{x+1}-1)^2}
	\big(
	p^{(\lf\frac{e_p}{2}\rf +1)(x+1)} + p^{\lc\frac{e_p}{2}\rc(x+1)}
	- p^{s_p(x+1)} - p^{x+1}
	\big)
	\\
	-(e-s)\frac{(p^x-1)^2}{p^{x+1}-1},
\end{multline*}
So we have
$$
	f_{1, p}(0) =
	\begin{cases}
	2p^{e_p-s_p} & \text{if } e_p< 2s_p, \\
	p^{\lf\frac{e_p}{2}\rf} + p^{\lf\frac{e_p-1}{2}\rf} & \text{if } e_p \geq 2s_p
	\end{cases}
	=
	\Psi_1(p^{e_p}, p^{s_p})
$$
and
$$
	f_{1, p}'(0)
	=
	\log p
	\begin{cases}
	B_p(e_p-s_p) + s_p p^{e_p-s_p} & \text{if } e_p < 2s_p, \\
	B_p\!\left(\lf\tfrac{e_p}{2}\rf\right) + B_p\!\left(\tfrac{e_p-1}{2}\right)
	- B_p(s_p-1) + s_pp^{s_p-1}
	& \text{if } e_p \geq 2s_p.
	\end{cases}
$$
Hence
$$
	S^+
	=
	\Psi_1(N, q) \sum_{p\mid N} \frac{\Psi_6(p^{e_p}, p^{s_p})}{\Psi_1(p^{e_p}, p^{s_p})} \log p.
$$

For $n=-1$ and $p>2$, we have
$$
	f_{-1, p}(x)
	=
	\begin{cases}
	2+(e_p-1)(1-p^x) & \text{if } s_p=0, \\
	1+\chi_p(-1) p^{s_p x} + (e_p-s_p) (1-p^x) & \text{if } s_p > 0.
	\end{cases}
$$
So
$$
	f_{-1, p}(0) =
	1+\chi_p(-1)
$$
and
$$
	f_{-1, p}'(0) = \log p
	\begin{cases}
	-e_p+1 & \text{if } s_p=0, \\
	s_p (1+\chi_p(-1))-e_p & \text{if } s_p>0.
	\end{cases}
$$

For $n=-1$ and $p=2$, we have
$$
	f_{-1, p}(x)
	=
	\begin{cases}
	1+\chi_p(-1) p^{s_p x} & \text{if } e_p=s_p, \\
	2(1+\chi_p(-1) p^{s_px}) + (e_p-s_p-1)(1-p^{2x}) & \text{if } e_p>s_p \geq 2, \\
	2 & \text{if } s_p=0 \text{ and } e_p=1, \\
	3 & \text{if } s_p=0 \text{ and } e_p=2, \\
	4+(e_p-3)(1-p^{2x}) & \text{if } s_p=0 \text{ and } e_p\geq 3.
	\end{cases}
$$
So we get
$$
	f_{-1, p}(0)
	=
	(1+\chi_p(-1))
	\begin{cases}
	1 & \text{if } e_p=s_p \geq 2 \text{ or } e_p=1, \\
	2 & \text{if } e_p>s_p \text{ and }e_p\geq 3, \\
	\frac{3}{2} & \text{if } s_p=0 \text{ and } e_p=2, \\
	\end{cases}
$$
and
$$
	f_{-1, p}'(0)
	=
	2\log p
	\begin{cases}
	\frac{1}{2} s_p \chi_p(-1) & \text{if } e_p=s_p\geq 2, \\
	s_p \chi_p(-1) -e_p+s_p+1 & \text{if } e_p>s_p\geq 2, \\
	-e_p+3 & \text{if } s_p=0 \text{ and } e_p\geq 3, \\
	0 & \text{otherwise.}
	\end{cases}
$$
Since $1+\chi_p(-1)=0$ unless $\chi_p$ is even,
we can conclude that $S^-=0$ unless $\chi$ is pure.
Hence we get
$$
	S^-
	=
	\sum_{p\mid N} f_{-1, p}'(0) \prod_{p'\mid N, p'\neq p} f_{-1, p'}(0)
	=
	I_\chi 2^{\omega(N)-1} \Omega_1(N, q)
	\sum_{p\mid N} \Psi_7(p^{e_p}, p^{s_p}) \log p.
$$
\end{proof}
%%%

%%%
\begin{lemma}\label{lem:chi_ve}
Let $r\in\Z$. If $\gcd(r,q)>1$ then $\{\chi\}_{\ve}(r)=0$, while if $\gcd(r,q)=1$ then
$$
	\{\chi\}_{\ve}(r)
	=
	\frac{1}{2}
	\bigg\{
	\prod_{p\mid N,p\nmid r} (\Re\chi_p(r)) \Phi_+(p^{e_p}, r)
	+
	(-1)^{\ve} \prod_{p\mid N,p\nmid r} (\{\Re/i\Im\}\chi_p(r)) \Phi_-(p^{e_p}, r)
	\bigg\},
$$
where $\{\Re/i\Im\}\chi_p(r) = \frac{1}{2}(\overline{\chi_p(r)}+\chi_p(-r))$,
and $\Phi_+$ and $\Phi_-$ are given in \eqref{e:Phi}.
\end{lemma}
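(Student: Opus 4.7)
The plan is to split $\{\chi\}_\ve(r)$ according to the parity of $\chi_1(-1)$. Using $\delta_{\chi_1(-1)=(-1)^\ve}=\tfrac12(1+(-1)^\ve\chi_1(-1))$ together with $\chi_1(-1)\chi_1(r)=\chi_1(-r)$, we may write
\[
\{\chi\}_\ve(r)=\tfrac12\bigl(T^+(r)+(-1)^\ve T^-(r)\bigr),
\qquad
T^\pm(r):=\sum_{(m,\chi_1,\chi_2)\in F}\chi_1(\pm r)\chi_2(r)\omega_m(r).
\]
If $\gcd(r,q)>1$, fix a prime $p\mid\gcd(r,q)$; then $s_p\ge 1$, and for every term contributing to $T^\pm(r)$, $\omega_m(r)\ne 0$ forces $p\nmid m$ and hence $\chi_{2,p}$ trivial. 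The condition $\cond(\chi\chi_2\overline{\chi_1})=1$ at $p$ then forces $\chi_{1,p}=\chi_p$, whose conductor $p^{s_p}>1$ divides $r$, so $\chi_1(\pm r)=0$; thus $T^\pm(r)=0$ and the first assertion follows.

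For $\gcd(r,q)=1$, the conditions defining $F$ and the summand both factor prime by prime, giving $T^\pm(r)=\prod_{p\mid N}T^\pm_p(r)$, where $T^\pm_p(r)$ runs over local data $(f,\chi_{1,p},\chi_{2,p})$ with $f\in S_p$ and $(\chi_{1,p},\chi_{2,p})\in F_{p^f}$. For $p\mid r$ (so necessarily $p\nmid q$ and $s_p=0$), $\omega_{p^f}(r)\ne0$ forces $f=0$, and Lemma \ref{lem:Fp} then forces $\chi_{1,p}=\chi_{2,p}=1$, yielding $T^\pm_p(r)=1$; this explains the restriction of the products in the lemma to $p\nmid r$.

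For $p\mid N$ with $p\nmid r$, Lemma \ref{lem:Fp} parameterizes $F_{p^f}$ by a single free character in $X_{p^a}$ with $a=\min\{f,e_p-f\}$: when $f\ge s_p$ the free character is $\chi_{1,p}$ and $\chi_{2,p}(r)=\chi_{1,p}(r)\overline{\chi_p(r)}$; when $f<s_p$ the roles swap and $\chi_{1,p}(r)=\chi_p(r)\chi_{2,p}(r)$. In either regime $\chi_{1,p}(\pm r)\chi_{2,p}(r)$ reduces to a fixed coefficient in $\chi_p$ times $\chi(r)^2$ or $\chi(-1)\chi(r)^2$ for the free character $\chi\in X_{p^a}$, and dual orthogonality gives
\[
\sum_{\chi\in X_{p^a}}\chi(r)^2=\varphi(p^a)\delta_{p^a\mid r^2-1},
\qquad
\sum_{\chi\in X_{p^a}}\chi(-1)\chi(r)^2=\varphi(p^a)\delta_{p^a\mid r^2+1}.
\]
When both $f$-ranges contribute at the same $a<s_p$, the coefficients add to $\chi_p(r)+\overline{\chi_p(r)}=2\Re\chi_p(r)$ for $T^+$ and $\chi_p(-r)+\overline{\chi_p(r)}=2\{\Re/i\Im\}\chi_p(r)$ for $T^-$; when only the $f\ge s_p$ range contributes (which requires $a\ge s_p$), the indicator forces $\chi_p(r)^2=\pm 1$, whence $\overline{\chi_p(r)}$ already equals $\Re\chi_p(r)$ in the $+$ case and $\chi_p(-r)=\{\Re/i\Im\}\chi_p(r)$ in the $-$ case automatically. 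Thus $T^+_p(r)=\Re\chi_p(r)\cdot N^+_p$ and $T^-_p(r)=\{\Re/i\Im\}\chi_p(r)\cdot N^-_p$, where $N^\pm_p=\sum_{f\in S_p,\,a_f\le b^\pm}\varphi(p^{a_f})$ with $a_f=\min\{f,e_p-f\}$ and $b^\pm=\ord_p(r^2\mp 1)$.

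It then remains to identify $N^\pm_p$ with $\Phi_\pm(p^{e_p},r)=\Psi_1(p^{e_p},p^{w_\pm(r)})$, $w_\pm(r)=\max\{s_p,e_p-b^\pm\}$. Using the two elementary identities $\sum_{f=0}^{e_p}\varphi(p^{a_f})=p^{\lfloor e_p/2\rfloor}+p^{\lfloor(e_p-1)/2\rfloor}$ and $\sum_{f:\,a_f\le c}\varphi(p^{a_f})=2p^c$ for integers $c<\lfloor e_p/2\rfloor$, a short case analysis on whether $b^\pm\gtrless e_p-s_p$ and whether $e_p\gtrless 2w_\pm(r)$ reproduces each branch of $\Psi_1$ in \eqref{e:Psi1}. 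Multiplying the prime-by-prime factors and assembling $T^+(r)$ and $T^-(r)$ produces the formula in the lemma. The main technical nuisance is this final case analysis, especially in the regime $e_p<2s_p$ where $S_p$ is strictly smaller than $\{0,\dots,e_p\}$ and only the portion $\{f:a_f\le e_p-s_p\}$ supports contributions; but the arithmetic is entirely elementary once the two orthogonality identities and the automatic equalities of character values on the support of the indicators are in hand.
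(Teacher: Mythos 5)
Your proposal is correct and follows essentially the same route as the paper. Your $T^\pm(r)$ are exactly the paper's $f_\chi^\pm(r)=\sum_{F}\chi_1(\pm1)\chi_1\chi_2\omega_m(r)$ (using $\chi_1(\pm1)\chi_1(r)=\chi_1(\pm r)$), the multiplicative local factorization, the use of Lemma~\ref{lem:Fp} to parameterize $F_{p^f}$ by a free character in $X_{p^{\min\{f,e_p-f\}}}$, the orthogonality sum $\sum_{\psi\in X_{p^a}}\psi(\pm r^2)=\varphi(p^a)\delta_{a\le \ord_p(\pm r^2-1)}$, and the observation that $\chi_p(\pm r)=\overline{\chi_p(r)}$ on the support of the indicator when $a\ge s_p$ all match the paper's computation. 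Two small imprecisions, neither fatal: your second counting identity $\sum_{f:\,a_f\le c}\varphi(p^{a_f})=2p^c$ should be stated for $c\le\lfloor(e_p-1)/2\rfloor$ (your ``$c<\lfloor e_p/2\rfloor$'' unnecessarily excludes the boundary case $c=(e_p-1)/2$ for $e_p$ odd, which in fact still gives $2p^c$ and is anyway covered by your first identity), and the phrase ``when only the $f\ge s_p$ range contributes'' should be read as referring to a fixed $a\ge s_p$ rather than to the whole sum; but with those readings your argument reproduces the paper's case analysis.
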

%%%
\begin{proof}
Let
$$
	f_\chi^{\pm}(r)
	=
	\sum_{(m, \chi_1, \chi_2)\in F} \chi_1(\pm 1) \chi_1\chi_2\omega_m(r).
$$
Then, recalling \eqref{e:chi_ve},
$$
	\{\chi\}_{\ve}(r) = \frac{1}{2}\big(f_\chi^+(r) + (-1)^{\ve} f_{\chi}^-(r)\big).
$$
Note that $f_\chi^{\pm}(r)$ is multiplicative.
For a prime $p\mid N$, let
$$
	f_{\chi, p}^{\pm}(r)
	=
	\sum_{f\in S_p} \omega_{p^f}(r)
	\sum_{(\chi_1, \chi_2)\in F_p} \chi_1(\pm 1) \chi_1\chi_2(r).
$$
Then $f_{\chi}^\pm = \prod_{p\mid N} f_{\chi, p}^{\pm}$.

When $p\mid r$, since $\omega_{p^f}(r) = 0$ unless $f=0$,
so $f_{\chi, p}^{\pm}(r)=0$ unless $s_p=0$.
From now on, assume that $p\nmid r$.

Recalling \eqref{e:Sp} and Lemma~\ref{lem:Fp},
$$
	f_{\chi, p}^\pm(r)
	=
	\sum_{f\in S_p}
	\begin{cases}
	\overline{\chi_p(r)} \sum_{\psi \in X_{p^{\min\{f, e_p-f\}}}} \psi (\pm r^2)
	& \text{if } f\geq s_p, \\
	\chi_p(\pm r) \sum_{\psi \in X_{p^{\min\{f, e_p-f\}}}} \psi (\pm r^2)
	& \text{if } f< s_p.
	\end{cases}
$$
Set $\ord_p(0)=\infty$.
Then
$$
	\sum_{\psi \in X_{p^{\min\{f, e_p-f\}}}} \psi (\pm r^2)
	=
	\begin{cases}
	\varphi(p^{\min\{f, e_p-f\}}) & \text{if } \min\{f, e_p-f\} \leq \ord_p(\pm r^2-1), \\
	0 & \text{otherwise.}
	\end{cases}
$$
Note that $\sum_{f=0}^k \varphi(p^f) = p^k$.

Let $u_\pm = \ord_p(\pm r^2-1)$.
When $e_p < 2s_p$, we have
$$
	f_{\chi, p}^{\pm}(r)
	=
	(\chi_p(\pm r) +\overline{\chi_p(r)} )
	\sum_{f=0}^{\min\{e_p-s_p, u_\pm \}} \varphi(p^f)
	=
	(\chi_p(\pm r) +\overline{\chi_p(r)} ) p^{e_p-w_{\pm}(r)}.
$$
When  $e_p \geq 2s_p$, we have
$$
	f_{\chi, p}^\pm(r)
	=
	\chi_p(\pm r) \sum_{f=0}^{\min\{s_p-1, u_\pm\}} \varphi(p^f)
	+
	\overline{\chi_p(r)}
	\bigg( \sum_{f=s_p}^{\min\{\lf\frac{e_p}{2}\rf, u_\pm\}} \varphi(p^{f})
	+
	\sum_{f=0}^{\min\{u_\pm, \lf\frac{e_p-1}{2}\rf\}}\varphi(p^{f})
	\bigg).
$$
For the second term, we have two cases:  $s_p> \min\{\lf\frac{e_p}{2}\rf, u_\pm\}$ or $s_p \leq \min\{\lf\frac{e_p}{2}\rf, u_\pm\}$.

When  $s_p> \min\{\lf\frac{e_p}{2}\rf, u_\pm\}$, since $e_p\geq 2s_p$, we get $u_\pm \leq s_p-1$.
So we have
$$
	f_{\chi, p}^{\pm} (r) = (\chi_p(\pm r) + \overline{\chi_p(r)}) \sum_{f=0}^{u_\pm} \varphi(p^f)
	=
	 (\chi_p(\pm r) + \overline{\chi_p(r)}) p^{u_\pm}.
$$
When $s_p\leq \min\{\lf\frac{e_p}{2}\rf, u_{\pm}\}$, i.e.\ $s_p\leq u_{\pm}$, we get
$$
	\chi_p(\pm r^2) = \chi_p(\pm 1) \chi_p(r)^2 = 1.
$$
We get
$$
	f_{\chi, p}^{\pm}(r)
	=
	(\chi_p(\pm r) + \overline{\chi_p(r)})
	\bigg(p^{\min\{\lf\frac{e_p-1}{2}\rf, u_\pm\}}+ \frac{1}{2}\delta_{2\mid e_p, \frac{e_p}{2} \leq u_\pm } \varphi(p^{\frac{e_p}{2}})\bigg)
$$
So we have
$$
	f_{\chi, p}^{\pm}(r)
	=
	(\chi_p(\pm r) + \overline{\chi_p(r)})
	\begin{cases}
	\frac{1}{2}\big(p^{\lf\frac{e_p-1}{2}\rf} + p^{\lf\frac{e_p}{2}\rf}\big)
	& \text{if } w_\pm(r) \leq\lf\frac{e_p}{2}\rf, \\
	p^{e_p-w_\pm(r)}
	& \text{if } w_\pm(r) > \lf\frac{e_p}{2}\rf.
	\end{cases}
$$
\end{proof}

\begin{proof}[Proof of Proposition \ref{prop:Eis_N}]
The proposition follows from 
\begin{align*}
\Eis(N,\chi;n)=\Eis(\Gamma, \chi^{(0)})+n\Eis(\Gamma, \chi^{(1)}),
\end{align*}
by evaluating $\Eis(\Gamma, \chi^{(0)})$ and $\Eis(\Gamma, \chi^{(1)})$ using
Lemmas \ref{lem:Eis_integral} and \ref{lem:TrPhiGamma_1/2} from the previous section
together with Lemmas \ref{lem:|Fm_ve|log}--\ref{lem:chi_ve} in the present section.
\end{proof}
%%%%%
\subsection{Cuspidal and Continuous contributions $(\Cu+\Eis)(\Gamma, \chi)$}

Finally we conclude the proof of Theorem \ref{thm:STF_N}
by computing the sum of $\Cu(N, \chi; n)$ from Proposition \ref{prop:Cu}
and $\Eis(\Gamma, \chi; n)$ from Proposition \ref{prop:Eis_N}.

For $n=1$, we combine \eqref{e:Cu_N_1} and \eqref{e:Eis_N_1},
and note that
\begin{multline*}
	(\Psi_4 + \Psi_5+ \Psi_6)(p^{e_p}, p^{s_p})
	\\
	=
	e_p\!\left(p^{\lf\frac{e_p}{2}\rf}+p^{\lf\frac{e_p-1}{2}\rf}\right)
	-
	\frac{p^{\lf\frac{e_p}{2}\rf} +p^{\lf\frac{e_p-1}{2}\rf}-2}{p-1}
	+
	p^{s_p-1}
	+
	2\frac{p^{s_p-1}-1}{p-1}
	=
	\Psi_3(p^{e_p}, p^{s_p}).
\end{multline*}
Here $\Psi_3$ is given in \eqref{e:Psi3}.
Hence we obtain \eqref{e:Cu+Eis_N_1}.

For $n=-1$, we combine \eqref{e:Cu_N_-1} and \eqref{e:Eis_N_-1}.
Here one notes that, if $I_\chi=I_{q,4}=1$,
$$
	\chi(\sqrt{-1})\min\{e_2+1, 4\} \prod_{p\mid N, p\equiv_41} \Psi_2 (p^{e_p}, p^{s_p}) \prod_{p\mid N, p\equiv_4-1} 2
	=
	\tPsi_2(N, \chi),
$$
with $\tPsi_2$ as in \eqref{e:tPsi2}.
Recall also that if $I_\chi=1$, i.e.\ if $\chi$ is pure, then $s_2=0$ or $s_2\geq 3$.
Therefore,
$$
	I_\chi 2^{\omega(N)} \frac{1}{2}
	\Big(
	\Omega_1(N, q) (-e_2+2) + \ve_N -2 -\Omega_1(N, q) \Psi_7(2^{e_2}, 2^{s_2})
	\Big)
	=
	I_\chi 2^{\omega(N)}\Omega_2(N, q),
$$
where $\Omega_2$ is given in \eqref{e:Omega2}.
Finally for $p\mid N$ odd, recalling \eqref{e:Psi7},
$$
	\frac{1}{2}\big(\Psi_7(p^{e_p}, p^{s_p}) + e_p\big)
	=
	\max\left\{\frac{1}{2}, s_p\right\}.
$$
Hence we obtain \eqref{e:Cu+Eis_N_-1}.

Recall also that the formulas for $\I(\Gamma, \chi; n)$ and $(\NEl+\El)(\Gamma, \chi; n)$
stated in Theorem \ref{thm:STF_N} were proved in Lemma \ref{lem:I_N}
and Proposition \ref{prop:NEll+Ell_N}, respectively.
Hence the proof of Theorem~\ref{thm:STF_N} is now complete.
\hfill$\square$

%%%%%

\section{Sieving}\label{sec:sieve}
Our goal in this section is to prove Theorem \ref{THMmintf},
by sieving out the contribution from the twist-minimal Hecke eigenforms in 
the trace formula in Theorem \ref{thm:STF_N}.

The sieving is carried out in two steps:
we first sieve for newforms,
and then
sieve for twist-minimal forms among the newforms.
The first step is as in \cite[\S2.3]{BS07}:
for any
Dirichlet character $\chi$ modulo $N$ of conductor $\cond(\chi)$,
any $\lambda>0$ and $n\in\{\pm1\}$,
we have
\begin{align}\label{newformsieve}
	\Tr T_n|_{\Anew_\lambda(\chi)}
	=
	\sum_{M \mid \frac N{\cond(\chi)}} \beta\left(\frac{N/\cond(\chi)}{M}\right)
	\Tr T_n|_{\A_\lambda(\chi|_{M\cond(\chi)})},
\end{align}
where $\beta(m)$ is the multiplicative function given by $\zeta(s)^{-2} = \sum_{m=1}^\infty \beta(m) m^{-s}$.

For the second step, sieving down to $\Amin_\lambda(\chi)$,
we assume that $\chi$ is minimal
as in Definition~\ref{def:minimal}.
Let $S_\chi$ be the set of pairs $\langle M,\psi\rangle$ where $M$ is a (positive) divisor of $N$ and
$\psi$ is a primitive Dirichlet character such that
$\lcm(M,\cond(\psi)\cond(\chi\psi))=N$,
$\cond(\chi\psi^2)\mid M$ and
$\chi\psi^2|_M$ is minimal.
Also let $\sim$ be the equivalence relation on $S_\chi$ defined by
$\langle M,\psi\rangle\sim\langle M',\psi'\rangle$ if and only if $M'=M$
and $\Amin_\lambda(\chi\psi^2|_M)\otimes\opsi
=\Amin_\lambda(\chi\psi'^2|_M')\otimes\opsi'$.
It then follows from Lemma~\ref{lem:twistconductor} and
Lemma~\ref{lem:twistminimal_isom} that for any $\lambda>0$,
we have the direct sum decomposition
\begin{align}\label{newmindecomp}
\Anew_\lambda(\chi)=\bigoplus_{\langle M,\psi\rangle\in S_\chi/\sim}\Bigl(\Amin_\lambda(\chi\psi^2|_M)\otimes\opsi\Bigr),
\end{align}
where $S_\chi/\!\!\sim$ denotes any set of representatives for $S_\chi$ modulo $\sim$.
Hence, also using the fact that $\overline{\psi(n)}=\psi(n)$ for $n\in\{\pm1\}$,
\begin{align}\label{newmindecomp2}
\Tr T_n|_{\Anew_\lambda(\chi)}=\sum_{\langle M,\psi\rangle\in S_\chi/\sim}\psi(n)\Tr T_n|_{\Amin_\lambda(\chi\psi^2|_M)}.
\end{align}
Note that the pairs $\langle M,\psi\rangle$ in $S_\chi$ with $M=N$ form a single equivalence class,
and this class contributes via the term $\Tr T_n|_{\Amin_\lambda(\chi)}$
to the sum in \eqref{newmindecomp2}.

In order to invert the formula \eqref{newmindecomp2},
we first note that the set $S_\chi$ and the relation $\sim$ can be fully described  by local conditions:
writing $\chi=\prod_{p\mid N} \chi_p$ as usual,
and setting $N_p=p^{e_p}$ ($e_p=\ord_p(N)$) and $M_p=p^{\ord_p(M)}$,
we have that a pair $\langle M,\psi\rangle$ lies in $S_\chi$ if and only if
$\langle M_p,\psi_p\rangle$ lies in $S_{\chi_p}$ for each prime $p$,
and furthermore 
$\langle M,\psi\rangle\sim\langle M',\psi'\rangle$ holds if and only if
$\langle M_p,\psi_p\rangle\sim\langle M'_p,\psi'_p\rangle$ for all primes $p\mid N$.
Also the equivalence classes in $S_{\chi_p}$ are easily classified:
if $p=2$ or $e_p\leq1$ or $2\nmid e_p$ or $s_p>1$ then 
\emph{all} elements in $S_{\chi_p}$ are equivalent with $\langle N_p,1\rangle$.
In the remaining case when $p>2$, $e_p\geq2$, $2\mid e_p$ and $s_p\in\{0,1\}$, 
then the set of elements in $S_{\chi_p}$ outside the equivalence class of
$\langle N_p,1\rangle$ equals
\begin{align}\label{PAIRS1}
\{\langle p^{e_p/2},\psi_p\rangle\::\:\cond(\psi_p)=p^{e_p/2},\:\psi_p\neq\ochi_p\}
\qquad\text{if $e_p\geq4$ or $s_p=1$,}
\end{align}
and
\begin{align}\label{PAIRS2}
\{\langle p,\psi_p\rangle\: :\:\cond(\psi_p)=p\}
\:\cup\:\{\langle 1,(\tfrac{\cdot}p)\rangle\}
\qquad\text{if $e_p=2$ and $s_p=0$.}
\end{align}
In the case of \eqref{PAIRS1},
each $\sim$ equivalence class in that set has exactly two elements.
In the case of \eqref{PAIRS2}, the elements $\langle p,(\frac{\cdot}p)\rangle$ and 
$\langle 1,(\frac{\cdot}p)\rangle$ form singleton equivalence classes, while the remaining $p-3$ elements 
group together into equivalence classes with exactly two elements each.

In particular it follows from the above description that 
\eqref{newmindecomp2} can be rewritten as
\begin{align*}
\Tr T_n|_{\Anew_\lambda(\chi)}=
\sum_{\langle M,\psi\rangle\in S_\chi'}
2^{-k(N,M,\psi)}\,\psi(n)\Tr T_n|_{\Amin_\lambda(\chi\psi^2|_M)},
\end{align*}
where $S_\chi'$ is the subset of all $\langle M,\psi\rangle\in S_\chi$
satisfying [$M_p<N_p$ or $\psi_p=1$] for each prime $p\mid N$,
and $k(N,M,\psi)$ is the number of primes $p\mid N$ 
for which $M_p<N_p$
and [$s_p=1$ or $\psi_p\neq(\frac{\cdot}p)$].
This formula can now be inverted as follows:
\begin{align}\label{minsieveformula}
\Tr T_n|_{\Amin_\lambda(\chi)}=
\sum_{\langle M,\psi\rangle\in S_\chi'}
(-1)^{k'(N,M)}
2^{-k(N,M,\psi)}\,
\psi(n)\Tr T_n|_{\Anew_\lambda(\chi\psi^2|_M)},
\end{align}
where $k'(N,M)$ is the number of primes $p\mid N$ for which $M_p<N_p$.

We will apply the sieving in \eqref{newformsieve} and then \eqref{minsieveformula}
to the trace formula in Theorem \ref{thm:STF_N}.
Note that the right hand side of that formula
is a sum of terms $f(N,\chi)$ each of which is multiplicative with respect to $\chi$,
in the sense that 
$f(N, \chi) = \prod_{p\mid N} f(p^{e_p}, \chi_p)$
for any Dirichlet character $\chi$ modulo $N$.
One verifies that this multiplicativity property is preserved by the sieving,
i.e., for any multiplicative function $f(N,\chi)$,
if we define $f^{\new}$ and $f^{\min}$ via
\begin{align}\label{newformsieve2}
f^{\new}(N,\chi)=
	\sum_{M \mid \frac N{\cond(\chi)}} \beta\left(\frac{N/\cond(\chi)}{M}\right)
	f(M\cond(\chi),\chi)
\end{align}
and
\begin{align}\label{minsieveformula2}
f^{\min}(N,\chi)=\sum_{\langle M,\psi\rangle\in S_\chi'}
(-1)^{k'(N,M)}
2^{-k(N,M,\psi)}\,
\psi(n) f^{\new}(M,\chi\psi^2),
\end{align}
then also $f^{\new}$ and $f^{\min}$ are multiplicative.
Hence our task is reduced to computing $f^{\min}(p^{e_p},\chi_p)$ 
for each term $f(N,\chi)$ appearing in the trace formula in Theorem \ref{thm:STF_N}.
Writing $e=e_p$
and $s=s_p$,
we note that \eqref{newformsieve2} implies
\begin{align}\label{fnewp}
	f^{\new}(p^{e}, \chi_p) = \sum_{j=0}^{e-s} \beta(p^{j}) f(p^{e-j}, \chi_p),
\end{align}
and we have, for each $j\geq0$,
$$
	\beta(p^j) = \begin{cases}
	1 & \text{if } j\in \{0, 2\}, \\
	-2 & \text{if } j=1, \\
	0 & \text{otherwise.}
	\end{cases}
$$
Next assume again that $\chi=\prod_p\chi_p$ is minimal as in Definition~\ref{def:minimal}.
It now follows from \eqref{minsieveformula2} and the description of the equivalence classes of
$S_{\chi_p}$ given around \eqref{PAIRS1} and \eqref{PAIRS2}
that, for each odd prime $p\mid N$:
\begin{multline}\label{e:fmin_odd}
f^{\min}(p^{e}, \chi_p)
	=
f^{\mathrm{new}}(p^{e}, \chi_p)-\delta_{e=2,s=0}\bigl(\tfrac{n}{p}\bigr)
\big(f^{\mathrm{new}}(1, 1)+\tfrac12f^{\mathrm{new}}(p, 1)\big)
\\
-\delta_{2\mid e,s\leq1}\cdot\frac12\sum_{\substack{\mathrm{cond}(\psi)=p^{\frac{e}{2}}\\\psi\neq\overline{\chi}_p}}
\psi(n)f^{\mathrm{new}}(p^{\frac{e}{2}},\chi_p\psi^2).
\end{multline}
On the other hand for $p=2$ we have simply
\begin{equation}\label{e:fmin_p=2}
	f^{\min}(2^{e}, \chi_2)
	=
	f^{\new} (2^{e}, \chi_2).
\end{equation}

%%%%%
\subsection{$\I(\Gamma, \chi)$}\label{IMINSIEVE}
Recalling Lemma~\ref{lem:I_N}, by \eqref{e:fmin_odd} and \eqref{e:fmin_p=2},
we have
$$
	\I^{\min}(\Gamma, \chi; n)
	=
	\frac{1+n}{2}
	\frac{M(\chi)}{12}
	\int_\R rh(r) \tanh(\pi r) \; dr.
$$
Here $M(\chi)$ is given in \eqref{eq:Mchi}.
By \cite[p.~141]{BS07},
$$
	\int_\R rh(r) \tanh(\pi r) \; dr = -\int_{-\infty}^\infty \frac{g'(u)}{\sinh(u/2)} \; du,
$$
so we get
$$
	\I^{\min}(\Gamma, \chi; n)
	=
	-
	\frac{1+n}{2}
	\frac{M(\chi)}{12}
	\int_{-\infty}^\infty \frac{g'(u)}{\sinh(u/2)} \; du.
$$

%%%%%
\subsection{$\boldS_p(p^{e}, \chi; t, n)$}
Using the Dirichlet class number formula
and the formula for $\boldS_p(1,1;t,n)$ in \eqref{e:boldSp_e=0_0},
the elliptic and hyperbolic terms in Theorem \ref{thm:STF_N} can be rewritten as
\begin{align*}
	(\NEl+\El)(\Gamma, \chi; n)
	=
	\sum_{\substack{t\in \Z\\ D=t^2-4n\\\sqrt D\notin\Q}}
\biggl(\prod_{p\mid N}\frac{\boldS_p(p^e,\chi_p;t,n)}{1+(p-(\frac dp))\frac{p^{\ord_p\ell}-1}{p-1}}\biggr)
\:L(1,\psi_{D})
\hspace{40pt}
\\
\cdot\begin{cases}
g\!\left(2\log\frac{|t|+\sqrt{D}}2\right)
&\text{if }D>0,\\
\frac{\sqrt{|D|}}{\pi}\int_\R
\frac{g(u)\cosh(u/2)}{4\sinh^2(u/2)+|D|}\,du
&\text{if }D<0,
\end{cases}
\end{align*}
where the character $\psi_D$ is as in Section \ref{ss:twist-minimal_TF}
(cf.\ also \cite[\S2]{BL17a}).
Hence to show that we obtain the corresponding sum in Theorem \ref{THMmintf},
it remains to prove that for each prime $p$ with $e=e_p>0$,
\begin{align}\label{HtnchiWTP}
\frac{\boldS_p^{\min}(p^e,\chi_p;t,n)}{1+(p-(\frac dp))\frac{p^{\ord_p\ell}-1}{p-1}}
=H_{t,n}(\chi_p).
\end{align}

\vspace{5pt}

From now on until further notice, let us assume that \emph{$p$ is odd.}
If $e=s>0$ then 
$\boldS_p^{\min}=\boldS_p^{\new}=\boldS_p$
by \eqref{fnewp} and \eqref{e:fmin_odd},
and one then verifies directly from
\eqref{e:Htn_s=e}
and
\eqref{e:boldSp_odd} (with $h=\max\{2e-1, e\} = 2e-1$)
that \eqref{HtnchiWTP} holds.

Next assume $s\in\{0, 1\}$ and $e>2$.
Then by \eqref{e:boldSp_odd}
(where now $h=\max\{2s-1, e\} =e$)
and \eqref{fnewp}, we get
\begin{multline}\label{e:Spnew}
	\boldS_p^{\new}(p^e, \chi; t, n)
	=
	\boldS_p(p^e, \chi; t, n) - 2\boldS_p(p^{e-1}, \chi; t, n) + \boldS_p(p^{e-2}, \chi; t, n)
	\\
	=
	\chi\!\left(\frac{t+\delta_{2\nmid t} p^s}{2}\right)
	p^{e-3}
	\begin{cases}
	(p-1)
	\bigg( \delta_{2\mid e} \left(p-\left(\frac{d}{p}\right)\right) p^{f-\frac{e}{2}+1}
	+
	\left(\left(\frac{d}{p}\right)-1\right)(p+1)
	\bigg)
	& \text{if } g\geq e, \\
	\left(\left(\frac{d}{p}\right)-1\right) (p-1) (1+\delta_{2\nmid e} p)
	& \text{if } g=e-1, \\
	1-\left(\frac{d}{p}\right)p
	& \text{if } g=e-2, \\
	0 & \text{otherwise.}
	\end{cases}
\end{multline}

Note that for $\alpha\geq 2$,
$$
	\sum_{\substack{\psi\text{ primitive}\\ \cond(\psi)=p^\alpha}}
	\psi(x)
	=
	\varphi(p^{\alpha-1})
	\begin{cases}
	p-1 & \text{if } \alpha \leq \ord_p(x-1), \\
	-1 & \text{if } \alpha = \ord_p(x-1)+1, \\
	0 & \text{otherwise.}
	\end{cases}
$$
To evaluate the last sum in \eqref{e:fmin_odd},
we need the following lemma.
%%%
\begin{lemma}
When $\left(\frac{d}{p}\right)=1$,
$n\left(\frac{t}{2}\pm \frac{\sqrt{d}\ell}{2}\right)^2 \equiv_{p^{\alpha}} 1$
if and only if $\ell\equiv_{p^{\alpha}} 0$.
\end{lemma}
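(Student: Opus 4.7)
The plan is to turn this into a short explicit computation, observing that the two quantities $\beta_\pm := \frac{t \pm \sqrt{d}\ell}{2}$ are units in $\Z_p$ and satisfy a clean polynomial identity.

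First I would record the key algebraic fact: since $t^2 - 4n = d\ell^2$, one has $\beta_+\beta_- = \frac{t^2 - d\ell^2}{4} = n \in \{\pm 1\}$. In particular both $\beta_+$ and $\beta_-$ are units in $\Z_p$ (recall $p$ is odd throughout this subsection). The hypothesis $\left(\frac{d}{p}\right)=1$ guarantees that $p\nmid d$ and that a square root $\sqrt{d}\in\Z_p^\times$ exists, so $\beta_\pm$ and the expression $n\beta_\pm^2-1$ make sense in $\Z_p$.

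Next I would substitute $t^2 = 4n + d\ell^2$ into the expansion of $n\beta_\pm^2-1$ and simplify. A short computation yields
\begin{equation*}
n\beta_\pm^2 - 1 \;=\; \frac{n(t \pm \sqrt{d}\ell)^2 - 4}{4}
\;=\; \frac{2nd\ell^2 \pm 2nt\sqrt{d}\ell}{4}
\;=\; \pm n\sqrt{d}\,\ell\,\beta_\pm,
\end{equation*}
where the last step uses $4n^2 = 4$ and factors out $\sqrt{d}\ell$.

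Since $n$, $\sqrt{d}$, and $\beta_\pm$ all lie in $\Z_p^\times$, the product $\pm n\sqrt{d}\,\beta_\pm$ is a unit, and the congruence $n\beta_\pm^2 \equiv 1 \pmod*{p^\alpha}$ is therefore equivalent to $\ell \equiv 0 \pmod*{p^\alpha}$. This gives the lemma. The only step that is not entirely mechanical is spotting that the relation $t^2 - d\ell^2 = 4n$ collapses $n\beta_\pm^2 - 1$ into a single factor proportional to $\ell$, but once one writes out $n\beta_\pm^2 - 1$ and substitutes for $t^2$, the cancellation is forced.
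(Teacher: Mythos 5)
Your argument is correct and follows essentially the same route as the paper's proof, both resting on the relation $\beta_+\beta_- = n$ coming from $t^2-4n=d\ell^2$ and the observation that $n\beta_\pm^2-1$ is a unit multiple of $\ell$. Where the paper treats the two directions of the equivalence separately (using $\beta_\pm^2 = t\beta_\pm - n$ for one implication and $\beta_\pm \equiv t/2 \pmod{p^\alpha}$ for the other), you package them into the single exact identity $n\beta_\pm^2 - 1 = \pm n\sqrt{d}\,\ell\,\beta_\pm$, which is a mild streamlining of the same idea rather than a genuinely different method.
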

%%%
\begin{proof}
Set $x \equiv_{p^{\alpha}} \frac{t}{2}\pm \frac{\sqrt{d}\ell}{2}$ and
$y\equiv_{p^{\alpha}} \frac{t}{2}\mp \frac{\sqrt{d}\ell}{2}$.
Then since $t^2-4n=d\ell^2$, we get $xy \equiv_{p^{\alpha}} n$ and $x^2\equiv_{p^{\alpha}} tx-n$.

Assume that $n(tx-n)=nx^2 \equiv_{p^{\alpha}}1$.
Then $ntx \equiv_{p^{\alpha}} 2$.
Multiplying by $y$ on both sides, we get
$$
	t\equiv 2y \equiv t-\sqrt{d}\ell \pmod*{p^{\alpha}}.
$$
So $p^{\alpha}\mid \ell$.

Conversely, assume that $\ell\equiv_{p^{\alpha}}0$.
Then
$$
	nx^2 \equiv ntx-1 \equiv n\frac{t^2}{2} -1  = n\frac{t^2-4n}{2}+1
	\equiv 1\pmod*{p^{\alpha}}.
$$
\end{proof}
%%%
By the above lemma, for $e>2$, $2\mid e$, we have
\begin{multline*}
	\sum_{\cond(\psi)=p^{\frac{e}{2}}}
	\psi(n)\,\boldS_p^{\new}(p^{\frac{e}{2}}, \chi\psi^2; t, n) 
	=
	\chi\!\left(\tfrac{t+\delta_{2\nmid t} p^s}{2}\right)
	(p-1) p^{e-3}
	\\
	\times
	\begin{cases}
	2(p-1)p^{f-\frac{e}{2}+1} + \left(\left(\frac{d}{p}\right)-1\right)\big(-2p^{f-\frac{e}{2}+1} + p +1\big)
	& \text{if } g\geq e-1, \\
	-2 & \text{if } g=e-2 \text{ and } \left(\frac{d}{p}\right)=1, \\
	0 & \text{otherwise.}
	\end{cases}
\end{multline*}
By \eqref{e:fmin_odd} and \eqref{e:Spnew} we now obtain, when $s\in\{0,1\}$ and $e>2$, 
$$
	\boldS_p^{\min}(p^e, \chi; t, n)
	=
	\left(\left(\tfrac{d}{p}\right)-1\right)
	\frac{1+\delta_{2\nmid e}}{2}
	\chi\!\left(\tfrac{t+\delta_{2\nmid t} p^s}{2}\right)
	p^{e-3}
	\begin{cases}
	(p-1)(p+1) & \text{if } g\geq e-1, \\
	-\delta_{2\mid e} p - 1 & \text{if } g=e-2, \\
	0 & \text{otherwise.}
	\end{cases}
$$
Hence \eqref{HtnchiWTP} again holds; cf.\ \eqref{e:Htn_s<e}.

We next turn to the case $e\in\{1, 2\}$, $s<e$.
By \eqref{e:fmin_odd}, when $e=1$ and $s=0$, we have
$$
	\boldS_p^{\min}(p, 1; t, n)= \boldS_p^{\new}(p, 1; t, n) = \boldS_p(p, 1; t, n) - 2\boldS_p(1, 1; t, n) = \left(\tfrac{d}{p}\right)-1.
$$
For $e=2$,
\begin{multline*}
	\boldS_p^{\new}(p^2, \chi; t, n)
	\\
	=
	\chi\!\left(\tfrac{t+\delta_{2\nmid t}p^s}{2}\right)
	\begin{cases}
	p^f (p-2-s) + \left(\left(\frac{d}{p}\right)-1\right) \frac{-p^f(p-2-s)+p^2-p-1-s}{p-1}
	& \text{if } g\geq 2, \\
	-1-s & \text{if } g=1, \\
	0 & \text{if } g=0
	\end{cases}
	\\
	+
	\delta_{g=0}
	\bigg\{
	(1-s)\frac{1-\left(\frac{d}{p}\right)}{2}
	-
	\frac{1}{2-s}
	\frac{1+\left(\frac{d}{p}\right)}{2}
	\left(\chi\!\left(\tfrac{t+\sqrt{d}\ell}{2}\right) + \chi\!\left(\tfrac{t-\sqrt{d}\ell}{2}\right)\right)
	\bigg\}.
\end{multline*}
Assuming first $s=1$, $e=2$, we compute:
\begin{multline*}
	\sum_{\substack{\psi\pmod*{p}\\\psi\notin\{\ochi_p,1\}}}
	\boldS_p^{\new}(p, \chi\psi^2; t, n) \psi(n)
	\\
	=
	\begin{cases}
	\chi\!\left(\frac{t+\delta_{2\nmid t} p}{2}\right)
	(p-3)
	\Bigl(2p^f + \bigl(\bigl(\frac{d}{p}\bigr)-1\bigr) \frac{-2p^f + p+1}{p-1}\Bigr)
	& \text{if } g\geq 1, \\
	-2\Bigl(\chi\!\bigl(\frac{t+\sqrt{d}\ell}{2}\bigr) + \chi\!\bigl(\frac{t-\sqrt{d}\ell}{2}\bigr)\Bigr)
	& \text{if } g=0 \text{ and } \bigl(\frac{d}{p}\bigr)=1, \\
	0 & \text{otherwise.}
	\end{cases}
\end{multline*}
Hence by \eqref{e:fmin_odd},
\begin{multline*}
	\boldS_p^{\min}(p^2, \chi; t, n)
	=
	\boldS_p^{\new}(p^2, \chi; t, n) -
	\frac{1}{2}\sum_{\substack{\psi\pmod*{p}\\\psi\notin\{\ochi_p,1\}}}
	\boldS_p^{\new}(p^2, \chi\psi^2; t, n) \psi(n)
	\\
	=
	\begin{cases}
	\chi\!\left(\frac{t+\delta_{2\nmid t} p^s}{2}\right) \left(\left(\frac{d}{p}\right)-1\right)
	\frac{p+1}{2}
	& \text{if } g\geq 1, \\
	0
	& \text{if } g=0.
	\end{cases}
\end{multline*}
Finally for $s=0$, $e=2$, we have
\begin{multline*}
	\left(\tfrac{n}{p}\right)
	\big( \boldS_p(1, 1; t, n) + \tfrac12\boldS_p^{\new}(p, 1; t, n)\big)
	+
	\frac{1}{2} \sum_{\substack{\psi\pmod*{p}\\\psi\neq1}}
	\boldS_p^{\new}(p, \psi^2; t, n) \psi(n)
	\\
	=
	\begin{cases}
	(p-2)p^f + \left(\left(\frac{d}{p}\right)-1\right) \frac{\frac{1}{2}(p^2-3) - p^f(p-2)}{p-1}
	& \text{if } g\geq 1, \\
	-1+\left(\left(\frac{d}{p}\right)-1\right) \frac{\left(\frac{n}{p}\right)-1}{2}
	& \text{if } g=0, \\
	0 & \text{otherwise,}
	\end{cases}
\end{multline*}
and so, by \eqref{e:fmin_odd},
$$
	\boldS_p^{\min}(p^2, 1; t, n)
	=
	\frac{\left(\tfrac{d}{p}\right)-1}{2}
	\begin{cases}
	p-1 & \text{if } g\geq 1, \\
	-\left(\frac{n}{p}\right)-1 & \text{if } g=0, \\
	0 & \text{otherwise.}
	\end{cases}
$$
In all these cases we again see that \eqref{HtnchiWTP} holds; cf.\ \eqref{e:Htn_s<e}.

%%%%%

\medskip

Finally we turn to the case $p=2$.
Since $\chi$ is minimal, we have to consider the following subcases (cf.\ Definition~\ref{def:minimal}
and recall that $s=1$ is impossible when $p=2$):
\begin{align}\label{def:minimalp2expl}
	s=
	\begin{cases}
	e & \text{if } e\geq 2, \\
	\lf\frac{e}{2}\rf & \text{if } e\geq 4, \\
	2 & \text{if } e\geq 5\text{ and } 2\nmid e, \\
	0 & \text{if } 2\nmid e \text{ or } e=2.
	\end{cases}
\end{align}
Recalling \eqref{e:boldSp_even},
for $s\geq 2$ and $s\leq e\leq 2s$, we have
\begin{multline*}
	\boldS_2(2^e, \chi; t, n)
	\\
	=
	\chi\!\left(\tfrac{t}{2}\right) 2^{e-1}
	\begin{cases}
	\delta_{2\nmid d} 2^{\frac{g}{2}-s} (4-\delta_{e=2s})
	+ \left(\left(\frac{d}{2}\right)-1\right) (3-2^{\lf\frac{g}{2}\rf-s}(4-\delta_{e=2s}))
	& \text{if } g\geq 2s+1, \\
	-\delta_{2\nmid d} (4-\delta_{e=2s})
	+ \delta_{e\leq 2s-1}\left(\left(\frac{d}{2}\right)-1\right)
	& \text{if } g=2s, \\
	0 & \text{otherwise}
	\end{cases}
	\\
	+
	\delta_{\left(\frac{d}{2}\right)=1, g\leq 2s-2}
	\left(\chi\!\left(\tfrac{t+\sqrt{d}\ell}{2}\right) + \chi\!\left(\tfrac{t-\sqrt{d}\ell}{2}\right)\right)
	2^{f+\min\{e-s, f\}},
\end{multline*}
For $e=2s+1$, we have
\begin{multline*}
	\boldS_2(2^{e}, \chi; t, n)
	=
	\chi\!\left(\tfrac{t}{2}\right) 2^{e-1}
	\begin{cases}
	\delta_{2\nmid d} 2^{\frac{g}{2}-s+1} + \left(\left(\frac{d}{2}\right) -1\right) (3-2^{\lf\frac{g}{2}\rf-s+1})
	& \text{if } g\geq 2s+2, \\
	0 & \text{otherwise}
	\end{cases}
	\\
	+
	\delta_{\left(\frac{d}{2}\right)=1, g\leq 2s}
	\left(\chi\!\left(\tfrac{t+\sqrt{d}\ell}{2}\right) + \chi\!\left(\tfrac{t-\sqrt{d}\ell}{2}\right)\right)
	2^{2f}.
\end{multline*}
Here $f=\ord_2(\ell)$.
Then by \eqref{e:fmin_p=2}, for $s=\lf\frac{e}{2}\rf$, $e\geq 4$, we
have
\begin{align*}
	\boldS_2^{\min}(2^e, \chi; t, n)
	=
	\boldS_2^{\new}(2^{e}, \chi; t, n)
	=
	\chi\!\left(\tfrac{t}{2}\right)&\left(\left(\tfrac{d}{2}\right)-1\right)
	2^{e-3}
	\begin{cases}
	3 & \text{if } g\geq e+1, \\
	-1-\delta_{2\mid e} 2 & \text{if } g=e, \\
	1-\delta_{2\nmid d}4 & \text{if } g=e-1, 2\nmid e, \\
	0 & \text{otherwise}
	\end{cases}
	\\
	&-
	\delta_{\left(\frac{d}{2}\right)=1, g=e-2}
	\left(\chi\!\left(\tfrac{t+\sqrt{d}\ell}{2}\right)+ \chi\!\left(\tfrac{t-\sqrt{d}\ell}{2}\right)\right)
	2^{g-1}.
\end{align*}
For $e=2s$ and $g=2s-2$, we claim that
$\chi\!\left(\frac{t+\sqrt{d}\ell}{2}\right)+ \chi\!\left(\frac{t-\sqrt{d}\ell}{2}\right)=0$.
Note that $g=4$ cannot occur, so $s\geq 4$.
Moreover, if $a$ is odd, then
$$
	\chi\!\left(\tfrac{t}{2}+a2^{s-2}\right)^2
	=
	\chi\!\left(\tfrac{t^2}{4} + a 2^{s-1}\right)
	=
	-\chi\!\left(\tfrac{t^2}{4}\right) = -\chi(1) =-1,
$$
since $t^2-4n=d\ell^2$ and $g=\ord_2(d\ell^2) = 2s-2>4$, which forces $n=1$.
This implies that
$$
	\chi\!\left(\tfrac{t}{2}+a 2^{s-2}\right) = \pm i.
$$
Since $\left(\frac{t+\sqrt{d}\ell}{2}\right)\left(\frac{t-\sqrt{d}\ell}{2}\right) = 1$,
we get $\chi\!\left(\frac{t+\sqrt{d}\ell}{2}\right)\chi\!\left(\frac{t-\sqrt{d}\ell}{2}\right)=1$,
so $\chi\!\left(\frac{t+\sqrt{d}\ell}{2}\right)=\overline{\chi\!\left(\frac{t-\sqrt{d}\ell}{2}\right)}$.
Then $\chi\!\left(\frac{t+\sqrt{d}\ell}{2}\right)+\chi\!\left(\frac{t-\sqrt{d}\ell}{2}\right)=0$ as claimed.

Now consider the cases for $s\in\{0, 2\}$.
For $e\geq \max\{1, 2s-1\}$, we have
\begin{multline*}
	\boldS_2(2^e, \chi; t, n)
	\\
	=
	\chi\!\left(\tfrac{t}{2}\right)
	\begin{cases}
	\delta_{2\nmid d}
	2^{\frac{g}{2}+\lf\frac{e}{2}\rf}(1+2^{\lc\frac{e}{2}\rc-\lf\frac{e}{2}\rf-1})
	\\
	+
	\left(\left(\frac{d}{2}\right)-1\right)
	(3\cdot 2^{e-1} - 2^{\lf\frac{g}{2}\rf+\lf\frac{e}{2}\rf}(1+2^{\lc\frac{e}{2}\rc-\lf\frac{e}{2}\rf-1}))
	& \text{if } g\geq e+1 \text{ and } 2\nmid g, \\
	& \text{or } g\geq \max\{e, 2s+2\} \text{ and } 2\mid g, \\
	0 & \text{otherwise}
	\end{cases}
	\\
	+
	\delta_{\left(\frac{d}{2}\right)=1, g\leq e-1}
	\left(\chi\!\left(\tfrac{t+\sqrt{d}\ell}{2}\right) + \chi\!\left(\tfrac{t-\sqrt{d}\ell}{2}\right)\right) 2^g.
\end{multline*}
For $e=s=0$, we have
$$
	\boldS_2(1, 1; t, n)
	=
	\delta_{2\nmid d} 2^{\frac{g}{2}} + \left(\left(\tfrac{d}{2}\right)-1\right)(1-2^{\lf\frac{g}{2}\rf}).
$$
For $e=4$ and $s=2$, we have
$$
	\boldS_2^{\min}(2^4, \chi_{-4}; t, n)
	=
	\chi_{-4}\!\left(\tfrac{t}{2}\right) \left(\left(\tfrac{d}{2}\right)-1\right)
	\begin{cases}
	6 & \text{if } g\geq 5, \\
	0 & \text{otherwise.}
	\end{cases}
$$
For $s=0$ and $e\in\{1, 2\}$, we have
$$
	\boldS_2^{\min}(2^e, 1; t,n)
	=
	\begin{cases}
	\left(\frac{d}{2}\right)-1
	& \text{if } g\geq 2, \\
	-2 & \text{if } e=1 \text{ and } g=0, \\
	1 & \text{if } e=2 \text{ and } g=0.
	\end{cases}
$$
For $s\in\{0, 2\}$ and $e \geq\max\{3, 2s+1\}$ and $2\nmid e$,
we have
$$
	\boldS_2^{\min}(2^e, \chi; t, n)
	=
	\chi\!\left(\tfrac{t}{2}\right)
	\left(\left(\tfrac{d}{2}\right)-1\right) 2^{e-3}
	\begin{cases}
	3 & \text{if } g\geq e+1, \\
	\delta_{2\nmid d} 4 -1 & \text{if } g\in\{e, e-1\}, \\
	0 & \text{otherwise.}
	\end{cases}
$$
In all these cases we again see that \eqref{HtnchiWTP} holds; cf.\ \eqref{e:boldS2_min_e>s}.
%%%%%

\subsection{$\Psi_1$, $\Psi_2$, $\tPsi_2$ and $\Psi_3$}
Note that $\Psi_1$ appears only when $n=1$.
%%%
\begin{lemma}\label{lem:Psi1_min}
We have
\begin{equation}\label{e:Psi1_min}
	\Psi_1^{\min}(p^e, p^s)
	=
	\delta_{e=s} 2.
\end{equation}
So
\begin{equation}\label{e:Psi1_min_N}
	\Psi_1^{\min}(N, q) = \delta_{N=q} 2^{\omega(N)}.
\end{equation}
\end{lemma}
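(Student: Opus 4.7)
The plan is to prove the pointwise formula \eqref{e:Psi1_min} by direct computation using the sieving formulas \eqref{fnewp}, \eqref{e:fmin_odd}, and \eqref{e:fmin_p=2}, after which the global statement \eqref{e:Psi1_min_N} follows from multiplicativity. Since $\Psi_1(N,q)$ enters the trace formula only for $n=1$ (as noted at the start of this subsection), I may take $n=1$ throughout, so that $\psi(n)=1$ identically in \eqref{e:fmin_odd}. The case $e=s$ is immediate: when $\cond(\chi_p)=N_p$, the newform sieve \eqref{fnewp} is trivial (only $j=0$ contributes) and the twist-minimal sieve is trivial ($S'_{\chi_p}=\{\langle p^e,1\rangle\}$), so $\Psi_1^{\min}(p^e,p^e)=\Psi_1(p^e,p^e)=2p^{e-e}=2$.

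The substantive case $e>s$, where I must show $\Psi_1^{\min}(p^e,p^s)=0$, proceeds in two stages. First, I compute $\Psi_1^{\new}$ via \eqref{fnewp} applied to the piecewise definition \eqref{e:Psi1}. Writing the newform sieve as the operator $1-2D+D^2=(1-D)^2$ (with $D$ the shift $e\mapsto e-1$) when $e-s\geq 2$, direct substitution gives $\Psi_1^{\new}(p^e,p^s)=0$ whenever $e$ is odd and $\geq 2s+1$, while $\Psi_1^{\new}(p^e,p^s)=p^{(e-4)/2}(p-1)^2$ when $e$ is even and $\geq 2s+2$. The boundary $e=2s$ (for $s\geq 2$) yields $\Psi_1^{\new}(p^{2s},p^s)=p^{s-2}(p-1)(p-2)$, and the three small cases $(e,s)\in\{(1,0),(2,0),(2,1)\}$ give $0$, $p-2$, and $p-3$ respectively, by hand.

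Second, I apply \eqref{e:fmin_odd} or \eqref{e:fmin_p=2}. For $p=2$, the twist-minimal sieve is trivial, so I only need $\Psi_1^{\new}=0$ on every minimal pair with $e>s$; this is immediate from the three identities above, since all such pairs satisfy $e=2s$ (where the factor $p-2$ vanishes), $e=2s+1$, or $e$ odd with $e\geq 2s+2$. For odd $p$, Definition~\ref{def:minimal} forces $s\in\{0,1\}$; when $e$ is odd, neither $\delta$-factor in \eqref{e:fmin_odd} fires and $\Psi_1^{\min}=\Psi_1^{\new}=0$. When $e$ is even and $\geq 4$, I evaluate the correction sum in \eqref{e:fmin_odd}: for every primitive $\psi$ of conductor $p^{e/2}$ the character $\chi_p\psi^2$ has conductor $p^{e/2}$ (using $e/2\geq 2$ and the fact that, for the two allowed minimal $\chi_p$, $\bar\chi_p$ is either trivial or not a square in $\widehat{(\Z/p\Z)^\times}$), so $\Psi_1^{\new}(p^{e/2},\chi_p\psi^2)=2$, and the count $p^{e/2-2}(p-1)^2$ of such $\psi$ makes the correction equal $p^{(e-4)/2}(p-1)^2$, canceling $\Psi_1^{\new}$. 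The two remaining cases $(e,s)=(2,0)$ and $(2,1)$ are handled by an explicit enumeration of characters modulo $p$: for $(2,0)$ one combines both correction terms in \eqref{e:fmin_odd} to get $(p-2)-1-(p-3)=0$; for $(2,1)$ one gets $(p-3)-(p-3)=0$. Finally, \eqref{e:Psi1_min_N} follows since $\Psi_1^{\min}$ is multiplicative: the product vanishes unless every $p\mid N$ satisfies $e_p=s_p$, i.e.\ $N=q$, in which case each local factor is $2$.

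The main obstacle is that no single closed-form expression for $\Psi_1^{\new}$ works across all $(e,s)$; I must separately treat $e\geq 2s+2$ (by parity), the boundaries $e=2s$ and $e=2s+1$, and the small-exponent edge cases $e\in\{1,2\}$. The bookkeeping for the twist-minimal correction at odd primes, particularly verifying that the ``$\psi\neq\bar\chi_p$'' exclusion in \eqref{e:fmin_odd} does not alter the character count when $e/2\geq 2$ (since $\bar\chi_p$ has smaller conductor), is the most delicate part of the calculation.
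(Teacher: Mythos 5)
Your proposal is correct and follows the same approach as the paper's proof: compute $\Psi_1^{\new}$ from \eqref{fnewp} and \eqref{e:Psi1}, then apply the twist-minimal sieve \eqref{e:fmin_odd}/\eqref{e:fmin_p=2}; you simply write out the details that the paper compresses into ``Then by \eqref{e:fmin_odd}, we get \eqref{e:Psi1_min}.'' One slight imprecision: the pair $(e,s)=(2,0)$ for $p=2$ does not fit any of the three patterns you list (``$e=2s$, $e=2s+1$, or $e$ odd $\geq 2s+2$''), but it is covered by your explicit small-case value $\Psi_1^{\new}(p^2,1)=p-2$, which vanishes at $p=2$.
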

%%%
\begin{proof}
Recalling \eqref{e:Psi1}, when $p$ is odd and $e=s\geq 1$ or
$e\geq\max\{1, 2s\}$ for $s\in\{0, 1\}$, we have
$$
	\Psi_1^{\new}(p^e, p^s)
	=
	\begin{cases}
	p^{\frac{e}{2}-2} (p-1)^2 & \text{if } e\geq \max\{3, 2s+2\} \text{ and } 2\mid e, \\
	p-2-s & \text{if } e=2\text{ and } s\in\{0, 1\}, \\
	2 & \text{if } e=s\geq 1, \\
	0 & \text{otherwise.}
	\end{cases}
$$
Then by \eqref{e:fmin_odd}, we get \eqref{e:Psi1_min}.
When $p=2$ and $s$ is as in \eqref{def:minimalp2expl}, we have by \eqref{e:fmin_p=2}
$$
	\Psi_1^{\min}(2^e, 2^s) = \Psi_1^{\new}(2^e, 2^s)
	= \begin{cases}
	2 & \text{if } e=s\geq 2, \\
	0 & \text{otherwise.}
	\end{cases}
$$
\end{proof}
%%%

Recalling \eqref{e:Psi2}, let
$$
	\Psi_2(p^e, \chi) = \frac{\chi(-1)+1}{2} \Psi_2(p^e, p^s).
$$
%%%
Note that $\Psi_2$ appears only when $n=1$. But it also occurs in the definition of $\tPsi_2$ when $n=-1$.
\begin{lemma}\label{lem:Psi2_min}
For $p\equiv_41$, we have
\begin{equation}\label{e:Psi2_min}
	\Psi_2^{\min}(p^e, \chi)
	=
	0,
\end{equation}
so that $\Psi_2^{\min}(N,\chi)=\delta_{N=1}$.
\end{lemma}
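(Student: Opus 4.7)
The plan is to reduce to a case analysis based on the minimality of $\chi_p$ (Definition~\ref{def:minimal}) and a direct evaluation of the sieves \eqref{fnewp} and \eqref{e:fmin_odd}. The crucial initial observation is that $\Psi_2(p^e,\chi) = \tfrac{\chi(-1)+1}{2}\Psi_2(p^e, p^s)$, and that both sieve steps preserve this parity factor: the restrictions $\chi_p|_{p^{e-j}}$ appearing in \eqref{fnewp} share the parity of $\chi_p$, and in \eqref{e:fmin_odd} we have $(\chi_p\psi^2)(-1) = \chi_p(-1)$ since $\psi^2$ is even. Hence $\chi_p(-1) = -1$ alone forces $\Psi_2^{\min}(p^e, \chi) = 0$. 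For $p \equiv 1 \pmod 4$ the minimal options from Definition~\ref{def:minimal} are (i) $s = 0$, (ii) $s = e$, or (iii) $\chi_p$ of order $2^{\ord_2(p-1)}$; option (iii) satisfies $\chi_p(-1) = -1$ and is dispatched immediately. This leaves (i) and (ii) with $\chi_p$ even.

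Case (ii) is easy: $\Psi_2(p^e, p^e) = 0$ because $e < 2e$, the single-term sum in \eqref{fnewp} (since $e - s = 0$) gives $\Psi_2^{\new}(p^e, \chi) = 0$, and the correction deltas in \eqref{e:fmin_odd} are inactive ($\delta_{e=2, s=0}$ trivially, and $\delta_{2\mid e, s\leq 1}$ would require $s = e$ with $2\mid e$ and $e \leq 1$, hence $e = 0$, contradicting $e \geq 1$). So $\Psi_2^{\min}(p^e, \chi) = 0$.

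Case (i), $s = 0$ and $\chi_p$ trivial, is the substantive one. From \eqref{fnewp} and $\Psi_2(p^e, 1) = 2e$ one checks $\Psi_2^{\new}(p^e, 1) = \delta_{e=2}$ for all $e \geq 1$. For $e$ odd there is no correction in \eqref{e:fmin_odd}. For $e = 2$ the first correction equals $-(\tfrac{n}{p})\Psi_2^{\new}(1,1) = -(\tfrac{n}{p})$ (the second correction vanishes because $\Psi_2^{\new}(p, \psi^2) = 0$ by case (ii) for any $\psi^2$ mod $p$), and $(\tfrac{n}{p}) = 1$ holds for $n = 1$ and for $n = -1$ when $p \equiv 1 \pmod 4$, so $\Psi_2^{\min}(p^2, 1) = 0$. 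The main obstacle is $e \geq 4$ even, where I must show that the second correction $\tfrac{1}{2}\sum_\psi \psi(n)\Psi_2^{\new}(p^{e/2}, \psi^2)$ vanishes. The key point is that for odd $p$ and $e/2 \geq 2$, the quotient $(1 + p^{e/2-1}\Z_p)/(1 + p^{e/2}\Z_p) \cong \Z/p\Z$ has odd order, so squaring is a bijection there; consequently $\cond(\psi^2) = \cond(\psi) = p^{e/2}$ for each primitive $\psi$ in the sum, and $\Psi_2^{\new}(p^{e/2}, \psi^2) = \Psi_2^{\new}(p^{e/2}, p^{e/2}) = 0$ by case (ii) applied at level $e/2$. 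The second assertion $\Psi_2^{\min}(N, \chi) = \delta_{N=1}$ then follows from the multiplicativity noted after \eqref{minsieveformula2}, in conjunction with the analogous vanishing at $p = 2$ and $p \equiv 3 \pmod 4$ treated in sister lemmas.
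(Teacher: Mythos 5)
Your proposal follows the same strategy as the paper: evaluate the newform sieve $\Psi_2^{\new}$ locally, then apply the twist-minimal sieve \eqref{e:fmin_odd}. The paper's proof is extremely terse — it merely states $\Psi_2^{\new}(p^e,\chi)=\delta_{e=2,s=0}$ for odd $p$ and then says ``by \eqref{e:fmin_odd} and \eqref{e:fmin_p=2}, we get \eqref{e:Psi2_min}'' — whereas you spell out the min-sieve computation in detail. In particular, you isolate and prove the key fact that the second correction term in \eqref{e:fmin_odd} vanishes because for $e/2\geq 2$ the map $\psi\mapsto\psi^2$ preserves the conductor $p^{e/2}$ (squaring being a bijection on the $p$-group $(1+p^{e/2-1}\Z_p)/(1+p^{e/2}\Z_p)$, which has odd order), so $\Psi_2^{\new}(p^{e/2},\psi^2)$ falls under the $s'=e'$ case and vanishes. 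This is exactly the step that the paper leaves implicit, and you identify it correctly. You also correctly identify where $p\equiv1\pmod4$ enters: for $n=-1$ one needs $\bigl(\tfrac{-1}{p}\bigr)=1$ to cancel the first correction against $\Psi_2^{\new}(p^2,1)=1$.

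Two small caveats. First, your closing sentence defers the vanishing of $\Psi_2^{\min}(2^e,\chi_2)$ and $\Psi_2^{\min}(p^e,\chi_p)$ for $p\equiv3\pmod4$ to ``sister lemmas'' — but no such lemmas exist. The paper handles all $p$ inside this one proof: the odd-$p$ computation $\Psi_2^{\new}=\delta_{e=2,s=0}$ combined with $(\tfrac1p)=1$ covers $p\equiv3\pmod4$ when $n=1$ (the only case where the standalone $\Psi_2(N,q)$ appears), and \eqref{e:fmin_p=2} reduces $p=2$ to the unstated but elementary check that $\Psi_2^{\new}(2^e,\chi_2)=0$ for all minimal $\chi_2$ with $e\geq1$. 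Second, a minor slip: in your treatment of $e=2$ you attribute $\Psi_2^{\new}(p,\psi^2)=0$ entirely to ``case (ii),'' but when $\psi$ has order $2$ one has $\psi^2=1$, i.e.\ $s'=0$, $e'=1$, which is not your case (ii); one instead computes $\Psi_2(p,1)-2\Psi_2(1,1)=2-2=0$ directly. Neither issue affects the correctness of the argument; they are points where you should supply the missing computation rather than gesture outward.
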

%%%
\begin{proof}
Recall \eqref{e:Psi2}.
When $p$ is odd, $e\geq \max\{1, 2s\}$, $s\in\{0, 1\}$ or $e=s\geq 1$,
we have
$$
	\Psi_2^{\new}(p^e, \chi)
	=
	\delta_{e=2, s=0}.
$$
Then by \eqref{e:fmin_odd} and \eqref{e:fmin_p=2}, we get \eqref{e:Psi2_min}.
\end{proof}
%%%

Note that $\tPsi_2$ appears only when $n=-1$.
\begin{lemma}\label{lem:tPsi2_min}
We have
$$
	\tPsi_2^{\min}(p^e, \chi) = 0,
$$
so that $\tPsi_2^{\min}(N,\chi)=\delta_{N=1}$.
\end{lemma}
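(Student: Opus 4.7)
Following the strategy of Lemma~\ref{lem:Psi2_min}, my plan is to reduce to a local calculation: since the sieves \eqref{fnewp}--\eqref{e:fmin_p=2} preserve multiplicativity, it suffices to show $\tPsi_2^{\min}(p^{e_p},\chi_p)=0$ for every prime $p\mid N$ and every minimal local component $\chi_p$. Then the product $\tPsi_2^{\min}(N,\chi)=\prod_{p\mid N}\tPsi_2^{\min}(p^{e_p},\chi_p)$ will automatically yield the claimed $\delta_{N=1}$.

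The first step is to dispatch the ``trivial'' subcases in which $\tPsi_2$ vanishes identically on every local twist $\chi_p\psi^2$ that can appear in the sieves. Inspecting \eqref{e:tPsi2}, this happens whenever $p=2$ with $s_2\geq 1$, whenever $p$ odd with $p\equiv_4-1$ and $s_p\geq 1$, or whenever $p$ odd with $\chi_p$ odd (the parity is preserved under the twists $\psi^2$). Using also the identity $\Psi_2(p^k,p^k)=0$ for $p$ odd and $k\geq 1$ (read off from \eqref{e:Psi2}), the case $p\equiv_4 1$ with $s_p=e_p\geq 1$ is likewise covered. A short check exploiting the fact that $(p-1)/2^{\ord_2(p-1)}$ is odd shows that every minimal character of order $2^{\ord_2(p-1)}$ satisfies $\chi_p(-1)=-1$, and hence falls into the ``$\chi_p$ odd'' case. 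These observations exhaust Definition~\ref{def:minimal} apart from three residual subcases: $p=2$ with $s_2=0$ (so $e_2\in\{1,2,3\}$ or $e_2$ odd $\geq 5$), and $p$ odd with $s_p=0$, split according to $p\bmod 4$.

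For each of these residual subcases I will substitute the explicit values $\tPsi_2(2^k,1)=\min\{k+1,4\}$, $\tPsi_2(p^k,1)=2$ (for $p\equiv_4-1$, $k\geq 1$), and $\tPsi_2(p^k,1)=2k$ (for $p\equiv_4 1$, $k\geq 1$), together with $\tPsi_2(1,1)=1$, into the M\"obius-type sum \eqref{fnewp}. A short arithmetic computation will yield $\tPsi_2^{\new}(2^e,1)=0$ at every minimal $e$ for $p=2$ (closing that case via \eqref{e:fmin_p=2}), and $\tPsi_2^{\new}(p^e,1)=0$ for $e=1$ and for all $e\geq 3$ in both odd residue classes. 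The only exceptional values are $\tPsi_2^{\new}(p^2,1)=\pm 1$ for $p\equiv_4\pm 1$, and these will be precisely cancelled by the ``$\delta_{e=2,s=0}$'' correction term in \eqref{e:fmin_odd}, which evaluates to $-\bigl(\tfrac{-1}{p}\bigr)\bigl(\tPsi_2^{\new}(1,1)+\tfrac12\tPsi_2^{\new}(p,1)\bigr)=-\bigl(\tfrac{-1}{p}\bigr)=\mp 1$ after inserting the values just computed. The auxiliary twisting sum over primitive $\psi$ of conductor $p^{e/2}$ in \eqref{e:fmin_odd} will vanish throughout: either $\psi^2=1$ (forcing $e/2=1$ and reducing to the already-computed $\tPsi_2^{\new}(p,1)=0$), or $\psi^2$ is a nontrivial even primitive character of conductor $p^{e/2}$, at which $\tPsi_2(p^{e/2},\psi^2)=\psi^2(\sqrt{-1})\Psi_2(p^{e/2},p^{e/2})=0$ when $p\equiv_4 1$ and $\tPsi_2(p^{e/2},\psi^2)=0$ trivially when $p\equiv_4-1$. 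The one non-routine point, and what I expect to be the main obstacle, is the sign-matching cancellation at $e=2$ between the exceptional newform value and the $\bigl(\tfrac{-1}{p}\bigr)$-correction; it requires the explicit arithmetic to line up across both residue classes.
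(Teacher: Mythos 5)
Your proposal is correct and follows essentially the same strategy as the paper: reduce to a prime-by-prime local computation, dispatch the cases with $s_p\geq 1$ (or $\chi_p$ odd) where $\tPsi_2$ vanishes on all twists that the sieve can produce, and then compute $\tPsi_2^{\new}$ explicitly in the residual $s_p=0$ cases, verifying that the only nonzero value $\tPsi_2^{\new}(p^2,1)=\pm1$ is cancelled by the $\delta_{e=2,s=0}$ correction in \eqref{e:fmin_odd}. The paper's own proof is far terser (it records only the $\tPsi_2^{\new}$ values for $p\not\equiv_4 1$, $s=0$ and, for $p\equiv_4 1$, defers to Lemma~\ref{lem:Psi2_min}), whereas you re-derive the $p\equiv_4 1$, $s=0$ case directly rather than routing through $\Psi_2^{\min}$; the underlying arithmetic is the same, and your sign-matching at $e=2$ (the correction evaluates to $-\bigl(\tfrac{-1}{p}\bigr)=\mp1$) and the vanishing of the $\psi$-twist sum via $\Psi_2(p^{e/2},p^{e/2})=0$ are exactly the points the paper leaves implicit.
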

%%%
\begin{proof}
Recalling \eqref{e:tPsi2}, for $p\not\equiv_41$ and $s=0$, we have
$$
	\tPsi_2^{\new}(p^e, 1)
	=
	\begin{cases}
	-1 & \text{if } p\equiv_4-1 \text{ and } e=2, \\
	0 & \text{otherwise.}
	\end{cases}
$$
For $p\equiv_41$, by \eqref{e:fmin_odd}, \eqref{e:fmin_p=2} and \eqref{e:Psi2_min},
we have $\tPsi_2^{\min}(p^e, \chi)=0$.
\end{proof}
%%%

Note that $\Psi_3$ appears only for $n=1$.
%%%
\begin{lemma}\label{lem:Psi3_min}
When $p$ is odd, we have
$$
	\Psi_3^{\min}(p^e, p^s)
	=
	\begin{cases}
	4e-1 & \text{if } e=s>0, \\
	2 & \text{if } e=1 \text{ and } s=0, \\
	\frac{1}{2}(p-1+2s) & \text{if } e=2 \text{ and } s\in\{0, 1\}, \\
	p^{\lf\frac{e-3}{2}\rf} (p-1) \frac{\delta_{2\mid e} p+ \delta_{2\nmid e} 3+1 }{2}
	& \text{if } e\geq 3 \text{ and } s\in \{0, 1\}.
	\end{cases}
$$
For $p=2$, we have
$$
	\Psi_3^{\min}(2^e, 2^s)
	=
	\begin{cases}
	4e-1 & \text{if } e=s\geq 2, \\
	2 & \text{if } e\in\{1, 3\} \text{ and } s=0, \\
	1 & \text{if } e=2 \text{ and } s=0, \\
	3\cdot 2^{\frac{e}{2}-2} & \text{if } e=2s\geq 4, \\
	2^{\frac{e-1}{2}} & \text{if } e\geq 5, 2\nmid e \text{ and } s\in\{0, 2, \frac{e-1}{2}\}.
	\end{cases}
$$
Thus, combining with \eqref{e:Psi1_min},
\begin{multline*}
	\sum_{p\mid N} \Psi_3^{\min}(p^{e_p}, p^{s_p})
	\prod_{p'\mid N, p'\neq p} \Psi_1^{\min}({p'}^{e_{p'}}, {p'}^{s_{p'}}) \log p
	\\
	=
	\begin{cases}
	2^{\omega(N)} \big(\log N^2 - \frac{1}{2} \sum_{p\mid N} \log p\big)
	& \text{if } N=q,\\
	2^{\omega(N)-1} \Psi_3^{\min}(p^{e_p}, p^{s_p}) \log p
	& \text{if } \frac{N}{q} = p^{e_p-s_p}>1, \\
	0 & \text{otherwise.}
	\end{cases}
\end{multline*}
\end{lemma}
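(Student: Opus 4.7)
The plan is to first establish the formulas for $\Psi_3^{\min}(p^e, p^s)$ by a direct (if somewhat tedious) application of the sieving identities \eqref{fnewp}, \eqref{e:fmin_odd}, \eqref{e:fmin_p=2} to $\Psi_3$ as defined in \eqref{e:Psi3}, and then to combine the resulting formula for $\Psi_3^{\min}$ with Lemma~\ref{lem:Psi1_min} to obtain the sum identity. The multiplicativity of $\Psi_1$ and $\Psi_3$ under the decomposition $\chi=\prod_{p\mid N}\chi_p$ together with the remark before \eqref{fnewp} means we may work prime-by-prime.

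For the first (pointwise) part, the main computation is $\Psi_3^{\new}$. By \eqref{fnewp} and the formula $\beta(p^0)=1$, $\beta(p^1)=-2$, $\beta(p^2)=1$, $\beta(p^j)=0$ for $j\geq3$, we have
$$
\Psi_3^{\new}(p^e,p^s)=\Psi_3(p^e,p^s)-2\Psi_3(p^{e-1},p^s)+\Psi_3(p^{e-2},p^s),
$$
with the convention that the terms vanish when the exponent drops below $s$. Substituting the case-split definition \eqref{e:Psi3} and simplifying yields $\Psi_3^{\new}$ as a piecewise elementary function of $(e,s)$. For $p$ odd we then apply \eqref{e:fmin_odd}, where the only nontrivial sieve correction appears when $e\in\{1,2\}$ and $s=0$ (because the condition $2\mid e$, $s\leq1$ on the secondary sum in \eqref{e:fmin_odd} together with the support of $\Psi_3^{\new}$ forces either $e=2$, $s\in\{0,1\}$ or the sum to vanish). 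One must verify the cancellations in the cases $e\in\{1,2,3\}$ and in the two regimes $e=s$ and $s\in\{0,1,2\}$ arising from the minimality of $\chi_p$ (Definition~\ref{def:minimal}). For $p=2$, \eqref{e:fmin_p=2} says $\Psi_3^{\min}=\Psi_3^{\new}$, so the only task is to evaluate this in the cases $s=e$, $s=\lfloor e/2\rfloor$, $s=2$ (when $e\geq5$ odd), and $s=0$ prescribed by \eqref{def:minimalp2expl}. This is routine but case-heavy; the main obstacle is bookkeeping of the sub-cases for $p=2$, where the formula for $\Psi_3(2^e,2^s)$ in \eqref{e:Psi3} already has many branches and the minimal values of $s$ are spread across several regimes.

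For the final displayed sum identity, the idea is to exploit that, by Lemma~\ref{lem:Psi1_min}, $\Psi_1^{\min}({p'}^{e_{p'}},{p'}^{s_{p'}})=2\cdot\delta_{e_{p'}=s_{p'}}$. Hence the product $\prod_{p'\mid N,\,p'\neq p}\Psi_1^{\min}$ vanishes unless $e_{p'}=s_{p'}$ for every prime $p'\neq p$ dividing $N$, in which case it equals $2^{\omega(N)-1}$. This forces $N/q$ to be supported only at $p$, so either $N=q$ (and we may take any $p\mid N$), or $N/q=p^{e_p-s_p}$ for a unique prime $p$, or the whole sum is empty. In the first case we use the formula $\Psi_3^{\min}(p^{e_p},p^{s_p})=4e_p-1$ at every $p\mid N$, so
$$
\sum_{p\mid N}(4e_p-1)\log p\cdot2^{\omega(N)-1}=2^{\omega(N)-1}\biggl(4\log N-\sum_{p\mid N}\log p\biggr),
$$
which simplifies to the claimed $2^{\omega(N)}(\log N^2-\tfrac12\sum_{p\mid N}\log p)$. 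In the second case only the term corresponding to the distinguished prime $p$ survives, giving $2^{\omega(N)-1}\Psi_3^{\min}(p^{e_p},p^{s_p})\log p$. In all other cases the sum is zero. This gives the three branches of the displayed formula and completes the proof.
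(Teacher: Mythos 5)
Your overall plan---compute $\Psi_3^{\new}$ via the $\beta$-M\"obius sieve \eqref{fnewp}, apply the twist-minimal sieve \eqref{e:fmin_odd}/\eqref{e:fmin_p=2}, then combine with Lemma~\ref{lem:Psi1_min}---is exactly the paper's approach, and your handling of the final displayed sum identity is correct.

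However, your parenthetical claim that ``the only nontrivial sieve correction appears when $e\in\{1,2\}$ and $s=0$ [\dots]\ forces either $e=2,s\in\{0,1\}$ or the sum to vanish'' is false, and it would lead to a wrong answer for $p$ odd, $e\ge4$ even, $s\in\{0,1\}$. In \eqref{e:fmin_odd} the secondary sum is over primitive $\psi$ of conductor $p^{e/2}$, and for odd $p$ one has $\cond(\psi^2)=\cond(\psi)=p^{e/2}$ (squaring is an automorphism of the $p$-Sylow of $(\Z/p^{e/2}\Z)^\times$), so $\cond(\chi_p\psi^2)=p^{e/2}$ and each summand is $\Psi_3^{\new}(p^{e/2},p^{e/2})=\Psi_3(p^{e/2},p^{e/2})=2e-1\ne0$. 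The full secondary term is $\tfrac12\,p^{e/2-2}(p-1)^2(2e-1)$, which is exactly what converts $\Psi_3^{\new}(p^e,p^s)=p^{e/2-2}(p-1)(e(p-1)+1)$ into the claimed $\Psi_3^{\min}(p^e,p^s)=p^{e/2-2}(p-1)\tfrac{p+1}{2}$; without it your $\Psi_3^{\min}$ would retain a spurious linear dependence on $e$. (Also note that for $e=1$ there is no correction at all in \eqref{e:fmin_odd}, so including $e=1$ in your list of ``correction'' cases is a second, smaller slip.) You should drop the parenthetical and instead verify the correction explicitly for all even $e\ge2$ with $s\in\{0,1\}$; the odd-$e$ cases and $e=s$ do have $\Psi_3^{\min}=\Psi_3^{\new}$ as you expect.
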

%%%
\begin{proof}
Recalling \eqref{e:Psi3}, for $e=s>0$ or $e\geq \max\{1, 2s\}$ for $s\in\{0, 1, 2\}$,
we have
$$
	\Psi_3^{\new}(p^e, p^s)
	=
	\begin{cases}
	4e-1 & \text{if } e=s>0, \\
	p^{\frac{e}{2}-2} (p-1) (e(p-1)+1)
	& \text{if } e\geq \max\{3, 2s+2\} \text{ and } 2\mid e, \\
	2p^{\frac{e-3}{2}} (p-1)
	& \text{if } e\geq \max\{3, 2s+2\} \text{ and } 2\nmid e, \\
	2p-3 & \text{if } e=2 \text{ and } s=0, \\
	2 & \text{if } e=1 \text{ and } s=0, \\
	2(p-1) & \text{if } e=3 \text{ and } s=1, \\
	2(p-2) & \text{if } e=2 \text{ and } s=1, \\
	e-1 & \text{if } p=2, e\in\{4, 5\} \text{ and } s=2, \\
	\end{cases}
$$
For $n=1$, by applying \eqref{e:fmin_odd} and \eqref{e:fmin_p=2},
we prove the lemma.
\end{proof}
%%%%%

\subsection{$\Phi_\pm$}

Let us define
$$
	\Phi_\pm (p^{e}, \chi; m)
	=\begin{cases}
        1&\text{if }s=0,\: p\mid m
        \\[3pt]
        \frac12\bigl(\overline{\chi(m)} + \chi(\pm m)\bigr)\Phi_\pm(p^e, m)&\text{otherwise.}
        \end{cases}
$$
Then in the trace formula in Theorem \ref{thm:STF_N},
the sums over $m$ appearing in the last lines of \eqref{e:Cu+Eis_N_1} (for $n=1$) and \eqref{e:Cu+Eis_N_-1} (for $n=-1$), 
can be expressed as
\begin{align*}
2\sum_{m=2}^\infty \frac{\Lambda(m)}m\biggl(\prod_{p\mid N}\Phi_n(p^e,\chi_p;m)\biggr)\,g(2\log m).
\end{align*}
Hence to show that we obtain
the corresponding sum in 
Theorem \ref{THMmintf},
we have to prove that for each prime $p\mid N$,
writing now $\chi$ in place of $\chi_p$,
\begin{align}\label{e:Phi_min}
\Phi_n^{\min}(p^e,\chi;m)=\Phi_{m,n}(\chi)
=\begin{cases}
\overline{\chi(m)}+\chi(nm)&\text{if }s=e,\\
-1&\text{if }p\mid m,\: e=1\text{ and }s=0,\\
0&\text{otherwise}
\end{cases}
\end{align}
(cf.\ \eqref{Phimndef}).

By \eqref{e:Phi} we have, for $e>0$,
$$
\Phi_\pm(p^e, \chi;m)
	=
	\begin{cases}
	\delta_{s=0} & \text{if } p\mid m, \\[3pt]
	\bigl(\overline{\chi(m)} + \chi(\pm m)\bigr)\,p^{e-w_\pm} & \text{if } p\nmid m,\: e<2w_\pm, \\[3pt]
	\bigl(\overline{\chi(m)} + \chi(\pm m)\bigr)\,\frac12(p^{\lfloor e/2\rfloor}+p^{\lfloor (e-1)/2\rfloor}) 
& \text{if } p\nmid m,\: e\geq2w_\pm,
	\end{cases}
$$
where $w_\pm = w_\pm (m) = \max\{s, e-\ord_p(\pm m^2-1)\}$.
%%%
Let $f_\pm = \ord_p(\pm m^2-1)$.
For $s=0$, we compute
$$
	\Phi_\pm^{\new}(p^e, \chi; m)
	=
	\begin{cases}
	-1 & \text{if } p\mid m \text{ and } e=1, \\
	\begin{cases}
	-1 & \text{if } e=2 \text{ and } f_\pm = 0, \\
	-p^{f_\pm}+p^{f_\pm -1} & \text{if } e=2f_\pm +2 \text{ and } f_\pm \geq 1, \\
	p^{\frac{e}{2}-2}(p-1)^2 & \text{if } 3\leq e \leq 2f_\pm \text{ and } 2\mid e, \\
	p-2 & \text{if } e=2 \text{ and } f_\pm \geq 1,
	\end{cases}
	& \text{if } p\nmid m, \\
	0 & \text{otherwise.}
	\end{cases}
$$
For $s=1$, $p$ odd and $e\geq 2$, and assuming $p\nmid m$, we compute
\begin{multline*}
	\Phi_\pm^{\new}(p^e, \chi; m)
	\\
	=
	\frac{1}{2}\big(\overline{\chi(m)}+\chi(\pm m)\big)
	\begin{cases}
	-2 & \text{if } e=2 \text{ and } f_\pm =0, \\
	\begin{cases}
	p-3 & \text{if } e=2, \\
	p^{\frac{e}{2}-2} (p-1)^2 & \text{if } 4\leq e \leq 2f_\pm \text{ and } 2\mid e, \\
	-p^{f_\pm}+p^{f_\pm -1} & \text{if } e=2f_\pm +2,
	\end{cases}
	& \text{if } f_\pm \geq 1, \\
	0 & \text{otherwise. }
	\end{cases}
\end{multline*}
For $p$ odd, using the above formulas together with \eqref{e:fmin_odd}
and the fact that $\Phi_\pm^{\new}=\Phi_\pm$
when $s=e>0$,
the desired result \eqref{e:Phi_min} now follows 
by a direct computation in each case.

Next assume $p=2$.
Recalling \eqref{e:fmin_p=2} and \eqref{def:minimalp2expl}, we consider the following cases.
For $e\geq 5$, $2\nmid e$ and $s\in \{0, 2\}$, we get
$$
	\Phi_\pm^{\min}(2^e, \chi ;m) = 0,
$$
and for $s=e\geq2$ we get
$$
	\Phi_\pm^{\min}(2^e, \chi ;m) = \overline{\chi(m)}+\chi(nm).
$$
For $s\geq 2$ and $e\in\{2s, 2s+1\}$, we obtain
$$
	\Phi_{\pm}^{\min}(2^e, \chi; m)
	=
	\frac{1}{2}\big(\overline{\chi(m)}+\chi(\pm m)\big)
	\begin{cases}
	-2^{f_\pm} & \text{if } 2\nmid m, e=2s \text{ and } f_\pm = s-1, \\
	0 & \text{otherwise.}
	\end{cases}
$$
For $f_\pm = s-1$, we have $m^2 = \pm 1+a \cdot 2^{s-1}$ for an odd integer $a$.
Then
$$
	\chi(m)^2 = \chi(\pm 1+a\cdot 2^{s-1}) =-\chi(\pm 1).
$$
For $\Phi_+$, we get $\chi(m)=\pm i$ and $\overline{\chi(m)}+\chi(m)=0$.
For $\Phi_-$, if $\chi$ is even then $\chi(m)=\pm i$.
If $\chi$ is odd then $\chi(m) = \pm 1$.
For either case we get $\overline{\chi(m)}+\chi(-m)=0$.
So finally we conclude that $\Phi_\pm^{\min}(2^e, \chi; m)=0$ for $e\in\{2s, 2s+1\}$.

For $s=0$ and $e\in\{1, 2, 3\}$, we get
$$
	\Phi_\pm^{\min}(2^e, \chi; m) =
	\begin{cases}
	-1 & \text{if } 2\mid m \text{ and } e=1, \\
	& \text{or } 2\nmid m, e=2 \text{ and } f_\pm=0, \\
	0 & \text{otherwise.}
	\end{cases}
$$
This formula agrees with \eqref{e:Phi_min},
since in fact $f_{\pm}=\ord_p(\pm m^2-1)\geq1$ must hold when $p=2$ and $2\nmid m$.

Now \eqref{e:Phi_min} has been proved in all cases.
%%%%%

\subsection{$\Omega_1$ and $\Omega_2$}\label{Omega12MINSIEVE}
For $j\in \{1, 2\}$, define
$$
	\tOmega_j(p^e, \chi)
	=
	\begin{cases}
	2\Omega_j(N, q) & \text{if } p=2, e\geq 1 \text{ and } \chi \text{ even}, \\
	\Omega_j(N, q) & \text{if } p=2 \text{ and } e=0, \\
	2 & \text{if } p \text{ odd}, e\geq 1 \text{ and } \chi \text{ even}, \\
	1 & \text{if } p \text{ odd } \text{ and } e=0, \\
	0 & \text{otherwise.}
	\end{cases}
$$
Here $\Omega_j$ is given in \eqref{e:Omega1} and \eqref{e:Omega2}.
Then
$$
	\tOmega(N, \chi) = \prod_{p\mid N} \tOmega_j(p^{e_p}, \chi_p) = I_\chi 2^{\omega(N)} \Omega_j(N, q).
$$
%%%
\begin{lemma}\label{lem:tOmega_min}
When $p$ is odd and $n=-1$, we have
\begin{equation}\label{e:tOmega_min_odd}
	\tOmega_j^{\min}(p^e, \chi) =
	\delta_{e=s\geq 1} (1+\chi(-1)).
\end{equation}
When $p=2$ and $n=-1$, we have
\begin{equation}\label{e:tOmega1_min_p=2}
	\tOmega_1^{\min}(2^e, \chi) = \delta_{e=s\geq 2} (1+\chi(-1))
\end{equation}
and
\begin{equation}\label{e:tOmega2_min_p=2}
	\tOmega_2^{\min}(2^e, \chi)
	=
	\begin{cases}
	\frac{1}{2} & \text{if } e=0, \\
	-2e & \text{if } e=s\geq 3 \text{ and } \chi\text{ even}, \\
	-\frac{3}{2} & \text{if } e=1 \text{ and }s=0, \\
	-\frac{1}{2} & \text{if } e\in \{2, 3\} \text{ and } s=0, \\
	0 & \text{otherwise.}
	\end{cases}
\end{equation}
\end{lemma}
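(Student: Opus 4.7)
\medskip

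\noindent\textit{Proof plan.} The strategy is the standard one used throughout Section~\ref{sec:sieve}: since $\tOmega_j(p^e,\chi)$ is multiplicative in $\chi$ by construction, we may apply the two sieving steps one prime at a time. So we first compute $\tOmega_j^{\new}(p^e,\chi)$ via the local formula \eqref{fnewp}, and then apply the twist-minimal sieve \eqref{e:fmin_odd} for $p$ odd, or \eqref{e:fmin_p=2} for $p=2$, in each case setting $n=-1$ so that every $\psi(n)$ factor becomes $\psi(-1)$.

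For $p$ odd, the function $\tOmega_j(p^e,\chi)$ has a very simple shape: it equals $1$ if $e=0$, equals $1+\chi(-1)\in\{0,2\}$ if $e\geq 1$, and in particular vanishes as soon as $\chi$ is odd. Since the sieve only involves the characters $\chi|_{p^{e-i}}$ and $\chi\psi^2$, and $\psi^2$ is always even, the parity of the character is preserved throughout, so $\tOmega_j^{\min}(p^e,\chi)=0$ whenever $\chi$ is odd. For $\chi$ minimal and even we split into the cases $s=e$ (no sieving corrections survive, giving $\tOmega_j^{\min}=1+\chi(-1)$) and $s<e$ (either $s=0$ and $e\geq 1$, handled below). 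In the latter case \eqref{fnewp} gives $\tOmega_j^{\new}(p^e,1)=0$ for $e=1$ and $e\geq 3$, and $\tOmega_j^{\new}(p^2,1)=-1$ for $e=2$. Applying \eqref{e:fmin_odd} with $n=-1$, the first correction term contributes $(\tfrac{-1}{p})$ when $e=2$, while the second involves the sum $\sum_{\psi}\psi(-1)\tOmega_j^{\new}(p^{e/2},\psi^2)$ over primitive $\psi$ of conductor $p^{e/2}$ with $\psi\neq 1$. The computation of this sum splits according to the conductor of $\psi^2$ (always at most $p^{e/2}$); orthogonality of characters of $(\Z/p^{e/2}\Z)^\times$ allows one to evaluate it explicitly, and the end result cancels the other terms to yield $\tOmega_j^{\min}(p^e,1)=0$.

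For $p=2$, the twist-minimal sieve degenerates to the newform sieve by \eqref{e:fmin_p=2}, so the task reduces to evaluating $\tOmega_j^{\new}(2^e,\chi_2)$ under the minimality constraints from Definition~\ref{def:minimal}, namely $s\in\{0,\lfloor e/2\rfloor,e\}$ (with $s=2$ also allowed for odd $e\geq 5$). For $j=1$, the function $\Omega_1(2^e,2^s)$ takes only three values, and the same type of analysis as in the odd $p$ case immediately produces $\tOmega_1^{\min}(2^e,\chi)=\delta_{e=s\geq 2}(1+\chi(-1))$. For $j=2$, we must separately unpack \eqref{e:Omega2} at the allowed minimal pairs $(e,s)$, plug into \eqref{fnewp}, and verify that each case in the statement is recovered; the values cleanly fall out because the difference operator $\sum_i \beta(p^i)f(p^{e-i})$ annihilates $\Omega_2(2^{e-i},2^s)$ except at the boundary, matching the piecewise formula \eqref{e:tOmega2_min_p=2}.

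The main obstacle is bookkeeping, not genuine difficulty: the character-sum computation at odd $p$ for the case $s=0,\ 2\mid e$ requires careful tracking of the conductor of $\psi^2$ as $\psi$ ranges over primitive characters of $(\Z/p^{e/2}\Z)^\times$, together with the special role of the Legendre symbol (which gives $\psi^2=1$), and this is what produces the cancellation with the first correction term involving $(\tfrac{-1}{p})$. At $p=2$, the minimal-character constraint \eqref{def:minimalp2expl} rules out enough intermediate cases that the newform sieve collapses directly to the tabulated values. The lemma then follows by combining all cases.
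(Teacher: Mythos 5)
The proposal is correct and follows essentially the same route as the paper's own (very terse) proof: compute $\tOmega_j^{\new}(p^e,\chi)$ via \eqref{fnewp} and then apply the twist-minimal sieve \eqref{e:fmin_odd} for $p$ odd (resp.\ \eqref{e:fmin_p=2} for $p=2$) with $n=-1$. One small imprecision worth flagging: the assertion that ``the sieve only involves the characters $\chi|_{p^{e-i}}$ and $\chi\psi^2$, \dots so the parity of the character is preserved throughout'' overlooks the correction term $-\delta_{e=2,s=0}\bigl(\tfrac{n}{p}\bigr)\bigl(f^{\new}(1,1)+\tfrac12 f^{\new}(p,1)\bigr)$ in \eqref{e:fmin_odd}, which involves the trivial (even) character independently of the parity of $\chi$; the reason this does not break your argument is not that parity is preserved there, but that the $\delta_{s=0}$ factor vanishes whenever $\chi$ is odd (an odd character has conductor $\geq p$, forcing $s\geq1$). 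Once that observation is supplied, the odd-$\chi$ case is clean, and the remaining cases match the computations you outline, which agree with the paper's tabulated values of $\tOmega_j^{\new}$.
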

%%%
\begin{proof}
When $p$ is odd, for $e=s\geq 1$ or $e\geq \max\{1, 2s\}$, we have
$$
	\tOmega_j^{\new}(p^e, \chi) =
	\begin{cases}
	2 & \text{if } e=s>0 \text{ and } \chi \text{ even}, \\
	-2 & \text{if } e=2, s=1 \text{ and } \chi \text{ even}, \\
	-1 & \text{if } e=2 \text{ and } s=0, \\
	0 & \text{otherwise.}
	\end{cases}
$$
By \eqref{e:fmin_odd}, for $j\in \{1, 2\}$ and $n=-1$, we get \eqref{e:tOmega_min_odd}.

When $p=2$, for $j\in\{1, 2\}$, by \eqref{e:fmin_p=2}, since $\tOmega_j^{\min}(2^e, \chi) = \tOmega_j^{\new}(2^e, \chi)$, we get \eqref{e:tOmega1_min_p=2} and \eqref{e:tOmega2_min_p=2}.
\end{proof}

\subsection{Concluding the proof of Theorem \ref*{THMmintf}}
It follows from \eqref{newformsieve} and \eqref{minsieveformula}
that for any minimal character $\chi\pmod*{N}$, any $n\in\{\pm1\}$ and any pair of test functions of trace class $(g,h)$,
we have
\begin{align}\label{THMmintfpf2}
\sum_{\lambda>0}&\tr T_n|_{\Amin_\lambda(\chi)}
h\Bigl(\sqrt{\lambda-\tfrac14}\Bigr)=
\sum_{\langle M,\psi\rangle\in S_\chi'}
(-1)^{k'(N,M)}
2^{-k(N,M,\psi)}\,
\psi(n)\Tr^{\new}\bigl(M,\chi\psi^2\bigr),
\end{align}
where
\begin{align}\label{THMmintfpf1}
\Tr^{\new}(N',\chi')
=
\sum_{M \mid \frac {N'}{\cond(\chi')}} \beta\left(\frac{N'/\cond(\chi')}{M}\right)
\sum_{\lambda>0}\Tr T_n|_{\A_\lambda(\chi'|_{M\cond(\chi')})} h\Bigl(\sqrt{\lambda-\tfrac{1}{4}}\Bigr).
\end{align}
Now the inner sum in \eqref{THMmintfpf1} can be evaluated by using Theorem \ref{thm:STF_N}
and compensating for the possible contribution from the Laplace eigenvalue $\lambda=0$.
Specifically, we have $\A_0(\chi)=\C$ if $\chi$ is the trivial character, otherwise $\A_0(\chi)=\{0\}$;
therefore $\lambda=0$ contributes with a term
$h(\frac i2)$ in the left hand side of \eqref{e:STF_N}
if and only if $n=1$ and $\chi$ is trivial.
This means that for $n=1$ and $\chi'$ trivial,
when using the right hand side of \eqref{e:STF_N} 
to evaluate \eqref{THMmintfpf1},
we need to compensate by subtracting a term
$(\sum_{M\mid N'}\beta(M))h(\frac i2)=\mu(N')h(\frac i2)$
from the resulting formula.
This compensation has an effect in the right hand side of \eqref{THMmintfpf2}
only if $n=1$, $\chi$ is trivial and $\ord_p(N)\leq2$ for all $p$;
and if $\ord_p(N)=2$ for at least one $p$ then the compensations cancel each other out;
hence it is only for $N$ squarefree that the net effect is nonzero.
Using $h(\frac i2)=\int_{\R}g(u)\cosh(u/2)\,du$ we now see that the total contribution from 
these compensations is exactly the term
$-\delta_{n=1}\mu(\chi)\int_{\R}g(u)\cosh(u/2)\,du$ appearing in the formula of
Theorem \ref{THMmintf}.

Taking the results of Sections \ref{IMINSIEVE}--\ref{Omega12MINSIEVE} 
into account, we now see that the formula in Theorem~\ref{THMmintf} 
follows by using \eqref{THMmintfpf1} and \eqref{e:STF_N} to evaluate \eqref{THMmintfpf2}.
Hence the proof of Theorem~\ref{THMmintf} is now complete.
\hfill$\square$

We are grateful to the referee for suggesting a remark along the following lines.
\begin{remark}
As is discussed in \cite[Sec.\ 8]{You19},
the newform part of the continuous part of the spectrum of
the Laplace operator on
$L^2(\Gamma_0(N)\bsl \HH, \chi)$
is spanned by the Eisenstein series
$E_{\chi_1}^{\chi_2}(q_2z,\frac12+it)$
(cf.\ Sec.\ \ref{EISsec})
with $\langle\chi_1,\chi_2\rangle$ running through all pairs of primitive Dirichlet
characters satisfying $q_1q_2=N$ (with $q_j=\cond(\chi_j)$)
and $\cond(\chi\chi_2\ochi_1)=1$.

It is natural to define the twist of
the Eisenstein series $E_{\chi_1}^{\chi_2}$ with an arbitrary primitive Dirichlet character
$\psi$ to be $E_{\chi_1'}^{\chi_2'}$, where $\chi_1'$ and $\chi_2'$ are the primitive Dirichlet characters corresponding to
the products $\chi_1\psi$ and $\chi_2\psi$, respectively.
(This is most naturally seen in the language of automorphic representations of the adele group of $\GL_2$,
where twisting by $\psi$ simply corresponds to multiplying any automorphic representation by
the 1-dimensional representation $\psi\circ\det$.)
It then follows that the 
\textit{twist-minimal part} of the %newform 
continuous spectrum 
is spanned by
$E_{\chi_1}^{\chi_2}(q_2z,\frac12+it)$
with $\langle\chi_1,\chi_2\rangle$ running through the set described above but further restricted
by the condition $\gcd(q_1,q_2)=1$.
This nice and simple description
of the twist-minimal part of the continuous spectrum %can be given such a relatively simple description
is likely to play an important role in
a potential future more direct proof of the twist-minimal trace formula 
avoiding the intermediate steps of Theorem \ref{thm:STF_N} and  sieving.
\end{remark}

%%%%%
\section{Artin representations and $\Gamma(N)$}\label{sec:galois}
In this section we prove two lemmas that restrict the computations of
twist-minimal spaces needed to prove Theorem~\ref{thm:SEC} for
$\Gamma(N)$ and Theorem~\ref{thm:Artin}.
\begin{lemma}\label{lem:Gammaadmissible}
For any $N\in\Z_{>0}$, the Selberg eigenvalue conjecture holds
for $\Gamma(N)$ if and only if $\Amin_\lambda(\chi)=\{0\}$
for all $\lambda\in(0,\tfrac14)$ and
$\chi\pmod*{M}$ with $\lcm(M,\cond(\chi)^2)\mid N^2$.
\end{lemma}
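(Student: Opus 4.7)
Plan. The strategy is to relate Maass cusp forms on $\Gamma(N)$ to Maass cusp forms on $\Gamma_0(N^2)$ with nebentypus of conductor dividing $N$, and then to apply the oldform/newform and twist-minimal sieves of Section~\ref{sec:sieve}.

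First I would set $A=\sm 1 & 0\\ 0 & N\esm$ and verify that $\Gamma':=A\Gamma(N)A^{-1}$ satisfies $\Gamma_1(N^2)\subset\Gamma'\subset\Gamma_0(N^2)$, is normal in $\Gamma_0(N^2)$, and has quotient $\Gamma_0(N^2)/\Gamma'\cong(\Z/N\Z)^\times$ realized by the bottom-right entry modulo $N$. Pulling back by $z\mapsto A^{-1}z$ identifies $L^2_{\mathrm{cusp}}(\Gamma(N)\bsl\HH)$ with $L^2_{\mathrm{cusp}}(\Gamma'\bsl\HH)$; decomposing the latter according to the characters of $(\Z/N\Z)^\times$—which we identify with Dirichlet characters $\chi$ mod $N^2$ with $\cond(\chi)\mid N$—and observing that odd $\chi$ contribute zero because $-I\in\Gamma_0(N^2)$ acts trivially on $\HH$, one obtains
\[
 L^2_{\mathrm{cusp},\lambda}(\Gamma(N)\bsl\HH) \;\cong\; \bigoplus_{\substack{\chi\bmod N^2\\ \cond(\chi)\mid N}} \A_\lambda(\chi).
\]
Hence the Selberg conjecture for $\Gamma(N)$ is equivalent to $\A_\lambda(\chi)=\{0\}$ for all $\lambda\in(0,\tfrac14)$ and all such $\chi$.

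Next, by applying the oldform/newform decomposition to each $\A_\lambda(\chi)$ and then the twist-minimal decomposition \eqref{newmindecomp}, the above vanishing is equivalent to $\Amin_\lambda(\chi_f)=\{0\}$ for every pair $(M'',\chi_f)$ of the form $\chi_f=\chi'\psi^2|_{M''}$ with $\langle M'',\psi\rangle\in S_{\chi'}$, as $\chi'$ ranges over characters modulo some $M'$ satisfying $M'\mid N^2$ and $\cond(\chi')\mid N$. The lemma then reduces to identifying these pairs $(M'',\chi_f)$ with the pairs $(M,\chi)$ satisfying $\lcm(M,\cond(\chi)^2)\mid N^2$. One direction is immediate: given $(M,\chi)$ with $\lcm(M,\cond(\chi)^2)\mid N^2$, lifting $\chi$ to $\tilde\chi$ mod $N^2$ of the same conductor and taking $\langle M,1\rangle\in S_{\tilde\chi}$ produces the space $\Amin_\lambda(\chi)$.

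For the converse one needs $M''\mid N^2$, which is immediate from $M''\mid M'\mid N^2$, and $\cond(\chi_f)\mid N$. The latter I would check prime by prime: combining $\lcm(M'',\cond(\psi)\cond(\chi'\psi))=M'$ with $M'\mid N^2$ gives $\ord_p\cond(\psi)+\ord_p\cond(\chi'\psi)\leq 2\ord_p N$ at each prime $p$, and a short case analysis on $\ord_p\cond(\psi)$ relative to $\ord_p\cond(\chi')\leq\ord_p N$ shows $\ord_p\cond(\psi)\leq\ord_p N$ in every case. Then $\ord_p\cond(\psi^2)\leq\ord_p\cond(\psi)\leq\ord_p N$, and $\ord_p\cond(\chi'\psi^2)\leq\max\{\ord_p\cond(\chi'),\ord_p\cond(\psi^2)\}\leq\ord_p N$, yielding $\cond(\chi_f)\mid N$. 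The main obstacle is this local bookkeeping: each case is elementary given the basic bound $\cond(\xi_1\xi_2)\mid\lcm(\cond(\xi_1),\cond(\xi_2))$ with equality at primes where the conductors differ, but the case $\ord_p\cond(\psi)=\ord_p\cond(\chi')$ requires the refined inequality $\ord_p\cond(\chi'\psi)\leq\ord_p\cond(\psi)$ to close the argument.
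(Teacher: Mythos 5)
Your strategy coincides with the paper's: replace $\Gamma(N)$ by a conjugate between $\Gamma_1(N^2)$ and $\Gamma_0(N^2)$, reduce to the spaces $\A_\lambda(\chi|_{N^2})$ with $\cond(\chi)\mid N$, sieve through oldforms/newforms and Lemma~\ref{lem:twistconductor}, and verify the conductor constraint prime by prime. That local analysis is correct as sketched; but the case $\ord_p\cond(\psi)=\ord_p\cond(\chi')$ needs no refined inequality — there $\ord_p\cond(\psi)=\ord_p\cond(\chi')\leq\ord_p N$ holds trivially, so the only nontrivial case is $\ord_p\cond(\psi)>\ord_p\cond(\chi')$, where $\ord_p\cond(\chi'\psi)=\ord_p\cond(\psi)$ forces $2\ord_p\cond(\psi)\leq 2\ord_p N$.

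Your "immediate" direction, however, contains an error. For $\langle M,1\rangle\in S_{\tilde\chi}$ with $\tilde\chi$ lifted to modulus $N^2$, the definition of $S_{\tilde\chi}$ requires $\lcm(M,\cond(1)\cond(\tilde\chi))=N^2$; since $\cond(\tilde\chi)=\cond(\chi)\mid M$, this forces $M=N^2$, so for $M<N^2$ your chosen pair is not in $S_{\tilde\chi}$ (and the minimality requirement on $\tilde\chi|_M$ is also not given). More to the point, the lemma does not need $\Amin_\lambda(\chi)$ to appear literally as a piece of the twist-minimal decomposition for every eligible $(M,\chi)$. It suffices to note that $\Amin_\lambda(\chi)\subseteq\Anew_\lambda(\chi)\subseteq\A_\lambda(\chi)$ and that $\ell_{M,1}$ embeds $\A_\lambda(\chi)$ into $\A_\lambda(\chi|_{N^2})$, which is a summand of $\A_\lambda(N)$ because $\cond(\chi|_{N^2})=\cond(\chi)\mid N$. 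That chain of inclusions is exactly the paper's argument for this direction, and it avoids the need to exhibit a specific element of $S_{\tilde\chi}$.
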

\begin{proof}
Given $\lambda>0$, let $\A_\lambda(N)$ denote the space of
Maass cusp forms of eigenvalue $\lambda$ that are invariant under the
action of $\Gamma(N)$. Then, as shown in \cite[\S3.5]{Hum18},
we have the isomorphism
$$
\A_\lambda(N)\cong\bigoplus_{\chi\pmod*{N}}\A_\lambda(\chi|_{N^2}),
$$
where to a given $f\in\A_\lambda(\chi|_{N^2})$ we associate
the function $z\mapsto f(N^{-1}z)$, which is an element of $\A_\lambda(N)$.
In turn, for a given pair $\lambda>0$ and $\chi\pmod*{N}$, we have
$$
\A_\lambda(\chi|_{N^2})=\sum_{\substack{M,d\in\Z_{>0}\\
\cond(\chi)\mid M\mid N^2\\d\mid\frac{N^2}{M}}}
\ell_{M,d}\,\Anew_\lambda(\chi|_M),
$$
in the notation of \S\ref{sec:prelim}.

By Lemma~\ref{lem:twistconductor}, the space $\Anew_\lambda(\chi|_M)$
is spanned by the Hecke eigenforms $f\otimes\psi$, where $f$ is twist
minimal and $M=\lcm(\cond(f),\cond(\psi)\cond(\chi\overline{\psi}))$.
Put $M'=\cond(f)$ and $\chi'=\chi\overline{\psi}^2|_{M'}$, so that
$f\in\Amin_\lambda(\chi')$.
Since $M\mid N^2$ and $\cond(\chi)\mid N$,
we have $M'\mid N^2$ and $\cond(\chi')\mid N$.

Thus, if $\Amin_\lambda(\chi')=\{0\}$ whenever $\lambda<\frac14$
and $\chi'\pmod*{M'}$ satisfies $M'\mid N^2$ and $\cond(\chi')\mid
N$ then the Selberg eigenvalue conjecture holds for $\Gamma(N)$.
Conversely, we have $\Amin_\lambda(\chi')\subseteq\Anew_\lambda(\chi')$
so the Selberg conjecture for $\Gamma(N)$ implies that $\Amin_\lambda(\chi')=\{0\}$
for $\lambda<\frac14$.
\end{proof}

\begin{lemma}\label{lem:Galoisadmissible}
Let $\rho:\Gal(\overline{\Q}/\Q)\to\GL_2(\C)$ be a nondihedral,
irreducible Artin representation of conductor $N$, and let
$\chi=\prod_{p\mid N}\chi_p$ be the Dirichlet character associated to
$\det\rho$ via class field theory.
If $p\mid N$ is a prime such
that $\ord_p{N}\in\{1,\ord_p\cond(\chi)\}$, then
$\chi_p$ has order $2$, $3$, $4$ or $5$.
Further,
if $p$ and $q$ are two such primes then $\chi_p\chi_q$ cannot have order
$20$.
\end{lemma}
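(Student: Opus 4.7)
The approach is to analyze $\rho|_{D_p}$ at each prime $p$ satisfying the hypothesis, translating the condition on $\chi_p$ into constraints on element orders in the projective image $\bar G$ of $\rho$. By the classical classification of finite subgroups of $\PGL_2(\C)$, together with $\rho$ being nondihedral and irreducible, $\bar G$ is isomorphic to $A_4$, $S_4$, or $A_5$, whose element orders are contained in $\{1,2,3\}$, $\{1,2,3,4\}$, and $\{1,2,3,5\}$ respectively.

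First I would show that $\rho|_{D_p}$ decomposes as a direct sum of two characters, one of which is unramified. Since $\rho$ has finite image, its associated Weil--Deligne representation has trivial monodromy, so $\rho|_{D_p}$ is semisimple: either supercuspidal (irreducible as a representation of the Weil group) or principal series $\chi_1 \oplus \chi_2$. In the supercuspidal case the conductor exponent is at least $2$, and Tunnell's theorem gives $a(\chi_p) \le a(\rho|_{D_p})/2$; hence neither $\ord_p N = 1$ nor $\ord_p N = a(\chi_p)$ can hold. Thus $\rho|_{D_p} = \chi_1 \oplus \chi_2$ is principal series. The identity $a(\rho|_{D_p}) = a(\chi_1) + a(\chi_2)$ together with the bound $a(\chi_1\chi_2) \le \max(a(\chi_1), a(\chi_2))$ and our hypothesis forces $\min(a(\chi_1), a(\chi_2)) = 0$; say $\chi_2$ is unramified, so that $\chi_p|_{I_p} = \chi_1|_{I_p}$ is nontrivial (as $p \mid N$).

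Next I would compute the order. The image $\rho(I_p)$ is conjugate to $\{\operatorname{diag}(\chi_1|_{I_p}(\sigma), 1) : \sigma \in I_p\}$, a cyclic subgroup of $\GL_2(\C)$ of order $n := \ord(\chi_1|_{I_p})$; by local class field theory this equals the order of $\chi_p$ as a Dirichlet character. Projecting to $\PGL_2(\C)$, the image still has order $n$, since $\operatorname{diag}(\alpha^k, 1) = \lambda I$ forces $\lambda = 1$ and $\alpha^k = 1$. Thus $\bar G$ contains an element of order $n$, and since $n \neq 1$, we conclude $\ord(\chi_p) \in \{2, 3, 4, 5\}$.

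For the second claim, if $p \neq q$ are distinct primes satisfying the hypothesis, then $\chi_p$ and $\chi_q$ are supported on coprime factors of $(\Z/N\Z)^\times$, so $\ord(\chi_p\chi_q) = \lcm(\ord\chi_p, \ord\chi_q)$. Among values in $\{2,3,4,5\}$, the only way to obtain $\lcm = 20$ is $\{4, 5\}$; but then $\bar G$ would contain elements of both orders $4$ and $5$, whereas order $4$ forces $\bar G \cong S_4$ and order $5$ forces $\bar G \cong A_5$, a contradiction. The main subtlety I expect is the supercuspidal exclusion in the first step, which requires the classical lower bound $a(\pi) \geq 2$ for supercuspidal representations of $\GL_2(\Q_p)$; this is standard but needs to be cited or verified by a short local argument.
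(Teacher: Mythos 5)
Your proof is correct, and it reaches the key structural conclusion --- that $\rho|_{I_p}$ is the direct sum of a nontrivial character and the trivial character, so that $\rho(I_p)$ injects into both $\det\rho(I_p)$ and the projective image $\bar\rho(I_p)$ --- by a genuinely different route from the paper. The paper works directly with the ramification filtration $G_0\supseteq G_1\supseteq\cdots$, computing the conductor exponents $e$ of $\rho_p$ and $s$ of $\det\rho_p$ by the usual sums over $G_i$ and observing that the $i$th term of the $e$-sum always dominates the $i$th term of the $s$-sum, with equality precisely when $\rho_p|_{G_i}$ contains a trivial constituent; the hypothesis $e\in\{1,s\}$ then forces equality at every level, in particular at $i=0$. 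Your version instead invokes the local Langlands dichotomy (supercuspidal versus principal series, Steinberg being ruled out by finiteness of image), excludes the supercuspidal case via Tunnell's central-character bound $a(\chi_p)\le\tfrac12 a(\rho_p)$ together with the lower bound $a(\rho_p)\ge 2$, and then reads off $\min(a(\chi_1),a(\chi_2))=0$ from the principal-series conductor formula $a(\chi_1)+a(\chi_2)$. Both approaches are legitimate. The paper's argument is more self-contained --- it uses nothing beyond the definition of the Artin conductor --- whereas yours leans on the local representation theory that the paper has in any case already imported for Lemma~\ref{lem:twistconductor} (Tunnell's theorem), and is arguably more transparent about \emph{why} the hypothesis forces one of the two local characters to be unramified. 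Your treatment of the last claim (mutual exclusivity of orders $4$ and $5$ in the projective image, ruling out $\lcm=20$) coincides with the paper's. One small stylistic point: you should make explicit, as the paper does, that the isomorphism $\rho(I_p)\to\det\rho(I_p)$ is what identifies $\ord(\chi_p)$ with the order of the cyclic subgroup of $\bar G$, since both maps $\rho(I_p)\to\det\rho(I_p)$ and $\rho(I_p)\to\bar\rho(I_p)$ are injective precisely because of the $\oplus\,1$ decomposition.
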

\begin{proof}
Let $\rho_p$ denote the restriction of $\rho$ to
$\Gal\bigl(\overline{\Q}_p/\Q_p\bigr)$.  Then $\rho_p$ factors through
$G=\Gal(L/\Q_p)$ for some finite extension $L/\Q_p$. Let
$G_i$, $i=0,1,2,\ldots$, denote the ramification subgroups of $G$, with
$G_0$ the inertia group. Then $\rho_p$ and $\det\rho_p$ have conductor
exponents
$$
e=\frac1{\#G_0}\sum_{i\ge0}\sum_{g\in G_i}\bigl(2-\tr\rho_p(g)\bigr)
\quad\text{and}\quad
s=\frac1{\#G_0}\sum_{i\ge0}\sum_{g\in G_i}\bigl(1-\det\rho_p(g)\bigr),
$$
respectively.
Note that the average of $\tr\rho_p$ over $G_i$ is the number of copies
of the trivial representation in $\rho_p|G_i$. If this is nonzero then
$\rho_p|G_i\cong\det\rho_p|G_i\oplus1$, from which it follows that the $i$th
terms of the two sums above are the same. If $\rho_p|G_i$ does not contain
the trivial representation then
$$
\frac1{\#G_i}\sum_{g\in G_i}\bigl(2-\tr\rho_p(g)\bigr)=2
>1\ge\frac1{\#G_i}\sum_{g\in G_i}\bigl(1-\det\rho_p(g)\bigr).
$$
Thus, the $i$th term of the formula for $e$ is always $\ge$ the $i$th
term of the formula for $s$, with equality if and only if $\rho_p|G_i$
contains the trivial representation.
If $e\in\{1,s\}$ then equality must hold for every term; in particular,
$\rho_p|G_0$ contains the trivial representation, so
that $\rho_p|G_0\cong\det\rho_p|G_0\oplus1$.
When $e=1$, this in turn implies that $s=1$, so $\det\rho_p|G_0$ is nontrivial.

Let $\bar\rho_p$ denote the composition of $\rho_p$ with the
canonical projection $\GL_2(\C)\to\PGL_2(\C)$. Then when $e\in\{1,s\}$, the
natural maps $\rho_p(G_0)\to\det\rho_p(G_0)$ and $\rho_p(G_0)\to\bar\rho_p(G_0)$
are isomorphisms. Since $\det\rho_p(G_0)$ is a nontrivial cyclic subgroup of
$\C^\times$ and $\bar\rho_p(G_0)$ is a subgroup of $A_4$, $S_4$ or
$A_5$, it follows that $\det\rho_p(G_0)\cong\bar\rho_p(G_0)$ is cyclic of
order $2$, $3$, $4$ or $5$. Since the Dirichlet character $\chi_p$
associated to $\det\rho_p$ is determined by $\det\rho_p|G_0$, they
have the same order, which implies the first claim.

Finally, order $4$ (resp.\ $5$) can only occur when $\rho$ is octahedral
(resp.\ icosahedral). These possibilities are mutually exclusive, whence
the second claim.
\end{proof}
In the $A_5$ case, we may also take advantage of the fact that
icosahedral representations occur in Galois-conjugate pairs that
are not twist equivalent. Thus, assuming Artin's conjecture, we can
still rule out the existence of an icosahedral representation of a
given conductor when our computation accommodates one representation
(in total over all characters, modulo twist equivalence) but not two.
We used this trick to rule out icosahedral representations with conductor
$N\in\{2221,2341,2381,2529,2799\}$.

\section{Numerical remarks}\label{sec:numerics}
To prove Theorems \ref{thm:SEC} and \ref{thm:Artin}, we applied the
numerical method described in \cite[\S4]{BS07}. Briefly, we consider
test functions of the form
$$
h(r)=\left(\sinc^2\!\left(\frac{\delta{r}}2\right)
\sum_{j=0}^{M-1}x_j\cos(j\delta{r})\right)^2,
$$
where
$$
\sinc{r}:=\begin{cases}
\sin(r)/r&\text{if }r\ne0,\\
1&\text{if }r=0,
\end{cases}
$$
$\delta=X/2M$ for $X,M\in\Z_{>0}$ and
$x_0,\ldots,x_{M-1}\in\R$ are arbitrary.
For each $\chi\pmod*{N}$ and $\epsilon\in\{0,1\}$, set
$$
m_\chi=\#\{\psi\pmod*{N}:\psi(-1)=1,\;\psi^2=1,\;
\cond(\psi)\cond(\chi\psi)\mid N\}
$$
and let $n_{\chi,\epsilon}$ be a lower bound for the number of
twist-minimal forms of character $\chi$ and parity $\epsilon$ arising
from Artin representations; we use \texttt{PARI/GP} \cite{PARI} to
compute the contribution from dihedral representations, as described in
\cite[\S3.2]{BS07} (see also the source code at \cite{code}), and the
data from Table~\ref{tab:Artin} for the rest. Then the quantity
$$
Q_{\chi,\epsilon}(x_0,\ldots,x_{M-1})
:=\frac1{m_\chi}\left[
\sum_{\lambda>0}\tr\tfrac12(T_1+(-1)^\epsilon T_{-1})|_{\Amin_\lambda(\chi)}
h\Bigl(\sqrt{\lambda-\tfrac14}\Bigr)-n_{\chi,\epsilon}h(0)\right],
$$
is a positive-definite quadratic form in the $x_j$. By standard
trigonometric identities, the matrix of $Q_{\chi,\epsilon}$ is
determined from the traces of $\sinc^4(\delta{r}/2)\cos(j\delta{r})$
for $0\le j\le2M-2$. We apply the trace formula to compute these,
and then minimize $Q_{\chi,\epsilon}$ with respect to the constraint
$\sum_{j=0}^{M-1}x_j=1$.

In every case, it turned out that the criterion from \cite[\S4.3]{BS07}
applied, so that the optimal test function $h$ satisfied $h(r)\ge1$ for
$r\in i\R$. As explained in \cite[\S3.4]{BS07}, every non-CM form occurs
with multiplicity $m_\chi$. Thus, since the CM forms satisfy Selberg's
conjecture,\footnote{For squarefree $N$, we actually remove the
contribution from CM forms, following \cite[\S3.3]{BS07}. With more work
one could generalize that approach to arbitrary $N$, but it turns out not
to be necessary for our applications.}
whenever the resulting minimal value of $Q_{\chi,\epsilon}$ is
less than $1$, we deduce both the Selberg conjecture and the completeness
of the list of nondihedral Artin representations for twist-minimal forms
of character $\chi$.

To ensure the accuracy of our numerical computations, we used the
interval arithmetic library \href{http://arblib.org/}{\texttt{Arb}}
\cite{Joh17} throughout.
To handle the integral terms of the trace formula, for each basis function
we first computed $\int_0^\infty g'(u)\log{u}\,du$ symbolically, which
allowed us to replace $\log(\sinh(u/2))$ and $\log(\tanh(u/4))$ by the
real-analytic functions
$\log(\sinh(u/2)/u)$ and $\log(\tanh(u/4)/u)$, respectively. Thus, in
every integral term, the integrand agrees with an analytic function
on each interval $[j\delta,(j+1)\delta]$. After applying a suitable affine
transformation to replace the interval by $[-1,1]$, we use the
following rigorous numerical quadrature estimate of Molin \cite{Mol10}:
\begin{lemma}[Molin]
Let $f$ be an analytic function on an open neighborhood of
$D=\{z\in\C:|z|\le 2\}$. Then for any $n\ge1$ we have
$$
\left|\int_{-1}^1 f(x)\,dx-\sum_{k=-n}^na_kf(x_k)\right|
\le\exp(4-5/h)\sup_{z\in\partial{D}}|f(z)|,
$$
where $h=\log(5n)/n$, $a_k=\frac{h\cosh(kh)}{\cosh^2(\sinh(kh))}$
and $x_k=\tanh(\sinh(kh))$.
\end{lemma}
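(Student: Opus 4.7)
The plan is to recognize the quadrature as the double-exponential (tanh-sinh) rule. Under the substitution $x=\tanh(\sinh(t))$, which is an analytic bijection $\R\to(-1,1)$, the integral transforms as
$$
\int_{-1}^1 f(x)\,dx=\int_\R F(t)\,dt,\qquad F(t):=f(\tanh(\sinh(t)))\cdot\frac{\cosh(t)}{\cosh^2(\sinh(t))}.
$$
The proposed sum $\sum_{k=-n}^n a_kf(x_k)$ is exactly the truncated trapezoidal rule $h\sum_{k=-n}^n F(kh)$ for $F$ with spacing $h=\log(5n)/n$. So the proof reduces to the standard two-part estimate for the trapezoidal rule on $\R$:
$$
\Bigl|\int_\R F(t)\,dt-\sum_{k=-n}^n a_kf(x_k)\Bigr|
\le\underbrace{\Bigl|\int_\R F(t)\,dt-h\sum_{k\in\Z}F(kh)\Bigr|}_{\text{discretization error}}
+\underbrace{h\sum_{|k|>n}|F(kh)|}_{\text{truncation error}}.
$$

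For the discretization error I would use the Poisson summation identity (or, equivalently, a contour shift), which gives, for any $F$ analytic in a horizontal strip $|\Im t|<d$ and suitably integrable on its edges, an upper bound of the form $C(d)\,e^{-2\pi d/h}$ times a supremum of $|F|$ on the strip boundary. For the truncation error I would use the double-exponential decay of $F(t)$ as $|t|\to\infty$: since $\tanh(\sinh(t))\to\pm1$ like $1-2\exp(-2\sinh|t|)$, one has $F(t)=O(\exp(-\tfrac12 e^{|t|}))$, and then $h\sum_{|k|>n}|F(kh)|$ is dominated by $\exp(-\tfrac12 e^{nh})=\exp(-\tfrac52 n/\log(5n))$ up to harmless factors. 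Calibrating $d$ and using $h=\log(5n)/n$ balances the two contributions and yields the constant $\exp(4-5/h)$ in the statement.

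The main obstacle is the analyticity/strip-width step: one must verify that $F$ extends holomorphically to a strip $|\Im t|<d$ for some explicit $d$ close to $\tfrac{\pi}{2}$, and bound $|F|$ there in terms of $\sup_{|z|=2}|f(z)|$. The map $t\mapsto\tanh(\sinh(t))$ has singularities where $\sinh(t)\in\frac{i\pi}{2}+i\pi\Z$, so one gets analyticity on a strip of width slightly less than the $t$-value where the image first exits the disk $|z|\le 2$. Concretely, one shows that for $d$ just below the critical value (so that the image of the strip lies in $|z|\le 2$), $|\tanh(\sinh(t))|\le 2$ throughout, allowing $f$ to be evaluated; the factor $\cosh(t)/\cosh^2(\sinh(t))$ is easy to bound on the same strip. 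The numerical constant $5$ in $h=\log(5n)/n$ is chosen precisely so that this $d$ and the resulting trapezoidal error estimate combine to give the clean bound $\exp(4-5/h)$.

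Finally, assembling the two parts: with the strip bound giving a discretization error $\lesssim\exp(-2\pi d/h)\cdot\sup_{\partial D}|f|$ and the truncation error of comparable size, both expressed in terms of $\sup_{\partial D}|f|$ via the maximum modulus principle (pulled back through the conformal map), the explicit constants coming from the choice $h=\log(5n)/n$ yield the stated bound. I would follow Molin \cite{Mol10} directly for the bookkeeping of these constants rather than reproduce the numerical optimization, since the content of the lemma is exactly his main theorem adapted to the interval $[-1,1]$ and the disk of radius $2$.
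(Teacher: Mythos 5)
The paper does not prove this lemma at all: it states it as an external result and cites Molin \cite{Mol10} for the proof. Your sketch correctly identifies the underlying strategy of Molin's theorem (double-exponential change of variables $x=\tanh(\sinh t)$, interpretation of the sum as a truncated trapezoidal rule, splitting into discretization and truncation errors, Poisson summation / strip analyticity for the former, super-exponential decay of $F$ for the latter, and the constraint that the image of the strip must remain inside $|z|\le 2$ so that $f$ can be evaluated), but you explicitly defer the derivation of the explicit constants $\exp(4-5/h)$ and the choice $h=\log(5n)/n$ to Molin rather than carrying out the calibration yourself. In that sense your proposal is not an independent proof of the stated inequality, but since the paper itself offers nothing beyond the citation, your approach and the paper's are essentially the same: both rest on Molin's result, and your outline adds an accurate description of how that result is proved.
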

Note that the error term decays exponentially in $n/\log{n}$.
To obtain a bound for $|f|$ on $\partial D$, we write
$\partial D=\bigcup_{j=0}^{n-1}\{2e(\theta):\theta\in[j/n,(j+1)/n)\}$
and use interval arithmetic to bound $|f(2e(\theta))|$ on each segment.

Using the algorithm from \cite{BBJ}, we computed the class numbers of
$\Q(\sqrt{t^2\pm4})$ for all $t\le e^{20}$, which enables us to take $X$
as large as $40$ in the above. Taking $M=200$, various $X\le 40$ and $\chi$ as
indicated by Lemmas~\ref{lem:Gammaadmissible} and
\ref{lem:Galoisadmissible} sufficed to prove Theorems~\ref{thm:SEC} and
\ref{thm:Artin}.

%%%%%%%%%%%%%%%%%%%%%%%%%%%%%%%%%%	
%: Reference
\thispagestyle{empty}
{\footnotesize
\nocite{*}
\bibliographystyle{amsalpha}
\providecommand{\bysame}{\leavevmode\hbox to3em{\hrulefill}\thinspace}
\providecommand{\MR}{\relax\ifhmode\unskip\space\fi MR }
% \MRhref is called by the amsart/book/proc definition of \MR.
\providecommand{\MRhref}[2]{%
  \href{http://www.ams.org/mathscinet-getitem?mr=#1}{#2}
}
\providecommand{\href}[2]{#2}

}
\end{document}